\numberwithin{equation}{section}
\numberwithin{figure}{section}
\newtheorem{thm}{Theorem}[section]
\newtheorem{rem}{Remark}[section]
\newcommand{\commentout}[1]{{}} % for large block comments
\newcommand{\bfA}{{\bf A}}
\newcommand{\bfa}{{\bf a}}
\newcommand{\bfB}{{\bf B}}
\newcommand{\bfb}{{\bf b}}
\newcommand{\bfc}{{\bf c}}
\newcommand{\bfE}{{\bf E}}
\newcommand{\bff}{{\bf f}}
\newcommand{\bfH}{{\bf H}}
\newcommand{\bfh}{{\bf h}}
\newcommand{\bfI}{{\bf I}}
\newcommand{\bfk}{{\bf k}}
\newcommand{\bfK}{{\bf K}}
\newcommand{\bfL}{{\bf L}}
\newcommand{\bfn}{{\bf n}}
\newcommand{\bfQ}{{\bf Q}}
\newcommand{\bfR}{{\bf R}}
\newcommand{\bfr}{{\bf r}}
\newcommand{\bft}{{\bf t}}
\newcommand{\bfu}{{\bf u}}
\newcommand{\bfv}{{\bf v}}
\newcommand{\bfw}{{\bf w}}
\newcommand{\bfz}{{\bf z}}
\newcommand{\bfgamma}{\boldsymbol{\gamma}}
\newcommand{\bfLambda}{\boldsymbol{\Lambda}}
\newcommand{\bfphi}{\boldsymbol{\phi}}
\newcommand{\bfpsi}{\boldsymbol{\psi}}
\newcommand{\vertiii}[1]{{\left\vert\kern-0.25ex\left\vert\kern-0.25ex\left\vert #1
    \right\vert\kern-0.25ex\right\vert\kern-0.25ex\right\vert}}
    \newcommand{\vertii}[1]{{\left\vert\kern-0.25ex\left\vert #1
    \right\vert\kern-0.25ex\right\vert}}
\newcommand{\verti}[1]{{\left\vert #1
    \right\vert}}
\begin{document}
\title{Solving Two Dimensional H(curl)-elliptic Interface Systems \\ with Optimal Convergence On Unfitted Meshes
%\blfootnote{Keywords: Interface problems, discontinuous coefficients, finite element spaces, curved interfaces, higher order.}
%\blfootnote{}
\thanks{Submitted to the editors \today.
\funding{The first author was funded by NSF DMS-2012465.}}
}
\author{
Ruchi Guo \thanks{Department of Mathematics, University of California, Irvine, CA 92697 (ruchig@uci.edu). }
\and Yanping Lin \thanks{Department of Applied Mathematics, The Hong Kong Polytechnic University, Kowloon, Hong Kong, China (yanping.lin@polyu.edu.hk).} 
\and Jun Zou \thanks{Department of Mathematics, The Chinese University of Hong Kong, Shatin, N.T., Hong Kong (zou@math.cuhk.edu.hk).} 
  }
\date{}
\maketitle
\begin{abstract}
In this article, we develop and analyze a finite element method with the first family N\'ed\'elec elements of the lowest degree for solving a Maxwell interface problem modeled by a $\bfH(\text{curl})$-elliptic equation on unfitted meshes. To capture the jump conditions optimally, we construct and use $\bfH(\text{curl})$ immersed finite element (IFE) functions on interface elements while keep using the standard N\'ed\'elec functions on all the non-interface elements. We establish a few important properties for the IFE functions including the unisolvence according to the edge degrees of freedom, the exact sequence relating to the $H^1$ IFE functions and the optimal approximation capabilities. In order to achieve the optimal convergence rates, we employ a Petrov-Galerkin method in which the IFE functions are only used as the trial functions and the standard N\'ed\'elec functions are used as the test functions which can eliminate the non-conformity errors. We analyze the \textit{inf-sup} conditions under certain conditions and show the optimal convergence rates which are also validated by numerical experiments.
\end{abstract}

\begin{keywords}
Maxwell equations, Interface problems, $\bfH(\text{curl})$-elliptic equations, N\'ed\'elec elements, Immersed finite element methods, Petrov-Galerkin methods, Exact sequence
\end{keywords}

%\fn{Some changes for the references:  
%all journal names should be written in abbreviation; 
%some authors' names are full, some are not;  better to use the standard writing without full names (to save space); 
%some addresses appear more than once, e.g., in [6][50]. 
%[35]: $\omega$ should be $\Omega$. {\color{red}I have corrected the literature and carefully double checked.}}

\section{Introduction}

This article is devoted to solve a two dimensional (2D) $\bfH(\text{curl})$-elliptic interface problem orginated from Maxwell equations on unfitted meshes. Let $\Omega\subseteq\mathbb{R}^2$ be a bounded domain, and let it contain two subdomains $\Omega^{\pm}$ occupied by media with different magnetic and electric properties. These two subdomains are partitioned by a curve, the so called interface, and we assume it is a smooth simple Jordan curve and does not touch the boundary as shown in Figure \ref{fig:domain}. The considered $\bfH(\text{curl})$-elliptic interface problem for the electric field $\bfu\in\mathbb{R}^2$ is given by
\begin{subequations}
\label{model}
\begin{align}
\label{inter_PDE}
\underline{ \text{curl}}~\mu^{-1} \text{curl}~\bfu + \beta \bfu = \bff \;\;\;\; & \text{in} \; \Omega = \Omega^-  \cup \Omega^+, 
\end{align}
with $\bff\in\bfL^2(\Omega)$, subject to the Dirichlet boundary condition:
\begin{align}
\label{bc}
\bfu\cdot\bft = 0 \;\;\;\;\; & \text{on} \; \partial\Omega,
\end{align}
where the operator $\text{curl}$ is for vector functions $\bfv=[v_1,v_2]^t$ such that $\text{curl}~\bfu=\partial_{x_1}v_2 - \partial_{x_2}v_1$ while $\underline{\text{curl}}$ is for scalar functions $v$ such that $\underline{\text{curl}}~v = \left[ \partial_{x_2}v, - \partial_{x_1}v \right]^t$ with ``$t$" denoting the transpose herein. Moreover, we consider the following jump conditions at the interface $\Gamma$:
\begin{align}
[\bfu\cdot\bft]_{\Gamma} &:=  \bfu^+\cdot\bft -  \bfu^-\cdot\bft = 0,  \label{inter_jc_1} \\
[\mu^{-1}\text{curl}~\bfu]_{\Gamma} &:=  \frac{1}{\mu^+}\text{curl}(\bfu^+) - \frac{1}{\mu^-}\text{curl}(\bfu^-)   = 0,
\label{inter_jc_2} \\
[\beta\bfu\cdot\bfn]_{\Gamma} &:=  \beta^+\bfu^+\cdot\bfn -  \beta^-\bfu^-\cdot\bfn = 0, \label{inter_jc_3}
\end{align}
\end{subequations}
where $\bfn$ denotes the normal vector to $\Gamma$, and $\mu=\mu^{\pm}$ and $\beta=\beta^{\pm}$ in $\Omega^{\pm}$ are assumed to be positive piecewise constant functions. The interface model \eqref{model} arises from each time step in a stable time-marching scheme for the eddy current computation of Maxwell equations \cite{2000AmmariBuffaNedelec,2000BeckHiptmairHoppeWohlmuth,1996Dirks}, which serves as a magneto-quasistatic approximation by dropping the displacement current. It has been frequently used in low frequency and high-conductivity applications. In this model, $\mu$ denotes the magnetic permeability and $\beta\sim \sigma/\triangle t$ is the scaling of the conductivity $\sigma$ by the time-marching step size $\triangle t$. Note that the usual variational or weak formulation of \eqref{model} can naturally take care of the jump conditions in \eqref{inter_jc_1} and \eqref{inter_jc_2} \cite{2003Monk}, whereas \eqref{inter_jc_3} comes from the underling eddy current model
\begin{equation}
\label{inter_jc_3_J}
[\sigma \bfu\cdot\bfn]_{\Gamma} = -[\mathbf{ J}\cdot\bfn]_{\Gamma}
\end{equation}
where $\mathbf{ J}$ denotes the current source. 
We shall see that all the jump conditions in \eqref{inter_jc_1}-\eqref{inter_jc_3} will be used in the construction of IFE functions such that the resulting space has optimal approximation capabilities. For simplicity, we only consider the homogeneous jump condition, and the non-homogeneous case can be handled by introducing an enriched function, see \cite{2007GongLiLi} and a recent work on theoretical analysis \cite{2020AdjeridBabukaGuoLin} by Babu\v{s}ka et al.

%Interface problems widely appear in a large variety of science and engineering applications such as the electrical impedance tomography \cite{2005ChungChanTai,2005HolderDavid}, electroencephalography \cite{2010VallaghePapadopoulo}, plasma simulations \cite{1991BirdsallLangdon,1988HockneyEastwood} and computational biology \cite{2007ChenHolstXu}. 

Interface problems widely appear in a large variety of science and engineering applications. The interface problems related to Maxwell equations are of particular importance due to the omnipresent situation of multiple materials/media appearing in electric and magnetic fields, such as the simulation of electric machines or magnetic actuators or the design of optical devices, microwave circuits and nano/micro electric devices. In particular we refer readers to the plasma simulation in magnetostatic/electrostatic field \cite{2019LuYangBaiCaoHe} and the non-destructive testing techniques such as electromagnetic induction sensors \cite{2015AmmariChenChenVolkov} testing buried low-metallic content.

Traditional finite element methods (FEMs) can be applied to solve interface problems based on interface-fitted meshes \cite{2010LiMelenkWohlmuthZou}, otherwise the numerical solution may loss accuracy \cite{2000BabuskaOsborn}. However it is time-consuming to generate interface-fitted meshes in some applications especially when the geometry is evolving. Alternatively lots of research interests have been focused on developing numerical methods with less interface-fitted mesh requirements. Typical examples include CutFEM \cite{2015BurmanClaus,2017HuangWuXiao}, generalized FEMs \cite{1983BabuskaOsborn}, multiscale FEMs \cite{2010ChuGrahamHou,2004HouWuZhang}, immersed FEMs (IFE methods) based on unfitted meshes, and methods based on non-matching meshes \cite{2005PeterCarloSangalliGiancarlo}. The fundamental idea of IFE methods is to construct special approximate functions weakly satisfying the jump conditions such that the resulting IFE space has optimal approximation capabilities on unfitted meshes. In this work, we develop and analyze an IFE method for solving $\bfH(\text{curl})$-elliptic interface problems. Due to the potentially low regularity of this type of interface problems originated from Maxwell equations, our method is based on the first family N\'ed\'elec elements of the lowest degree \cite{2000BeckHiptmairHoppeWohlmuth,2015CaiCao,2009ChenXiaoZhang,2012DuanLiTanZheng,2016DuanQiuTanZheng,2012HiptmairLiZou}.

%The fundamental idea of IFE methods is to solve local problems on interface elements according to the jump conditions. The solutions as piecewise polynomials on interface elements can satisfy the jump conditions weakly and are used as trial functions in IFE methods such that the resulting IFE space has optimal approximation capabilities on unfitted meshes. The underlying idea can be traced back to the early works of Babu\v{s}ka, Caloz and Osborn \cite{1983BabuskaOsborn,1994BabuskaCalozOsborn} which solve local differential equations to construct special trial functions. It was further employed by Chu, Graham and Hou in \cite{2010ChuGrahamHou} for multiscale FEMs.

Many numerical methods have been developed to solve Maxwell interface problems. In \cite{2012HiptmairLiZou}, the authors analyzed the standard FEM, and in particular they established the $\bfH^1(\text{curl};\Omega)$-extension theorem which is a very useful theoretical tool in this field. In \cite{2007HuangZou}, the authors explicitly specified the dependence of error bounds of the standard FEM on material parameters. The study on preconditioners can be found in \cite{2011XuZhu}. In addition, due to the potentially low regularity, there are a few works focusing on adaptive finite element methods, see \cite{2015CaiCao,2016DuanQiuTanZheng} and reference therein. Moreover, in the trend of researches reducing the requirements on interface-fitted meshes, we refer readers to non-matching mesh methods \cite{2016CasagrandeHiptmairOstrowski,2016CasagrandeWinkelmannHiptmairOstrowski,2000ChenDuZou}, finite difference methods based on matched interface and boundary (MIB) formulation \cite{2004ZhaoWei}, and an adaptive FEM \cite{2009ChenXiaoZhang} where an unfitted mesh is generated initially and interface elements are further partitioned into submeshes according to interface geometry.

However, to our best knowledge, compared with $H^1$-elliptic interface problems, there are much fewer works on interface-unfitted FEMs for Maxwell $\bfH(\text{curl})$-elliptic interface problems. One of the major obstacles hindering the research is due to the penalties. Most of interface-unfitted FEMs in the literature require some penalty techniques to ensure the consistency and optimal convergence such as IFE methods \cite{2018GuoLinLin,2015LinLinZhang} or to enforce the jump conditions such as the Nitsche's methods \cite{2015BurmanClaus,2017HuangWuXiao}, since their trial function spaces are in general non-conforming. However for $\bfH(\text{curl})$-elliptic interface problems, Hiptmair and his collaborators in \cite{2016CasagrandeHiptmairOstrowski,2016CasagrandeWinkelmannHiptmairOstrowski} show that a direct application of the Nitsche's penalty to impose the jump condition for non-matching meshes can cause the loss of one order of convergence, namely the method just fails to converge for the lowest degree spaces. This phenomenon is due to the stability term in the penalties (Theorem 2 in \cite{2016CasagrandeHiptmairOstrowski}), and we can expect that the similar result may also hold for many penalty-type interface-unfitted methods. Indeed our numerical experiments suggest that even a penalty-type IFE method may not have the expected optimal convergence rates either, and in particular it fails to converge near the interface. In addition, from the perspective of analysis, the low regularity of the solution (only $\bfH^1(\text{curl};\Omega)$) makes many existing analysis techniques developed for $H^2(\Omega)$ solutions unavailable. All these issues make the development of an optimal convergent interface-unfitted method as well as its analysis for $\bfH(\text{curl})$-elliptic interface problems especially challenging. 

%A possible cure is to use high-order spaces around the interface \cite{2016CasagrandeHiptmairOstrowski}, but it requires the high-order regularity around the interface which may not be true especially for interface problems.

%\cite{2016CasagrandeHiptmairOstrowski,2000ChenDuZou} based on separate meshes on each subdomain. 

%Due to the potentially low regularity of the $\bfH(\text{curl};\Omega)$-elliptic interface, the first family N\'ed\'elec elements of the lowest degree is widely used in this field \cite{2012DuanLiTanZheng,2016DuanQiuTanZheng,2012HiptmairLiZou}. If more regular solutions are admissible, high-order N\'ed\'elec elements can also be applied. We refer readers to \cite{1994Mur} for a comparison of the first, second family N\'ed\'elec elements and the usual nodal Lagrange elements. However, the aurhos in \cite{2016CasagrandeHiptmairOstrowski,2016CasagrandeWinkelmannHiptmairOstrowski} show that 

The contributions of this research are multifold. We first construct IFE functions according to the jump conditions \eqref{inter_jc_1}-\eqref{inter_jc_3}, and perform the detailed geometric analysis to give sufficient conditions that guarantee the unisolvence associated with the edge degrees of freedom. The proposed IFE functions also share some nice properties of the standard N\'ed\'elec's functions such as the optimal approximation capabilities to $\bfH^1(\text{curl};\Omega)$ functions, the commutative diagram and the exact sequence connecting to the $H^1$ IFE functions. Moreover the IFE spaces are isomorphic to the standard N\'ed\'elec spaces through the edge interpolation operator, which is advantageous in moving interface problems since the size and structure of stiffness and mass matrices keep unchanged. In addition, in order to overcome the obstacle of penalties, we take the advantage of the isomorphism and employ a Petrov-Galerkin (PG) method where the IFE functions are only used as the trial functions while the standard N\'ed\'elec functions are used as the test functions. We show that the \textit{inf-sup} condition can be guaranteed regardless of interface location relative to the mesh which further yields the optimal convergence.

%This very idea can be traced back to \cite{2004HouWuZhang} for multiscale FEMs which use PG formulation to remove the cell resonance error. It was further employed in \cite{2013HouSongWangZhao} for IFE functions to solve  $H^1(\Omega)$-elliptic interface problems.

The underling idea of using specially constructed problem-oriented non-conforming trial functions but keeping the standard conforming test functions can be traced back to the fundamental work of Babu\v{s}ka, Caloz and Osborn \cite{1994BabuskaCalozOsborn}. The similar idea was also adopted in \cite{2004HouWuZhang} by Hou et al. for a multiscale FEM through PG formulation to remove the cell resonance error. As for IFE methods, we refer readers to \cite{2013HouSongWangZhao} for $H^1$-elliptic interface problems. Our research demonstrates that the PG formulation is particularly useful here for $\bfH^1(\text{curl};\Omega)$-elliptic interface problems since it is able to remove the non-conformity errors on interface edges without adding any penalties. However, we highlight that the analysis of the \textit{inf-sup} stability for PG methods is in general not easy. In the founding work \cite{1994BabuskaCalozOsborn}, the proof of \textit{inf-sup} stability is based on the assumption that the coefficient of the PDE is only rough in one direction. The argument in \cite{2004HouWuZhang} utilizes the fact that the specially constructed trial functions have certain approximation capabilities to the test functions. We emphasize that there is no analysis available in the literature for PG-IFE methods despite their various applications. In this work, we are able to show the \textit{inf-sup} stability under certain conditions of the discontinuous conductivity. Our approach is based on a special regular discrete decomposition together with the De Rham complex properties relating the $H^1$ and $\bfH(\text{curl})$ IFE functions. As an extra achievement of this approach, for the first time in the literature, we are also able to establish the \textit{inf-sup} stability for the IFE method solving $H^1$-elliptic interface problems in a Petrov-Galerkin scheme \cite{2013HouSongWangZhao} with discontinuous conductivity. Although this approach currently needs to assume a critical upper bound on the jump of the discontinuity, we believe it still has theoretical importance and may motivate the further analysis.

This article has additional 6 sections. In the next section, we describe some notations and assumptions frequently used in this article. In Section \ref{sec:IFE_disre}, we develop IFE functions and discuss their properties. The PG-IFE method is also presented in this section. In Section \ref{sec:analysis}, we prove the optimal approximation capabilities of the IFE spaces. In Section \ref{sec:solu_error}, we analyze the \textit{inf-sup} stability and the solution errors of the PG-IFE scheme. In Section \ref{sec:num_examp}, we present some numerical experiments to validate the theoretical analysis. Some technique results are presented in the Appendix.

%The \textit{inf-sup} condition is widely used to analyze the stability for different numerical methods on $H(\text{curl})$-elliptic equations or Maxwell equations for which we also refer readers to \cite{2012BadiaCodina,2002CiarletGirault,1998DemkowiczPal,2018DuanTanYangYou} and the reference therein, especially \cite{2000ChenDuZou} for the interface problem. The idea of constructing some special approximate functions to treat rough coefficients can be traced back to Babu\v{s}ka and his collaborators' fundamental work in \cite{1994BabuskaCalozOsborn} in which the \textit{inf-sup} condition is also used to analyze the stability. A more related work to the present research is the Petrov-Galerkin multiscale finite element method by \cite{2004HouWuZhang} by Hou et al in which the special finite element functions are used as the trial functions while the standard polynomials are used as the test function, and thus a \textit{inf-sup} stability is required. We also note that Petrov-Galerkin IFE scheme has also been used for standard elliptic interface problems \cite{2018ZhuangGuo,2005HouLiu,2013HouSongWangZhao}; however the analysis remains unknown except the 1D case in \cite{2018JiZhangZhang}.

%%%%%%%%%%%%%%%%%%%%%%%%%%%%%%%%%%%%%%%%%%%%%%%%%%%%%%%

\section{Notations and Assumptions}

In this section, we describe some notations and assumptions which will be frequently used in this article. Let $\mathcal{T}_h,~h\geq 0$ be a family of interface-independent and shape-regular triangular meshes of the domain $\Omega$, and let $h_T$ be the diameter of an element $T\in\mathcal{T}_h$ and $h=\max_{T\in\mathcal{T}_h}\{h_T\}$ be the mesh size. Denote the sets of nodes and edges of the mesh $\mathcal{T}_h$ by $\mathcal{N}_h$ and $\mathcal{E}_h$, respectively. In a mesh $\mathcal{T}_h$, the interface $\Gamma$ cuts some of its elements which are called interface elements and their collection is denoted by $\mathcal{T}^i_h$ while the remaining elements are called non-interface elements and their collection is $\mathcal{T}^n_h$. 
We note that it is in general not trivial to generate a satisfactory regular fitted mesh especially when the interface is complicated. But since the mesh is not required to fit the interface for IFE methods, the difficulties are just avoided. Actually the IFE methods can be and are often used on simple triangular Cartesian meshes as shown in Figure \ref{fig:unfit_mesh}, 
especially for electromagnetic waves where the computational domain can be truncated as boxes. So in the following discussion, we simply focus on the Cartesian mesh although most of the results are applicable to general triangulations unless otherwise specified. Throughout this article, the generic constants $C$ in all the estimates are independent of the mesh size and  interface location but may depend on the parameters $\mu$ and $\beta$.

Furthermore, we make the following assumption for the mesh $\mathcal{T}_h$:
\begin{itemize}
  \item[(\textbf{A1})] The mesh is generated such that the interface can only intersect each interface element $T\in\mathcal{T}^i_h$ at two distinct points which locate on two different edges of $T$.
%   \item[(\textbf{A2})] {\color{red}Define $\Omega_h^{\Gamma}=\{ T\in\mathcal{T}_h$}
\end{itemize} 
Note that this assumption is fulfilled for a linear interface, and thus it should hold if a curved interface is locally flat enough, i.e., the mesh is fine enough. By the assumption (\textbf{A1}), we define $\Gamma^{T}_h$ as the line connecting the two intersection points of each interface element $T$, let $\Gamma^T_h$ cut $T$ into two subelements 
%\fn{Is it necessary for us to always write the triangular subelement as $T^-_h$ while the non-triangular subelement as $T^+_h$? Do we need to use this special feature that one of the two subelements is triangular, while the other is not? {\color{red}The analysis and derivation before section 5 has no restriction on the shape of $T^-_h$ and $T^+_h$. Actually sometimes they may be both triangles if the interface passes through the vertex. But in section 5.1, we need to assume one of the subelement ($T^-_h$) is triangular only for the simplicity's sake in discussion since there are some geometric analysis, and so I make the assumption there.}} 
$T^-_h$, $T^+_h$ and let $\widetilde{T}$ be the subregion sandwiched by $\Gamma^T_h$ and $\Gamma$ as shown by Figure \ref{fig:sandwich}. In addition, $T^-$ and $T^+$ refer to the subelements partitioned by the interface curve instead of its linear approximation $\Gamma^T_h$. Moreover we assume the interface is well-resolved by the mesh, and it can be quantitatively described in terms of the following lemma \cite{2016GuoLin}.
\begin{lemma}
\label{lemma_interface_flat}
Suppose the mesh is sufficiently fine such that $h<h_0$ for some valve value $h_0$, then on each interface element $T\in\mathcal{T}^i_h$, there exist constants $C$ independent of the interface location inside $T$ and $h_T$ such that for every two points $X_1,X_2\in\Gamma\cap T$ with their normal vectors $\mathbf{ n}(X_1),\mathbf{ n}(X_2)$ to $\Gamma$ and every point $X\in\Gamma\cap T$ with its orthogonal projection $X^{\bot}$ onto $\Gamma^T_h$,
\begin{subequations}
\label{lemma_interface_flat_eq}
\begin{align}
    &  \| X-X^{\bot} \| \le  C h_T^2, \label{lemma_interface_flat_eq1}   \\
    &  \| \mathbf{ n}(X_1)-\mathbf{ n}(X_2) \| \le C h_T . \label{lemma_interface_flat_eq2}
\end{align}
\end{subequations}
\end{lemma}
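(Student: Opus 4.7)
The plan is to exploit the smoothness of $\Gamma$ by passing to an arc-length parametrization. Since $\Gamma$ is a smooth simple Jordan curve whose closure is separated from $\partial\Omega$, its unit tangent $\bft(s)$ and unit normal $\bfn(s)$ are $C^{1}$ on $\Gamma$, and by the two-dimensional Frenet-Serret relations $\bft'(s)=\kappa(s)\bfn(s)$ and $\bfn'(s)=-\kappa(s)\bft(s)$, we have $\|\bft'\|_\infty=\|\bfn'\|_\infty\le K$ for a constant $K$ depending only on $\Gamma$. The claims will follow from one-shot Taylor/FTC arguments once we control the arc length of $\Gamma\cap T$.

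First, I would show that for $h<h_0$ with $h_0$ chosen small in terms of $K$, $\Gamma\cap T$ is a single smooth arc $\gamma:[0,L]\to T$ with $L\le C h_T$. By (\textbf{A1}), $\Gamma$ meets $\partial T$ at exactly two points on distinct edges, so $\Gamma$ enters and exits $T$ once; an additional loop inside $T$ would force the tangent to rotate by at least $\pi$, contradicting $L\,\|\kappa\|_\infty<\pi$ once $h_0$ is small. To bound $L$, write
\begin{equation*}
\gamma(L)-\gamma(0)=\int_0^L \bft(s)\,ds,\qquad \bft(s)=\bft(0)+\int_0^s \bft'(\tau)\,d\tau,
\end{equation*}
so $|\gamma(L)-\gamma(0)|\ge L-KL^2/2\ge L/2$ whenever $L\le 1/K$. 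Combined with $|\gamma(L)-\gamma(0)|\le \sqrt{2}\,h_T$ (since both endpoints lie in $T$), this yields $L\le 2\sqrt{2}\,h_T$.

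Next, for \eqref{lemma_interface_flat_eq1}, take $X=\gamma(s_0)$ with $s_0\in[0,L]$ and let $\hat{\bft}_h=(\gamma(L)-\gamma(0))/|\gamma(L)-\gamma(0)|$ denote the chord direction. A second-order Taylor expansion gives $\gamma(s_0)=\gamma(0)+s_0\bft(0)+R(s_0)$ with $\|R(s_0)\|\le Ks_0^2/2$, and the same computation applied at $s=L$ together with the lower bound $|\gamma(L)-\gamma(0)|\ge L/2$ gives $\|\hat{\bft}_h-\bft(0)\|\le C K L\le C h_T$. The orthogonal distance $\|X-X^\bot\|$ equals the component of $\gamma(s_0)-\gamma(0)$ perpendicular to $\hat{\bft}_h$, so
\begin{equation*}
\|X-X^\bot\|\le s_0\,\|\bft(0)-\hat{\bft}_h\|+\|R(s_0)\|\le C h_T^2,
\end{equation*}
using $s_0\le L\le C h_T$. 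For \eqref{lemma_interface_flat_eq2}, writing $X_i=\gamma(s_i)$ and applying the fundamental theorem of calculus with $\bfn'=-\kappa\bft$ yields directly
\begin{equation*}
\|\bfn(X_1)-\bfn(X_2)\|=\left\|\int_{s_1}^{s_2}\bfn'(s)\,ds\right\|\le K|s_2-s_1|\le KL\le C h_T.
\end{equation*}

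The main obstacle is the qualitative step of verifying that $\Gamma\cap T$ is a single arc of length $O(h_T)$: ruling out a loop needs the curvature-times-diameter product to be small, and the chord-to-arclength comparison $|\gamma(L)-\gamma(0)|\ge L/2$ requires $L$ to already be small. Both force $h_0$ to be chosen in terms of $\|\kappa\|_\infty$ at the outset. Once this ``sufficiently fine'' hypothesis is distilled into $L\le C h_T$, the two estimates reduce to elementary Taylor expansions.
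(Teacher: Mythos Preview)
Your argument is correct. The paper does not actually prove this lemma: it is quoted from \cite{2016GuoLin}, with the remark that the explicit dependence of $h_0$ on the curvature of $\Gamma$ can be found there. Your self-contained proof via arc-length parametrization and the Frenet--Serret relations is the standard route to such geometric estimates and recovers both inequalities cleanly. The one loose end you flag yourself---needing $L\le 1/K$ before the chord-to-arclength comparison $|\gamma(L)-\gamma(0)|\ge L/2$ is available---closes easily: if $L>1/K$, apply the same estimate to the sub-arc $\gamma|_{[0,1/K]}\subset T$ to obtain $h_T\ge|\gamma(1/K)-\gamma(0)|\ge 1/(2K)$, which contradicts $h<h_0$ once $h_0\le 1/(2K)$. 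With that in place, your derivations of \eqref{lemma_interface_flat_eq1} and \eqref{lemma_interface_flat_eq2} go through exactly as written, and the constants depend only on the global curvature bound $K$, hence are independent of how $\Gamma$ sits inside $T$.
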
 
The explicit dependence of $h_0$ on the curvature of the interface can be found in \cite{2016GuoLin}. 

\begin{figure}[h]
\centering
\begin{minipage}{.26\textwidth}
  \centering
   \includegraphics[width=1.8in]{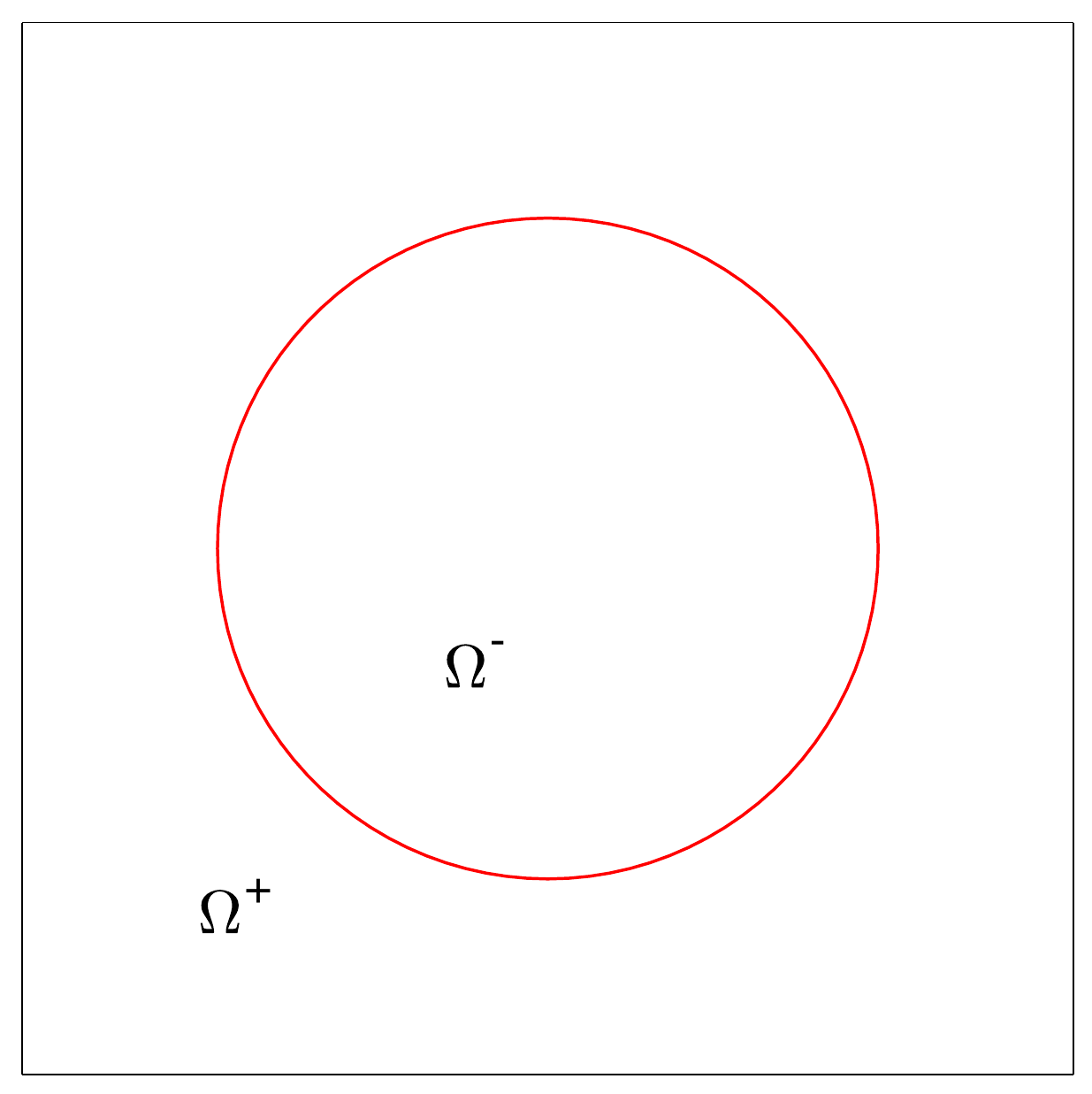}
  \caption{The model domain}
  \label{fig:domain}
\end{minipage}
\hspace{1cm}
\begin{minipage}{.26\textwidth}
  \centering
   \includegraphics[width=1.8in]{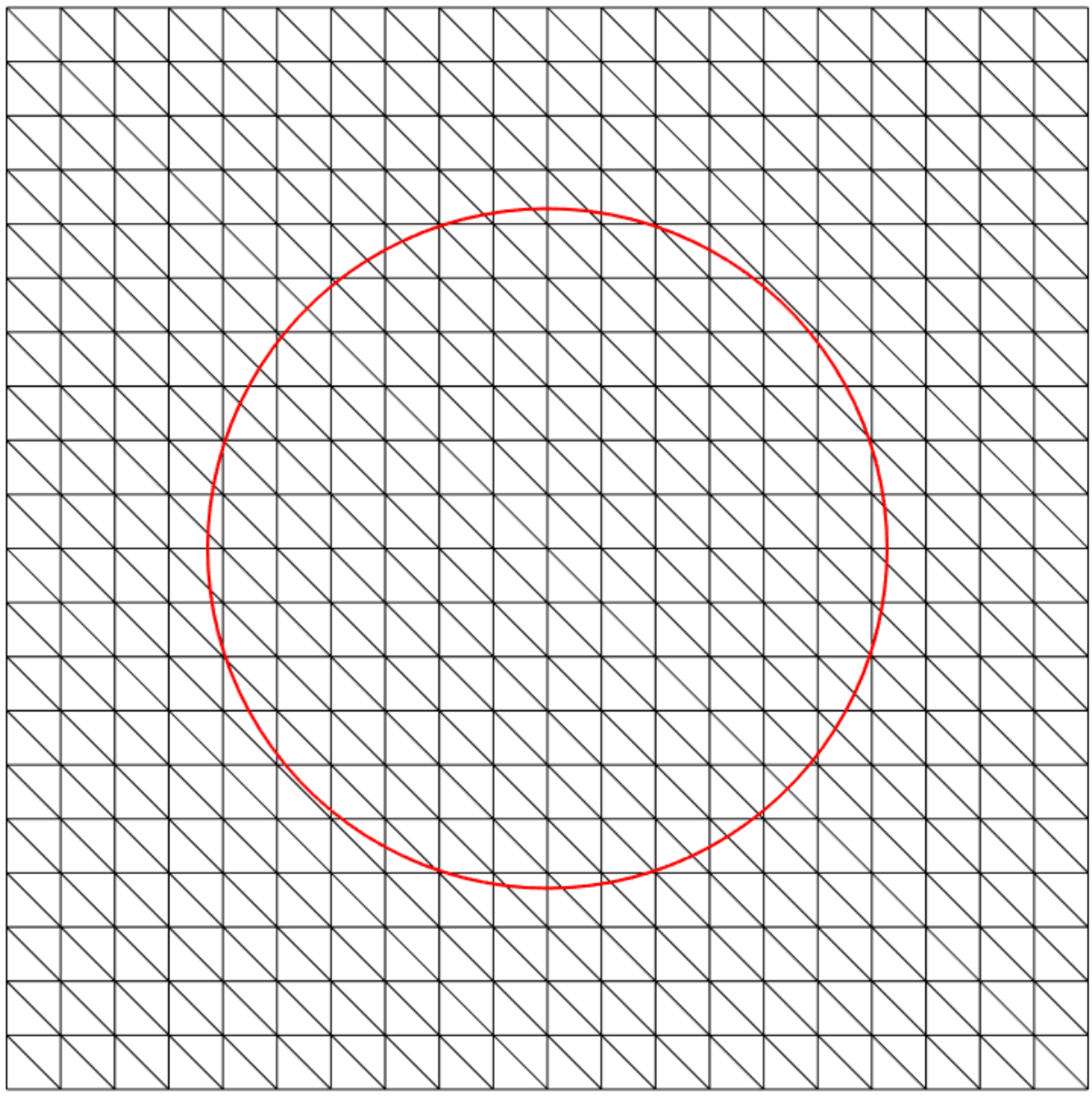}
  \caption{A unfitted mesh}
  \label{fig:unfit_mesh}
\end{minipage}
\hspace{1cm}
\begin{minipage}{.3\textwidth}
  \centering
   \includegraphics[width=2in]{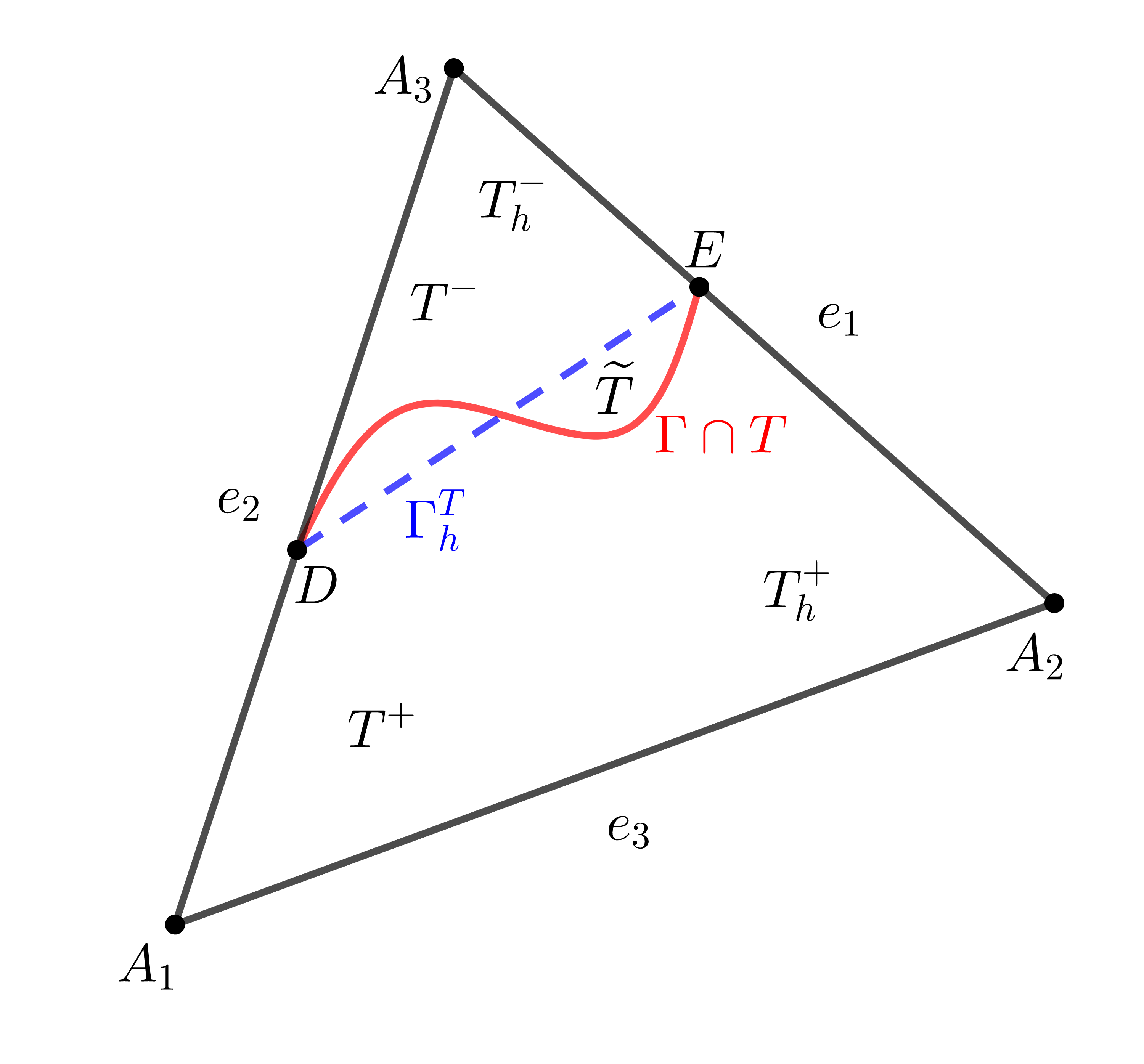}
  \caption{An interface element}
  \label{fig:sandwich}
  \end{minipage}
\end{figure}

%Furthermore, for each $T\in\mathcal{T}_h$ we define its patch $\omega_T$ as
%\begin{equation}
%\label{patch}
%\omega_T = \cup\{ T\in\mathcal{T}_h~:~  \}
%\end{equation}

%Furthermore, we let $\mathcal{E}^i_h$ be the collection of all the edges of elements in $\mathcal{T}^i_h$ and $\mathcal{E}^n_h=\mathcal{E}_h\backslash \mathcal{E}^i_h$.

Next we introduce some major spaces, and some other spaces will be introduced where they are used. For each subdomain $\omega\subseteq\Omega$, we let $H^k(\omega)$ and $\bfH^k(\omega)$, $k\ge0$, be the standard scalar and $\mathbb{R}^2$-vector Hilbert spaces on $\omega$; in particular $H^0(\omega)=L^2(\omega)$ and $\bfH^0(\omega) =\bfL^2(\omega)$. In addition, we introduce the $\bfH(\text{curl};\omega)$ spaces
\begin{equation}
\label{Hcurl_spa}
  \bfH^k(\text{curl};\omega) = \{ \bfv\in\bfH^k(\omega)~:~\text{curl}~\bfv \in H^k(\omega) \}.
  \end{equation}
%\begin{subequations}
%\label{Hcurl_spa}
%\begin{align}
%    &  \bfH(\text{curl};\omega) = \{ \bfv\in\bfL^2(\omega)~:~\text{curl}~\bfv \in L^2(\omega) \}, \label{Hcurl_spa_1}  \\
%    &  \bfH^1(\text{curl};\omega) = \{ \bfv\in\bfH^1(\omega)~:~\text{curl}~\bfv \in H^1(\omega) \}, \label{Hcurl_spa_2} \\
%    & \bfH_0(\text{curl};\omega) = \{ \bfv\in \bfH(\text{curl};\omega) ~:~ \bfv\cdot\bft = 0 ~ \text{on} ~ \partial\omega \}. \label{Hcurl_spa_3}
%\end{align}
%\end{subequations}
If $|\omega\cap\Gamma|\neq 0$, we let $\omega^{\pm}=\Omega^{\pm}\cap\omega$ and further define the broken space for $k\ge\frac{1}{2}$
\begin{equation}
\label{Hcurl_spa_interf}
\widetilde{\bfH}^k(\text{curl};\omega) = \bfH^k(\text{curl};\omega^+)\cap\bfH^k(\text{curl};\omega^-) \cap \{ \bfv ~:~ \bfv ~ \text{satisfies the jump conditions} \}.
\end{equation}
For all the spaces above, we can define their subpaces $H^k_0(\omega)$, $\bfH^k_0(\omega)$, $\bfH^k_0(\text{curl};\omega)$ and $\widetilde{\bfH}^k_0(\text{curl};\omega)$ with the zero trace on $\partial \omega$. The associated norms of these spaces are denoted by $\|\cdot\|_{H^k(\omega)}$, $\|\cdot\|_{L^2(\omega)}$ (here we don't distinguish $\|\cdot\|_{H^k(\omega)}$ and $\|\cdot\|_{\bfH^k(\omega)}$ for standard Hilbert spaces for simplicity), $\|\cdot\|_{\bfH(\text{curl};\omega)}$ and $\|\cdot\|_{\bfH^1(\text{curl};\omega)}$. 
%Furthermore, if $\omega$ has non-empty intersection with $\Gamma$, we define
%\begin{equation}
%\label{Hcurl_spa_interf}
%\widetilde{\bfH}(\text{curl};\omega) = 
%\end{equation}

For discretization, we shall consider the first family N\'ed\'elec element of the lowest degree \cite{1980Nedelec} as the underling approximation space:
\begin{subequations}
\label{ned_spa}
\begin{align}
    &  \mathcal{ND}_h(T) = \{ \bfa + b[x_2,-x_1]^t: ~ \bfa\in\mathbb{R}^2, b\in\mathbb{R} \},  \\
    &  \mathcal{ND}_h(\Omega) = \{ \bfv\in\bfH(\text{curl};\Omega)~:~ \bfv|_T \in \mathcal{ND}_h(T) ~~ \forall T\in\mathcal{T}_h \}.
\end{align}
\end{subequations}
Similarly, $\mathcal{ND}_{h,0}(\Omega)$ denotes of the subspace of $\mathcal{ND}_h(\Omega)$ with the zero trace on $\partial\Omega$. Given an element $T$ with the edges $e_i$, $i=1,2,3$ shown in Figure \ref{fig:sandwich}, the local space $\mathcal{ND}_h(T)$ can be equipped with the well-known interpolation operator \cite{2003Monk} defined by
\begin{equation}
\label{interp_1}
\Pi_{h,T}~:~ \bfH^1(\text{curl};T) \longrightarrow \mathcal{ND}_h(T) ~~~ \text{with} ~~~ \int_{e_i} \Pi_{h,T} \bfu\cdot\bft_i ds = \int_{e_i}  \bfu\cdot\bft_i ds, ~~~ i=1,2,3.
\end{equation}
Then the global interpolation operator $\Pi_h~:~\bfH^1(\text{curl};\Omega)\rightarrow \mathcal{ND}_h(\Omega)$ can be defined piecewisely such that $\Pi_h\bfu|_T=\Pi_{h,T}\bfu$ for each element $T$. The following optimal approximation capabilities hold:
\begin{equation}
\label{interp_1_err}
\| \bfu - \Pi_{h,T}\bfu \|_{\bfH(\text{curl};T)} \le Ch_T \| \bfu \|_{\bfH^1(\text{curl};T)}~~~~ \text{and} ~~~~ \| \bfu - \Pi_h\bfu \|_{\bfH(\text{curl};\Omega)} \le Ch \| \bfu \|_{\bfH^1(\text{curl};\Omega)}.
\end{equation}

Following the \cite{2018Alberti,1999MartinMoniqueSerge,2012HiptmairLiZou}, we employ a reasonable assumption $\bfu\in \widetilde{\bfH}^1_0(\text{curl};\Omega)$ on the regularity of the solution $\bfu$ to the interface problem \eqref{model} for analysis. Note that here we only consider the case that the interface is a smooth simple Jordan curve without touching boundary and self intersection; otherwise the exact solution has further weaker regularity \cite{1999MartinMoniqueSerge}. According to the tangential continuity jump condition, we note that $\widetilde{\bfH}^1_0(\text{curl};\Omega)\subset\bfH_0(\text{curl};\Omega)$, and in particular we can further conclude $\widetilde{\bfH}^1_0(\text{curl};\Omega)\subset\bfH^s_0(\Omega)$, $s<1/2$ by Theorem 4.1 in \cite{2002Hiptmair}. In addition, the solution satisfies the following weak formulation
\begin{equation}
\label{weak_form_1}
a(\bfu,\bfv) = \int_{\Omega} \bff\cdot\bfv dX ~~~~~~ \forall \bfv \in \bfH_0(\text{curl};\Omega)
\end{equation}
where the bilinear form is given by
\begin{equation}
\label{weak_form_2}
a(\bfu,\bfv) = \int_{\Omega}\mu^{-1}\text{curl}~\bfu\cdot\text{curl}~\bfv dX + \int_{\Omega} \beta \bfu\cdot\bfv dX.
\end{equation}
It naturally leads to the following energy norm $\| \bfv \|_a=a(\bfv,\bfv)^{\frac{1}{2}}$, and it is easy to see $\| \cdot \|_a$ is equivalent to $\| \cdot \|_{\bfH(\text{curl};\Omega)}$.
%\begin{equation}
%\label{energy_norm}
%\| \bfv \|^2_a := \| \sqrt{\mu}^{-1} \text{curl} ~ \bfv \|^2_{L^2(\Omega)} + \| \sqrt{\beta} \bfv \|^2_{\bfL^2(\Omega)}, ~~~~ \forall \bfv \in \bfH(\text{curl};\Omega).
%\end{equation}

We end this section by recalling the $\bfH^1(\text{curl};\Omega)$-extension operator established by Hiptmair, Li and Zou in \cite{2012HiptmairLiZou} (Theorem 3.4 and Corollary 3.5).
\begin{thm}
\label{thm_ext}
There exist two bounded linear operators
\begin{equation}
\label{thm_ext_eq0}
\bfE^{\pm}_{\emph{curl}} ~:~ \bfH^1(\emph{curl};\Omega^{\pm})\rightarrow \bfH^1(\emph{curl};\Omega)
\end{equation}
such that for each $\bfu\in\bfH^1(\emph{curl};\Omega^{\pm})$:
\begin{itemize}
  \item[1.]  $\bfE^{\pm}_{\emph{curl}}\bfu = \bfu ~~ \text{a.e.} ~\text{in} ~ \Omega^{\pm}$.
  \item[2.] $\| \bfE^{\pm}_{\emph{curl}}\bfu \|_{\bfH^1(\emph{curl};\Omega)} \le C_E \| \bfu \|_{\bfH^1(\emph{curl};\Omega^{\pm})}$ with the constant $C_E$ only depending on $\Omega$.
\end{itemize}
\end{thm}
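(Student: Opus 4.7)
The plan is to construct $\bfE^+_{\emph{curl}}$ explicitly; the operator $\bfE^-_{\emph{curl}}$ is obtained by an entirely analogous construction after swapping the roles of $\Omega^+$ and $\Omega^-$. The main obstacle, beyond the classical Stein extension theorem for scalar $H^1$ functions, is to preserve simultaneously the $\bfH^1$ regularity of the vector field \emph{and} the $H^1$ regularity of its scalar curl. A naive componentwise $H^1$-extension produces a field in $\bfH^1(\Omega)$ whose curl typically lies only in $L^2(\Omega)$ outside $\Omega^+$, so a structured correction localized in $\Omega^-$ is needed.

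First, apply the scalar Stein (or Calder\'on) extension componentwise to $\bfu$, producing $\widetilde{\bfu}\in\bfH^1(\Omega)$ with $\widetilde{\bfu}|_{\Omega^+}=\bfu$ and $\|\widetilde{\bfu}\|_{\bfH^1(\Omega)}\le C\|\bfu\|_{\bfH^1(\Omega^+)}$. Independently, extend the scalar $\text{curl}\,\bfu\in H^1(\Omega^+)$ to some $q\in H^1(\Omega)$ with $q=\text{curl}\,\bfu$ on $\Omega^+$ and $\|q\|_{H^1(\Omega)}\le C\|\text{curl}\,\bfu\|_{H^1(\Omega^+)}$. Both extensions are available because $\Gamma$ is a smooth Jordan curve, so $\Omega^+$ has a smooth boundary. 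Define the defect $r:=q-\text{curl}\,\widetilde{\bfu}\in L^2(\Omega)$; by construction $r$ vanishes on $\Omega^+$ and is supported in $\overline{\Omega^-}$.

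The heart of the proof is to find a corrector $\bfv\in\bfH^1(\Omega)$ that vanishes on $\Omega^+$ (so that $\bfE^+_{\emph{curl}}\bfu=\bfu$ on $\Omega^+$ and the full $\bfH^1$-trace matches across $\Gamma$) and satisfies $\text{curl}\,\bfv=r$ in $\Omega^-$. Using a Helmholtz-type decomposition $\bfv=\nabla\alpha+\underline{\text{curl}}\,\beta$ together with the identity $\text{curl}(\underline{\text{curl}}\,\beta)=-\Delta\beta$, the curl equation reduces to the scalar Poisson problem $-\Delta\beta=r$ in $\Omega^-$ with $\beta=0$ on $\partial\Omega^-$, which by elliptic regularity on the smooth domain $\Omega^-$ yields $\beta\in H^2(\Omega^-)\cap H^1_0(\Omega^-)$. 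The potential $\alpha$ is then chosen to cancel the remaining trace of $\underline{\text{curl}}\,\beta$ on $\Gamma$; since $\beta|_\Gamma=0$ forces $\underline{\text{curl}}\,\beta|_\Gamma=(\partial_{\bfn}\beta)\bft$, this amounts to prescribing the tangential derivative of $\alpha$ along $\Gamma$ together with a compatible homogeneous Neumann condition on $\partial\Omega\cap\overline{\Omega^-}$, and elliptic regularity again delivers $\alpha\in H^2(\Omega^-)$. Extending $\bfv$ by zero across $\Gamma$ produces the required element of $\bfH^1(\Omega)$.

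Setting $\bfE^+_{\emph{curl}}\bfu:=\widetilde{\bfu}+\bfv$ gives an extension that agrees with $\bfu$ on $\Omega^+$, lies in $\bfH^1(\Omega)$, and has curl equal to $q\in H^1(\Omega)$; the bound in item~2 follows from the continuity of each of the constituent operators, with $C_E$ depending only on the geometry of $\Omega$ and $\Gamma$. The hardest technical point is the correction step: one must match the full trace on the \emph{curved} interface \emph{while} prescribing the curl, which is where smoothness of $\Gamma$ enters essentially through elliptic regularity for the auxiliary scalar problems. The corresponding statement for $\bfE^-_{\emph{curl}}$ is obtained by the same construction with $\Omega^+$ and $\Omega^-$ interchanged; the potential multi-connectedness of $\Omega^-$ causes no obstruction, since the argument uses only scalar Sobolev extensions and solvability of elliptic scalar problems rather than global integration of closed forms.
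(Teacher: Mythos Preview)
The paper does not prove this theorem at all: it is quoted from Hiptmair, Li and Zou \cite{2012HiptmairLiZou} (their Theorem~3.4 and Corollary~3.5), and the paper says so explicitly in the sentence introducing the statement. So there is no ``paper's own proof'' to compare against; you are supplying an argument where the authors simply cite one.

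Your construction is along reasonable lines --- extend $\bfu$ and $\text{curl}\,\bfu$ separately by Stein, then correct the discrepancy $r=q-\text{curl}\,\widetilde{\bfu}$ on $\Omega^-$ --- but there is a genuine gap in the correction step. You solve $-\Delta\beta=r$ in $\Omega^-$ with $\beta|_{\partial\Omega^-}=0$ and then want $\alpha$ with $\partial_{\bft}\alpha=-\partial_{\bfn}\beta$ along $\Gamma$ so that $\nabla\alpha+\underline{\text{curl}}\,\beta$ has zero trace. But $\Gamma$ is a \emph{closed} curve, so prescribing the tangential derivative of a single-valued scalar $\alpha$ along it requires the compatibility condition
\[
\oint_\Gamma \partial_{\bfn}\beta\,ds \;=\; \int_{\Omega^-}\Delta\beta\,dX \;=\; -\int_{\Omega^-}r\,dX \;=\; 0.
\]
Nothing in your construction forces $\int_{\Omega^-}(q-\text{curl}\,\widetilde{\bfu})\,dX$ to vanish: the two Stein extensions are chosen independently, and there is no reason the integrals of $q$ and of $\text{curl}\,\widetilde{\bfu}$ over $\Omega^-$ should agree. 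So as written, $\alpha$ need not exist. Your closing remark that the argument ``uses only scalar Sobolev extensions and solvability of elliptic scalar problems rather than global integration of closed forms'' is precisely wrong here: recovering $\alpha$ from its tangential derivative along a closed curve \emph{is} a global integration, and this is exactly where a period obstruction appears. The fix is not hard --- first modify $q$ by subtracting a fixed smooth bump supported in $\Omega^-$, scaled to kill the mean of $r$, which costs only a bounded $H^1$ perturbation and keeps the operator linear --- but it must be stated.
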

Using these two special extension operators, we can define $\bfu^{\pm}_E = \bfE^{\pm}_{\text{curl}}\bfu^{\pm}$ which are the keys in the later analysis.

%%%%%%%%%%%%%%%%%%%%%%%%%%%%%%%%%%%

\section{IFE Discretization}
\label{sec:IFE_disre}

In this section, we first develop the so called IFE functions and discuss the construction procedure. The IFE functions are then used in a Petrov-Galerkin IFE scheme to solve the $\bfH(\text{curl})$-elliptic interface problem. In addition, we will discuss some characterization properties of the IFE functions.

\subsection{IFE Spaces And A Petrov-Galerkin IFE Scheme}

Given each interface element $T\in\mathcal{T}^i_h$, we let $\bar{\bft}$ and $\bar{\bfn}$ be the tangential and normal vectors to the segment $\Gamma^T_h$. Then we consider a linear operator 
%\fn{This definition is a bit unclear to me. Should we write 
%$\mathcal{C}_T: \mathcal{ND}_h(T)\rightarrow \mathcal{ND}_h(T)$ 
%as $\mathcal{C}_T: \mathcal{ND}_h(T)\rightarrow \mathcal{\overline{ND}}_h(T)$ where 
%$\mathcal{\overline{ND}}_h(T)$ consists of piecewise $\mathcal{ND}_h$ functions in $T_h^+$ and $T_h^-$, but not 
%$\mathcal{ND}_h$ functions globally in $T$? The main reason is that the two pieces in $T_h^+$ and $T_h^-$ are different, from two different 
%$\mathcal{ND}_h$ functions, with the part in $T_h^-$ 
%defined by the one in $T_h^+$ through $C_T$, so the two parts can not be written 
%using the same coefficients $a$ and $b$ in \eqref{ned_spa}(a); otherwise $C_T$ should be the identity. 
%Is my understanding correct?
%{\color{red}Yes, it is correct. Actually originally I use the same space $\mathcal{ND}_h(T)$ because I want to emphasize both the input and output of $\mathcal{C}_T$ are in the same polynomial space but defined on the different subelements. But I do realize that this notation is indeed very confusing. Since the operator $\mathcal{C}_T$ is designed to transfer polynomials from one subelement to another through the interface condition, can I suggest we write it as $\mathcal{C}_T: \mathcal{ND}_h(T^-_h) \rightarrow \mathcal{ND}_h(T^+_h)$? Here the idea is to keep the input and output of $\mathcal{C}_T$ as single standard polynomials which are then put together as the piecewise polynomials.}
%} 
$\mathcal{C}_T: \mathcal{ND}_h(T^-_h)\rightarrow \mathcal{ND}_h(T^+_h)$ satisfying the following conditions
\begin{subequations}
\label{weak_jc}
\begin{align}
     \mathcal{C}_T(\bfv_h)\cdot\bar{\bft} &=  \bfv_h \cdot\bar{\bft} ~~~~~~~~~~~~~  \text{at} ~ X_m, \label{weak_jc_1}  \\
     \frac{1}{\mu^{+}}\text{curl}~\mathcal{C}_T(\bfv_h) & =  \frac{1}{\mu^{-}}\text{curl}~\bfv_h ~~~~~~  \text{at} ~ X_m,  \label{weak_jc_2} \\
     \beta^+\mathcal{C}_T(\bfv_h)\cdot\bar{\bfn} &=  \beta^- \bfv_h \cdot\bar{\bfn} ~~~~~~~~~ \text{at} ~ X_m,\label{weak_jc_3}
\end{align}
\end{subequations}
where $X_m$ is a middle point of $\Gamma^T_h$. We note that the approximate jump conditions in \eqref{weak_jc} may be imposed at any point at $\Gamma^{T}_h$, and we choose the midpoint $X_m$ for simplifying the derivation of formulas of IFE functions below. Due to the lowest degree, we can show that this linear operator is not only well defined but also bijective. This is valid for both $T_h^-$ and $T_h^+$ being of general polygonal shape.
\begin{lemma}
\label{lem_C_welldefine}
The linear operator $\mathcal{C}_T$ in \eqref{weak_jc} is well defined and bijective between $\mathcal{ND}_h(T^-_h)$ and $\mathcal{ND}_h(T^+_h)$.
\end{lemma}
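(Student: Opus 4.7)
The plan is a straightforward dimension-count: both $\mathcal{ND}_h(T_h^-)$ and $\mathcal{ND}_h(T_h^+)$ are $3$-dimensional (parameterized by $(\bfa,b)\in\mathbb{R}^2\times\mathbb{R}$), and the three scalar conditions in \eqref{weak_jc_1}--\eqref{weak_jc_3} are exactly the right number to determine $\mathcal{C}_T(\bfv_h)$ uniquely from $\bfv_h$. So I would (i) set up the ansatz, (ii) exploit the fact that $\text{curl}$ of a lowest-order N\'ed\'elec function is a constant to decouple \eqref{weak_jc_2}, (iii) reduce the remaining two conditions to a $2\times 2$ linear system in $\bfa^+$, and (iv) conclude bijectivity by linearity and equal dimensions.

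More precisely, I would write $\bfv_h = \bfa^- + b^-[x_2,-x_1]^t$ and $\mathcal{C}_T(\bfv_h) = \bfa^+ + b^+[x_2,-x_1]^t$, treating $(\bfa^+,b^+)$ as the unknowns. Because $\text{curl}(\bfa+b[x_2,-x_1]^t)\equiv -2b$ is independent of the point, condition \eqref{weak_jc_2} reduces immediately to $b^+ = (\mu^+/\mu^-)b^-$, with no dependence on $X_m$. Plugging this known $b^+$ into the values at $X_m$, the remaining two conditions \eqref{weak_jc_1} and \eqref{weak_jc_3} take the form
\begin{equation*}
\bar{\bft}\cdot\bfa^+ = r_1(\bfa^-,b^-,b^+), \qquad \bar{\bfn}\cdot\bfa^+ = r_2(\bfa^-,b^-,b^+),
\end{equation*}
with right-hand sides that are explicit linear expressions of the data (using $\beta^+>0$). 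The coefficient matrix has rows $\bar{\bft}^t$ and $\bar{\bfn}^t$, which is orthogonal since $\{\bar{\bft},\bar{\bfn}\}$ is an orthonormal basis of $\mathbb{R}^2$, hence invertible. This yields a unique $\bfa^+$, establishing well-definedness of $\mathcal{C}_T$; linearity is inherited from the linear structure of the conditions.

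Bijectivity then follows from equal dimensions together with injectivity: the resolution above shows the map $(\bfa^-,b^-)\mapsto(\bfa^+,b^+)$ is an upper-triangular block-linear map with invertible diagonal blocks (scalar $\mu^+/\mu^-$ for $b^+$, and the orthogonal $2\times 2$ block for $\bfa^+$), so its $3\times 3$ matrix representation is invertible. I expect no real obstacle here; the only point worth flagging is that the argument is insensitive to the polygonal shape of $T_h^\pm$, because only the constancy of $\text{curl}$ and the orthonormality of $\{\bar{\bft},\bar{\bfn}\}$ are used, which confirms the remark in the lemma that the shape of the subelements plays no role.
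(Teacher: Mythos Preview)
Your proposal is correct and follows essentially the same approach as the paper: both arguments hinge on the constancy of $\text{curl}$ on $\mathcal{ND}_h$ to decouple $b$, and then on the orthonormality of $\{\bar{\bft},\bar{\bfn}\}$ to handle $\bfa$. The paper presents this as a kernel computation (showing that zero right-hand sides force $b=0$, then $\bfa=c\bar{\bfn}$, then $c=0$), whereas you give the equivalent constructive version that solves explicitly for $(\bfa^+,b^+)$; the substance is identical.
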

\begin{proof}
Note that \eqref{weak_jc} gives exactly $3$ conditions. Since $\mathcal{C}_T$ is defined between two three dimensional spaces ($\mathcal{ND}_h(T^{\pm}_h)$ has three degrees of freedom), we only need to show $\mathcal{C}_T$ is injective, namely $\mathcal{C}_T$ only has the trivial kernel $\{\mathbf{ 0}\}$. Let $\bfv=\bfa + b[x_2,-x_1]^t$ with $\bfa\in\mathbb{R}^2$ and $b\in\mathbb{R}$. Then \eqref{weak_jc_2} directly yields $b=0$, and thus \eqref{weak_jc_1} shows $\bfa=c \bf\bar{\bfn}$ for some constant $c$. Finally \eqref{weak_jc_3} leads to $c=0$, i.e., $\bfa=\mathbf{ 0}$, which finishes the proof.
\end{proof}
We can further present an equivalent description to the conditions in \eqref{weak_jc_1} and \eqref{weak_jc_2} which are useful for analysis.
\begin{lemma}
\label{lem_equiv_jc12}
On each interface element, the linear operator $\mathcal{C}_T$ satisfies
\begin{subequations}
\label{weak_eqv_jc12}
\begin{align}
     \mathcal{C}_T(\bfv_h)\cdot\bar{\bft} &=  \bfv_h \cdot\bar{\bft} ~~~~~~~~~~~~~~~~~~~~~~~~~~~~~~~~  \text{on} ~ \Gamma^{T}_h, \label{weak_eqv_jc_1}  \\
     \frac{1}{\mu^{+}} \emph{curl}~\mathcal{C}_T(\bfv_h) & =  \frac{1}{\mu^{-}}\emph{curl}~\bfv_h ~~~~~~~~~~~~~~~~~~~~~~~~~~  \text{in} ~ T.  \label{weak_eqv_jc_2} 
\end{align}
\end{subequations}
\end{lemma}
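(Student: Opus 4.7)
My plan is to exploit the very low polynomial degree of $\mathcal{ND}_h(T^\pm_h)$: on each subelement the curl of any N\'ed\'elec function is a constant scalar, and the tangential component along any straight segment is a constant scalar. These two rigidity facts will upgrade the pointwise midpoint conditions \eqref{weak_jc_1} and \eqref{weak_jc_2} to the segment/element-wide identities \eqref{weak_eqv_jc_1} and \eqref{weak_eqv_jc_2} \emph{for free}.

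For \eqref{weak_eqv_jc_2}, I would write any $\bfv = \bfa + b[x_2,-x_1]^t \in \mathcal{ND}_h(T^\pm_h)$ with $\bfa\in\mathbb{R}^2$, $b\in\mathbb{R}$ and compute directly that $\text{curl}~\bfv = \partial_{x_1}(a_2-bx_1) - \partial_{x_2}(a_1+bx_2) = -2b$, a constant on the corresponding subelement. Consequently, $\mu^{-1}\,\text{curl}~\mathcal{C}_T(\bfv_h)$ and $\mu^{-1}\,\text{curl}~\bfv_h$ are constants on $T^+_h$ and $T^-_h$ respectively, and their equality at the single point $X_m$ imposed by \eqref{weak_jc_2} immediately propagates to the entire element $T$.

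For \eqref{weak_eqv_jc_1}, I would parameterize $\Gamma^T_h$ as $X(s) = X_0 + s\bar{\bft}$ with $\bar{\bft}=(t_1,t_2)$, and note that
\[
[x_2(s),-x_1(s)]\cdot\bar{\bft} = (X_{0,2}+s t_2)\,t_1 - (X_{0,1}+s t_1)\,t_2 = X_{0,2}\,t_1 - X_{0,1}\,t_2,
\]
which is independent of $s$ since the $s$-terms cancel. Hence for any $\bfv = \bfa + b[x_2,-x_1]^t$ in $\mathcal{ND}_h(T^\pm_h)$, the tangential trace $\bfv\cdot\bar{\bft}$ is constant along $\Gamma^T_h$. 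Applied to both $\bfv_h$ and $\mathcal{C}_T(\bfv_h)$, the identity at $X_m$ from \eqref{weak_jc_1} then propagates to equality on the whole segment $\Gamma^T_h$.

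I do not foresee any genuine technical obstacle here: the result is essentially the observation that the midpoint conditions in \eqref{weak_jc} were deliberately engineered to be equivalent to the segment/element-wide identities in \eqref{weak_eqv_jc12} for polynomials as rigid as the lowest-order N\'ed\'elec functions. This also makes transparent the remark preceding the lemma that the particular choice of $X_m$ as a midpoint is inessential, since any point on $\Gamma^T_h$ would produce the same operator $\mathcal{C}_T$.
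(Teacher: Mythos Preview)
Your proposal is correct and follows essentially the same approach as the paper: both arguments hinge on the two rigidity facts that the curl of a lowest-order N\'ed\'elec function is constant and that its tangential trace along any straight segment is constant. The only cosmetic difference is that for \eqref{weak_eqv_jc_1} the paper phrases the cancellation geometrically---observing that $[x_2-x^m_2,-(x_1-x^m_1)]^t$ is normal to $\Gamma^T_h$ whenever $X\in\Gamma^T_h$---whereas you parameterize the segment and cancel the $s$-terms explicitly; these are the same computation.
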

\begin{proof}
\eqref{weak_eqv_jc_2} directly follows from the fact that $\text{curl}~\bfv_h$ is a constant for each $\bfv_h\in\mathcal{ND}_h(T)$. To see \eqref{weak_eqv_jc_1}, we let $X_m=[x^m_1,x^m_2]^t$ and let $X=[x_1,x_2]^t$ be an arbitrary point on $\Gamma^T_h$, and note that $[x_2-x^m_2,-(x_1-x^m_1)]^t$ is normal to $\Gamma^T_h$, which yields \eqref{weak_eqv_jc_2}. 
%we let $\bfw_h=\mathcal{C}_T(\bfv_h)-\bfv_h=\bfa+b[x_2,-x_1]^t$ and $X_m=[x^m_1,x^m_2]^t$. Then for every $X=[x_1,x_2]^t\in\Gamma^T_h$ we have
%$$
%(\bfw_h(X)-\bfw_h(X_m))\cdot\bar{\bft} = b[x_2-x^m_2,-(x_1-x^m_1)]\cdot\bar{\bft} = 0
%$$
%which finishes the proof of \eqref{weak_eqv_jc_2}. 
\end{proof}

In particular, we can establish an explicit formulation for the operator $\mathcal{C}_T$ using the definition 
\eqref{weak_jc} and the basic form of functions in $\mathcal{ND}_h(T)$ (see \eqref{ned_spa}). Let us denote $X=[x_1,x_2]^t\in\mathbb{R}^2$, let $D$ be one of the intersection points of $\Gamma$ and edges of $T$ shown in Figure \ref{fig:sandwich}, and take the rotation matrix $\bfR=[0,1;-1,0]$, then
\begin{equation}
\begin{split}
\label{explicit_form_CT}
&\mathcal{C}_T(\bfv_h) = \bfv_h + b_1 \bfR \left[ X -D \right] +  b_2 \bar{\bfn} , ~~~~~ \forall \bfv_h\in \mathcal{ND}_h(T^-_h) \\
\text{with} ~~ & b_1 = \frac{1}{2}\left( 1- \frac{\mu^+}{\mu^-} \right) \text{curl}~\bfv_h ~~~ \text{and} ~~~ b_2 = \left( \frac{\beta^-}{\beta^+} - 1 \right) \bfv_h(X_m)\cdot\bar{\bfn}  - \frac{b_1|\Gamma^T_h|}{2}.
\end{split}
\end{equation}
Now we can define the local IFE space on each interface element as
%\fn{Have you used the fact that 
%the triangular subelement must be $T^-_h$ while the non-triangular subelement is $T^+_h$? 
%Otherwise it may not be convenient to determine $\bfz^-_h = \mathcal{C}_T(\bfv_h)$ on $T^-_h$
%if $T^-_h$ is non-triangular. {\color{red}I have not used the fact one of the subelement is triangular. I think I understand the issue here since we have the similar concern before. Yes, it is true that $\bfz^-_h = \mathcal{C}_T(\bfv_h)$ on $T^-_h$
%if $T^-_h$ may not be conveniently determined by the $\mathcal{ND}_h$ basis polynomials if $T^-_h$ is non-triangular. But fortunately it can be easily determined by the $\bfv_h$ itself through the transmission conditions of the mapping $\mathcal{C}_T$. I just add this explicit formula in (3.3). So basically for each individual polynomial component in an IFE function, we do not need the edge degrees of freedom to determine this polynomial, instead we only need a certain format for its unknown coefficients. When they are put together to generate a piecewise polynomial we then need the edge degrees of freedom.}} 
\begin{equation}
\label{IFE_loc_spa}
\mathcal{IND}_h(T) = \{ \bfz_h~:~ \bfz_h=\bfz^-_h=\bfv_h ~ \text{on} ~ T^-_h ~ \text{and} ~ \bfz_h =\bfz^+_h = \mathcal{C}_T(\bfv_h) ~ \text{on} ~ T^+_h ~\text{for every} ~ \bfv_h \in \mathcal{ND}_h(T^-_h) \}.
\end{equation}
Due to the bijectivity of $\mathcal{C}_T$, we already have $\text{dim}(\mathcal{IND}_h(T))=3$ which is as the same as the standard N\'ed\'elec space $\mathcal{ND}_h(T)$. By \eqref{weak_eqv_jc_1}, we can see that the space $\mathcal{IND}_h(T)$ is locally $\bfH(\text{curl};T)$-conforming, i.e.,
\begin{equation}
\label{IFE_loc_spa_Hcurl}
\mathcal{IND}_h(T) \subset \bfH(\text{curl};T).
\end{equation}
However due to \eqref{weak_eqv_jc_2}, it is important to see $\mathcal{IND}_h(T) \not\subset \bfH^1(\text{curl};T)$, namely even the local IFE space has weaker regularity than the standard N\'ed\'elec space. So many standard analysis techniques such as the scaling argument can not be applied directly to estimate the approximation capabilities. A noval approach for this issue will be introduced in Section \ref{sec:analysis}.

Next, in order to construct suitable global IFE spaces, it is important to select local basis functions in $\mathcal{IND}_h(T)$ having the degrees of freedom associated with the element edges. In particular, for an element $T$ with the edges $e_i$ and the corresponding tangential vectors $\bft_i$, $i=1,2,3$, we impose the degrees of freedom for functions in $\mathcal{IND}_h(T)$:
\begin{equation}
\label{edge_dof}
\int_{e_i} \bfz\cdot\bft_i ds = v_i, ~~~~ i=1,2,3
\end{equation}
with some values $v_i\in\mathbb{R}$. The proof of the unisolvence of IFE functions, regardless of the interface location or the parameters $\mu$ and $\beta$, according to the degrees of freedom in \eqref{edge_dof} is postponed to the next subsection. It actually guarantees the existence of local IFE shape functions by taking $v_i$, $i=1,2,3$ to be $0$ or $1$ in \eqref{edge_dof}, namely there exist $\bfpsi_i\in\mathcal{IND}_h(T)$ such that
\begin{equation}
\label{IFE_fun_1}
\int_{e_j} \bfpsi_i\cdot\bft_j ds = \delta_{ij}, ~~~~ i,j =1,2,3.
\end{equation}
Then the local IFE space \eqref{IFE_loc_spa} on each interface element $T\in\mathcal{T}^i_h$ can be rewritten as
\begin{equation}
\label{IFE_loc_spa_2}
\mathcal{IND}_h(T) = \text{Span}\{ \bfpsi_1, \bfpsi_2, \bfpsi_3 \}.
\end{equation}
Here we plot some examples of IFE shape functions in Figure \ref{fig:ife_shapefun} where the interface is the red line and the parameters for the media below and above the interface, denoted by $T^-$ and $T^+$, are $\mu^-=1/2$, $\beta^-=1$ and $\mu^+=1$, $\beta^+=10$, respectively. According to the plots, we can clearly see the vector fields are discontinuous across the interface which indeed reduces the local regularity of the IFE functions.

\begin{figure}[H]
\centering
\begin{subfigure}{.32\textwidth}
     \includegraphics[width=1.6in]{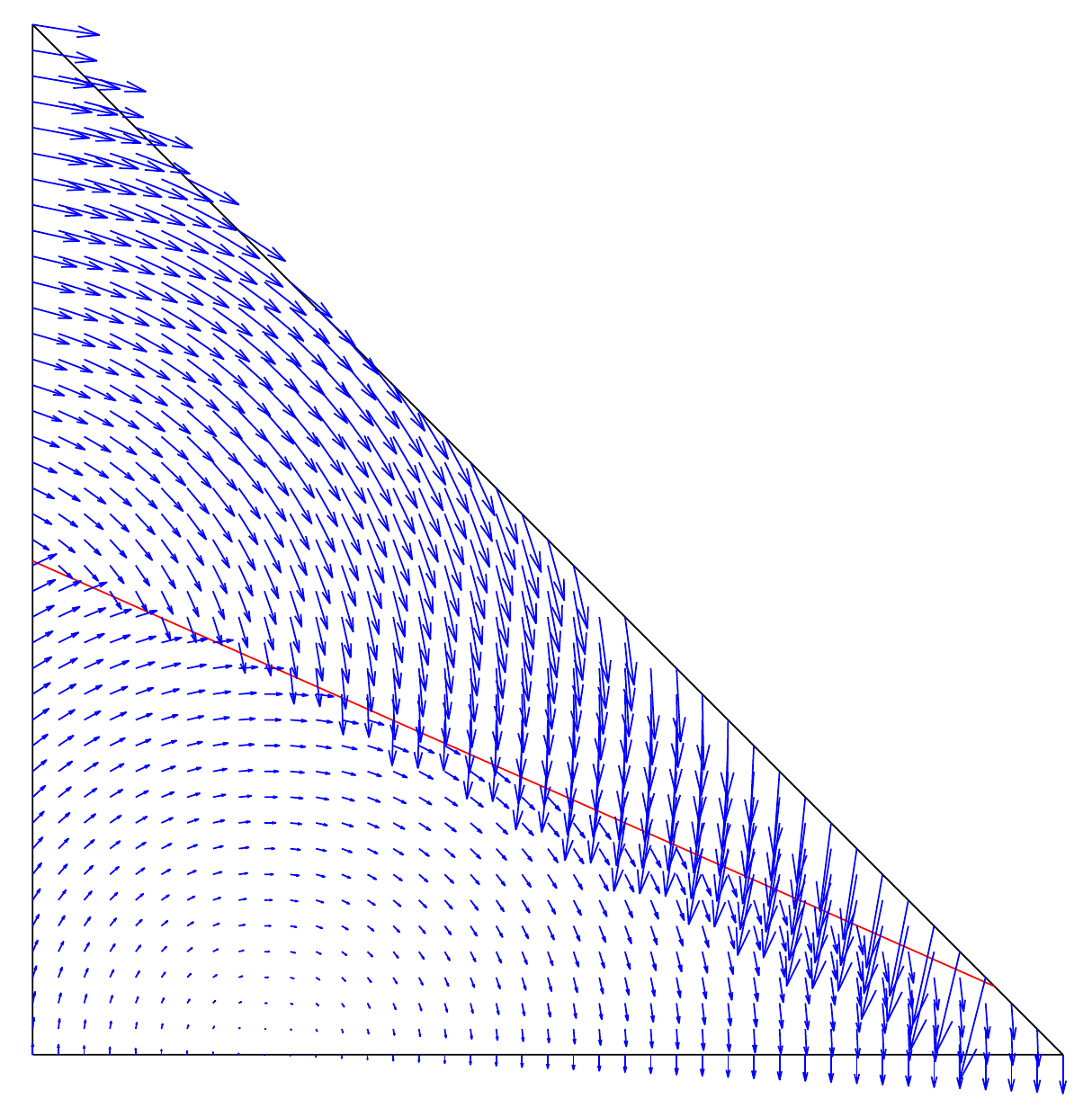}
     %\caption{Case 1}
     \label{ife_shapefun_1} %% label for first subfigure
\end{subfigure}
~
\begin{subfigure}{.32\textwidth}
     \includegraphics[width=1.6in]{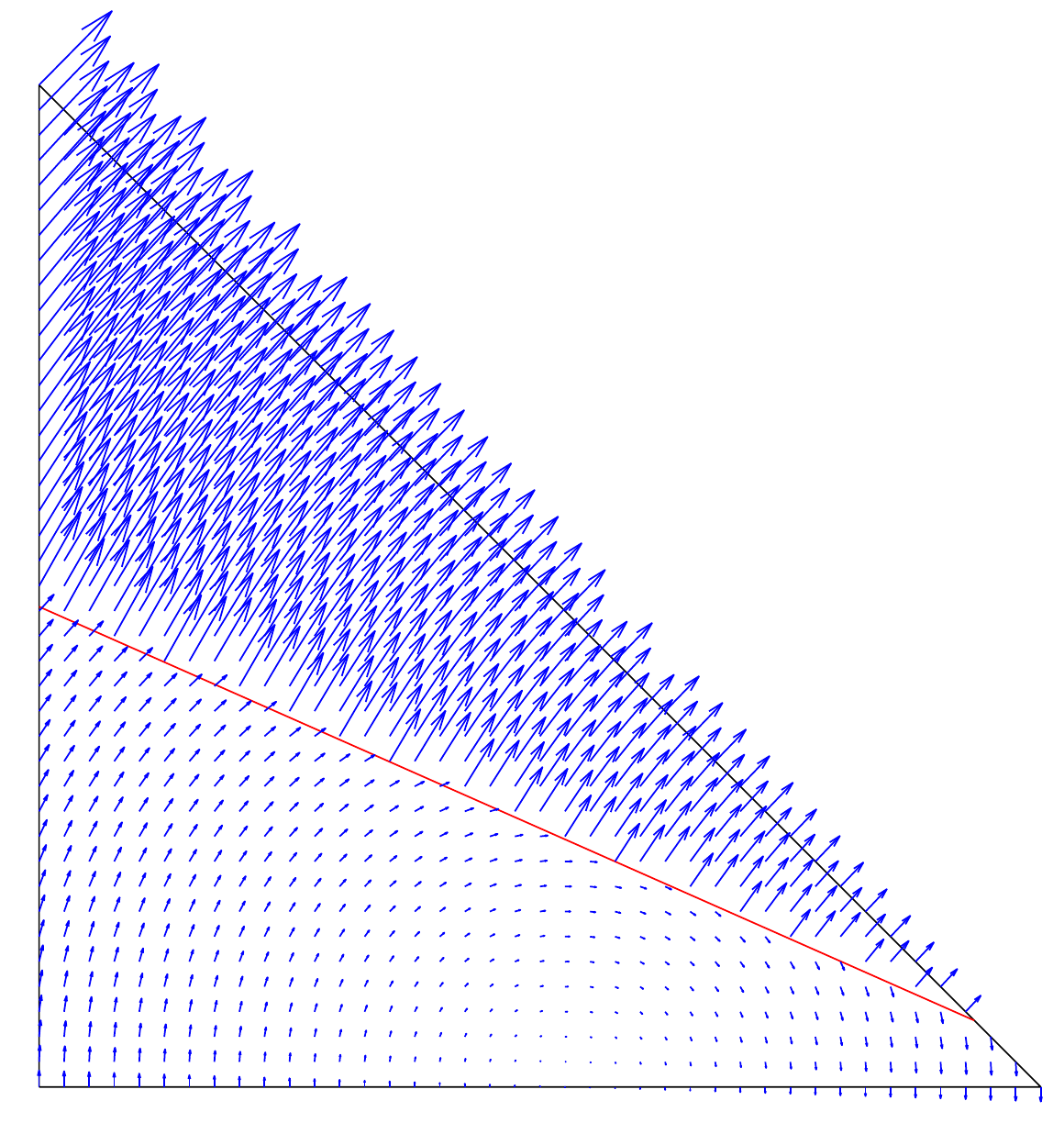}
     %\caption{Case 2}
     \label{ife_shapefun_2} %% label for first subfigure
\end{subfigure}
~
\begin{subfigure}{.32\textwidth}
     \includegraphics[width=1.6in]{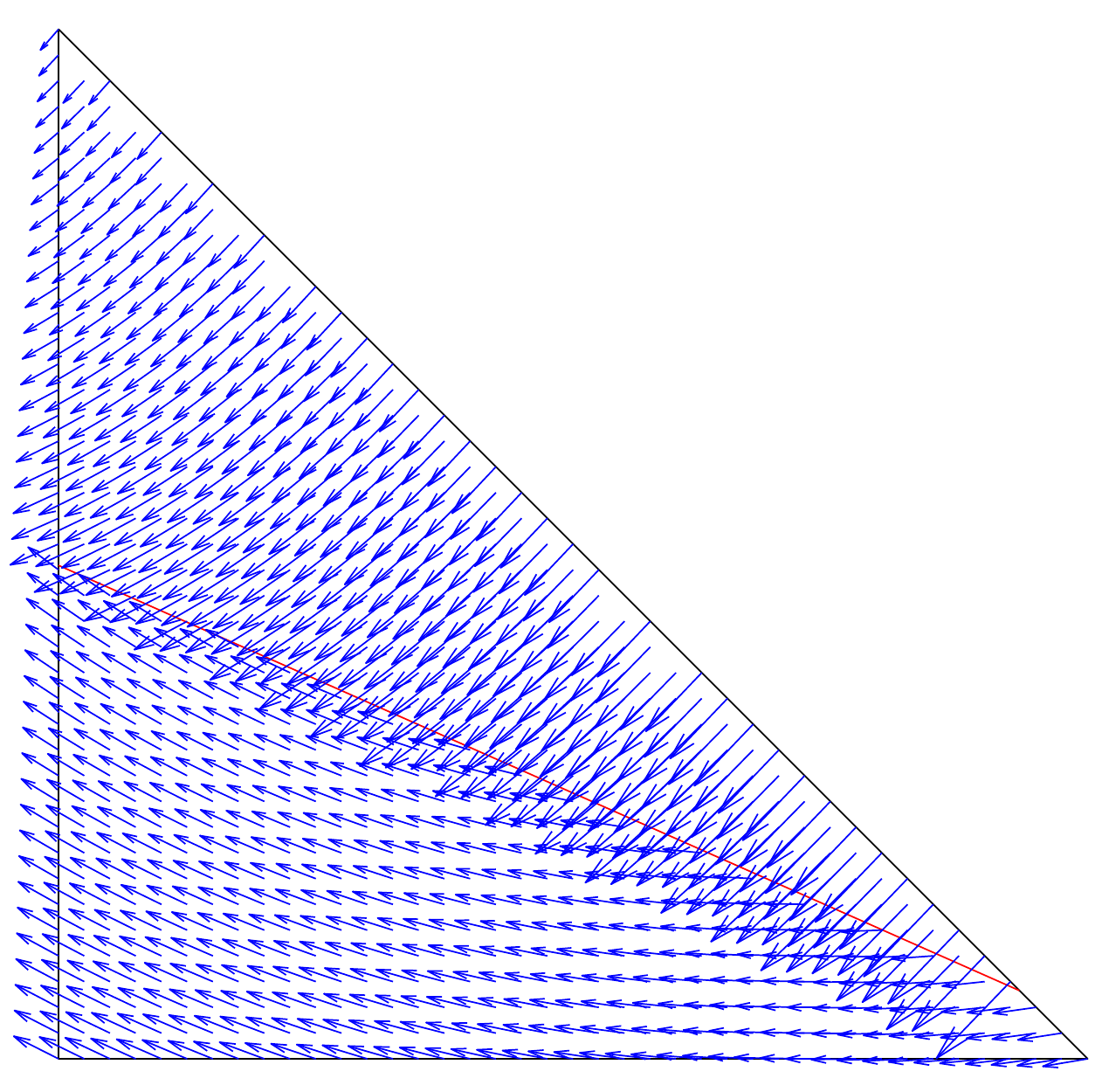}
     %\caption{Case 2}
     \label{ife_shapefun_3} %% label for first subfigure
\end{subfigure}
     \caption{IFE shape functions associated with each edge of an interface element.}
  \label{fig:ife_shapefun} %% label for entire figure
\end{figure}

As usual, the local IFE spaces on non-interface elements are simply defined as the standard first family N\'ed\'elec space $\mathcal{IND}_h(T)=\mathcal{ND}_h(T)$. Hence, we can define the global IFE space:
\begin{equation}
\label{IFE_glob_spa}
\mathcal{IND}_h(\Omega) = \bigg\{ \bfv\in \bfL^2(\Omega)~:~ \bfv|_T \in \mathcal{IND}_h(T) ~~ \forall T\in\mathcal{T}_h, ~~ \int_e [\bfv\cdot\bft]_e ds =0 ~~ \forall e\in\mathring{\mathcal{E}}_h \bigg\},
\end{equation}
We also use $\mathcal{IND}_{h,0}(\Omega)$ to denote the subspace with the zero trace on $\partial\Omega$. We highlight that the proposed IFE space $\mathcal{IND}_h(\Omega)$ is isomorphic to the standard N\'ed\'elec space $\mathcal{ND}_h(\Omega)$. Then the proposed IFE scheme is to find $\bfu_h\in \mathcal{IND}_{h,0}(\Omega)$ such that
\begin{equation}
\label{weak_form_3}
a(\bfu_h,\bfv_h) = \int_{\Omega} \bff\cdot\bfv_h ~ dX ~~~~~~ \forall \bfv_h\in \mathcal{ND}_{h,0}(\Omega)
\end{equation}
where the bilinear form is defined in \eqref{weak_form_2}. 
%We emphasize that there are no penalties added in the proposed IFE method

Although the local IFE spaces $\mathcal{IND}_h(T)$ are subspaces of $\bfH(\text{curl};T)$, the global space in \eqref{IFE_glob_spa} is not $\bfH(\text{curl};\Omega)$-conforming. To see this, we note that $\int_e [\bfv\cdot\bft]_e ds =0$ does not lead to $[\bfv\cdot\bft]_e =0$ since $\bfv\cdot\bft$ is not a constant on $e$, and thus we do not have tangential continuity on $e$. So it has weaker regularity than the standard N\'ed\'elec space. Actually this non-conformity (discontinuity) widely appears in many interface-unfitted methods either on the interface edges \cite{2016GuoLin,2015LinLinZhang} or on the interface itself \cite{2015BurmanClaus,2017HuangWuXiao}. Interior penalties are in general needed to handle the non-conformity to ensure consistency or stability such that the optimal convergence can be obtained for various interface problems. But the penalties, particularly the stabilization term, may cause the loss of convergence order for $\bfH(\text{curl})$-elliptic interface problems if the first family N\'ed\'elec spaces are used; see the discussion and numerical results in
%\fn{Did these results study also the second family of N\'ed\'elec spaces? With same or different results? {\color{red}They have not studied the second family of N\'ed\'elec spaces. But to my understanding, for the second family of N\'ed\'elec spaces, the optimal convergence should be obtained for the penalty method but it requires $H^2$ regularity near the interface. There are some reference based on the standard $H^1$-conforming spaces \cite{2020LiuZhangZhangZheng}, and the analysis is similar. If the regularity is still only $H^1(curl)$, theoretically no convergence can be obtained near the interface. But I have not seen any numerical experiments in the literature. I guess it might be because exact solutions with only $H^1(curl)$ regularity are difficult to be explicitly constructed. I think this is a very good point and thus add some explanation above marked by red.}}
\cite{2016CasagrandeHiptmairOstrowski,2016CasagrandeWinkelmannHiptmairOstrowski}. Indeed, for the IFE methods, numerical results in Section \ref{sec:num_examp} indicate that the solutions of the penalty-type scheme or the standard Galerkin scheme do not converge at all near the interface, and this shall pollute the solution on the whole domain. This issue may be circumvented by some special treatment near the interface such as using high-order elements, second family N\'ed\'elec elements or discontinuous $\mathbb{P}_k$ spaces \cite{2020LiuZhangZhangZheng}, but all these approaches require at least $H^2$ regularity near the interface. Since many Maxwell-type interface models such as the 
$\bfH(\text{curl})$-elliptic problems here in general produce low regularity solutions around the interface, the aforementioned cures may not work.

Fortunately, the isomorphism between $\mathcal{IND}_h(\Omega)$ and $\mathcal{ND}_h(\Omega)$ actually motivates and enables us to test the equation \eqref{inter_PDE} by the standard functions in $\mathcal{ND}_h(\Omega)$ but use the special functions in $\mathcal{IND}_h(\Omega)$ for approximation, and this yields a Petrov-Galerkin type scheme without any penalties. Both analysis and numerical experiments below suggest the Petrov-Galerkin IFE (PG-IFE) scheme has optimal convergence rates. We note that the PG-IFE method was used in \cite{2013HouSongWangZhao} to solve 
$H^1$-elliptic interface problems of which, however, the rigorous analysis still remains open. 

%%%%%%%%%%%%%%%%%%%%%%%%%%%%%%%%

\subsection{Univsolven of IFE Functions}

We then proceed to describe the detailed construction approach for shape functions satisfying \eqref{edge_dof} and prove the unisolvence. Without loss of generality, we consider an element with the vertices $A_i$ and edges $e_i$, $i=1,2,3$ with $e_1=A_2A_3$, $e_2=A_3A_1$ and $e_3=A_1A_2$, assume $A_1$ locates at the origin $(0,0)$, $A_1A_2$ is along with the $x_1$ axis, and the interface $\Gamma$ cuts the edges $A_1A_2$ and $A_1A_3$ with two points $D$ and $E$, i.e., $\Gamma^T_h=\overline{DE}$, see the left plot in Figure \ref{fig:ref_elem_1}. Let $d=|A_1D|/|A_1A_2|\in(0,1]$ and $e=|A_1E|/|A_1A_3|\in(0,1]$. 
 Consider a reference element $\hat{T}$ shown in the right plot in Figure \ref{fig:ref_elem_1} having the vertices $\hat{A}_1=(0,0)$, $\hat{A}_2=(1,0)$, $\hat{A}_3=(0,1)$ and edges $\hat{e}_1=\hat{A}_2\hat{A}_3$, $\hat{e}_2=\hat{A}_3\hat{A}_1$, $\hat{e}_3=\hat{A}_1\hat{A}_2$ with the corresponding tangential vectors $\hat{\bft}_1=\frac{1}{\sqrt{2}}[-1,1]^t$, $\hat{\bft}_2=[0,-1]^t$ and $\hat{\bft}_3=[1,0]^t$.
Then the affine mapping is given by $F_T=B_T\hat{X}$ with the Jacobian matrix $B_T$ from the reference element $\hat{T}$ to the physical element $T$. By this set-up, we actually have $\hat{D}=F^{-1}_T(D)=(d,0)$, $\hat{E}=F^{-1}_T(E)=(0,e)$ and the midpoint $\hat{X}_m=[d/2,e/2]^t$. Moreover we let $\hat{\bar{\bft}}=[\hat{t}_1,\hat{t}_2]^t$ and $\hat{\bar{\bfn}}=[\hat{n}_1,\hat{n}_2]^t$ be the images of $\bar{\bft}$ and $\bar{\bfn}$ under $F^{-1}_T$. We emphasize that $\hat{\bar{\bft}}$ is also the tangential vector of $\hat{E}\hat{D}$ but $\hat{\bar{\bfn}}$ may not be normal to $\hat{E}\hat{D}$ anymore, and they all may not be unit vectors due to scaling. We further let $\hat{\bar{\bfn}}'=[e,d]^t$ be the normal vector to $\hat{E}\hat{D}$. By the well-known Piola transformation \cite{1991BrezziFortin}, an IFE function $\bfz$ can be transformed from a function $\hat{\bfz}$ on the reference element: 
\begin{equation}
\label{piola_trans}
\bfz(X)=B_T^{-t}(\hat{\bfz}\circ F^{-1}_T)(X).
\end{equation}
Then $\hat{\bfz}$ should satisfy the following jump conditions on the reference element
\begin{equation}
\label{ref_jump_cond}
[\hat{\bfz}\cdot\hat{\bar{\bft}}]_{\hat{\Gamma}^T_h}=0, ~~~~~ [\mu^{-1}\text{curl}~\hat{\bfz}]_{\hat{\Gamma}^T_h}=0, ~~~~~~ [\beta\hat{\bfz}\cdot\hat{\bar{\bfn}}]_{\hat{X}_m}=0.
\end{equation}

\begin{figure}[h]
\centering
  \includegraphics[width=\textwidth]{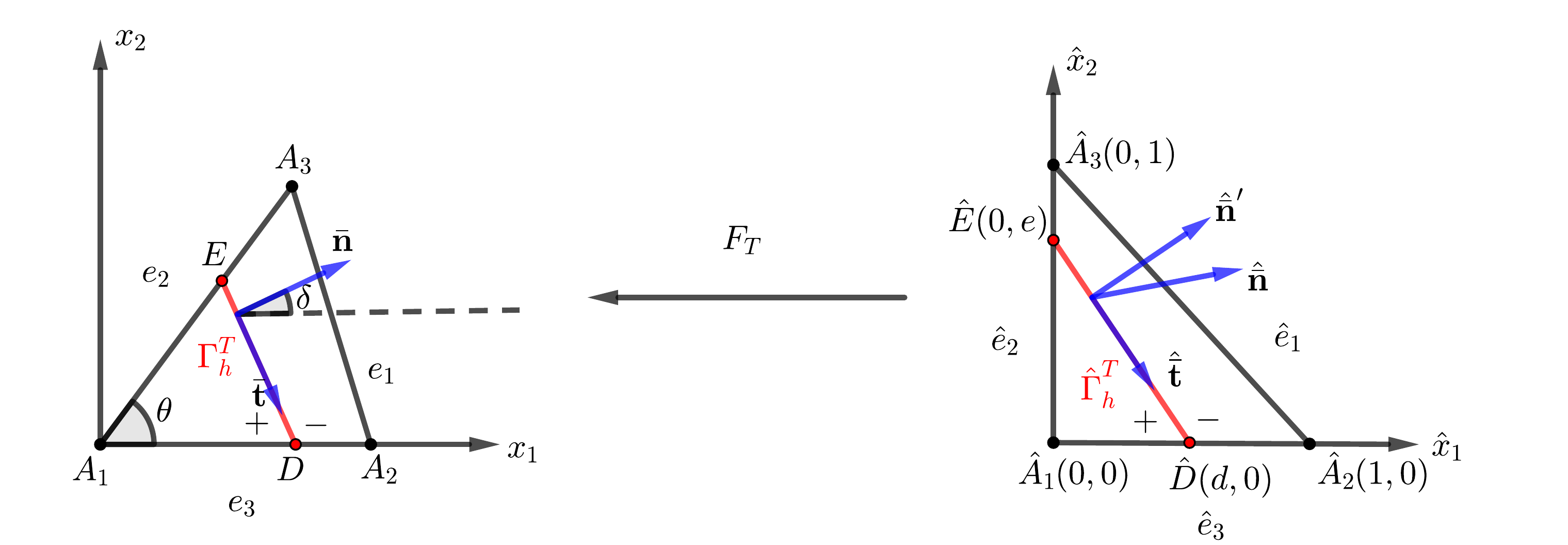}
  \caption{The affine mapping between the physical element and the reference element}
  \label{fig:ref_elem_1}
\end{figure}

Let $\hat{\phi}_i$ be the local basis functions of $\mathcal{ND}_h(\hat{T})$ associated with the edge $\hat{e}_i$, $i=1,2,3$. Thus, using the first condition in \eqref{ref_jump_cond} together with \eqref{edge_dof} for $i=1$ we have the following expression for $\hat{\bfz}$: 
\begin{equation}
\label{IFE_fun_ref_1}
\hat{\bfz}
=
\begin{cases}
      & \hat{\bfz}^- = v_1\hat{\bfphi}_1 + c_2 \hat{\bfphi}_2 + c_3 \hat{\bfphi}_3, ~~~~~~~~~~~~~~~~~~~~~~~~ \text{in} ~ T^-_h, \\
      & \hat{\bfz}^+ =  \hat{\bfz}^- + b_{1} [\hat{x}_2, -(\hat{x}_1-d)]^t + b_2 [e,d]^t, ~~~~~~~ \text{in} ~ T^+_h,
\end{cases}
\end{equation}
where the vectors $[\hat{x}_2, -(\hat{x}_1-d)]^t$ and $[e,d]^t$ are orthogonal to $\hat{E}\hat{D}$, and $\bfc=[c_2,c_3]^t$ and $\bfb=[b_1,b_2]^t$ are unknown coefficients to be determined. Using the rest two in \eqref{ref_jump_cond}, we can rewrite down the following equation for $\bfb$ and $\bfc$
\begin{equation}
\label{IFE_fun_ref_2}
\mathbf{ A} \bfb= \bfgamma v_1 + \mathbf{ B} \bfc
\end{equation}
\begin{equation*}
\label{IFE_fun_ref_3}
\text{where} ~~~ \mathbf{ A} = \left[\begin{array}{cc} 1 & 0 \\ \alpha & 2\alpha \end{array}\right], ~~~~ 
\bfgamma = \left[\begin{array}{c} \kappa \\ -\lambda\hat{\bfphi}_1(\hat{X}_m)\cdot\hat{\bar{\bfn}} \end{array}\right], ~~~~ 
\mathbf{ B} = \left[\begin{array}{cc} \kappa & \kappa \\ -\lambda\hat{\bfphi}_2(\hat{X}_m)\cdot\hat{\bar{\bfn}} & -\lambda\hat{\bfphi}_3(\hat{X}_m)\cdot\hat{\bar{\bfn}} \end{array}\right],
\end{equation*}
with $\alpha=\frac{1}{2}(e\hat{n}_1+d\hat{n}_2)=\frac{1}{2}\hat{\bar{\bfn}}'\cdot\hat{\bar{\bfn}}$, $\kappa=1-\frac{\mu^+}{\mu^-}$ and $\lambda=1-\frac{\beta^-}{\beta^+}$. Furthermore we can use \eqref{edge_dof} for $i=2,3$ to obtain 
\begin{equation}
\label{IFE_fun_ref_4}
\mathbf{ I}_2 \bfc + \mathbf{ R} \bfb = \bfv,
\end{equation}
where $\mathbf{ I}_2$ is the $2\times2$ identity matrix, $\mathbf{ R}=de[-1,-1;0,1]$ and $\bfv=[v_2,v_3]^t$. Solving the linear systems \eqref{IFE_fun_ref_2} and \eqref{IFE_fun_ref_4}, we can compute all the unknown coefficients in \eqref{IFE_fun_ref_1}. The solvability is addressed by the following theorem.

%\begin{lemma}
%\label{lem_unisol}
%Suppose $T$ does not have obtuse angles, then $\hat{\bar{\bfn}}'\cdot\hat{\bar{\bfn}} >0$ and $\frac{de(\hat{n}_1+\hat{n}_2)}{\hat{\bar{\bfn}}'\cdot\hat{\bar{\bfn}}} \in [0,1]$.
%\end{lemma}
%\begin{proof}
%Since the derivation is quite technical, we put it to Appendix \ref{App_A1}.
%\end{proof}

%By the first inequality in Lemma \ref{lem_unisol}, we know that $\alpha>0$ and thus $\mathbf{ A}$ is invertible. So \eqref{IFE_fun_ref_2} gives the formula to compute $\bfb$ in terms of $\bfc$. Putting it into \eqref{IFE_fun_ref_4}, we have the following linear system
%\begin{equation}
%\label{IFE_fun_ref_5}
%( \mathbf{ I}  +  \mathbf{ R} \mathbf{ A}^{-1} \mathbf{ B} ) \bfc = \bfv - \mathbf{ A}^{-1} \bfgamma v_1.
%\end{equation}
%Using this linear system, we can prove the following theorem on the unisolvence.

\begin{thm}[Unisolvence]
\label{thm_unisol}
Suppose $T$ does not have obtuse angles, then for each $[v_1,v_2,v_3]\in\mathbb{R}^3$, there exists a unique piecewise polynomial $\bfz$ in the IFE space $\mathcal{IND}_h(T)$ satisfying the degrees of freedom \eqref{edge_dof}.
\end{thm}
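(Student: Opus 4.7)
The plan is to reduce unisolvence to showing that a certain $4\times 4$ linear system has only the trivial solution. Since Lemma \ref{lem_C_welldefine} already gives $\dim\mathcal{IND}_h(T)=3$, and the degrees of freedom in \eqref{edge_dof} also number $3$, it suffices to prove injectivity of the edge-interpolation map; equivalently, the coefficient matrix arising from \eqref{IFE_fun_ref_2}--\eqref{IFE_fun_ref_4} on the reference element is nonsingular. Because the Piola transformation \eqref{piola_trans} preserves edge integrals up to a nonzero constant factor, working on the reference element $\hat T$ loses nothing and is more transparent.

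First I would assemble the full linear system for the four unknowns $(\mathbf b,\mathbf c)$ together with the right-hand side driven by $(v_1,v_2,v_3)$. Combining \eqref{IFE_fun_ref_2} with \eqref{IFE_fun_ref_4} gives the block form
\begin{equation*}
\begin{pmatrix}\mathbf A & -\mathbf B\\ \mathbf R & \mathbf I_2\end{pmatrix}\!
\begin{pmatrix}\mathbf b\\\mathbf c\end{pmatrix}=
\begin{pmatrix}\bfgamma v_1\\ \mathbf v\end{pmatrix}.
\end{equation*}
Since the lower-right block $\mathbf I_2$ is trivially invertible, a Schur-complement reduction factors the determinant as $\det(\mathbf A+\mathbf B\mathbf R)$. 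A short direct computation (which I would carry out explicitly, using $\mathbf R=de[-1,-1;0,1]$ and the forms of $\mathbf A,\mathbf B$) yields a lower-triangular $\mathbf A+\mathbf B\mathbf R$ with diagonal entries of the form
\begin{equation*}
1-de\,\kappa \qquad\text{and}\qquad 2\alpha + de\,\lambda\,(\hat{\bfphi}_2-\hat{\bfphi}_3)(\hat X_m)\cdot\hat{\bar{\bfn}}.
\end{equation*}
Thus the theorem reduces to showing both diagonal entries are nonzero for every admissible configuration.

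For the first factor, $\kappa=1-\mu^+/\mu^-<1$ always since $\mu^{\pm}>0$, while $d,e\in(0,1]$ forces $de\le 1$, so $1-de\,\kappa\ge 1-\kappa>0$ when $\kappa\ge 0$ and $1-de\,\kappa\ge 1$ when $\kappa<0$; this step is robust and uses nothing about the geometry of $T$ beyond assumption (\textbf{A1}). The genuinely geometric work lies in the second factor, and this is where the no-obtuse-angle hypothesis enters. Here $\alpha=\tfrac12\hat{\bar{\bfn}}'\cdot\hat{\bar{\bfn}}$ is, up to the Piola rescaling, the cosine of the angle between the true normal to $\Gamma^T_h$ and the normal to $\hat E\hat D$, and the correction term $(\hat{\bfphi}_2-\hat{\bfphi}_3)(\hat X_m)\cdot\hat{\bar{\bfn}}$ depends on where $D$ and $E$ sit on the two edges emanating from $A_1$. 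I would show that the absence of obtuse angles guarantees that $\alpha$ stays uniformly bounded away from zero with a fixed sign, and that the same sign is shared (or at worst dominated) by the correction term multiplied by $\lambda=1-\beta^-/\beta^+<1$, so that cancellation cannot drive the factor to zero.

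The routine part is the algebraic reduction and the first factor estimate. The main obstacle I anticipate is the geometric sign/nonvanishing analysis for the second factor: one must argue that for any placement of $D$ and $E$ on the two non-hypotenuse edges of $T$ the vector $\hat{\bar{\bfn}}$ makes an angle of the correct type with $\hat{\bar{\bfn}}'$, and that the inner products $\hat{\bfphi}_i(\hat X_m)\cdot\hat{\bar{\bfn}}$ cannot conspire with $\lambda$ to cancel $2\alpha$. I would likely handle this by splitting into the cases $\lambda\ge 0$ and $\lambda<0$ (equivalently $\beta^+\gtrless\beta^-$), since in one case the two terms are co-signed and the bound is immediate, while in the other case the no-obtuse-angle condition provides a quantitative lower bound on $|\alpha|$ large enough to absorb the perturbation since $|\lambda|<1$ and $de\le 1$. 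Once both diagonal entries are shown nonzero, existence and uniqueness of $\bfz$ for every $(v_1,v_2,v_3)\in\mathbb{R}^3$ follow, completing the proof.
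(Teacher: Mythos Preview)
Your reduction is correct and is essentially the paper's argument, with one pleasant variation: you take the Schur complement with respect to the lower-right block $\mathbf I_2$ (eliminating $\mathbf c$), obtaining the lower-triangular matrix $\mathbf A+\mathbf B\mathbf R$, whereas the paper eliminates $\mathbf b$ first (using invertibility of $\mathbf A$) and then computes the eigenvalues of $\mathbf I_2+\mathbf R\mathbf A^{-1}\mathbf B$. The two determinants differ by the factor $\det\mathbf A=2\alpha$, and indeed your second diagonal entry, after one computes $(\hat{\bfphi}_2-\hat{\bfphi}_3)(\hat X_m)\cdot\hat{\bar{\bfn}}=-(\hat n_1+\hat n_2)$, is exactly $2\alpha$ times the paper's second eigenvalue $1-\frac{de(\hat n_1+\hat n_2)\lambda}{e\hat n_1+d\hat n_2}$. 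Your route is arguably cleaner since triangularity makes the factorization immediate.

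However, your sketch for the second factor has a real gap. First, $|\lambda|<1$ is false in general: $\lambda=1-\beta^-/\beta^+$ can be arbitrarily negative. Second, even in the case $\lambda\in[0,1)$, a ``quantitative lower bound on $|\alpha|$'' together with $de\le1$ is not enough, because $\alpha=\tfrac12(e\hat n_1+d\hat n_2)$ is \emph{not} bounded below uniformly in $d,e$ (it vanishes as $d,e\to0$). What is actually needed---and what the paper proves in its Appendix using the no-obtuse-angle hypothesis---is the sharp ratio bound
\[
0\;\le\;\frac{de(\hat n_1+\hat n_2)}{e\hat n_1+d\hat n_2}\;\le\;1,
\]
together with $\hat n_1+\hat n_2\ge0$. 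With these two geometric facts in hand your second diagonal entry equals $2\alpha\bigl(1-\tfrac{de(\hat n_1+\hat n_2)}{2\alpha}\lambda\bigr)\ge 2\alpha\min\{1,\beta^-/\beta^+\}>0$ uniformly, and the case split on the sign of $\lambda$ becomes unnecessary. So the missing piece is precisely this pair of inequalities; once you supply them (the paper's derivation is elementary but nontrivial, using $l_2\ge l_3\cos\theta$ and $l_3\ge l_2\cos\theta$ from the angle condition), your argument is complete.
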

\begin{proof}
Under the notations above, by the assumption that $T$ does not have obtuse angles, we can verify that 
\begin{equation}
\label{thm_unisol_eq1}
\hat{\bar{\bfn}}'\cdot\hat{\bar{\bfn}} >0 ~~~ \text{and} ~~~ \frac{de(\hat{n}_1+\hat{n}_2)}{\hat{\bar{\bfn}}'\cdot\hat{\bar{\bfn}}} \in [0,1]. 
\end{equation}
Since the derivation is quite technical and elementary, we put it to Appendix \ref{App_A1}. By the first inequality above, we know that $\alpha=\frac{1}{2}\hat{\bar{\bfn}}'\cdot\hat{\bar{\bfn}}>0$ and thus $\mathbf{ A}$ is invertible. So \eqref{IFE_fun_ref_2} gives the formula to compute $\bfb$ in terms of $\bfc$. Putting it into \eqref{IFE_fun_ref_4}, we have the following linear system
\begin{equation}
\label{IFE_fun_ref_5}
( \mathbf{ I}  +  \mathbf{ R} \mathbf{ A}^{-1} \mathbf{ B} ) \bfc = \bfv - \mathbf{ A}^{-1} \bfgamma v_1.
\end{equation}
We only need to show the non-singularity of the matrix in $\mathbf{ I}  +  \mathbf{ R} \mathbf{ A}^{-1} \mathbf{ B}$ for the reference element. Direct computation shows that the matrix $\mathbf{ I} +  \mathbf{ R} \mathbf{ A}^{-1} \mathbf{ B}$ has two eigenvalues
$
1 -de \kappa ~\text{and}  ~ 1-\frac{de(\hat{n}_1 + \hat{n}_2) \lambda}{ e\hat{n}_1 + d\hat{n}_2 }.
$
Because $d,e\in[0,1]$, using the second inequality in \eqref{thm_unisol_eq1}, we have
\begin{equation*}
\label{thm_unisol_eq2}
1 - de \kappa \ge \min\Big\{ 1, \frac{\mu^+}{\mu^-} \Big\}>0 ~~~ \text{and} ~~~ 1-\frac{de(\hat{n}_1 + \hat{n}_2) \lambda}{ e\hat{n}_1 + d\hat{n}_2 } \ge \min\Big\{ 1, \frac{\beta^-}{\beta^+} \Big\}>0
\end{equation*}
which finishes the proof.
\end{proof}

Theorem \ref{thm_unisol} guarantees the unique existence of IFE shape functions in \eqref{IFE_fun_1} 
theoretically. Furthermore, we can prove the following properties of these shape functions.
\begin{thm}
\label{thm_bound}
Suppose $T$ does not have obtuse angles and is shape regular, then for $i=1,2,3$ there holds
\begin{subequations}
\label{thm_bound_eq0}
\begin{align}
 &  \| \bfpsi_i \|_{L^{\infty}(T)} \le C h^{-1}_T,    \label{thm_bound_eq01} \\
&  \frac{1}{\mu}\emph{curl}~\bfpsi_i =  \frac{ 2 \emph{det} B_T^{-1}}{(1-de)\mu^- + de \mu^+} \le C h^{-2}_T.  \label{thm_bound_eq02} %\frac{\emph{det} B_T^{-1}}{(1-de)\mu^- + }
\end{align}
\end{subequations}
\end{thm}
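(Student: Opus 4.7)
I will treat the two assertions separately.

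\emph{Curl identity \eqref{thm_bound_eq02}.} I begin with the observation, from Lemma \ref{lem_equiv_jc12}, that $\mu^{-1}\mathrm{curl}\,\bfpsi_i$ is a single constant $K$ on all of $T$. The plan is to determine $K$ by integration: applying Stokes' theorem and using the edge DOFs \eqref{IFE_fun_1} (with tangents oriented counter-clockwise along $\partial T$) gives
$$1 = \oint_{\partial T}\bfpsi_i\cdot\bft\,ds = \int_T\mathrm{curl}\,\bfpsi_i\,dX = K\big(\mu^-|T^-_h| + \mu^+|T^+_h|\big).$$
With $|T^+_h| = \tfrac{de}{2}\det B_T$ (the corner triangle at $A_1$) and $|T^-_h| = \tfrac{1-de}{2}\det B_T$, solving for $K$ yields the explicit formula. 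The upper bound $K \le Ch_T^{-2}$ then follows from $\det B_T \ge Ch_T^2$ (shape regularity) and $(1-de)\mu^- + de\mu^+ \ge \min(\mu^-, \mu^+) > 0$.

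\emph{Pointwise bound \eqref{thm_bound_eq01}.} The plan is to reduce to the reference element via the Piola transformation \eqref{piola_trans}:
$$\|\bfpsi_i\|_{L^\infty(T)} \le \|B_T^{-t}\|_2\,\|\hat{\bfpsi}_i\|_{L^\infty(\hat{T})} \le Ch_T^{-1}\,\|\hat{\bfpsi}_i\|_{L^\infty(\hat{T})},$$
using shape regularity in the last step, and then to bound $\|\hat{\bfpsi}_i\|_{L^\infty(\hat{T})}$ uniformly in the interface position $(d,e)\in(0,1]^2$. From the representation \eqref{IFE_fun_ref_1}, it suffices to bound the coefficients $c_2,c_3,b_1,b_2$, which are extracted from \eqref{IFE_fun_ref_5} via $\bfc = (\mathbf{I}+\mathbf{R}\mathbf{A}^{-1}\mathbf{B})^{-1}(\bfv - v_1\mathbf{A}^{-1}\bfgamma)$ followed by $\bfb = \mathbf{A}^{-1}(v_1\bfgamma + \mathbf{B}\bfc)$. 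The inequalities \eqref{thm_unisol_eq1}, valid under the no-obtuse-angle hypothesis, bound $\alpha = \tfrac12\hat{\bar{\bfn}}'\cdot\hat{\bar{\bfn}}$ from below, so the entries of $\mathbf{A}^{-1}$ are $O(1)$; the pieces $\mathbf{B}, \mathbf{R}, \bfgamma$ are similarly bounded since $d,e\in[0,1]$ and $\hat{\bfphi}_j(\hat{X}_m) = O(1)$. For the inverse $(\mathbf{I}+\mathbf{R}\mathbf{A}^{-1}\mathbf{B})^{-1}$, I will invoke the $2\times 2$ adjugate formula, which reduces boundedness to a lower bound on the determinant; this determinant equals the product of the two eigenvalues identified in Theorem \ref{thm_unisol}, each of which is at least $\min\{1,\mu^+/\mu^-,\beta^-/\beta^+\}>0$. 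With $\bfv = \bfe_i$ a unit vector, the coefficients stay bounded, giving $\|\hat{\bfpsi}_i\|_{L^\infty(\hat{T})}\le C$.

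\emph{Main obstacle.} The delicate step will be ensuring the bounds are genuinely uniform in the degenerate configurations where $d$ or $e$ approaches $0$ or $1$, so that $\Gamma^T_h$ nearly touches a vertex of $T$. In such limits $\alpha = \tfrac12(e\hat{n}_1 + d\hat{n}_2)$ is a competition between the vanishing weights $d,e$ and a potentially large reference-element normal $\hat{\bar{\bfn}} = B_T^{-1}\bar{\bfn}$; the no-obtuse-angle hypothesis, via the geometric estimates deferred to Appendix \ref{App_A1}, is precisely what prevents $\alpha$ from collapsing faster than the ambient Piola rescaling. Verifying this scaling together with the $O(1)$ conditioning of the $2\times 2$ coefficient matrix is what makes the constant $C$ in the final estimate independent of the interface's position within $T$.
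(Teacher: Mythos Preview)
Your Stokes-theorem derivation of \eqref{thm_bound_eq02} is correct and arguably cleaner than the paper's route, which obtains the formula by direct calculation from the reference-element coefficients.

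For \eqref{thm_bound_eq01}, however, your argument has a genuine gap in the uniformity over the interface position. You assert that the no-obtuse-angle hypothesis bounds $\alpha=\tfrac12\hat{\bar{\bfn}}'\cdot\hat{\bar{\bfn}}$ from below, so that ``the entries of $\mathbf{A}^{-1}$ are $O(1)$''. This is not true: Appendix~\ref{App_A1} gives $2\alpha=|\Gamma^T_h|/(l_2l_3\sin\theta)$, which tends to $0$ as $d,e\to 0$. Likewise the second row of $\mathbf{B}$ contains $\hat{\bfphi}_j(\hat X_m)\cdot\hat{\bar{\bfn}}$ with $\hat{\bar{\bfn}}=B_T^{-1}\bar{\bfn}$, so these entries are not $O(1)$ either. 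Consequently neither $\mathbf{A}^{-1}$ nor $\mathbf{A}^{-1}\mathbf{B}$ is bounded entrywise, and $b_2$ itself is \emph{not} uniformly bounded in $(d,e)$.

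The paper's proof (Appendix~\ref{App_A2}) closes this by tracking exactly which \emph{combinations} stay bounded. First, because $\mathbf{R}$ carries an explicit factor $de$, one needs only $de\,s_j/\alpha$ bounded (not $s_j/\alpha$ alone), and the estimate $|\Gamma^T_h|\ge\max\{dl_3,el_2\}\sin\theta$ delivers this; together with the eigenvalue lower bounds from Theorem~\ref{thm_unisol} this controls $(\mathbf{I}+\mathbf{R}\mathbf{A}^{-1}\mathbf{B})^{-1}$ and hence $\bfc$. Second, once $\bfc$ is bounded one finds $|b_1|\le C$ but only $|b_2|\le C+Ch_T/|\Gamma^T_h|$; the key observation is that in \eqref{IFE_fun_ref_1} the coefficient $b_2$ multiplies the vector $[e,d]^t$, so it suffices to bound $b_2 e$ and $b_2 d$, and the same geometric inequality gives $|b_2|\max\{d,e\}\le C$. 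Your ``main obstacle'' paragraph correctly senses that the degenerate limit is the issue, but the resolution is not a lower bound on $\alpha$; it is these two structural cancellations.
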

%where $C(\mu^{\pm},\beta^{\pm})\sim C\left( \max\Big\{  \frac{\mu^-}{\mu^+}, \frac{\mu^+}{\mu^-} \Big\} \right)^3 \left( \max\Big\{ \frac{\beta^+}{\beta^-}, \frac{\beta^-}{\beta^+} \Big\} \right)^3$.
\begin{proof}
See the Appendix \ref{App_A2}.
\end{proof}
\eqref{thm_bound_eq02} shows that for each fixed interface element $\text{curl}~\bfpsi_i$, $i=1,2,3$ equal the same constant. It can be understood as the generalization of the properties of the standard N\'ed\'elec functions $\bfphi_i$ since $\text{curl}~\bfphi_i = 2\text{det} B^{-1}_T$ for $i=1,2,3$.

\begin{rem}
\label{rem_shapeT}
We note that the unisolvence of Theorem \ref{thm_unisol} only requires that $T$ doe not have an obtuse angle while the boundedness of Theorem \ref{thm_bound} extraly requires the shape regularity. These conditions can be certainly satisfied for Cartesian meshes used for IFE methods mainly in this article.
%if the domain $\Omega$ is a rectangle, we can simply generate a mesh by partitioning it into rectangular elements and cutting each element into two right-angle triangles along its diagonal. 
\end{rem}

%%%%%%%%%%%%%%%%%%%%%%%%%%%%%%%%%%%%%%%%%%%%%%%%%%%%%%%%%%%%%%%%%%%%%%%%%%%%%%%%%%%%%%%%%%%%%%%%%%%%%%%%%%%%%%%%%%%%%%%%%%%%%%%%%%%%%%%%%%%%%%%%%%%%%%%%%%%%%%%%%%%%%%%%%%%%%%%%%%%%%%%%%%%%%%%%
\subsection{Characterization of Immersed Elements}

In this subsection, we follow the spirit of the well-known De Rham Complex \cite{2006ArnoldFalkWinther} to derive some analog properties for the proposed $\bfH(\text{curl})$ immersed elements and the $H^1$ immersed elements for elliptic equations in the literature \cite{2016GuoLin,2004LiLinLinRogers}. These results have not appeared in any literature and serve as the foundation in the analysis of the \textit{inf-sup} stability in Section \ref{sec:solu_error}. 

First, we mimic the interpolation \eqref{interp_1} to define a similar interpolation operator for the IFE space $\mathcal{IND}_h(\Omega)$:
%\begin{equation}
%\begin{split}
%\label{interp_3}
%&\widetilde{\Pi}_{h,T}~:~ \bfH^1(\text{curl};T^-)\cap\bfH^1(\text{curl};T^+)\cap\bfH(\text{curl};T) \longrightarrow \mathcal{IND}_h(T)\\
% \text{with} ~~ & \int_{e_i} \Pi_{h,T} \bfu\cdot\bft_i ds = \int_{e_i}  \bfu\cdot\bft_i ds, ~~ i=1,2,3.
% \end{split}
%\end{equation}
%Then the global interpolation operator $I_h$ is defined as
\begin{equation}
\begin{split}
\label{interp_4}
& \widetilde{\Pi}_h~:~ \widetilde{\bfH}^1(\text{curl};\Omega) \longrightarrow \mathcal{IND}_h(\Omega) ~~ \text{with} ~~  \int_e  \widetilde{\Pi}_h\bfu\cdot\bft ds = \int_e \bfu\cdot\bft ds, ~~ \forall e\in\mathcal{E}_h. 
\end{split}
\end{equation}  
Again we have the local interpolation $\widetilde{\Pi}_{h,T}=\widetilde{\Pi}_h|_T$ for each element $T$. Since IFE functions reduce to the standard N\'ed\'elec functions on non-interface elements, we simply have $\widetilde{\Pi}_{h,T}={\Pi}_{h,T}$ on $T\in\mathcal{T}^n_h$. In addition, the isomorphism between the IFE space $\mathcal{IND}_h(\Omega)$ and the standard N\'ed\'elec space $\mathcal{ND}_h(\Omega)$ can be described by the following interpolation operator:
\begin{equation}
\begin{split}
\label{iso_map}
\mathbb{\Pi}_h~:~ \mathcal{IND}_h(\Omega) \longrightarrow \mathcal{ND}_h(\Omega) ~~~ 
\text{with} ~~  \int_e \mathbb{\Pi}_h\bfu_h\cdot\bft ds = \int_e \bfu_h\cdot\bft ds, ~~ \forall e\in\mathcal{E}_h
\end{split}
\end{equation}
where we note that $\mathbb{\Pi}_h$ is well-defined due to the degrees of freedom of the global IFE functions in \eqref{IFE_glob_spa}. We can also define the local mapping $\mathbb{\Pi}_{h,T}=\mathbb{\Pi}_h|_T$ on $\mathcal{IND}_h(T)$. We note that $\mathbb{\Pi}_h$ and $\mathbb{\Pi}_{h,T}$ can be understood as the interpolation operators $\Pi_h$ and $\Pi_{h,T}$ in \eqref{interp_1} applied to the space $\mathcal{IND}_h(\Omega)$, while $\mathbb{\Pi}^{-1}_h$ and $\mathbb{\Pi}^{-1}_{h,T}$ can be understood as the interpolation operators $\widetilde{\Pi}_h$ and $\widetilde{\Pi}_{h,T}$ in \eqref{interp_4} applied to the space $\mathcal{ND}_h(\Omega)$. To show how the IFE functions are related to their FE counterparts through $\mathbb{\Pi}_h$, here we plot an example of their $x$-component in Figure \ref{fig:ife_fe_fun} where we can clearly see they are exactly the same away from the interface, and the IFE functions have jumps across the interface while the FE functions loss the jump information.
\begin{figure}[H]
\centering
\begin{subfigure}{.32\textwidth}
     \includegraphics[width=2.2in]{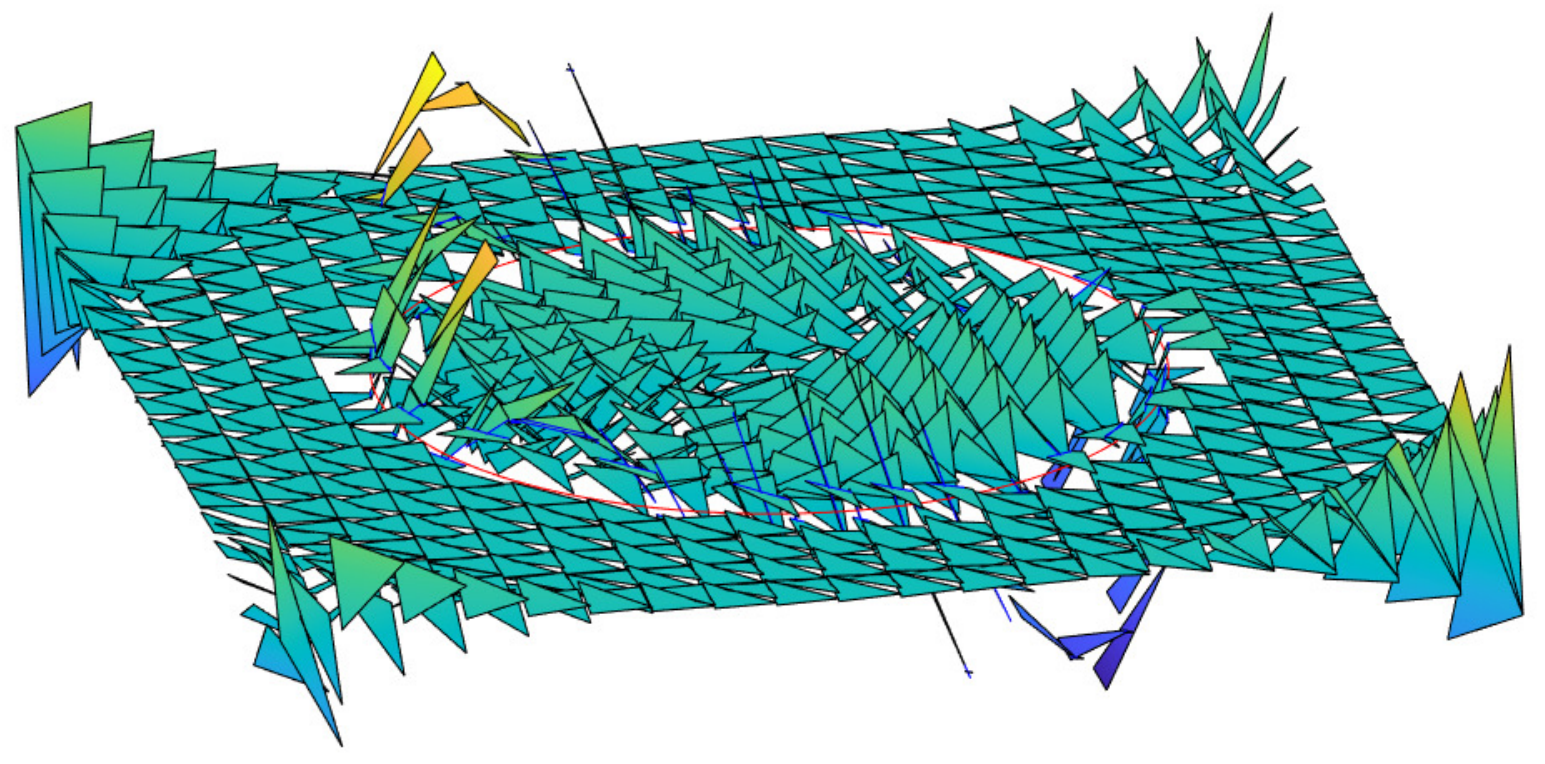}
     %\caption{Case 1}
     \label{ife_fun_1} %% label for first subfigure
\end{subfigure}
~~~~~
\begin{subfigure}{.32\textwidth}
     \includegraphics[width=2.2in]{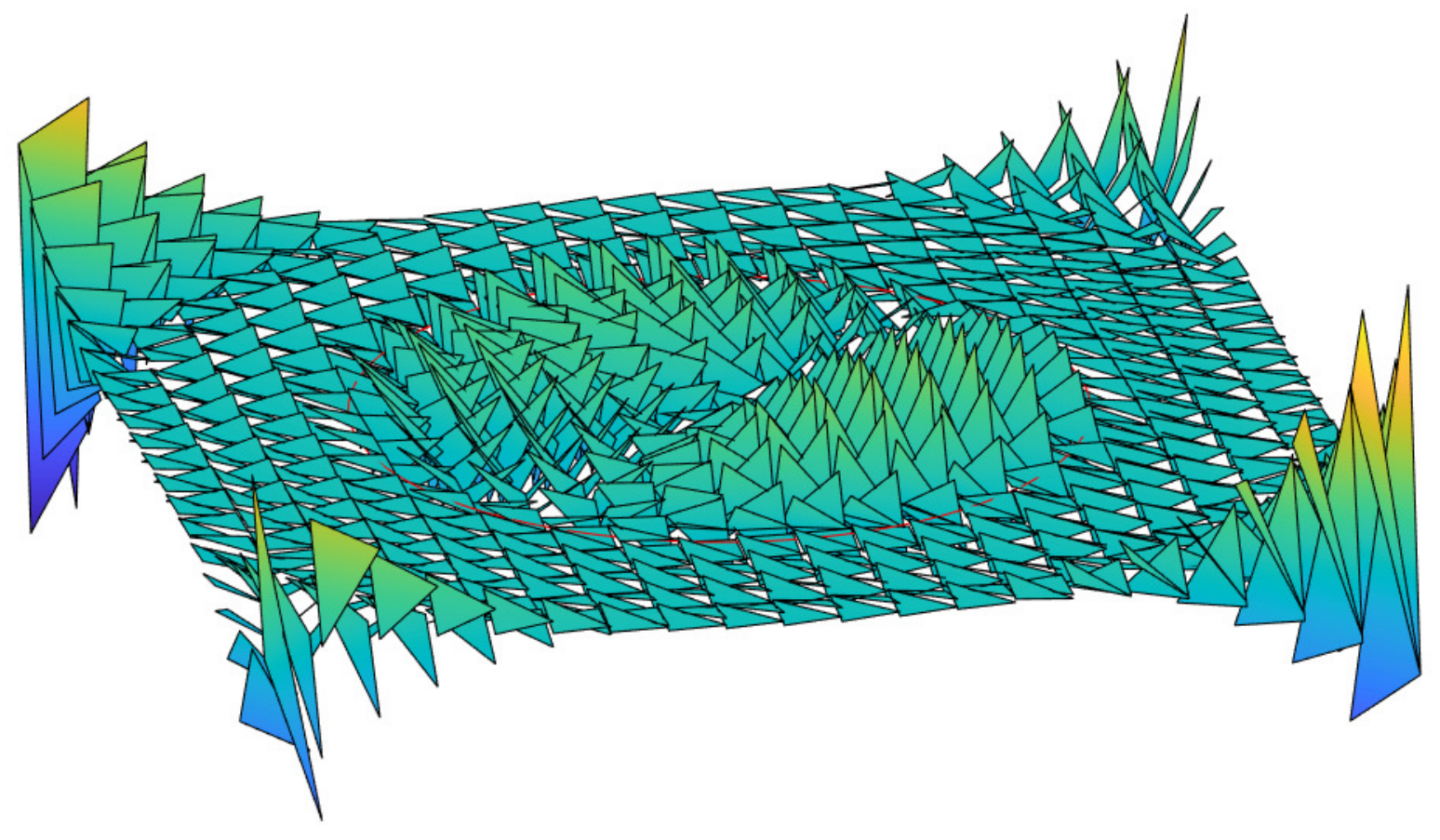}
     %\caption{Case 2}
     \label{fe_fun_1} %% label for first subfigure
\end{subfigure}
     \caption{A Global $\bfH(\text{curl})$ IFE function and its FE isomorphic image.}
  \label{fig:ife_fe_fun} %% label for entire figure
\end{figure}

\begin{figure}[H]
\centering
\begin{subfigure}{.32\textwidth}
     \includegraphics[width=2.2in]{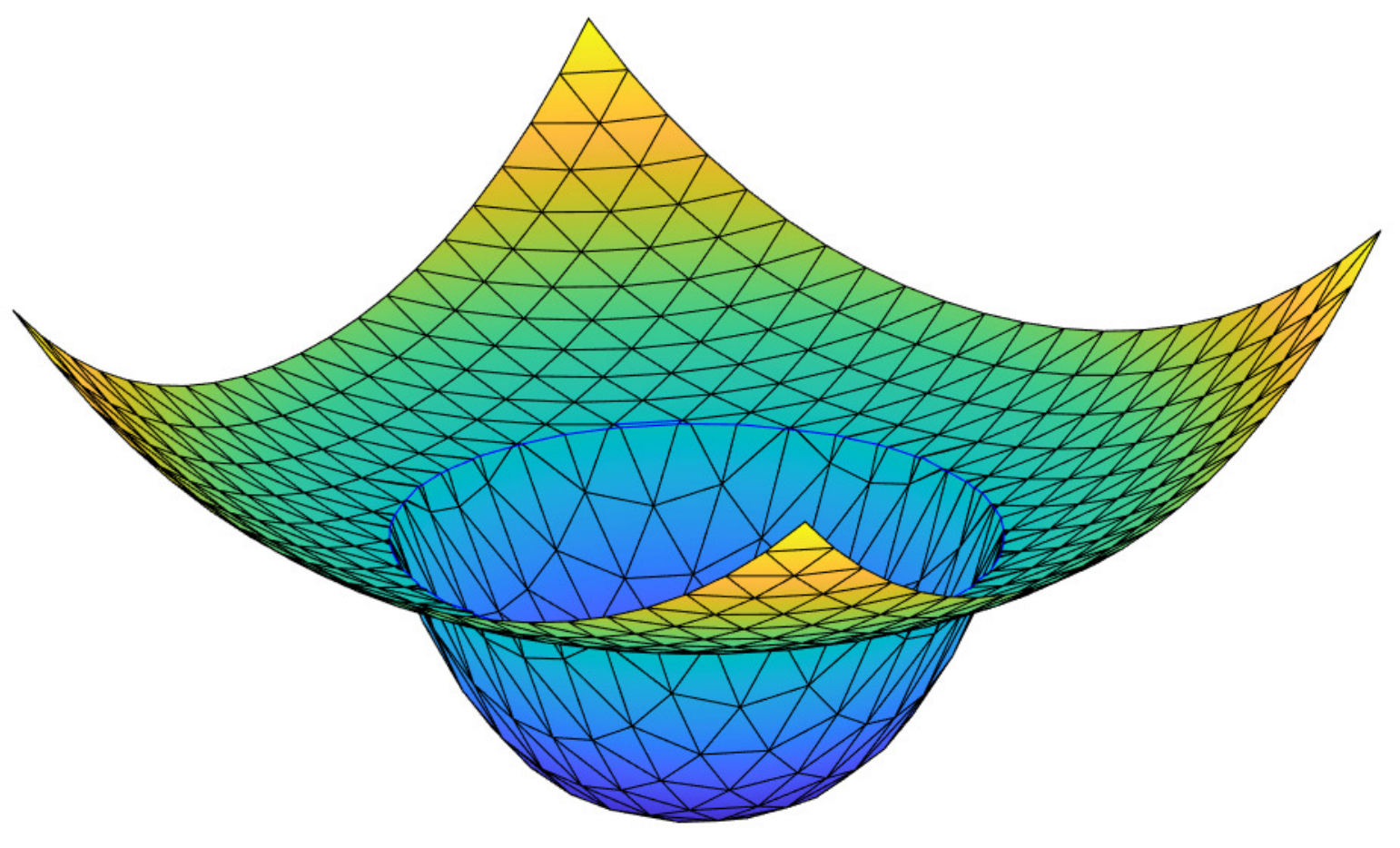}
     %\caption{Case 1}
     \label{H1_ife_fun_1} %% label for first subfigure
\end{subfigure}
~~~~~
\begin{subfigure}{.32\textwidth}
     \includegraphics[width=2.2in]{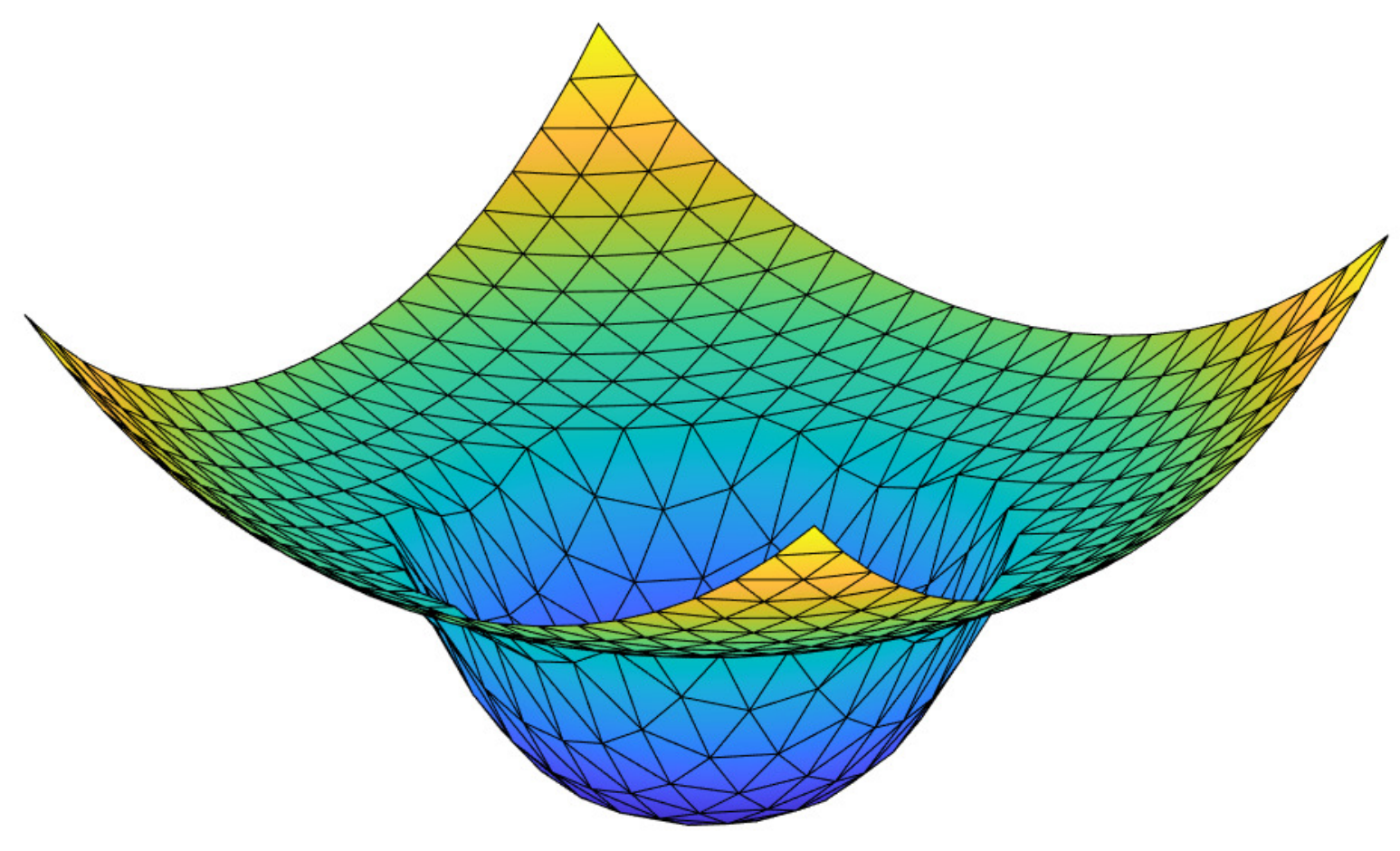}
     %\caption{Case 2}
     \label{H1_fe_fun_1} %% label for first subfigure
\end{subfigure}
     \caption{A Global $H^1$ IFE function and its FE isomorphic image.}
  \label{fig:H1_ife_fe_fun} %% label for entire figure
\end{figure}

Next we recall the $H^1$ immersed elements in the literature \cite{2004LiLinLinRogers}. We define $S_h(T):=\mathbb{P}_1(T)$ as the standard scalar linear polynomial space, and let $S_h(\Omega)\subset H^1_0(\Omega)$ be the continuous piecewise linear finite element space. Also we define $I_{h,T}$ and $I_h$ as the standard local and global nodal interpolation operator, respectively. The scalar solution $u^{\pm}:=u|_{\Omega^{\pm}}$ of the $H^1$-elliptic interface problems should satisfy the jump conditions at the interface:
\begin{equation}
\label{elli_jump_cd}
u^+ = u^- ~~~~ \text{and} ~~~~ \beta^+ \nabla u^+\cdot\bfn = \beta^- \nabla u^-\cdot\bfn ~~~ \text{at} ~ \Gamma
\end{equation}
where $\beta^{\pm}$ are assumed the same as those in \eqref{model} also referring to conductivity in physics. We define the underling Sobolev space
$$\widetilde{H}^2_0(\Omega)=H^2_0(\Omega^-)\cap H^2_0(\Omega^+)\cap\{v:v~\text{satisfies the jump conditions in \eqref{elli_jump_cd}} \}. 
$$
Then the local IFE space $\widetilde{S}_{h,T}$ on each interface element, for example the one shown in Figure \ref{fig:sandwich}, is defined as
\begin{equation}
\label{p_IFE}
\widetilde{S}_h(T) = \{ z_h~:~ z^{\pm}_h= z_h|_{T^{\pm}_h}\in \mathbb{P}_1(T^{\pm}_h),~ z^+_h=z^-_h, ~~ \beta^-\nabla z^-_h\cdot\bar{\bfn} = \beta^+\nabla z^+_h\cdot\bar{\bfn} ~~ \text{at} ~ \Gamma^T_h \}.
\end{equation}
Clearly there holds $\widetilde{S}_h(T)\subset H^1(T)$, and one can pick the shape functions from $\widetilde{S}_h(T)$ having the nodal value degrees of freedom as the same as the standard linear finite element shape functions. We also let $\widetilde{S}_h(\Omega)$ be the global IFE space continuous at the mesh nodes, but we in general have $\widetilde{S}_h(\Omega)\subsetneq H^1(\Omega)$ due to the discontinuities across interface edges. However the continuity imposed at mesh nodes still enables us to define the nodal interpolation operators $\tilde{I}_{h,T}$ and $\tilde{I}_h$. We refer readers to \cite{2004LiLinLinRogers} for more details about these $H^1$ IFE spaces and the estimates of the interpolation errors. Moreover, we can also define the isomorphism $\mathbb{I}_h$ between $S_h(\Omega)$ and $\widetilde{S}_h(\Omega)$ such that
\begin{equation}
\label{I_IFE}
\mathbb{I}_h~:~ \widetilde{S}_h(\Omega)\rightarrow S_h(\Omega) ~~ \text{with} ~~ \mathbb{I}_hv_h(X) = v_h(X) ~~ \forall X\in\mathcal{N}_h,
\end{equation}
 where again $\mathbb{I}_h$ can be understood as $I_h$ applied to $\widetilde{S}_h(\Omega)$ and $\mathbb{I}^{-1}_h$ can be understood as $\tilde{I}_h$ applied to $S_h(\Omega)$. The new notations $\mathbb{I}_h$ and $\mathbb{\Pi}_h$ are used to emphasize the isomorphism and to avoid confusion. Moreover, we let $S_{h,0}(\Omega)$ and $\widetilde{S}_{h,0}(\Omega)$ be the corresponding subspaces with zero traces on $\partial\Omega$. Here we also plot an example of $H^1$ IFE functions and its FE isomorphic image in Figure \ref{fig:H1_ife_fe_fun} where we can see the IFE function can capture more detailed jump information at the interface.

%\begin{equation}
%\label{DR_comp_local}
%\left.\begin{array}{ccc}
%H^1(T) & \xrightarrow[]{~~\nabla~~} & \bfH(\text{curl};T) \\
%~~~~\bigg\downarrow I_{h,T}  &  & ~~~~\bigg\downarrow \Pi_{h,T} \\
%S_h(T) & \xrightarrow[]{~~\nabla~~} & \mathcal{ND}_h(T)
%\end{array}\right.
%~~~
%\left.\begin{array}{ccc}
%H^1(T) & \xrightarrow[]{~~\nabla~~} & \bfH(\text{curl};T) \\
%~~~~\bigg\downarrow \tilde{I}_{h,T}  &  & ~~~~\bigg\downarrow \tilde{\Pi}_{h,T} \\
%\widetilde{S}_h(T) & \xrightarrow[]{~~\nabla~~} & \mathcal{IND}_h(T)
%\end{array}\right.
%\end{equation}

\begin{equation}
\label{DR_comp_glob}
\left.\begin{array}{ccc}
H^2_0(\Omega) & \xrightarrow[]{~~\nabla~~} & \bfH^1_0(\text{curl};\Omega) \\
~~~~\bigg\downarrow I_{h}  &  & ~~~~\bigg\downarrow \Pi_{h} \\
S_{h,0}(\Omega) & \xrightarrow[]{~~\nabla~~} & \mathcal{ND}_{h,0}(\Omega)
\end{array}\right.
~~~ 
\left.\begin{array}{ccc}
\widetilde{H}^2_0(\Omega) & \xrightarrow[]{~~\nabla~~} & \widetilde{\bfH}^1_0(\text{curl};\Omega) \\
~~~~\bigg\downarrow \tilde{I}_{h}  &  & ~~~~\bigg\downarrow \tilde{\Pi}_{h} \\
\widetilde{S}_{h,0}(\Omega) & \xrightarrow[]{~~\nabla~~} & \mathcal{IND}_{h,0}(\Omega)
\end{array}\right.
\end{equation}

According to the well-known De Rham Complex \cite{2006ArnoldFalkWinther,2006ArnoldFalkWintherExterior}, we plot the diagram for $H^2_0(\Omega)$ and $\bfH^1_0(\text{curl};\Omega)$ spaces in the left of \eqref{DR_comp_glob}, and for simplicity we assume the underlying Sobolev spaces have some extra smoothness such that the interpolation are well-defined. Then the following commuting property holds for the standard FE spaces
\begin{equation}
\label{commut_1}
 \Pi_{h} \circ \nabla = \nabla \circ I _{h} ~~~~ \text{on} ~H^2_0(\Omega).
\end{equation}
Furthermore, by the assumption that $\partial\Omega$ has simple topology, the exact sequence also shows the following identity
\begin{equation}
\label{commut_2}
\begin{split}
\nabla S_{h,0}(\Omega) = \text{Ker(curl)}\cap\mathcal{ND}_{h,0}(\Omega) = \{ \bfz_h\in \mathcal{ND}_{h,0}(\Omega)~:~ \text{curl}~\bfz_h = 0\} 
\end{split}
\end{equation}
%\begin{equation}
%\label{commut_2}
%\begin{split}
%&~ S_h(\Omega) = \mathbf{ X}_h \oplus \nabla S_h(\Omega)\\
%\text{with} ~~&~ \nabla S_h(\Omega) = \text{Ker(curl)}\cap\mathcal{ND}_h(\Omega) = \{ \bfz_h\in \mathcal{ND}_h(\Omega)~:~ \text{curl}~\bfz_h = 0\} \\
%\text{and} ~~ & ~ \bfX_h = \{ \bfz_h\in \mathcal{ND}_h(\Omega)~:~ (\bfz_h,\nabla v_h)_{\Omega} =0 ~~ \forall v_h\in S_h \},
%\end{split}
%\end{equation}
namely $\nabla S_{h,0}(\Omega)$ is the curl-free subspace of $\mathcal{ND}_{h,0}(\Omega)$. The proposed IFE spaces share the similar properties shown by the right plot in \eqref{DR_comp_glob} where the gradient $\nabla$ is understood element-wisely for IFE spaces. Here we note that functions in $\nabla\widetilde{H}^2_0(\Omega)$ satisfy the jump conditions \eqref{inter_jc_1} and \eqref{inter_jc_3} because of their own jump conditions \eqref{elli_jump_cd} and satisfy \eqref{inter_jc_2} because they are curl-free, and thus $\nabla\widetilde{H}^2_0(\Omega)\subseteq\widetilde{\bfH}^1_0(\text{curl};\Omega)$. The next lemma shows the diagram on the right of \eqref{DR_comp_glob} is well-defined.

\begin{lemma}
\label{lem_DR_1}
For IFE spaces, there holds $\nabla \widetilde{S}_{h,0}(\Omega) \subseteq \mathcal{IND}_{h,0}(\Omega)$.
%\begin{equation}
%\label{lem_DR_1_eq0}
%\nabla S_h(\Omega) \subset \emph{Ker(curl)}\cap\mathcal{ND}_h(\Omega) = \{ \bfz_h\in \mathcal{ND}_h(\Omega)~:~ \emph{curl}~\bfz_h = 0\}.
%\end{equation}
\end{lemma}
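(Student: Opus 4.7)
The plan is to verify, for each $v_h \in \widetilde{S}_{h,0}(\Omega)$, that $\nabla v_h$ (taken element-wise) satisfies every defining requirement of $\mathcal{IND}_{h,0}(\Omega)$: (i) the correct polynomial shape on each element, (ii) the three jump conditions at $X_m$ on each interface element, (iii) zero integral jump of the tangential component across each interior edge, and (iv) zero boundary trace in the $\int_e \cdot\bft\,ds$ sense.

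First I would check shape and regularity element-wise. On each non-interface element $T$, $v_h|_T \in \mathbb{P}_1(T)$, so $\nabla v_h|_T$ is a constant vector, which lies in $\mathcal{ND}_h(T)$. On each interface element $T$, $v_h|_{T^\pm_h} \in \mathbb{P}_1(T^\pm_h)$, so $\nabla v_h|_{T^\pm_h}$ is a constant vector, which lies in $\mathcal{ND}_h(T^\pm_h)$. Thus $\nabla v_h|_T$ has the correct piecewise form for membership in $\mathcal{IND}_h(T)$, and it remains only to verify the three IFE jump conditions \eqref{weak_jc} at the midpoint $X_m$ of $\Gamma^T_h$.

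Next, on each interface element I would check the three jump conditions, appealing to Lemma \ref{lem_equiv_jc12} which identifies the pointwise tangential and curl conditions at $X_m$ with the equivalent conditions on all of $\Gamma^T_h$ and on $T$. For the tangential condition \eqref{weak_jc_1}: the $H^1$-IFE definition \eqref{p_IFE} enforces $v^+_h = v^-_h$ along the entire segment $\Gamma^T_h$, so the derivative of $v_h$ in the direction of $\bar{\bft}$ is continuous across $\Gamma^T_h$, giving $[\nabla v_h \cdot \bar{\bft}]_{\Gamma^T_h}=0$, hence in particular at $X_m$. For the curl condition \eqref{weak_jc_2}: $\text{curl}(\nabla v^\pm_h)=0$ identically on $T^\pm_h$, so both sides vanish. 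For the normal condition \eqref{weak_jc_3}: this is precisely the second identity in \eqref{p_IFE}, namely $\beta^-\nabla v^-_h\cdot\bar{\bfn}=\beta^+\nabla v^+_h\cdot\bar{\bfn}$ on $\Gamma^T_h$, which holds at $X_m$ in particular. Therefore $\nabla v_h|_T \in \mathcal{IND}_h(T)$.

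Finally I would handle the global conditions in \eqref{IFE_glob_spa}. For any interior edge $e$ with endpoints $P_1,P_2$, continuity of $v_h$ at mesh nodes (built into the definition of $\widetilde{S}_h(\Omega)$) gives $v_h(P_i)^+ = v_h(P_i)^-$, and since $\nabla v_h \cdot \bft$ is (piecewise) the tangential derivative along $e$,
\begin{equation*}
\int_e (\nabla v_h\cdot\bft)^{\pm}\,ds = v_h(P_2)^{\pm}-v_h(P_1)^{\pm},
\end{equation*}
so $\int_e[\nabla v_h\cdot\bft]_e\,ds=0$. For the boundary trace, $v_h=0$ on $\partial\Omega$ (being zero at the boundary nodes and piecewise linear along boundary edges) forces $\nabla v_h\cdot\bft=0$ along each boundary edge, hence $\int_e\nabla v_h\cdot\bft\,ds=0$, giving zero tangential trace on $\partial\Omega$. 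Combining these verifications, $\nabla v_h \in \mathcal{IND}_{h,0}(\Omega)$. The only subtle step is the tangential jump condition at the interface segment, which I expect to be the main observation: it is not automatic from the $H^1$-IFE definition pointwise on $\Gamma^T_h$ in general, but it follows because the $H^1$-IFE enforces equality of the two linear pieces along the whole segment $\Gamma^T_h$, which is stronger than needed.
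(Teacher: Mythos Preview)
Your proposal is correct and follows essentially the same approach as the paper: verify locally that $\nabla v_h$ satisfies the three IFE jump conditions on each interface element (the paper states this without detail), then use nodal continuity of $\widetilde{S}_{h,0}(\Omega)$ together with the fundamental theorem of calculus along each edge to obtain $\int_e[\nabla v_h\cdot\bft]_e\,ds=0$, and finally use that the interface does not touch $\partial\Omega$ for the boundary condition. The paper's proof is slightly more explicit about one point you use implicitly: on an interface edge $e$, the identity $\int_e \nabla v_h|_{T_j}\cdot\bft\,ds = v_h|_{T_j}(P_2)-v_h|_{T_j}(P_1)$ relies on $v_h|_{T_j}$ being continuous across the interface intersection point on $e$, which holds because $\widetilde{S}_h(T)\subset H^1(T)$.
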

\begin{proof}
We note that $\widetilde{S}_h(T)\subset \mathcal{IND}_h(T)$ on non-interface elements is trivial, and $\widetilde{S}_h(T)\subset \mathcal{IND}_h(T)$ on interface elements simply follows from the observation that $\nabla v_h$, $v_h\in \widetilde{S}_h(T)$, satisfies the jump conditions in \eqref{weak_jc}. But the global result is non-trivial due to the weaker regularity on interface edges, and the derivation is based on the continuity of $\widetilde{S}_h(\Omega)$ at mesh nodes. For each $v_h\in \widetilde{S}_h(\Omega)$ and each boundary edge $e$, we immediately have $\nabla v_h\cdot\bft_e=0$ where $\bft_e$ is the tangential vector of $e$ since the interface is assumed not to touch the boundary. For each edge $e\in\mathcal{E}_h$ not on boundary, we need to show $\int_e[\nabla v_h\cdot\bft_e]ds=0$. Again this is trivial for non-interface edges. For an interface edge $e$, we let $A_1$ and $A_2$ be the two nodes of $e$, let $\bft_e$ be oriented from $A_1$ to $A_2$ and let $T_1$ and $T_2$ be its neighbor elements. Then the continuity at the interface intersection point of $e$ yields
\begin{equation}
\label{lem_DR_1_eq1}
\int_e \nabla v_h|_{T_1}\cdot \bft_e ds = \int_e \partial_{\bft_e} v_h|_{T_1} ds = v_h|_{T_1}(A_2) - v_h|_{T_1}(A_2). 
\end{equation}
The similar identity also holds for $T_2$. Therefore the continuity at mesh nodes yields the desired result. %Finally it is easy to see $\text{curl}~\nabla v_h=0$.
\end{proof}

Now we can show the commuting property of the IFE space.

\begin{thm}
\label{thm_DR_4}
The diagram on the right of \eqref{DR_comp_glob} is commutative, namely $\widetilde{\Pi}_h \circ \nabla = \nabla \circ \tilde{I}_h$, $\text{on} ~ \widetilde{H}^2_0(\Omega)$.
%\begin{equation}
%\label{thm_DR_4_eq0}
%\widetilde{\Pi}_h \circ \nabla = \nabla \circ \tilde{I}_h ~~~~ \text{on} ~ H^1_0(\Omega).
%\end{equation}
%For the isomorphism $\mathbb{\Pi}_h$, the following commuting property holds
%\begin{equation}
%\label{lem_DR_2_eq0}
%\mathbb{\Pi}_h \circ\nabla = \nabla \circ  \mathbb{I}_h  ~~~ \text{on} ~ \widetilde{S}_h(\Omega) ~~~~~ \text{and} ~~~~~ \mathbb{\Pi}^{-1}_h \circ\nabla = \nabla \circ  \mathbb{I}^{-1}_h  ~~~ \text{on} ~ S_h(\Omega).
%\end{equation}
\end{thm}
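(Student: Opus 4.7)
The plan is to verify the identity by showing that both sides have the same edge degrees of freedom, and then invoke the unisolvence of Theorem \ref{thm_unisol} (together with the definition of the global IFE space in \eqref{IFE_glob_spa}) to conclude equality. Because the IFE global space $\mathcal{IND}_{h,0}(\Omega)$ is determined by the $3|\mathcal{E}_h|$ edge moments $\int_e \bfz \cdot \bft \, ds$, I only need to check membership in this space and matching of those moments.

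First I would verify that both sides are legitimate members of $\mathcal{IND}_{h,0}(\Omega)$. For the left side, I would check that $\nabla v \in \widetilde{\bfH}^1_0(\text{curl};\Omega)$ so that $\widetilde{\Pi}_h(\nabla v)$ is well-defined: regularity $\nabla v \in \bfH^1(\text{curl};\Omega^\pm)$ holds since $v \in H^2(\Omega^\pm)$ and $\text{curl}\,\nabla v = 0$; the jump condition \eqref{inter_jc_1} follows from continuity of $v$ across $\Gamma$ (tangential differentiation), \eqref{inter_jc_2} is automatic, and \eqref{inter_jc_3} is exactly the flux jump in \eqref{elli_jump_cd}. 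For the right side, Lemma \ref{lem_DR_1} already provides $\nabla \tilde{I}_h v \in \mathcal{IND}_{h,0}(\Omega)$.

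Next I would compare edge moments. For any edge $e\in\mathcal{E}_h$ with endpoints $A_1,A_2$, the defining property of $\widetilde{\Pi}_h$ in \eqref{interp_4} gives
\begin{equation*}
\int_e \widetilde{\Pi}_h(\nabla v)\cdot\bft_e \, ds = \int_e \nabla v \cdot \bft_e \, ds.
\end{equation*}
Because $v\in \widetilde{H}^2_0(\Omega)$ is continuous across $\Gamma$ by the first condition in \eqref{elli_jump_cd}, the restriction of $v$ to $e$ is continuous (even across the interface crossing point of $e$, if any), so the fundamental theorem of calculus yields $\int_e \nabla v\cdot \bft_e\, ds = v(A_2) - v(A_1)$. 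On the other hand, choosing either neighbouring element $T$ of $e$, the piecewise-linear IFE function $\tilde{I}_h v|_T \in \widetilde{S}_h(T) \subset H^1(T)$ is continuous on the segment $e$, with nodal values $v(A_1), v(A_2)$, hence
\begin{equation*}
\int_e \nabla(\tilde{I}_h v)|_T \cdot \bft_e \, ds = \tilde{I}_h v|_T(A_2) - \tilde{I}_h v|_T(A_1) = v(A_2) - v(A_1),
\end{equation*}
where the zero-trace boundary case collapses both sides to $0$.

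The main delicate step will be the interface edge case: one must argue that, although $\nabla(\tilde{I}_h v)|_T$ has a jump in its tangential component at the point where $\Gamma^T_h$ crosses $e$, the integral over $e$ is still recovered telescopically because the $H^1$-level IFE function itself is continuous across $\Gamma^T_h$. This is essentially the same telescoping used in \eqref{lem_DR_1_eq1}. Once both edge moments coincide for every $e\in\mathcal{E}_h$, applying $\mathbb{\Pi}_h$ (the isomorphism in \eqref{iso_map}) to the difference shows the two IFE functions have the same image in $\mathcal{ND}_{h,0}(\Omega)$, and bijectivity gives $\widetilde{\Pi}_h(\nabla v) = \nabla(\tilde{I}_h v)$, completing the proof.
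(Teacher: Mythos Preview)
Your proposal is correct and follows essentially the same approach as the paper's proof: both arguments verify that $\nabla\tilde I_h v$ and $\widetilde\Pi_h(\nabla v)$ lie in $\mathcal{IND}_{h,0}(\Omega)$ (via Lemma~\ref{lem_DR_1} and the observation that $\nabla\widetilde H^2_0(\Omega)\subseteq\widetilde{\bfH}^1_0(\text{curl};\Omega)$), then show their edge moments agree by the fundamental theorem of calculus plus the continuity of $v$ and of $\tilde I_h v$ across the interface intersection point, and finally invoke unisolvence. Your explicit check that $\nabla v$ satisfies the jump conditions \eqref{inter_jc_1}--\eqref{inter_jc_3} is a nice addition that the paper handles only in the surrounding discussion.
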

\begin{proof}
%We note that the first identity in \eqref{lem_DR_2_eq0} directly follows from \eqref{commut_1} since locally $\widetilde{S}_h(T)\subseteq H^1(T)$.  We now prove the non-trivial one, the second one in \eqref{lem_DR_2_eq0}. 
The result on non-interface elements is trivial, and we only discuss the interface element. On each interface element $T$ with the vertices $A_i$, $i=1,2,3$ and the edges $e_i$, $i=1,2,3$ as shown in Figure \ref{fig:sandwich}, we let $\bft_i$ be the tangential vectors of $e_i$ with the anti-clockwise orientation. Without loss of generality, we focus on the edge $e_1$. For each $v\in \widetilde{H}^2_0(\Omega)$, by the similar derivation to \eqref{lem_DR_1_eq1},
%\begin{equation}
%\label{lem_DR_2_eq1}
%\int_{e_1} \nabla \psi_1\cdot \bft_1  ds = 0,~~~~ \int_{e_1} \nabla \psi_2\cdot \bft_2  ds = -1,~~~~  \int_{e_1}  \nabla \psi_3\cdot \bft_3  ds = 1.
%\end{equation}
and the continuity of $v$, we have
\begin{equation*}
\int_{e_1}  \nabla \tilde{I}_{h,T} v\cdot\bft_1 ds = \tilde{I}_{h,T} v(A_3)-\tilde{I}_{h,T} v(A_2)  = v(A_3)-v(A_2) = \int_{e_1} \nabla v\cdot\bft_1 ds =  \int_{e_1} \widetilde{\Pi}_{h,T}(\nabla v)\cdot\bft_1 ds.
\end{equation*}
Similar arguments apply to other edges, and thus we have the desired result due to the unisolvence and Lemma \ref{lem_DR_1}.
\end{proof}
\begin{rem}
\label{rem_DR}
By similar arguments to Theorem \ref{thm_DR_4}, the following commuting properties also hold
\begin{equation}
\label{rem_DR_eq0}
\mathbb{\Pi}_h \circ\nabla = \nabla \circ  \mathbb{I}_h  ~~~ \text{on} ~ \widetilde{S}_h(\Omega) ~~~~~ \text{and} ~~~~~ \mathbb{\Pi}^{-1}_h \circ\nabla = \nabla \circ  \mathbb{I}^{-1}_h  ~~~ \text{on} ~ S_h(\Omega).
\end{equation}
\end{rem}
Furthermore, we can show that $\mathbb{\Pi}_h$ yields an isomorphism between $\text{Ker(curl)}\cap\mathcal{IND}_{h,0}(\Omega)$ and $\text{Ker(curl)}\cap\mathcal{ND}_{h,0}(\Omega)$.
%%%%%%%%%%%%%%%%%%%%%%%%%%%%%%%%
\begin{thm}
\label{thm_curl0}
$\mathbb{\Pi}_h$ is an isomorphism between $\emph{Ker(curl)}\cap\mathcal{IND}_{h,0}(\Omega)$ and $\emph{Ker(curl)}\cap\mathcal{ND}_{h,0}(\Omega)$. 
%In particular, if $\beta$ is continuous, i.e., $\beta^-=\beta^+$, there holds
%\begin{equation}
%\label{thm_curl0_eq0}
%\mathbb{\Pi}_h|_{\emph{Ker(curl)}\cap\mathcal{IND}_h(\Omega)} = \mathcal{I}
%\end{equation}
%where $\mathcal{I}$ denotes the identity mapping.
\end{thm}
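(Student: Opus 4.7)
The plan is to exploit the existing isomorphism $\mathbb{\Pi}_h:\mathcal{IND}_{h,0}(\Omega)\to\mathcal{ND}_{h,0}(\Omega)$ together with the exact sequence $\mathrm{Ker(curl)}\cap\mathcal{ND}_{h,0}(\Omega)=\nabla S_{h,0}(\Omega)$ from \eqref{commut_2} and the commuting relations of Remark \ref{rem_DR}. Injectivity is immediate: since $\mathbb{\Pi}_h$ is injective on the whole of $\mathcal{IND}_{h,0}(\Omega)$, its restriction to any subspace is injective. Thus the proof reduces to verifying (i) that $\mathbb{\Pi}_h$ maps $\mathrm{Ker(curl)}\cap\mathcal{IND}_{h,0}(\Omega)$ into $\mathrm{Ker(curl)}\cap\mathcal{ND}_{h,0}(\Omega)$, and (ii) surjectivity onto this target.

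For (i), fix $\bfz_h\in\mathrm{Ker(curl)}\cap\mathcal{IND}_{h,0}(\Omega)$ and an interface element $T$ (the non-interface case being trivial since $\mathbb{\Pi}_h\bfz_h=\bfz_h$ there). By \eqref{IFE_loc_spa_Hcurl}, $\mathcal{IND}_h(T)\subset\bfH(\mathrm{curl};T)$, so Stokes' theorem applies and gives $\int_{\partial T}\bfz_h\cdot\bft\,ds=\int_T\mathrm{curl}\,\bfz_h\,dX=0$. Since $\mathbb{\Pi}_h\bfz_h\in\mathcal{ND}_h(T)$ has constant curl, and shares the same edge integrals as $\bfz_h$ by definition of $\mathbb{\Pi}_h$ in \eqref{iso_map}, another application of Stokes' theorem yields
\begin{equation*}
|T|\,\mathrm{curl}(\mathbb{\Pi}_h\bfz_h)=\int_T\mathrm{curl}(\mathbb{\Pi}_h\bfz_h)\,dX=\int_{\partial T}\mathbb{\Pi}_h\bfz_h\cdot\bft\,ds=\int_{\partial T}\bfz_h\cdot\bft\,ds=0,
\end{equation*}
so $\mathrm{curl}(\mathbb{\Pi}_h\bfz_h)=0$ on $T$. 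This is the main technical step; note that it depends crucially on the local $\bfH(\mathrm{curl})$-conformity established in \eqref{IFE_loc_spa_Hcurl}, which in turn stems from imposing the tangential jump condition on the whole segment $\Gamma^T_h$, not just pointwise.

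For (ii), take any $\bfw_h\in\mathrm{Ker(curl)}\cap\mathcal{ND}_{h,0}(\Omega)$. By \eqref{commut_2}, we may write $\bfw_h=\nabla v_h$ for some $v_h\in S_{h,0}(\Omega)$. Define the candidate preimage $\bfz_h:=\nabla(\mathbb{I}_h^{-1}v_h)\in\nabla\widetilde{S}_{h,0}(\Omega)$, which lies in $\mathcal{IND}_{h,0}(\Omega)$ by Lemma \ref{lem_DR_1} and is curl-free element-wise since it is a piecewise gradient. Applying the second commuting relation in Remark \ref{rem_DR},
\begin{equation*}
\mathbb{\Pi}_h\bfz_h=\mathbb{\Pi}_h(\nabla\mathbb{I}_h^{-1}v_h)=\nabla(\mathbb{I}_h\mathbb{I}_h^{-1}v_h)=\nabla v_h=\bfw_h,
\end{equation*}
which establishes surjectivity and completes the proof. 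As a byproduct, the argument actually shows the stronger identity $\mathrm{Ker(curl)}\cap\mathcal{IND}_{h,0}(\Omega)=\nabla\widetilde{S}_{h,0}(\Omega)$, giving the discrete exact sequence for the IFE spaces that will be needed later in the inf-sup analysis.
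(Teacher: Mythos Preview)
Your proof is correct but follows a different route from the paper. The paper derives a single local identity on each interface element,
\[
\mu^{-1}\,\mathrm{curl}\,\mathbb{\Pi}^{-1}_{h,T}\bfz_h = |T|\,\tau\,\mathrm{curl}\,\bfz_h,
\]
using the equal-curl property of the IFE shape functions from \eqref{thm_bound_eq02} (namely $\mu^{-1}\mathrm{curl}\,\bfpsi_i=\tau$ for all $i$). This identity immediately yields the ``if and only if'' in one stroke, so both directions of the curl-free isomorphism come out simultaneously. In contrast, you handle the two directions separately: the forward direction by a plain Stokes argument (which is more elementary, as it does not need \eqref{thm_bound_eq02}), and surjectivity by pulling back through the standard exact sequence \eqref{commut_2} and the commuting relation of Remark~\ref{rem_DR}. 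A minor slip: the displayed step $\mathbb{\Pi}_h(\nabla\mathbb{I}_h^{-1}v_h)=\nabla(\mathbb{I}_h\mathbb{I}_h^{-1}v_h)$ actually invokes the \emph{first} identity in Remark~\ref{rem_DR}, not the second. What your approach buys is that it delivers Theorem~\ref{thm_DR_3} (the IFE exact sequence) in the same breath, whereas the paper proves that separately afterward; what the paper's approach buys is the identity \eqref{thm_curl0_eq1} itself, which is reused verbatim in the proofs of Theorems~\ref{thm_interp_err_4} and~\ref{thm_stiff}.
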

\begin{proof}
Let's focus on an interface element $T$, and recall that $\bfpsi_i$, $i=1,2,3$, are the $\bfH(\text{curl})$ IFE shape functions. By the identity in \eqref{thm_bound_eq02} we can let $\tau=\mu^{-1}\text{curl}~\bfpsi_i$, $i=1,2,3$. For each $\bfz_h\in \mathcal{ND}_{h,0}(\Omega)$, we note that $\mu^{-1}\text{curl}~ \mathbb{\Pi}^{-1}_{h,T} \bfz_h$ is a constant, and then the integration by parts yields
\begin{equation}
\begin{split}
\label{thm_curl0_eq1}
\frac{1}{\mu} \text{curl}~ \mathbb{\Pi}^{-1}_{h,T} \bfz_h = \sum_{i=1}^3 \int_{e_i} \bfz_h\cdot\bft_i ds \frac{1}{\mu} \text{curl}~ \bfpsi_i = \int_{\partial T} \bfz_h\cdot\bft \tau ds = \int_T \text{curl}~\bfz_h dX ~\tau = |T| \tau \text{curl}~\bfz_h.
\end{split}
\end{equation}
The identity above shows that $\text{curl}~\bfz_h=0$ if and only if $ \text{curl}~ \mathbb{\Pi}^{-1}_{h,T} \bfz_h = 0$, and similar results also hold on non-interface elements. Thus we have the desired result. 
%In addition, we note that $\text{Ker(curl)}\cap\mathcal{ND}_h(\Omega)$ consists of piecewise constant vectors; so does $\text{Ker(curl)}\cap\mathcal{IND}_h(\Omega)$ but the functions on interface elements are piecewise constant vectors on each subelement. Moreover, if $\beta^-=\beta^+$ then the local IFE functions of piecewise constant vectors on each interface element will reduce to the same constant vector. Therefore \eqref{thm_curl0_eq0} follows from the self-preserving property of the isomorphism $\mathbb{\Pi}_h$.
\end{proof}
\begin{rem}
\label{rem_identity}
We note that $\text{Ker(curl)}\cap\mathcal{ND}_{h,0}(\Omega)$ consists of piecewise constant vectors; so does $\text{Ker(curl)}\cap\mathcal{IND}_{h,0}(\Omega)$, but the functions of the latter one on interface elements are piecewise constant vectors on each subelement. Moreover, if $\beta^-=\beta^+$ then the local IFE functions of piecewise constant vectors on each interface element will reduce to the same constant vector. Therefore, by the self-preserving property of the isomorphism $\mathbb{\Pi}_h$, if $\beta$ is continuous, there holds
\begin{equation}
\label{thm_curl0_eq0}
\mathbb{\Pi}_h|_{\emph{Ker(curl)}\cap\mathcal{IND}_h(\Omega)} = \mathcal{I}
\end{equation}
where $\mathcal{I}$ denotes the identity mapping.
\end{rem}
Another corollary of the isomorphism $\mathbb{\Pi}_h$ and $\mathbb{I}_h$ is property of the exact sequence similar to \eqref{commut_2}.

%\begin{thm}
%\label{thm_DR_4}
%The diagram in the right of \eqref{DR_comp_glob} is commuting, namely
%\begin{equation}
%\label{thm_DR_4_eq0}
%\widetilde{\Pi}_h \circ \nabla = \nabla \circ \tilde{I}_h ~~~~ \text{on} ~ H^1_0(\Omega).
%\end{equation}
%\end{thm}
%\begin{proof}
%Given each $v\in H^1(\Omega)$, we have
%\begin{equation}
%\label{thm_DR_4_eq1}
%\nabla \circ \tilde{I}_h = \nabla \circ  \mathbb{I}^{-1}_h \circ \mathbb{I}_h \circ \tilde{I}_h = \mathbb{\Pi}^{-1}_h\circ \nabla \circ \mathbb{I}_h \circ \tilde{I}_h = \mathbb{I}_h \circ \tilde{I}_h
%\end{equation}
%\end{proof}

\begin{thm}
\label{thm_DR_3}
For IFE spaces, the sequence $\widetilde{S}_{h,0}(\Omega)\xrightarrow[]{\nabla} \mathcal{IND}_{h,0}(\Omega) \xrightarrow[]{\emph{curl}}Q_h$ is exact, where $Q_h\subset L^2(\Omega)$ is a piecewise constant space, namely
\begin{equation}
\label{thm_DR_3_eq0}
\nabla \widetilde{S}_{h,0}(\Omega) = \emph{Ker(curl)}\cap\mathcal{IND}_{h,0}(\Omega) = \{ \bfz_h\in \mathcal{IND}_{h,0}(\Omega)~:~ \emph{curl}~\bfz_h = 0\}.
\end{equation}
%where both $\emph{curl}$ and $\nabla$ are understood element-wisely.
\end{thm}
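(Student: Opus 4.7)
The plan is to leverage the isomorphisms $\mathbb{I}_h$ and $\mathbb{\Pi}_h$ together with the already-established classical exact sequence \eqref{commut_2} for the standard finite element spaces, thereby transferring the exactness from the FE side to the IFE side.

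First I would establish the forward inclusion $\nabla \widetilde{S}_{h,0}(\Omega) \subseteq \emph{Ker(curl)} \cap \mathcal{IND}_{h,0}(\Omega)$. The membership in $\mathcal{IND}_{h,0}(\Omega)$ is exactly Lemma \ref{lem_DR_1}, and curl-freeness (interpreted element-wisely, consistent with how $\nabla$ is understood on IFE spaces per the discussion before the statement) follows since $\text{curl}\circ\nabla = 0$ on any piecewise $H^1$ scalar field. This direction is essentially immediate.

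The core step is the reverse inclusion. Take any $\bfz_h \in \emph{Ker(curl)} \cap \mathcal{IND}_{h,0}(\Omega)$. Apply the isomorphism $\mathbb{\Pi}_h$ to produce $\mathbb{\Pi}_h \bfz_h \in \mathcal{ND}_{h,0}(\Omega)$; Theorem \ref{thm_curl0} guarantees $\text{curl}(\mathbb{\Pi}_h \bfz_h) = 0$. The classical exact sequence \eqref{commut_2} for the standard spaces then supplies a $v_h \in S_{h,0}(\Omega)$ with $\nabla v_h = \mathbb{\Pi}_h \bfz_h$. Pulling this back via $\mathbb{I}_h^{-1}$ and invoking the commuting identity $\mathbb{\Pi}_h^{-1}\circ\nabla = \nabla\circ\mathbb{I}_h^{-1}$ on $S_h(\Omega)$ from \eqref{rem_DR_eq0} yields
\begin{equation*}
\nabla (\mathbb{I}_h^{-1} v_h) = \mathbb{\Pi}_h^{-1} \nabla v_h = \mathbb{\Pi}_h^{-1} \mathbb{\Pi}_h \bfz_h = \bfz_h,
\end{equation*}
and since $\mathbb{I}_h^{-1}$ is nodal-value preserving and $v_h$ vanishes on $\partial\Omega$ (the interface does not touch the boundary, so boundary mesh nodes are not influenced by the IFE modification), we have $\mathbb{I}_h^{-1} v_h \in \widetilde{S}_{h,0}(\Omega)$. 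This produces the required scalar potential in $\widetilde{S}_{h,0}(\Omega)$.

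The main subtlety I anticipate is bookkeeping around the fact that $\nabla$ on $\widetilde{S}_{h,0}(\Omega)$ must be read element-wise (the global IFE scalar space is not in $H^1(\Omega)$), and confirming that the resulting gradient still produces the tangential-average continuity required for membership in $\mathcal{IND}_{h,0}(\Omega)$; however this has already been settled in Lemma \ref{lem_DR_1} and so no fresh work is needed. Beyond that, the argument is a straightforward diagram chase through the right square of \eqref{DR_comp_glob}, and the only properties it consumes are the isomorphism Theorem \ref{thm_curl0}, the commuting identity \eqref{rem_DR_eq0}, and the classical exact sequence \eqref{commut_2}.
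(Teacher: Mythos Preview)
Your proposal is correct and follows essentially the same approach as the paper: forward inclusion via Lemma~\ref{lem_DR_1}, then the reverse inclusion by pushing $\bfz_h$ through $\mathbb{\Pi}_h$, invoking Theorem~\ref{thm_curl0} and the classical exact sequence \eqref{commut_2} to find a scalar potential in $S_{h,0}(\Omega)$, and pulling back via $\mathbb{I}_h^{-1}$ using the commuting identity \eqref{rem_DR_eq0}. The only addition you make beyond the paper is the explicit remark that $\mathbb{I}_h^{-1}v_h$ retains the zero boundary trace, which is a welcome clarification.
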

\begin{proof}
By Lemma \ref{lem_DR_1}, it is easy to see $\nabla \widetilde{S}_{h,0}(\Omega)\subset  \text{Ker(curl)}\cap\mathcal{IND}_{h,0}(\Omega)$. As for the reverse direction, for each $\bfz_h\in\text{Ker(curl)}\cap\mathcal{IND}_{h,0}(\Omega)$, Theorem \ref{thm_curl0} suggests $\mathbb{\Pi}_h\bfz_h\in\text{Ker(curl)}\cap\mathcal{ND}_{h,0}(\Omega)$. Then due to the exact sequence for $S_{h,0}(\Omega)$ and $\mathcal{ND}_{h,0}(\Omega)$, there exists $s_h\in S_{h,0}(\Omega)$ such that $\nabla s_h = \mathbb{\Pi}_h\bfz_h$. Now we take $\tilde{s}_h=\mathbb{I}^{-1}_hs_h$, and use \eqref{rem_DR_eq0} to obtain $\nabla \tilde{s}_h = \nabla \mathbb{I}^{-1}_h s_h = \mathbb{\Pi}^{-1}_h \nabla s_h = \mathbb{\Pi}^{-1}_h \mathbb{\Pi}_h\bfz_h = \bfz_h$
%\begin{equation}
%\label{thm_DR_3_eq1}
%\nabla \tilde{s}_h = \nabla \mathbb{I}^{-1}_h s_h = \mathbb{\Pi}^{-1}_h \nabla s_h = \mathbb{\Pi}^{-1}_h \mathbb{\Pi}_h\bfz_h = \bfz_h
%\end{equation}
which has finished the proof.
\end{proof}
%\begin{rem}
%\label{rem_DR_4}
%By Theorem \ref{thm_DR_3} and the isomorphism $\mathbb{\Pi}_h$, there clearly holds
%\begin{equation}
%\label{rem_DR_4_eq0}
%\mathcal{IND}_h(\Omega) = \mathbb{\Pi}^{-1}_h\bfX_h + \nabla \widetilde{S}_h(\Omega)
%\end{equation}
%where the notation $+$ emphasizes that the geometric structure, i.e., the orthogonality, is lost under the isomorphism $\mathbb{\Pi}_h$.
%\end{rem}

%%%%%%%%%%%%%%%%%%%%%%%%%%%%%%%%%%%%%%%%%%%%%%%%%%%%%%%%%%%%%%%%%%%%%%%%%%%%%%%%%%%%%%%%%%%%%%%%%%%%%%%%%%%%%%%%%%%%%%%%%%%%%%%%%%%%%%%%%%%%%%%%%%%%%%%%%%%%%%%%%%%%%%%%%%%%%%%%%%%%%%%%%%%%%%%%%%%%%%%%%%%%%%%%%%%%%%%%%%%%%%%%%%%%%%%%%%%%%%%%%%%%%%%%%%%%%%%%%%%%%%%%%%%%%%%%%%%%%%%%%%%%%%%%%%%%%%%%%%%%%%%%%%%%%%%%%%%%%%%%%%%%%%%%%%%%%%%%%%%%%%%%%%%%%%%%%%%%%%%%%%%%%%%%%%%%%%%%%%%%%%%%%%%%%%%%%%%%%%%%%%%%%%%%%%%%%%%%%%%%%%%%%%%%%%%%%%%%%%%%%%%%%%%%%%%%%%%%%%%%%%%%%%%%%%%%%%%%%%%%%%%%%%%%%%%%%%%%%%%%%%%%%%%%%%%%%%%%%%%%%%%%%%%%%%%%%%%%%%%%%%%%%%%%%%%%%%%%%%%%%%%%%%%%%%%%%%%%

\section{Approximation Capabilties}
\label{sec:analysis}
In this section, we analyze the approximation capabilities of the IFE space \eqref{IFE_glob_spa} through the interpolation $\widetilde{\Pi}_h$ in \eqref{interp_4}.
%We first introduce a group of interpolation operators. 
%On each element $T$, let $\Pi_{h,T}$ be the well-known interpolation operator \cite{2003Monk} defined by
%\begin{equation}
%\label{interp_1}
%\Pi_{h,T}~:~ \bfH^1(\text{curl};T) \longrightarrow \mathcal{ND}_h(T) ~~~ \text{with} ~~~ \int_{e_i} \Pi_{h,T} \bfu\cdot\bft_i ds = \int_{e_i}  \bfu\cdot\bft_i ds.
%\end{equation}
%It has the follwoing optimal approximation errors
%\begin{equation}
%\label{interp_1_err}
%\| \bfu - \Pi_{h,T}\bfu \|_{\bfH(\text{curl};T)} \le Ch \| \bfu \|_{\bfH^1(\text{curl};T)}.
%\end{equation}
We note that the estimate on non-interface elements directly follows from \eqref{interp_1_err} since the standard N\'ed\'elec spaces are used, but the estimate on interface elements is in general one of the major challenges for IFE due to the insufficient regularity near the interface, and the weaker regularity for the considered problem makes the analysis even more difficult. %Therefore new theoretical tools are needed to gauge the approximation capabilities.
% which is one of the major contributions of this research.

\begin{figure}[h]
\centering
\begin{minipage}{.38\textwidth}
  \centering
  \includegraphics[width=2.2in]{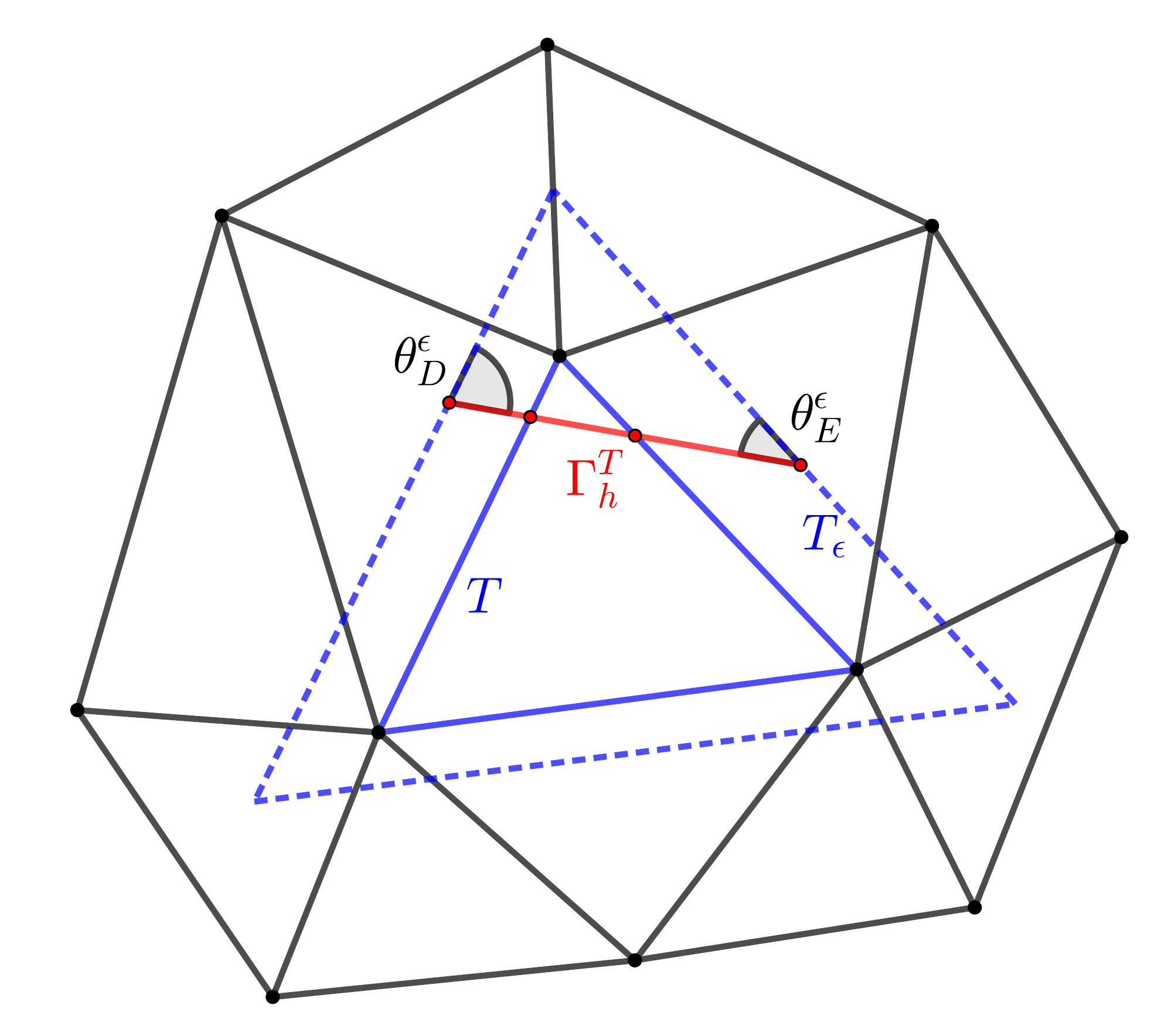}
  \caption{A fictitious element and a patch}
  \label{fig:patch}
\end{minipage}
\hspace{1cm}
\begin{minipage}{.4\textwidth}
  \centering
  \ \includegraphics[width=2in]{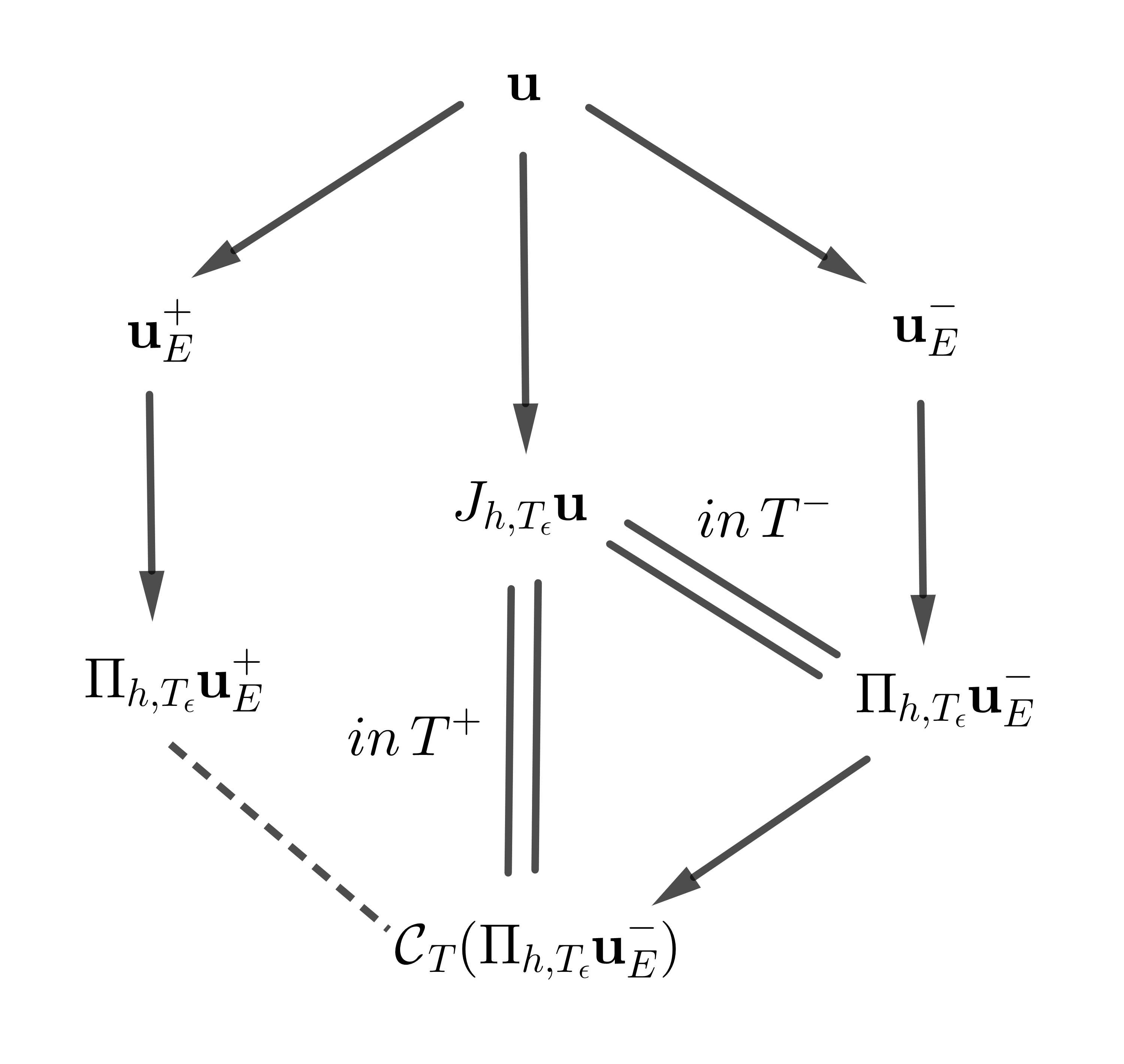}
  \caption{The diagram for interpolation errors}
  \label{fig:diagram}
  \end{minipage}
\end{figure}

In this article, we employ the general framework and techniques recently developed in \cite{2020AdjeridBabukaGuoLin,2019GuoLin}. For this purpose, we first introduce the patch $\omega_T$ of an element $T$
\begin{equation}
\label{patch_T}
\omega_T = \cup\{ T'\in\mathcal{T}_h~:~ T'\cap T \neq \emptyset \}.
\end{equation}
%Then, guided by the general framework recently developed in \cite{2020AdjeridBabukaGuoLin,2019GuoLin,2020GuoLin}, 
Then for each $T\in\mathcal{T}^i_h$ we define its fictitious element $T_{\epsilon}$ as its homothetic image:
\begin{equation}
\label{fic_elem}
T_{\epsilon} = \{ X\in\mathbb{R}^2~:~\exists Y\in T ~ \text{s.t.}~ \overrightarrow{OX}=\epsilon\overrightarrow{OY} \},
\end{equation}
where $O$ is the homothetic center which can be simply chosen as the centroid of $T$ and $\epsilon\ge1$ is a scaling factor. In the following analysis, we assume there exists a fixed $\epsilon_0>1$ such that for each $T$ there holds $T_{\epsilon_0}\subseteq \omega_T$. It is easy to see that this assumption is fulfilled if the mesh is regular, see the illustration by Figure \ref{fig:patch}. Without loss of generality, from now on we shall fix $\epsilon=\epsilon_0$ for all fictitious elements. The reason for using fictitious elements is that each subelement of $T_{\epsilon}$ has regular shape, namely their angles and length of sides are all bounded regardless of the interface location. We refer readers to Lemma 3.2 in \cite{2019GuoLin} for more details. This property is important for making the generic constants in all the analysis below independent of the interface location. In particular, we let $\Gamma^{T_{\epsilon}}_h$ be the extension of the straight line $\Gamma^T_h$ to $T_{\epsilon}$ and let $\theta_D^{\epsilon}$ and $\theta_E^{\epsilon}$ be the angles formed by the edges of the fictitious element and $\Gamma^{T_\epsilon}_h$ as shown in Figure \ref{fig:patch}. Then Lemma 3.2 in \cite{2019GuoLin} yields the two constants $\delta_1$ and $\delta_2>0$ independent of the interface location such that for every interface element and its fictitious element, 
%\fn{What is $x$ inside $x|\Gamma^{T_{\epsilon}}_h|$? {\color{red}It is a typo and has been corrected}}
\begin{equation}
\label{interf_ext}
|\Gamma^{T_{\epsilon}}_h| \ge \delta_1 h_T  ~~~~~~ \text{and} ~~~~~~ \theta^{\epsilon}_D,\theta^{\epsilon}_E\ge \delta_2.
\end{equation}

Now we are ready to define the special interpolation operator
%\fn{This definition is opposite to the use of 
%$\mathcal{C}_T$ in \eqref{IFE_loc_spa}. {\color{red}I think I mess the direction of the operator $\mathcal{C}_T$ which is assumed to be from $\mathcal{ND}_h(T^-_h)$ to $\mathcal{ND}_h(T^+_h)$ in the analysis but opposite in the definition \eqref{IFE_loc_spa}. I have corrected the definition \eqref{IFE_loc_spa} to keep it consistent.}}
\begin{equation}
\begin{split}
\label{interp_2}
J_{h,T_{\epsilon}}~:~ \widetilde{\bfH}^1(\text{curl};T_{\epsilon}) \longrightarrow \mathcal{IND}_h(T_{\epsilon})  ~~~~
 \text{with} ~~~~  J_{h,T_{\epsilon}} \bfu =
 \begin{cases}
      & J^-_{h,T_{\epsilon}} \bfu = \Pi_{h,T_{\epsilon}} \bfu^-_E ~~~~~~~~~~~ \text{in} ~ T^-_{\epsilon}, \\
      & J^+_{h,T_{\epsilon}} \bfu = \mathcal{C}_T(\Pi_{h,T_{\epsilon}} \bfu^-_E) ~~~~~ \text{in} ~ T^+_{\epsilon} ,
\end{cases}
\end{split}
\end{equation}
where $\Pi_{h,T_{\epsilon}} \bfu^-_E$ is a polynomial generated on $T_{\epsilon}$ but used onto $T^-$ to apply $\mathcal{C}_T$. Actually, since polynomials can be naturally extended to everywhere, in the following analysis we can and always use $\Pi_{h,T_{\epsilon}} \bfu^-_E$, $\mathcal{C}_T(\Pi_{h,T_{\epsilon}} \bfu^-_E)$ (those polynomials defined on subelements) on the whole $T_{\epsilon}$. Moreover, we note that $\mathcal{ND}_h(T^{\pm})$ and $\mathcal{ND}_h(T^{\pm}_{\epsilon})$ consist of the same polynomials, and thus we shall only use $\mathcal{ND}_h(T_{\epsilon})$ to denote the  N\'ed\'elec space associated with the element $T$ for simplicity.
The motivation behind the special interpolation operator \eqref{interp_2} to analyze the approximation capabilities is a relation between different extension and interpolation operators including $\bfE^{\pm}_{\text{curl}}$, $\mathcal{C}_T$ and $\Pi_{h,T_{\epsilon}}$, illustrated by the diagram in Figure \ref{fig:diagram}. This diagram actually suggests a delicate decomposition of the interpolation error $J_{h,T_{\epsilon}}\bfu-\bfu$ into the errors of $\Pi_{h,T_{\epsilon}}\bfu^{\pm}_E - \bfu^{\pm}_E$ and the error from $\mathcal{C}_T$, i.e., $\Pi_{h,T_{\epsilon}}\bfu^+_E - \mathcal{C}_T(\Pi_{h,T_{\epsilon}}\bfu^-_E)$ in which only the latter one is unclear indicated by a dashed line in this diagram and will be analyzed below while the rest are well-established indicated by the solid arrows.

For each interface element $T\in\mathcal{T}^i_h$ and the associated fictitious element $T_{\epsilon}$ we prepare the following lemma.
\begin{lemma}
\label{quiv_jc_3}
There exists a function $\rho\in L^{\infty}(\Gamma^{T_{\epsilon}}_h)$ such that $\|\rho\|_{L^{\infty}(\Gamma^{T_{\epsilon}}_h)}\le Ch^{-1}_T$ and
\begin{equation}
\label{quiv_jc_3_eq0}
\int_{\Gamma^{T_{\epsilon}}_h} \rho (\bfv_h\cdot\bar{\bfn}) (\bfw_h\cdot\bar{\bfn}) ds = (\bfv_h(X_m)\cdot\bar{\bfn}) (\bfw_h(X_m)\cdot\bar{\bfn}) ~~~~~ \forall \bfv_h,\bfw_h\in \mathcal{ND}_h(T).
\end{equation}
\end{lemma}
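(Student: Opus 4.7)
The idea is to reduce the claim to a polynomial moment problem on the segment $\Gamma^{T_\epsilon}_h$. Parametrize $\Gamma^{T_\epsilon}_h$ by arclength $s$ chosen so that $X_m$ corresponds to $s=0$; say $\Gamma^{T_\epsilon}_h=\{X_m+s\bar{\bft}:s\in[-L_1,L_2]\}$. For any $\bfv_h=\bfa+b[x_2,-x_1]^t\in\mathcal{ND}_h(T)$, a direct substitution shows that
\begin{equation*}
\bfv_h(X_m+s\bar{\bft})\cdot\bar{\bfn}=\bfv_h(X_m)\cdot\bar{\bfn}+b\,(\bar{t}_2\bar{n}_1-\bar{t}_1\bar{n}_2)\,s,
\end{equation*}
so $\bfv_h\cdot\bar{\bfn}$ is an affine function of $s$ on $\Gamma^{T_\epsilon}_h$. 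Consequently the product $(\bfv_h\cdot\bar{\bfn})(\bfw_h\cdot\bar{\bfn})$ is a polynomial of degree $\leq 2$ in $s$, and the first step is merely expanding this product.

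The plan is then to seek $\rho$ itself as a polynomial in $\mathbb{P}_2(\Gamma^{T_\epsilon}_h)$ satisfying the three moment conditions
\begin{equation*}
\int_{-L_1}^{L_2}\rho\,ds=1,\qquad\int_{-L_1}^{L_2}\rho\,s\,ds=0,\qquad\int_{-L_1}^{L_2}\rho\,s^2\,ds=0.
\end{equation*}
Existence and uniqueness of such $\rho$ follow from the positive definiteness of the $L^2$ Gram matrix of $\{1,s,s^2\}$ on $[-L_1,L_2]$. Substituting the affine expansions of $\bfv_h\cdot\bar{\bfn}$ and $\bfw_h\cdot\bar{\bfn}$ into $\int\rho(\bfv_h\cdot\bar{\bfn})(\bfw_h\cdot\bar{\bfn})\,ds$ and invoking the three moment relations makes all terms vanish except for $(\bfv_h(X_m)\cdot\bar{\bfn})(\bfw_h(X_m)\cdot\bar{\bfn})\cdot\int\rho\,ds=(\bfv_h(X_m)\cdot\bar{\bfn})(\bfw_h(X_m)\cdot\bar{\bfn})$, yielding \eqref{quiv_jc_3_eq0}.

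The main obstacle is the quantitative bound $\|\rho\|_{L^\infty(\Gamma^{T_\epsilon}_h)}\le Ch_T^{-1}$ with $C$ independent of the interface location. By a standard affine change of variables $s\mapsto s/(L_1+L_2)$ the problem reduces to representing evaluation at a point $\tau=L_1/(L_1+L_2)\in(0,1)$ on the reference interval $[0,1]$, and the norm of the unique representing polynomial in $\mathbb{P}_2([0,1])$ depends continuously on $\tau$, blowing up only when $\tau$ approaches $0$ or $1$. Thus the bound follows once one shows $L_1,L_2\ge ch_T$ for a constant $c$ independent of the interface location. This is precisely where the fictitious element construction is used: since $T\subseteq T_{\epsilon_0}$ with $\epsilon_0>1$ fixed, any point of $T$ (in particular $X_m$) has distance bounded below by a positive multiple of $h_T$ from $\partial T_{\epsilon_0}$ under the shape-regularity assumption, and the endpoints of $\Gamma^{T_\epsilon}_h$ lie on $\partial T_\epsilon$. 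Combined with the lower bound $|\Gamma^{T_\epsilon}_h|\ge\delta_1 h_T$ from \eqref{interf_ext}, this gives $\min(L_1,L_2)\ge ch_T$ and $L_1+L_2\le Ch_T$, so that the scaling argument yields $\|\rho\|_\infty\le Ch_T^{-1}$ as required.
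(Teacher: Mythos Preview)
Your proof is correct and follows essentially the same route as the paper: observe that $\bfv_h\cdot\bar{\bfn}\in\mathbb{P}_1(\Gamma^{T_\epsilon}_h)$, reduce \eqref{quiv_jc_3_eq0} to finding $\rho\in\mathbb{P}_2$ that represents point evaluation on $\mathbb{P}_2$, invoke invertibility of the Gram (mass) matrix, and scale back from a reference interval to obtain $\|\rho\|_{L^\infty}\le Ch_T^{-1}$ via $|\Gamma^{T_\epsilon}_h|\ge\delta_1 h_T$. The paper parametrizes from an endpoint rather than from $X_m$, but this is only cosmetic.

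If anything, your argument is slightly more careful on one point the paper glosses over. When the paper asserts ``it is easy to see $\|\hat\rho\|_{L^\infty(0,1)}\le C$'', the constant a priori depends on the scaled location $\hat\xi_m$ of $X_m$ and would blow up if $\hat\xi_m\to 0$ or $1$. You close this gap explicitly: since $X_m\in T$ and $T_\epsilon$ is a fixed homothetic enlargement of $T$, shape regularity gives $\operatorname{dist}(X_m,\partial T_\epsilon)\ge ch_T$, hence $L_1,L_2\ge ch_T$ and $\tau=L_1/(L_1+L_2)$ stays in a compact subinterval of $(0,1)$, making the representing polynomial uniformly bounded. This is exactly the reason the fictitious element is needed here, and your justification is sound.
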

\begin{proof}
We consider a local Cartesian system in which the origin is one of the intersection points of $\Gamma^{T_{\epsilon}}_h$, the $\xi_1$-axis is the straight line $\Gamma^{T_{\epsilon}}_h$ and the $\xi_2$-axis is the one perpendicular to $\xi_1$. We let $\xi_m$ be the point corresponding to $X_m$ and let $l=|\Gamma^{T_{\epsilon}}_h|$. Note that for each $\bfv_h\in\mathcal{ND}_h(T)$, $\bfv_h\cdot\bar{\bfn}$ restricted onto $\Gamma^{T_{\epsilon}}_h$ belongs $\mathbb{P}_1(\Gamma^{T_{\epsilon}}_h)$. Therefore we only need to show the existence of $\rho$ such that
\begin{equation}
\label{quiv_jc_3_eq1}
 \int^l_0 \rho v d\xi_1 = v(\xi_m)~~~~~ \forall v\in \mathbb{P}_2(0,l).
\end{equation}
Using the scaling argument $\hat{\xi}_1=\xi_1/l$, we clearly see the existence of $\hat{\rho}\in\mathbb{P}_2(0,1)$ such that
$
\int^1_0 \hat{\rho} \hat{v} d\hat{\xi}_1 = \hat{v}(\hat{\xi}_m)
$
$\forall\hat{v}\in\mathbb{P}_2(0,1)$ since the induced mass matrix is certainly non-singular. Then it is easy to see $\| \hat{\rho} \|_{L^{\infty}(0,1)}\le C$. Hence taking $\rho=\hat{\rho}l^{-1}$ fulfills \eqref{quiv_jc_3_eq1} and $\|\rho\|_{L^{\infty}(\Gamma^{T_{\epsilon}}_h)}\le Ch^{-1}_T$ due to the first inequality in \eqref{interf_ext}.
\end{proof}
Lemma \ref{quiv_jc_3} enables us to present an equivalent description for \eqref{weak_jc_3} which is useful in the analysis.

\begin{lemma}
\label{lem_equiv_jc}
For each $\bfv_h\in \mathcal{ND}_h(T_{\epsilon})$, the third condition of $\mathcal{C}_T$ defined in \eqref{weak_jc_3} is equivalent to
\begin{equation}
\label{weak_eqv_jc_3}
\int_{\Gamma^{T_{\epsilon}}_h} \rho \beta^+( \mathcal{C}_T(\bfv_h)\cdot\bar{\bfn})(\bfw_h\cdot\bar{\bfn})  ds  = \int_{\Gamma^{T_{\epsilon}}_h} \rho \beta^- (\bfv_h\cdot\bar{\bfn} )(\bfw_h\cdot\bar{\bfn}) ds ~~~~ \forall \bfw_h\in\mathcal{ND}_h(T_{\epsilon}). 
\end{equation}
%\begin{subequations}
%\label{weak_eqv_jc}
%\begin{align}
%     \mathcal{C}_T(\bfv_h)\cdot\bar{\bft} &=  \bfv_h \cdot\bar{\bft} ~~~~~~~~~~~~~~~~~~~~~~~~~~~~~~~~  \text{on} ~ \Gamma^{T_{\epsilon}}_h, \label{weak_eqv_jc_1}  \\
%     \frac{1}{\mu^{-}} \emph{curl}~\mathcal{C}_T(\bfv_h) & =  \frac{1}{\mu^{+}}\emph{curl}~\bfv_h ~~~~~~~~~~~~~~~~~~~~~~~~~~  \text{in} ~ T,  \label{weak_eqv_jc_2} \\
%%  and for each \bfv_h\in \mathcal{ND}_h(T_{\epsilon}), there holds \\
%     \int_{\Gamma^{T_{\epsilon}}_h} \rho \beta^+( \mathcal{C}_T(\bfv_h)\cdot\bar{\bfn})(\bfw_h\cdot\bar{\bfn})  ds & = \int_{\Gamma^{T_{\epsilon}}_h} \rho \beta^- (\bfv_h\cdot\bar{\bfn} )(\bfw_h\cdot\bar{\bfn}) ds ~~~~ \forall \bfw_h\in\mathcal{ND}_h(T_{\epsilon}).  \label{weak_eqv_jc_3}
%\end{align}
%\end{subequations}
\end{lemma}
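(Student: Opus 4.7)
The plan is a two-way deduction that rests almost entirely on Lemma \ref{quiv_jc_3}, which exactly recasts the point-evaluation bilinear form $(\bfv_h\cdot\bar{\bfn})(X_m)(\bfw_h\cdot\bar{\bfn})(X_m)$ as a $\rho$-weighted integral along $\Gamma^{T_\epsilon}_h$. Since $\beta^{\pm}$ are constants, they pass through the integrals freely, so the two formulations differ only by how we express the pairing at $X_m$.

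For the forward direction, assume \eqref{weak_jc_3}: $\beta^+ \mathcal{C}_T(\bfv_h)(X_m)\cdot\bar{\bfn} = \beta^- \bfv_h(X_m)\cdot\bar{\bfn}$. For an arbitrary test function $\bfw_h \in \mathcal{ND}_h(T_{\epsilon})$, multiply both sides of this scalar identity by the number $\bfw_h(X_m)\cdot\bar{\bfn}$. Applying Lemma \ref{quiv_jc_3} once to each side (with $\mathcal{C}_T(\bfv_h)$ and $\bfv_h$ respectively in the role of the first argument) converts both products of point values into $\rho$-weighted integrals, which is precisely \eqref{weak_eqv_jc_3}.

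For the converse, start from \eqref{weak_eqv_jc_3}; applying Lemma \ref{quiv_jc_3} in the opposite direction collapses both integrals to the point-value product at $X_m$, giving
\begin{equation*}
\beta^+ \bigl(\mathcal{C}_T(\bfv_h)(X_m)\cdot\bar{\bfn}\bigr)\bigl(\bfw_h(X_m)\cdot\bar{\bfn}\bigr) = \beta^- \bigl(\bfv_h(X_m)\cdot\bar{\bfn}\bigr)\bigl(\bfw_h(X_m)\cdot\bar{\bfn}\bigr)
\end{equation*}
for every $\bfw_h \in \mathcal{ND}_h(T_{\epsilon})$. Since constant vector fields lie in $\mathcal{ND}_h(T_{\epsilon})$ by \eqref{ned_spa}, the choice $\bfw_h \equiv \bar{\bfn}$ gives $\bfw_h(X_m)\cdot\bar{\bfn} = |\bar{\bfn}|^2 \neq 0$; dividing through recovers the pointwise identity \eqref{weak_jc_3}.

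I do not expect a substantive obstacle here: Lemma \ref{quiv_jc_3} has been engineered precisely to make this equivalence bookkeeping. The only point that requires a moment's care is the choice of test function in the converse step, i.e., ensuring that $\mathcal{ND}_h(T_{\epsilon})$ contains an element whose normal component at $X_m$ is nonzero — which is immediate from the fact that the space contains all constant vectors.
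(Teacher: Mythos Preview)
Your proof is correct and follows essentially the same route as the paper: both arguments reduce the equivalence to a direct application of Lemma~\ref{quiv_jc_3}, with the paper's own proof being the single sentence ``The identity directly follows from the definition of $\rho$ in Lemma~\ref{quiv_jc_3}.'' Your version is simply more explicit, spelling out both directions of the equivalence and, in particular, the choice $\bfw_h\equiv\bar{\bfn}$ needed for the converse.
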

\begin{proof}
The identity directly follows from the definition of $\rho$ in Lemma \ref{quiv_jc_3}.
\end{proof}
%%%%%%%
Next we define the following special norm which is suitable to handle the jump conditions.
\begin{equation}
\label{spec_norm}
\vertiii{ \bfv_h }^2_{T_{\epsilon}} = h_T | \bfv_h(X_m)\cdot\bar{\bfn} |^2 + \| \bfv_h\cdot\bar{\bft} \|^2_{L^2(\Gamma^{T_{\epsilon}}_h)} + \| \text{curl}~\bfv_h \|^2_{L^2(\Gamma^{T_{\epsilon}}_h)}, ~~~ \forall \bfv_h\in\bfH^1(\text{curl};T_{\epsilon}),
\end{equation}
%\begin{equation}
%\label{spec_norm}
%\vertiii{ \bfv_h }^2_{T_{\epsilon}} = h_T\| \sqrt{\rho} \bfv_h\cdot\bar{\bfn} \|^2_{L^2(\Gamma^{T_{\epsilon}}_h)} + \| \bfv_h\cdot\bar{\bft} \|^2_{L^2(\Gamma^{T_{\epsilon}}_h)} + \| \text{curl}~\bfv_h \|^2_{L^2(\Gamma^{T_{\epsilon}}_h)}, ~~~ \forall \bfv_h\in\bfH^1(\text{curl};T_{\epsilon}),
%\end{equation}
%where we note that $\rho$ may not be positive and thus $\sqrt{\rho}$ may be complex, but we emphasize that this notation is only used for convenience since in all the analysis essentially only $\rho$ itself will appear, and moreover $\| \sqrt{\rho} \bfv_h\cdot\bar{\bfn} \|^2_{L^2(\Gamma^{T_{\epsilon}}_h)}=(\bfv_h(X_m)\cdot\bar{\bfn})^2\ge0$ suggests that the norm $\vertiii{ \bfv_h }_{T_{\epsilon}}$ is well-defined. We shall see this alternative representation can enable us handle the regularity $\bfH^1(\text{curl};\Omega)$ in analysis without assuming higher regularity.

\begin{lemma}
\label{lem_norm_equiv}
The norm equivalence $h_T^{1/2}\vertiii{\cdot}_{T_{\epsilon}}\simeq \|\cdot\|_{\bfH(\emph{curl};T_{\epsilon})}$ holds on $\mathcal{ND}_h(T_{\epsilon})$ where the hidden constant is independent of the interface location.
\end{lemma}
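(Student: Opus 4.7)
The plan is a direct computation after parametrizing the N\'ed\'elec space. Every $\bfv_h\in\mathcal{ND}_h(T_\epsilon)$ has the form $\bfv_h=\bfa+b[x_2,-x_1]^t$ with $(\bfa,b)\in\mathbb{R}^2\times\mathbb{R}$, and $\text{curl}~\bfv_h=2b$. Both $\|\bfv_h\|_{\bfH(\text{curl};T_\epsilon)}^2$ and $h_T\vertiii{\bfv_h}_{T_\epsilon}^2$ are quadratic forms in the three coefficients $(\bfa,b)$, so it suffices to show that each is comparable to $h_T^2(|\bfa|^2+b^2)$ with constants depending only on the shape regularity of $T$ and on the constants $\delta_1,\delta_2$ appearing in \eqref{interf_ext}.

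The equivalence $\|\bfv_h\|_{\bfH(\text{curl};T_\epsilon)}^2\simeq h_T^2(|\bfa|^2+b^2)$ is classical: combine $|\bfv_h|\le|\bfa|+h_T|b|$ and $|\text{curl}~\bfv_h|=2|b|$ with $|T_\epsilon|\simeq h_T^2$ for the upper bound, and use the standard equivalence between the N\'ed\'elec coefficients and the edge degrees of freedom on a shape-regular reference element for the lower bound. For the upper bound on the triple norm, I would argue term by term: $|\bfv_h(X_m)\cdot\bar{\bfn}|^2\lesssim|\bfa|^2+h_T^2 b^2$, while integrating $|\bfv_h\cdot\bar{\bft}|^2$ and $|\text{curl}~\bfv_h|^2$ over $\Gamma^{T_\epsilon}_h$ (of length at most $Ch_T$) yields the remaining two contributions, giving $h_T\vertiii{\bfv_h}_{T_\epsilon}^2\lesssim h_T^2(|\bfa|^2+b^2)$.

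The substantive step is the lower bound. First I would extract a control on $b^2$ from the curl term: since $\text{curl}~\bfv_h=2b$ is constant and $|\Gamma^{T_\epsilon}_h|\ge\delta_1 h_T$ by \eqref{interf_ext},
\begin{equation*}
\|\text{curl}~\bfv_h\|_{L^2(\Gamma^{T_\epsilon}_h)}^2=4b^2|\Gamma^{T_\epsilon}_h|\ge 4\delta_1 b^2 h_T.
\end{equation*}
With $b^2$ controlled, decompose $\bfa=(\bfa\cdot\bar{\bft})\bar{\bft}+(\bfa\cdot\bar{\bfn})\bar{\bfn}$ using that $\bar{\bft},\bar{\bfn}$ are orthonormal. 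A short calculation shows that $\bfv_h\cdot\bar{\bft}$ is affine along $\Gamma^{T_\epsilon}_h$ with constant part $\bfa\cdot\bar{\bft}$ modified by a term of size $O(h_T|b|)$; integrating over a segment of length $\ge\delta_1 h_T$ and subtracting the $b$-contribution recovers a multiple of $h_T(\bfa\cdot\bar{\bft})^2$. Similarly $\bfv_h(X_m)\cdot\bar{\bfn}$ equals $\bfa\cdot\bar{\bfn}$ up to $O(h_T|b|)$, so $h_T|\bfv_h(X_m)\cdot\bar{\bfn}|^2$ recovers $h_T(\bfa\cdot\bar{\bfn})^2$ modulo a quantity of order $h_T^3 b^2$ absorbed by the curl term. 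Summing the three contributions yields $h_T\vertiii{\bfv_h}_{T_\epsilon}^2\gtrsim h_T^2(|\bfa|^2+b^2)$.

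The main obstacle is keeping all constants independent of interface location, and this is exactly what forces the use of the fictitious element construction: the uniform lower bound $|\Gamma^{T_\epsilon}_h|\ge\delta_1 h_T$ from \eqref{interf_ext} is what guarantees that line integrals over $\Gamma^{T_\epsilon}_h$ scale as $h_T$, so that both $b^2$ and $(\bfa\cdot\bar{\bft})^2$ are stably recovered from the triple norm; without this bound the equivalence could degenerate as the interface approaches a vertex of $T$.
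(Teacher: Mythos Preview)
Your approach is correct and essentially the same as the paper's. The paper packages the argument slightly differently: instead of introducing the intermediate quadratic form $h_T^2(|\bfa|^2+b^2)$, it Taylor-expands $\bfv_h$ at $X_m$ (which amounts to choosing the origin at $X_m$ in your parametrization) and bounds $\|\bfv_h\|_{L^2(T_\epsilon)}$ directly by the three pieces of the triple norm, using the same key input $|\Gamma^{T_\epsilon}_h|\ge\delta_1 h_T$ from \eqref{interf_ext}. One small correction that actually simplifies your computation: for $\bfv_h\in\mathcal{ND}_h(T_\epsilon)$ the tangential component $\bfv_h\cdot\bar{\bft}$ is \emph{constant} (not merely affine) along $\Gamma^{T_\epsilon}_h$, since $[x_2,-x_1]^t\cdot\bar{\bft}$ has zero derivative in the direction $\bar{\bft}$; this makes the recovery of $(\bfa\cdot\bar{\bft})^2$ from $\|\bfv_h\cdot\bar{\bft}\|_{L^2(\Gamma^{T_\epsilon}_h)}^2$ even cleaner.
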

\begin{proof}
Given each $\bfv_h\in\mathcal{ND}_h(T_{\epsilon})$, since $\bfv_h$ is simply a polynomial, using the first inequality in \eqref{interf_ext}, Lemma \ref{quiv_jc_3} and the trace inequality together with the inverse inequality, we have
\begin{equation*}
\begin{split}
\label{lem_norm_equiv_eq1}
& h^{1/2}_T |\bfv_h(X_m)\cdot\bar{\bfn} | \le \|\bfv_h\cdot\bar{\bfn} \|_{L^2(\Gamma^{T_{\epsilon}}_h)} \le C h^{-1/2}_T \| \bfv_h \|_{L^2(T_{\epsilon})}, \\
&\| \bfv_h \cdot\bar{\bft} \|_{L^2(\Gamma^{T_{\epsilon}}_h)} \le C h_T^{-1/2}\| \bfv_h \|_{L^2(T_{\epsilon})}, ~~~ \| \text{curl} ~ \bfv_h \|_{L^2(\Gamma^{T_{\epsilon}}_h)} \le Ch_T^{-1/2} \| \text{curl} ~ \bfv_h \|_{L^2(T_{\epsilon})}. 
\end{split}
\end{equation*}
This yields $\vertiii{\bfv_h}_{T_{\epsilon}} \le Ch^{-1/2}_T \| \bfv_h \|_{\bfH(\text{curl};T_{\epsilon})}$. For the reverse direction, by Taylor expansion, we have
\begin{equation}
\label{lem_norm_equiv_eq2}
\bfv_h(X) = \bfv_h(X_m) + \nabla \bfv_h(X_m)(X-X_m) = \bfv_h(X_m)\cdot\bar{\bfn} + \bfv_h(X_m)\cdot\bar{\bft}  - \frac{1}{2}\text{curl}~\bfv_h ~ \bfK( X - X_m)
\end{equation} 
where $\bfK=[0,1;-1,0]\in\mathbb{R}^{2\times2}$ since $\text{curl}~\bfv_h$ is a constant. Then we have
\begin{equation}
\label{lem_norm_equiv_eq3}
\| \bfv_h \|_{L^2(T_{\epsilon})} \le Ch_T \verti{ \bfv_h(X_m)\cdot\bar{\bfn} } + Ch_T \verti{ \bfv_h(X_m)\cdot\bar{\bft} } + Ch_T^2\verti{ \text{curl}~\bfv_h }.
\end{equation}
We note that $\bfv_h\cdot\bar{\bft}$ can be understood as a polynomial defined on $\Gamma^{T_{\epsilon}}_h$, and $|\text{curl}~\bfv_h|$ is a constant, and thus they can be simply bounded by the standard trace inequality on $\Gamma^{T_{\epsilon}}_h$. Therefore, we have
\begin{equation}
\label{lem_norm_equiv_eq5}
\| \bfv_h \|_{L^2(T_{\epsilon})} \le Ch_T \verti{ \bfv_h(X_m)\cdot\bar{\bfn} } + Ch_T^{1/2} \| \bfv_h \cdot\bar{\bft} \|_{L^2(\Gamma^{T_{\epsilon}}_h)} + Ch^{3/2}_T\| \text{curl}~\bfv_h \|_{L^2(\Gamma^{T_{\epsilon}}_h)} \le Ch^{1/2}_T \vertiii{\bfv_h }_{T_{\epsilon}}.
\end{equation}
Finally, we also have $\| \text{curl} ~ \bfv_h \|_{L^2(T_{\epsilon})} = (|T|/|\Gamma^{T_{\epsilon}}_h|)^{1/2} \| \text{curl} ~ \bfv_h \|_{L^2(\Gamma^{T_{\epsilon}}_h)} \le Ch^{1/2}_T  \| \text{curl} ~ \bfv_h \|_{L^2(\Gamma^{T_{\epsilon}}_h)} $ because of the mesh regularity and the first inequality in \eqref{interf_ext}, which has finished the proof.
\end{proof}

Since a linear approximation of the interface is used for constructing IFE functions, we need to estimate the jumps on this approximated interface.
\begin{lemma}
\label{lem_u_gam_h}
For $\bfu\in\bfH^1(\emph{curl};\Omega)$ and for each interface element $T$ and the associated $T_{\epsilon}$, there holds
\begin{subequations}
\label{lem_u_gam_eq0}
\begin{align}
    &  \| \bfu^+_E\cdot\bar{\bft} - \bfu^-_E\cdot\bar{\bft} \|_{L^2(T_{\epsilon})} \le Ch_T \left( \| \bfu^+_E \|_{H^1(T_{\epsilon})} + \| \bfu^-_E \|_{H^1(T_{\epsilon})} \right),  \label{lem_u_gam_eq01} \\
    &  \| \beta^+ \bfu^+_E\cdot\bar{\bfn} - \beta^- \bfu^-_E\cdot\bar{\bfn} \|_{L^2(T_{\epsilon})} \le C h_T \left( \| \bfu^+_E \|_{H^1(T_{\epsilon})} + \| \bfu^-_E \|_{H^1(T_{\epsilon})} \right),  \label{lem_u_gam_eq02} \\
    &  \| (\mu^{+})^{-1} \emph{curl}~\bfu^+_E - (\mu^{-})^{-1} \emph{curl}~\bfu^-_E \|_{L^2(T_{\epsilon})} \le C h_T \left( \| \emph{curl}~ \bfu^+_E \|_{H^1(T_{\epsilon})} + \| \emph{curl}~ \bfu^-_E \|_{H^1(T_{\epsilon})} \right).  \label{lem_u_gam_eq03}
\end{align}
\end{subequations}
\end{lemma}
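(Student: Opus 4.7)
The plan is to prove all three estimates by a single scheme: the quantity on the left is a scalar $H^1(T_\epsilon)$ function which either vanishes or is of order $O(h_T)$ on the true interface curve $\Gamma\cap T_\epsilon$, and then a Friedrichs/Poincaré-type inequality on the fictitious element converts this boundary smallness into an $L^2(T_\epsilon)$ estimate of order $h_T$ on the whole element.

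The first step is geometric. By Lemma \ref{lemma_interface_flat}, for any $X\in\Gamma\cap T_\epsilon$ the tangent $\bft_\Gamma(X)$ and normal $\bfn_\Gamma(X)$ to the true interface differ from the fixed vectors $\bar{\bft}$ and $\bar{\bfn}$ of the chord $\Gamma^T_h$ by $O(h_T)$, uniformly in the interface location. Since the extended functions $\bfu^{\pm}_E$ agree with $\bfu^{\pm}$ on $\Omega^{\pm}$, continuity of the trace on $\Gamma$ transfers the interface jump conditions \eqref{inter_jc_1}--\eqref{inter_jc_3} from $\bfu^\pm$ to $\bfu^\pm_E$ on $\Gamma\cap T_\epsilon$. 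Combining these two facts with the $O(h_T)$ deflection of the normal/tangent between $\bar{\bft},\bar{\bfn}$ and $\bft_\Gamma,\bfn_\Gamma$ yields, for (a), $|(\bfu^+_E-\bfu^-_E)\cdot\bar{\bft}|\le C h_T|\bfu^+_E-\bfu^-_E|$ on $\Gamma\cap T_\epsilon$, and analogously for (b); for (c) the $\mu$-weighted curl jump vanishes exactly on $\Gamma\cap T_\epsilon$.

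The second step is analytical. I would establish a trace--Friedrichs inequality on the fictitious element,
\begin{equation*}
\|w\|_{L^2(T_\epsilon)}\le C h_T\|\nabla w\|_{L^2(T_\epsilon)}+C h_T^{1/2}\|w\|_{L^2(\Gamma\cap T_\epsilon)}\qquad\forall w\in H^1(T_\epsilon),
\end{equation*}
with constants independent of the interface location, thanks to the regular-shape property of the subelements of $T_\epsilon$ guaranteed by \eqref{interf_ext} (and Lemma~3.2 of \cite{2019GuoLin}). For estimates (a) and (b), applying this to $w=(\bfu^+_E-\bfu^-_E)\cdot\bar{\bft}$ or $w=\beta^+\bfu^+_E\cdot\bar{\bfn}-\beta^-\bfu^-_E\cdot\bar{\bfn}$, using the pointwise boundary bound $|w|\le C h_T|\bfu^+_E-\bfu^-_E|$ together with the standard trace inequality $\|v\|_{L^2(\Gamma\cap T_\epsilon)}\le C(h_T^{-1/2}\|v\|_{L^2(T_\epsilon)}+h_T^{1/2}\|\nabla v\|_{L^2(T_\epsilon)})$, produces the boundary term of order $h_T^{1/2}$ and a volume gradient term directly controlled by $\|\bfu^\pm_E\|_{H^1(T_\epsilon)}$. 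Balancing these gives the $Ch_T$ bound in \eqref{lem_u_gam_eq01} and \eqref{lem_u_gam_eq02}. For (c), $\phi=(\mu^+)^{-1}\mathrm{curl}\,\bfu^+_E-(\mu^-)^{-1}\mathrm{curl}\,\bfu^-_E$ satisfies $\phi=0$ on $\Gamma\cap T_\epsilon$, so the boundary term vanishes and we directly get $\|\phi\|_{L^2(T_\epsilon)}\le C h_T\|\nabla\phi\|_{L^2(T_\epsilon)}\le C h_T(\|\mathrm{curl}\,\bfu^+_E\|_{H^1(T_\epsilon)}+\|\mathrm{curl}\,\bfu^-_E\|_{H^1(T_\epsilon)})$.

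The main obstacle is the trace--Friedrichs inequality on $T_\epsilon$ with constants independent of where $\Gamma$ sits inside the element. One has to argue that because every subelement $T^\pm_\epsilon$ has uniformly bounded angles and side lengths (from \eqref{interf_ext}), the Poincaré constant of $T_\epsilon$ relative to the curve $\Gamma\cap T_\epsilon$ behaves like $h_T$, and the trace constant like $h_T^{-1/2}$, with no hidden dependence on the interface cut. A second, more minor, point is that the transfer of the jumps from $\bfu^\pm$ to $\bfu^\pm_E$ is only valid in the $L^2(\Gamma)$ trace sense, which is exactly the regularity we use on the right-hand side of the inequality.
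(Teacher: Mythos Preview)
Your proposal is correct and follows essentially the same route as the paper: the geometric input is Lemma~\ref{lemma_interface_flat} giving $|\bar{\bft}-\bft_\Gamma|,|\bar{\bfn}-\bfn_\Gamma|\le Ch_T$ on $\Gamma\cap T_\epsilon$, and the analytical core is the trace--Friedrichs inequality you write down, $\|w\|_{L^2(T_\epsilon)}\le Ch_T\|\nabla w\|_{L^2(T_\epsilon)}+Ch_T^{1/2}\|w\|_{L^2(\Gamma\cap T_\epsilon)}$, followed by the standard trace bound on $\Gamma\cap T_\epsilon$. The paper's proof is exactly a proof of this Friedrichs inequality with interface-independent constants: it partitions each curved-edge subelement $T^\pm_\epsilon$ into a uniformly bounded number of strips transverse to $\Gamma\cap T_\epsilon$ (the bound on the number of strips coming from \eqref{interf_ext}), applies the one-dimensional Friedrichs inequality along each strip, and sums. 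So the ``main obstacle'' you flag is precisely what the paper resolves, and your outline is complete once that strip argument is supplied.
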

\begin{proof}
Since the proof is similar to Lemma 3.3 in \cite{2019GuoLin} through a strip argument, we only present the proof for \eqref{lem_u_gam_eq01} for simplicity, and put it in the Appendix \ref{app_4}.
\end{proof}
Now we can estimate the error $\Pi_{h,T_{\epsilon}}\bfu^+_E - \mathcal{C}_T(\Pi_{h,T_{\epsilon}}\bfu^-_E)$ indicted by the dashed line of the diagram in Figure \ref{fig:diagram}.

\begin{lemma}
\label{lem_interp_error_ife_1}
Suppose $\bfu\in \bfH^1(\emph{curl};\Omega)$, then for each element $T$ and the associated $T_{\epsilon}$
\begin{equation}
\label{lem_interp_error_ife_1_eq0}
\vertiii{ \Pi_{h,T_{\epsilon}}\bfu^+_E - \mathcal{C}_T(\Pi_{h,T_{\epsilon}}\bfu^-_E) }_{T_{\epsilon}} \le C  h^{1/2}_T \left( \| \bfu_E^+ \|_{\bfH^1(\emph{curl};\omega_T)} +  \| \bfu_E^- \|_{\bfH^1(\emph{curl};\omega_T)}  \right).
\end{equation}
\end{lemma}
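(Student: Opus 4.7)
The plan is to write $\bfw := \Pi_{h,T_{\epsilon}}\bfu^+_E - \mathcal{C}_T(\Pi_{h,T_{\epsilon}}\bfu^-_E) \in \mathcal{ND}_h(T_{\epsilon})$ and bound separately the three components of $\vertiii{\bfw}_{T_{\epsilon}}^2$ appearing in \eqref{spec_norm}. For each component the idea is the same: use the appropriate approximate jump condition from \eqref{weak_jc} (or its equivalent form in Lemma \ref{lem_equiv_jc12}) to rewrite the quantity as a difference of interpolants of $\bfu^+_E$ and $\bfu^-_E$, then apply the triangle inequality to compare with the actual jump of $\bfu^\pm_E$, controlling the jump by Lemma \ref{lem_u_gam_h} and the interpolation discrepancy by \eqref{interp_1_err}. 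Throughout, $T_\epsilon \subset \omega_T$ lets us pass from $T_\epsilon$-norms to $\omega_T$-norms.

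First I would treat the tangential piece. By \eqref{weak_eqv_jc_1}, $\mathcal{C}_T(\Pi_{h,T_{\epsilon}}\bfu^-_E)\cdot\bar{\bft} = \Pi_{h,T_{\epsilon}}\bfu^-_E\cdot\bar{\bft}$ on $\Gamma^T_h$, and since both sides are polynomials this identity extends to $\Gamma^{T_{\epsilon}}_h$. Therefore $\bfw\cdot\bar{\bft} = \Pi_{h,T_{\epsilon}}(\bfu^+_E - \bfu^-_E)\cdot\bar{\bft}$ on $\Gamma^{T_{\epsilon}}_h$ (in the sense of the polynomial interpolants). Adding and subtracting $(\bfu^+_E-\bfu^-_E)\cdot\bar\bft$, the inverse trace estimate $\|p\|_{L^2(\Gamma^{T_\epsilon}_h)}\le C h_T^{-1/2}\|p\|_{L^2(T_\epsilon)}$ for polynomials $p$ reduces the bound to estimating the same difference in $L^2(T_\epsilon)$, which by \eqref{lem_u_gam_eq01} and \eqref{interp_1_err} is $O(h_T)(\|\bfu^+_E\|_{\bfH^1(\text{curl};\omega_T)} + \|\bfu^-_E\|_{\bfH^1(\text{curl};\omega_T)})$. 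This produces $\|\bfw\cdot\bar\bft\|^2_{L^2(\Gamma^{T_\epsilon}_h)}\le C h_T(\|\bfu^+_E\|^2_{\bfH^1(\text{curl};\omega_T)} + \|\bfu^-_E\|^2_{\bfH^1(\text{curl};\omega_T)})$. The curl piece is handled identically: \eqref{weak_eqv_jc_2} gives $\text{curl}~\bfw = \mu^+\bigl((\mu^+)^{-1}\text{curl}~\Pi_{h,T_{\epsilon}}\bfu^+_E - (\mu^-)^{-1}\text{curl}~\Pi_{h,T_{\epsilon}}\bfu^-_E\bigr)$ (a constant on $T_\epsilon$), so the same inverse-trace plus triangle-inequality argument, now invoking \eqref{lem_u_gam_eq03} instead of \eqref{lem_u_gam_eq01}, yields the analogous bound on $\|\text{curl}~\bfw\|^2_{L^2(\Gamma^{T_\epsilon}_h)}$.

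The normal piece requires a bit more care since it is a pointwise quantity. By \eqref{weak_jc_3}, $\beta^+\mathcal{C}_T(\Pi_{h,T_{\epsilon}}\bfu^-_E)(X_m)\cdot\bar{\bfn} = \beta^-\Pi_{h,T_{\epsilon}}\bfu^-_E(X_m)\cdot\bar{\bfn}$, so $\beta^+\bfw(X_m)\cdot\bar{\bfn} = \bigl(\beta^+\Pi_{h,T_{\epsilon}}\bfu^+_E - \beta^-\Pi_{h,T_{\epsilon}}\bfu^-_E\bigr)(X_m)\cdot\bar{\bfn}$. Denoting this polynomial by $\Phi$, successive inverse estimates (from $L^\infty(\Gamma^{T_\epsilon}_h)$ to $L^2(\Gamma^{T_\epsilon}_h)$, then from $L^2(\Gamma^{T_\epsilon}_h)$ to $L^2(T_\epsilon)$) give $|\Phi(X_m)|\le C h_T^{-1}\|\Phi\|_{L^2(T_\epsilon)}$; alternatively one may use Lemma \ref{quiv_jc_3} combined with the polynomial trace bound to arrive at $h_T|\bfw(X_m)\cdot\bar\bfn|^2 \le C h_T^{-1}\|\Phi\|^2_{L^2(T_\epsilon)}$. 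Then triangle inequality, \eqref{lem_u_gam_eq02}, and \eqref{interp_1_err} produce $\|\Phi\|_{L^2(T_\epsilon)}\le C h_T(\|\bfu^+_E\|_{\bfH^1(\text{curl};\omega_T)} + \|\bfu^-_E\|_{\bfH^1(\text{curl};\omega_T)})$, giving the desired $h_T$ contribution to $\vertiii{\bfw}^2_{T_\epsilon}$. Summing the three components completes the proof.

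The main obstacle is the normal piece, because it is a pointwise value rather than an $L^2$ norm, so two layers of inverse/trace inequalities are needed to turn it into a $2$D $L^2$ norm before the jump estimate of Lemma \ref{lem_u_gam_h} can be applied. Tracking powers of $h_T$ through these inverse estimates must be done carefully, but the geometric regularity provided by \eqref{interf_ext} on the fictitious element ensures all hidden constants are independent of the interface location, as required.
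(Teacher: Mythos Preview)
Your proposal is correct and follows essentially the same route as the paper: define $\bfw_h$, use the jump conditions \eqref{weak_jc}/\eqref{weak_eqv_jc12} to rewrite each piece of $\vertiii{\bfw_h}_{T_\epsilon}$ as a weighted interpolation jump, pass to $L^2(T_\epsilon)$ via polynomial trace/inverse estimates, then split with the triangle inequality and invoke \eqref{interp_1_err} together with Lemma~\ref{lem_u_gam_h}. The only stylistic difference is that for the normal term the paper uses the weighted representation of Lemma~\ref{quiv_jc_3} (equation \eqref{lem_interp_error_ife_1_eq1}) to convert the point value directly into a weighted $L^2$ integral on $\Gamma^{T_\epsilon}_h$, whereas you first propose two successive inverse inequalities $L^\infty(\Gamma^{T_\epsilon}_h)\to L^2(\Gamma^{T_\epsilon}_h)\to L^2(T_\epsilon)$ and mention the $\rho$-weight route only as an alternative; both lead to the same $|\Phi(X_m)|\le Ch_T^{-1}\|\Phi\|_{L^2(T_\epsilon)}$ with interface-independent constants thanks to \eqref{interf_ext}.
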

\begin{proof}
Let
%\fn{Whether we have $\bfw_h \in \mathcal{ND}_h(T_{\epsilon})$ may depend 
%on your answer to my comment in footnote 3. {\color{red}I have modified the definition of $\mathcal{C}_T$ to be from $\mathcal{ND}_h(T^-)$ to $\mathcal{ND}_h(T^+)$. While the inputs and outputs of $\mathcal{C}_T$ are only polynomials defined on subelements, they can be simply extended to the whole element $T$ and the fictitious element $T_{\epsilon}$ since polynomials can be defined everywhere. This is also the reason why I originally only let $\mathcal{C}_T$ be defined on $\mathcal{ND}_h(T)$, but it indeed causes some confusions. So I modify its definition and then add some sentences below \eqref{interp_2} to explain that these polynomials on subelements can be used on the whole element. } }
$\bfw_h= \Pi_{h,T_{\epsilon}}\bfu^+_E - \mathcal{C}_T(\Pi_{h,T_{\epsilon}}\bfu^-_E) \in \mathcal{ND}_h(T_{\epsilon})$, and we need to estimate each term in the definition \eqref{spec_norm}. First of all, since $\bfw_h\in\mathcal{ND}_h(T_{\epsilon})$, we use Lemmas \ref{quiv_jc_3} and \ref{lem_equiv_jc} to obtain
\begin{equation}
\begin{split}
\label{lem_interp_error_ife_1_eq1}
|\bfw_h(X_m)\cdot\bar{\bfn}|^2 &=  \| \sqrt{\rho} \bfw_h \cdot\bar{\bfn} \|^2_{L^2(\Gamma^{T_{\epsilon}}_h)} = \int_{L^2(\Gamma^{T_{\epsilon}}_h)} \left(\Pi_{h,T_{\epsilon}}\bfu^+_E -  \frac{\beta^-}{\beta^+} \Pi_{h,T_{\epsilon}}\bfu^-_E \right)\cdot\bar{\bfn} ~ \rho\bfw_h \cdot\bar{\bfn} ds \\
 & \le \vertii{ \sqrt{\rho} \left(\Pi_{h,T_{\epsilon}}\bfu^+_E -  \frac{\beta^-}{\beta^+} \Pi_{h,T_{\epsilon}}\bfu^-_E \right)\cdot\bar{\bfn} }_{L^2(\Gamma^{T_{\epsilon}}_h)}  \| \sqrt{\rho} \bfw_h \cdot\bar{\bfn} \|_{L^2(\Gamma^{T_{\epsilon}}_h)}. 
 \end{split}
 \end{equation}
We note that $\| \sqrt{\rho} \bfw_h\cdot\bar{\bfn} \|^2_{L^2(\Gamma^{T_{\epsilon}}_h)}=(\bfw_h(X_m)\cdot\bar{\bfn})^2\ge0$ suggests that the norm $\| \sqrt{\rho} \bfw_h\cdot\bar{\bfn} \|_{L^2(\Gamma^{T_{\epsilon}}_h)}$ is well-defined. We also note that $\rho$ may not be positive and thus $\sqrt{\rho}$ may be complex, but we emphasize that this notation is only used for convenience since in all the derivation above only $\rho$ itself appears. Since $\Pi_{h,T_{\epsilon}}\bfu^+_E -  \frac{\beta^-}{\beta^+} \Pi_{h,T_{\epsilon}}\bfu^-_E $ is a polynomial, applying the trace inequality for polynomials \cite{2003WarburtonHesthaven} and the bound for $\rho$ in Lemma \ref{quiv_jc_3}, we induce from \eqref{lem_interp_error_ife_1_eq1}
 \begin{equation*}
 \begin{split}
\label{lem_interp_error_ife_1_eq2}
& |\bfw_h(X_m)\cdot\bar{\bfn}|  \le C h^{-1}_T \vertii{ \left(\Pi_{h,T_{\epsilon}}\bfu^+_E -  \frac{\beta^-}{\beta^+} \Pi_{h,T_{\epsilon}}\bfu^-_E \right)\cdot\bar{\bfn} }_{L^2(T_{\epsilon})} \\
 \le & C h^{-1}_T \left( \vertii{ \left(\Pi_{h,T_{\epsilon}}\bfu^+_E - \bfu^+_E \right)\cdot\bar{\bfn} }_{L^2(T_{\epsilon})} + \vertii{  \left( \frac{\beta^-}{\beta^+} \Pi_{h,T_{\epsilon}}\bfu^-_E - \frac{\beta^-}{\beta^+} \bfu^-_E \right)\cdot\bar{\bfn} }_{L^2(T_{\epsilon})} +  \vertii{ \left( \frac{\beta^-}{\beta^+} \bfu^-_E - \bfu^+_E \right)\cdot\bar{\bfn} }_{L^2(T_{\epsilon})} \right).
\end{split}
\end{equation*}
Applying the estimate \eqref{interp_1_err} for $\Pi_{h,T_{\epsilon}}$ on $T_{\epsilon}$ and \eqref{lem_u_gam_eq02} to the estimate above, we have
\begin{equation}
\label{lem_interp_error_ife_1_eq3}
|\bfw_h(X_m)\cdot\bar{\bfn}| \le C  \left( \| \bfu^+_E \|_{\bfH^1(\text{curl};T_{\epsilon})} + \| \bfu^-_E \|_{\bfH^1(\text{curl};T_{\epsilon})} \right).
\end{equation}
By similar arguments, using \eqref{lem_u_gam_eq01} and \eqref{lem_u_gam_eq03}, we have the following estimates
\begin{equation}
\begin{split}
\label{lem_interp_error_ife_1_eq5}
&\| \bfw_h \cdot\bar{\bft} \|_{L^2(\Gamma^{T_{\epsilon}}_h)} \le Ch^{1/2}_T \left( \| \bfu^+_E \|_{\bfH^1(\text{curl};T_{\epsilon})} + \| \bfu^-_E \|_{\bfH^1(\text{curl};T_{\epsilon})} \right), \\
&\| \text{curl}~\bfw_h \|_{L^2(\Gamma^{T_{\epsilon}}_h)} \le C h^{1/2}_T \left( \| \bfu^+_E \|_{\bfH^1(\text{curl};T_{\epsilon})} + \| \bfu^-_E \|_{\bfH^1(\text{curl};T_{\epsilon})} \right).
\end{split}
\end{equation}
%\begin{equation}
%\label{lem_interp_error_ife_1_eq4}
%\| \bfw_h \cdot\bar{\bft} \|_{L^2(\Gamma^{T_{\epsilon}}_h)} \le Ch^{1/2}_T \left( \| \bfu^+_E \|_{\bfH^1(\text{curl};T_{\epsilon})} + \| \bfu^-_E \|_{\bfH^1(\text{curl};T_{\epsilon})} \right).
%\end{equation}
%\begin{equation}
%\label{lem_interp_error_ife_1_eq5}
%\| \text{curl}~\bfw_h \|_{L^2(\Gamma^{T_{\epsilon}}_h)} \le C h^{1/2}_T \left( \| \bfu^+_E \|_{\bfH^1(\text{curl};T_{\epsilon})} + \| \bfu^-_E \|_{\bfH^1(\text{curl};T_{\epsilon})} \right).
%\end{equation}
Then the desired result follows from \eqref{lem_interp_error_ife_1_eq3}-\eqref{lem_interp_error_ife_1_eq5} together with the assumption that $T_{\epsilon}\subseteq\omega_T$.
%Then, applying the standard triangular inequality and Lemma \ref{quiv_jc_3} to \eqref{lem_interp_error_ife_1_eq1}, we have
%\begin{equation}
%\begin{split}
%\label{lem_interp_error_ife_1_eq2}
% \| \sqrt{\rho} \bfw\cdot\bar{\bfn} \|_{L^2(\Gamma^{T_{\epsilon}}_h)} & \le Ch^{1/2}_T \vertii{ \left(\Pi_{h,T_{\epsilon}}\bfu^+_E -  \frac{\beta^-}{\beta^+} \Pi_{h,T_{\epsilon}}\bfu^-_E \right)\cdot\bar{\bfn} }_{L^2(\Gamma^{T_{\epsilon}}_h)} \\
% & \le Ch^{1/2}_T \left( \vertii{ \left(\Pi_{h,T_{\epsilon}}\bfu^+_E -  \bfu^+_E \right)\cdot\bar{\bfn} }_{L^2(\Gamma^{T_{\epsilon}}_h)} + \frac{\beta^-}{\beta^+} \vertii{ \left(\Pi_{h,T_{\epsilon}}\bfu^-_E -  \bfu^-_E \right)\cdot\bar{\bfn} }_{L^2(\Gamma^{T_{\epsilon}}_h)}  \right. \\
% &~~~~~~ + \left. \vertii{ \bfu^+_E - \frac{\beta^+}{\beta^-}\bfu^-_E }_{L^2(\Gamma^{T_{\epsilon}}_h)}   \right)
% \end{split}
%\end{equation} 
\end{proof}

\begin{rem}
\label{rem_lem_interp_error_ife_1}
Lemma \ref{lem_interp_error_ife_1} is the major result towards the approximation capabilities of the $\bfH(\emph{curl};\Omega)$ IFE spaces. One of the keys in the proof is to represent a point-valued norm as a weighted $L^2$-norm as shown in \eqref{lem_interp_error_ife_1_eq1} through Lemma \ref{quiv_jc_3}. This treatment enables us to bound the error by the $\bfH^1(\emph{curl};T_{\epsilon})$-norm; otherwise $\bfH^2$ regularity is needed.
%We comment on two critical issues in the proof.
%\begin{itemize}
%\item We not that one can actually directly apply the triangular inequality to the term\\
%$\vertii{ \sqrt{\rho} \left(\Pi_{h,T_{\epsilon}}\bfu^+_E -  \frac{\beta^-}{\beta^+} \Pi_{h,T_{\epsilon}}\bfu^-_E \right)\cdot\bar{\bfn} }_{L^2(\Gamma^{T_{\epsilon}}_h)}$ in \eqref{lem_interp_error_ife_1_eq1}, and then only needs to bound $\| \Pi_{h,T_{\epsilon}}\bfu^{\pm}_E - \bfu^{\pm}_E \|_{L^2(\Gamma^{T_{\epsilon}}_h)}$ through the trace inequality such that Lemma \ref{lem_u_gam_h} is not needed, which yields a simpler proof. However this approach requires the estimate for $|\Pi_{h,T_{\epsilon}}\bfu^{\pm}_E - \bfu^{\pm}_E|_{H^1(T_{\epsilon})}$ and then the $\bfH^2$ regularity of $\bfu^{\pm}_E$ is needed which may be too demanding for $\bfH(\emph{curl};\Omega)$-elliptic interface problems.
%
%\item In \eqref{lem_interp_error_ife_1_eq1}, we have used Lemma \ref{quiv_jc_3} to represent a point-valued norm as a weighted $L^2$-norm which enables us to bound the error by the $\bfH^1(\emph{curl};T_{\epsilon})$-norm; otherwise $\bfH^2$ regularity is needed.
%\end{itemize}
\end{rem}

%\begin{rem}
%\label{rem_lem_interp_error_ife_2}
%In \eqref{lem_interp_error_ife_1_eq1}, we have used Lemma \ref{quiv_jc_3} to represent a point-valued norm as a weighted $L^2$-norm which enables us to bound the error by the $\bfH^1(\emph{curl};T_{\epsilon})$-norm; otherwise $\bfH^2$ regularity is needed.
%%Note that for functions with only $\bfH^1(\emph{curl};T_{\epsilon})$ regularity, their point values are not well-defined.
%\end{rem}

Next, we use the idea of the diagram \ref{fig:diagram} to prove the following interpolation error estimate.
\begin{thm}
\label{thm_interp_err_2}
Suppose $\bfu\in \bfH^1(\emph{curl};\Omega)$, then on each interface element $T$ and $T_{\epsilon}$
\begin{equation}
\label{thm_interp_err_2_eq0}
\| J^{\pm}_{h,T_{\epsilon}} \bfu - \bfu^{\pm}_E \|_{\bfH(\emph{curl};T_{\epsilon})} \le C h_T \left( \| \bfu_E^+ \|_{\bfH^1(\emph{curl};\omega_T)} +  \| \bfu_E^- \|_{\bfH^1(\emph{curl};\omega_T)}  \right).
\end{equation}
\end{thm}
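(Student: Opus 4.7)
The proof follows the decomposition suggested by the diagram in Figure \ref{fig:diagram}, splitting according to the two branches of the piecewise definition of $J_{h,T_\epsilon}$ in \eqref{interp_2}. On the minus side the operator $J^-_{h,T_\epsilon}$ reduces to the classical N\'ed\'elec interpolation of $\bfu^-_E$, so the estimate is immediate from \eqref{interp_1_err}; on the plus side the map $\mathcal{C}_T$ enters, and the only non-routine ingredient has already been prepared in Lemma \ref{lem_interp_error_ife_1}.

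First I would dispose of the minus case. Since $J^-_{h,T_\epsilon}\bfu = \Pi_{h,T_\epsilon}\bfu^-_E$ and $\bfu^-_E \in \bfH^1(\text{curl};T_\epsilon)$ by Theorem \ref{thm_ext}, the classical interpolation estimate \eqref{interp_1_err} applied on $T_\epsilon$ gives
\begin{equation*}
\|J^-_{h,T_\epsilon}\bfu - \bfu^-_E\|_{\bfH(\text{curl};T_\epsilon)} \le Ch_T\|\bfu^-_E\|_{\bfH^1(\text{curl};T_\epsilon)} \le Ch_T\|\bfu^-_E\|_{\bfH^1(\text{curl};\omega_T)},
\end{equation*}
the last step using the inclusion $T_\epsilon \subseteq \omega_T$ secured by the choice of $\epsilon_0$.

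For the plus case I would insert the intermediate quantity $\Pi_{h,T_\epsilon}\bfu^+_E$ and split
\begin{equation*}
J^+_{h,T_\epsilon}\bfu - \bfu^+_E \;=\; \bigl[\mathcal{C}_T(\Pi_{h,T_\epsilon}\bfu^-_E) - \Pi_{h,T_\epsilon}\bfu^+_E\bigr] \;+\; \bigl[\Pi_{h,T_\epsilon}\bfu^+_E - \bfu^+_E\bigr].
\end{equation*}
The second bracket is controlled exactly as in the minus case. The first bracket is a polynomial belonging to $\mathcal{ND}_h(T_\epsilon)$, so the norm equivalence of Lemma \ref{lem_norm_equiv} bounds its $\bfH(\text{curl};T_\epsilon)$-norm by $Ch_T^{1/2}\vertiii{\,\cdot\,}_{T_\epsilon}$; combining with Lemma \ref{lem_interp_error_ife_1}, which furnishes another factor $h_T^{1/2}$ together with the $\bfH^1(\text{curl};\omega_T)$-norm of $\bfu^{\pm}_E$, produces precisely the $O(h_T)$ bound. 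Adding the two contributions closes the proof.

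The conceptual difficulty in establishing Theorem \ref{thm_interp_err_2} — controlling the discrepancy between $\mathcal{C}_T(\Pi_{h,T_\epsilon}\bfu^-_E)$ and $\Pi_{h,T_\epsilon}\bfu^+_E$ using only $\bfH^1(\text{curl})$ regularity, since the third condition of $\mathcal{C}_T$ in \eqref{weak_jc_3} is pointwise at $X_m$ — has already been absorbed in Lemma \ref{lem_interp_error_ife_1} by rewriting that pointwise jump as a weighted integral jump against the density $\rho$ of Lemma \ref{quiv_jc_3}. Consequently, at this stage the argument is essentially a triangle inequality plus norm equivalence, and I do not anticipate any further obstacle.
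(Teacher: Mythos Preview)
Your proposal is correct and follows essentially the same route as the paper's proof: the minus case is disposed of via the standard estimate \eqref{interp_1_err} for $\Pi_{h,T_\epsilon}$, and the plus case is handled by inserting $\Pi_{h,T_\epsilon}\bfu^+_E$, then combining the norm equivalence of Lemma~\ref{lem_norm_equiv} with Lemma~\ref{lem_interp_error_ife_1} to pick up the two factors of $h_T^{1/2}$. Your closing remark correctly identifies that all the real work was already absorbed into Lemma~\ref{lem_interp_error_ife_1}.
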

\begin{proof}
By the definition in \eqref{interp_2}, the estimate of
%\fn{Should $\bfu^+_E$ be $\bfu^-_E$ here? {\color{red}Yes, it has been corrected.}} 
$J^-_{h,T_{\epsilon}} \bfu - \bfu^-_E= \Pi_{h,T_{\epsilon}} \bfu^-_E -  \bfu^-_E $ in $T^-_{\epsilon}$ directly follows from applying \eqref{interp_1_err} to $\Pi_{h,T_{\epsilon}}$ which is simply the right side of the diagram in Figure \ref{fig:diagram}. So we only need to estimate the error on $T^+_{\epsilon}$. We first note the following error decomposition
\begin{equation}
\begin{split}
\label{thm_interp_err_2_eq1}
\| J^+_{h,T_{\epsilon}} \bfu - \bfu^+_E \|_{\bfH(\text{curl};T^+_{\epsilon})} 
\le  \| \mathcal{C}_T(\Pi_{h,T_{\epsilon}} \bfu^-_E)- \Pi_{h,T_{\epsilon}} \bfu^+_E \|_{\bfH(\text{curl};T^+_{\epsilon})} + 
\| \Pi_{h,T_{\epsilon}} \bfu^+_E -  \bfu^+_E  \|_{\bfH(\text{curl};T^+_{\epsilon})}.
\end{split}
\end{equation}
Again the second term in the right hand side of \eqref{thm_interp_err_2_eq1} follows from applying \eqref{interp_1_err} to $\Pi_{h,T_{\epsilon}}$. For the first term, using the norm equivalence in Lemma \ref{lem_norm_equiv} together with the estimate in Lemma \ref{lem_interp_error_ife_1}, we have
\begin{equation}
\begin{split}
\label{thm_interp_err_2_eq2}
\| \mathcal{C}_T(\Pi_{h,T_{\epsilon}} \bfu^-_E)- \Pi_{h,T_{\epsilon}} \bfu^+_E \|_{\bfH(\text{curl};T^+_{\epsilon})}
& \le Ch^{1/2}_T \vertiii{ \mathcal{C}_T(\Pi_{h,T_{\epsilon}} \bfu^-_E)- \Pi_{h,T_{\epsilon}} \bfu^+_E }_{T_{\epsilon}} \\
& \le  Ch_T \left( \| \bfu_E^+ \|_{\bfH^1(\text{curl};\omega_T)} +  \| \bfu_E^- \|_{\bfH^1(\text{curl};\omega_T)}  \right)
\end{split}
\end{equation}
which has finished the proof by the assumption that $T_{\epsilon}\subseteq\omega_T$.
\end{proof}

We note that Theorem \ref{thm_interp_err_2} already guarantees that the local IFE spaces have optimal approximation capabilities on interface elements. However in order to estimate the approximation capabilities of the global IFE space $\mathcal{IND}_h(\Omega)$, we need to employ the interpolation operators $\widetilde{\Pi}_{h,T}$ and $\widetilde{\Pi}_h$ in \eqref{interp_4}.

\begin{thm}
\label{thm_interp_err_3}
Suppose $\bfu\in \bfH^1(\emph{curl};\Omega)$, then for each interface element $T$
\begin{equation}
\label{thm_interp_err_3_eq0}
\| \widetilde{\Pi}_{h,T} \bfu - \bfu \|_{L^2(T)} \le C h_T \left( \| \bfu_E^+ \|_{\bfH^1(\emph{curl};\omega_T)} +  \| \bfu_E^- \|_{\bfH^1(\emph{curl};\omega_T)}  \right).
\end{equation}
\end{thm}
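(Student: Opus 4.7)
The plan is to compare $\widetilde{\Pi}_{h,T}\bfu$ with the auxiliary interpolant $J_{h,T_{\epsilon}}\bfu$ from Theorem \ref{thm_interp_err_2}, via the triangle inequality
\[
\|\widetilde{\Pi}_{h,T}\bfu - \bfu\|_{L^2(T)} \le \|\widetilde{\Pi}_{h,T}\bfu - J_{h,T_{\epsilon}}\bfu\|_{L^2(T)} + \|J_{h,T_{\epsilon}}\bfu - \bfu\|_{L^2(T)}.
\]
Theorem \ref{thm_interp_err_2} already controls $J_{h,T_{\epsilon}}\bfu-\bfu_E^{\pm}$ on $T_\epsilon^{\pm}$, so the remaining difficulty is the mismatch between the actual interface $\Gamma$ and its chord approximation $\Gamma^T_h$.

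For the second summand I would partition $T$ into the two ``matching'' regions $T^\pm \cap T^\pm_h$ and the thin mismatch strip $\widetilde{T}$ sandwiched between $\Gamma$ and $\Gamma^T_h$. On the matching regions $\bfu$ and $J_{h,T_{\epsilon}}\bfu$ use the same $\pm$-branch, and the bound follows immediately from Theorem \ref{thm_interp_err_2}. On $\widetilde{T}$ the two branches are swapped, so writing $\bfu - J_{h,T_{\epsilon}}\bfu = (\bfu_E^{\pm} - \bfu_E^{\mp}) + (\bfu_E^{\mp} - J_{h,T_{\epsilon}}^{\mp}\bfu)$ reduces the task to controlling $\|\bfu_E^+ - \bfu_E^-\|_{L^2(\widetilde{T})}$. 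Here the measure estimate $|\widetilde{T}| \le Ch_T^3$ from Lemma \ref{lemma_interface_flat}, combined with the 2D Sobolev embedding $H^1(T_{\epsilon}) \hookrightarrow L^6(T_{\epsilon})$ and H\"older's inequality, yields the desired $Ch_T(\|\bfu_E^+\|_{H^1(T_{\epsilon})} + \|\bfu_E^-\|_{H^1(T_{\epsilon})})$ bound.

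For the first summand, both $\widetilde{\Pi}_{h,T}\bfu$ and $J_{h,T_{\epsilon}}\bfu|_T$ belong to $\mathcal{IND}_h(T)$, so their difference can be expanded in the IFE shape functions as $\bfw_h = \sum_{i=1}^{3} v_i \bfpsi_i$ with $v_i = \int_{e_i}(\bfu - J_{h,T_{\epsilon}}\bfu)\cdot\bft_i\, ds$. Theorem \ref{thm_bound} gives $\|\bfpsi_i\|_{L^2(T)} \le C$, so the estimate reduces to bounding each $|v_i|$. By assumption (\textbf{A1}), $\Gamma$ and $\Gamma^T_h$ meet every edge of $T$ at the same point, so $\bfu$ and $J_{h,T_{\epsilon}}\bfu$ switch branches at the same location on each $e_i$. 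Inserting $\Pi_{h,T_{\epsilon}}\bfu_E^{\pm}$ on each piece splits the integrand into (i) the standard N\'ed\'elec interpolation residual $\bfu_E^{\pm} - \Pi_{h,T_{\epsilon}}\bfu_E^{\pm}$, controlled on $e_i$ by an edge-trace inequality and \eqref{interp_1_err}, and (ii) the polynomial residual $\Pi_{h,T_{\epsilon}}\bfu_E^+ - \mathcal{C}_T(\Pi_{h,T_{\epsilon}}\bfu_E^-)$, already bounded by Lemma \ref{lem_interp_error_ife_1} and converted to an $L^2$ edge bound via Lemma \ref{lem_norm_equiv} together with a polynomial inverse trace. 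Cauchy--Schwarz with $|e_i|^{1/2} = O(h_T^{1/2})$ then gives $|v_i| \le Ch_T(\|\bfu_E^+\|_{\bfH^1(\text{curl};\omega_T)} + \|\bfu_E^-\|_{\bfH^1(\text{curl};\omega_T)})$.

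The main technical obstacle is the bookkeeping near $\widetilde{T}$ and on the portion of each edge lying on the ``wrong'' side of $\Gamma$: there $\bfu$ and $J_{h,T_{\epsilon}}\bfu$ use mismatched polynomial pieces and the error involves the full jump $\bfu_E^+ - \bfu_E^-$, whose normal component does \emph{not} vanish on $\Gamma$. Absorbing this residual into the target $O(h_T)$ bound using only the $O(h_T^2)$ proximity between $\Gamma$ and $\Gamma^T_h$ (Lemma \ref{lemma_interface_flat}) and the available $\bfH^1$-regularity (via Sobolev embedding and/or Lemma \ref{lem_u_gam_h}) is the crux of the argument; once this is in place, the rest is a routine combination of classical N\'ed\'elec interpolation estimates, trace inequalities, and the characterization results already established in this section.
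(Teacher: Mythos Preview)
Your approach mirrors the paper's: the same triangle inequality split, the same expansion of $\widetilde{\Pi}_{h,T}\bfu - J_{h,T_\epsilon}\bfu$ in the IFE shape functions with edge coefficients $\int_{e_i}(\bfu - J_{h,T_\epsilon}\bfu)\cdot\bft_i\,ds$, and the same two-term edge split into the standard residual $\bfu_E^{\pm} - \Pi_{h,T_\epsilon}\bfu_E^{\pm}$ and the polynomial residual $\Pi_{h,T_\epsilon}\bfu_E^+ - \mathcal{C}_T(\Pi_{h,T_\epsilon}\bfu_E^-)$, the latter handled via Lemmas~\ref{lem_interp_error_ife_1} and~\ref{lem_norm_equiv} together with the polynomial trace inequality. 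The paper phrases your ``same switching point'' observation by introducing the auxiliary function $\bfw$ with $\bfw^{\pm} = \bfu_E^{\pm} - J^{\pm}_{h,T_\epsilon}\bfu$ on $T^{\pm}_h$ (chord partition) and noting that it coincides with $\bfu - J_{h,T_\epsilon}\bfu$ on $\partial T$.

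Two caveats. First, for the edge bound on the standard residual the paper does \emph{not} go through a volume trace followed by \eqref{interp_1_err}---that route would need an $H^1$-seminorm bound on $\bfu_E^{\pm} - \Pi_{h,T_\epsilon}\bfu_E^{\pm}$, which the $\bfH(\text{curl})$ estimate \eqref{interp_1_err} does not supply---but instead invokes a direct N\'ed\'elec edge-scaling estimate from \cite{1999CiarletZou}. Second, your Sobolev argument for the $\widetilde{T}$ contribution to $\|J_{h,T_\epsilon}\bfu - \bfu\|_{L^2(T)}$ does not actually deliver the stated local rate: the embedding constant for $H^1(T_\epsilon)\hookrightarrow L^6(T_\epsilon)$ scales like $h_T^{-2/3}$ on the $L^2$ part, so after the H\"older factor $|\widetilde{T}|^{1/3}\sim h_T$ you recover only $Ch_T^{1/3}\|\bfu_E^{\pm}\|_{L^2(T_\epsilon)}$, not $Ch_T\|\bfu_E^{\pm}\|_{L^2(T_\epsilon)}$. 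The paper is equally terse on this point---it simply says the second summand ``directly follows from Theorem~\ref{thm_interp_err_2}'' without isolating $\widetilde{T}$---so you are not missing something that the paper supplies.
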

\begin{proof}
Given an interface element $T$ with the edges $e_i$, $i=1,2,3$, the triangular inequality yields
\begin{equation}
\label{thm_interp_err_3_eq1}
\| \widetilde{\Pi}_{h,T} \bfu - \bfu \|_{L^2(T)} \le \| \widetilde{\Pi}_{h,T} \bfu - J_{h,T_{\epsilon}}\bfu \|_{L^2(T)} + \| J_{h,T_{\epsilon}}\bfu - \bfu \|_{L^2(T)}.
\end{equation}
The estimate of the second term in \eqref{thm_interp_err_3_eq1} directly follows from Theorem \ref{thm_interp_err_2}. So we only need to estimate the first term. Note that $\widetilde{\Pi}_{h,T} \bfu - J_{h,T_{\epsilon}}\bfu = \widetilde{\Pi}_{h,T}\left( \bfu - J_{h,T_{\epsilon}}\bfu \right)$, and we then consider a piecewise-defined function $\bfw = \bfu_E - J_{h,T_{\epsilon}} \bfu$ with $\bfw^{\pm} = \bfu^{\pm}_E - J^{\pm}_{h,T_{\epsilon}} \bfu$ in $T^{\pm}_h$ partitioned by $\Gamma^T_h$. We note that $\bfw$ slightly differs from $\bfu - J_{h,T_{\epsilon}} \bfu$ only in $\widetilde{T}$ since $\bfu$ is partitioned by the interface $\Gamma$ itself where $\widetilde{T}$ is the subelement sandwiched by $\Gamma^T_h$ and $\Gamma$ shown by Figure \ref{fig:sandwich}, but we only need that these two functions equal on $\partial T$. Using the IFE shape functions in \eqref{IFE_fun_1}, we can write
\begin{equation}
\label{thm_interp_err_3_eq2}
\widetilde{\Pi}_{h,T}(\bfu - J_{h,T_{\epsilon}}\bfu) = \sum_{i=1}^3 \int_{e_i} (\bfu - J_{h,T_{\epsilon}}\bfu)\cdot\bft_i ds \bfpsi_i = \sum_{i=1}^3 \int_{e_i} \bfw\cdot\bft_i ds \bfpsi_i .
\end{equation}
By H\"older's inequality and the boundedness \eqref{thm_bound_eq01}, we have
\begin{equation}
\begin{split}
\label{thm_interp_err_3_eq3}
\| \widetilde{\Pi}_{h,T} (\bfu - J_{h,T_{\epsilon}}\bfu) \|_{L^2(T)} & \le \sum_{i=1}^3 \verti{ \int_{e_i} \bfw\cdot\bft_i ds } \| \bfpsi_i \|_{L^2(T)} \\
& \le C h^{1/2}_T \sum_{i=1}^3  \| \bfw \|_{L^2(e_i)}  \le C h^{1/2}_T \sum_{i=1}^3 \left( \| \bfw^- \|_{L^2(e_i)} + \| \bfw^+ \|_{L^2(e_i)} \right).
\end{split}
\end{equation}
Then by the scaling arguments for N\'ed\'elec elements in \cite{1999CiarletZou}(Lemma 3.2), we have
\begin{equation}
\label{thm_interp_err_3_eq4}
\| \bfw^- \|_{L^2(e_i)} = \vertii{ \Pi_{h,T_{\epsilon}} \bfu^-_E  - \bfu^-_E }_{L^2(e_i)} \le Ch_T^{1/2} \left( \| \bfu_E^+ \|_{\bfH^1(\text{curl};T_{\epsilon})} +  \| \bfu_E^- \|_{\bfH^1(\text{curl};T_{\epsilon})}  \right).
\end{equation}
In addition, the triangular inequality yields
\begin{equation}
\begin{split}
\label{thm_interp_err_3_eq5}
\| \bfw^+ \|_{L^2(e_i)} &= \vertii{  \mathcal{C}_T(\Pi_{h,T_{\epsilon}} \bfu^-_E) -\bfu^+_E }_{L^2(e_i)} \le  \vertii{  \mathcal{C}_T(\Pi_{h,T_{\epsilon}} \bfu^-_E) - \Pi_{h,T_{\epsilon}} \bfu^+_E }_{L^2(e_i)}
+ \vertii{  \Pi_{h,T_{\epsilon}} \bfu^+_E - \bfu^+_E }_{L^2(e_i)}.
%& \le Ch_T^{1/2} \left( \| \bfu_E^+ \|_{\bfH^1(\text{curl};T_{\epsilon})} +  \| \bfu_E^- \|_{\bfH^1(\text{curl};T_{\epsilon})}  \right),
\end{split}
\end{equation}
Since $ \mathcal{C}_T(\Pi_{h,T_{\epsilon}} \bfu^-_E) - \Pi_{h,T_{\epsilon}}\bfu^+_E$ is a polynomial, by the trace inequality for polynomials \cite{2003WarburtonHesthaven}, the estimate in Lemma \ref{lem_interp_error_ife_1} and the norm equivalence in Lemma \ref{lem_norm_equiv}, we have
\begin{equation}
\begin{split}
\label{thm_interp_err_3_eq6}
\vertii{  \mathcal{C}_T(\Pi_{h,T_{\epsilon}} \bfu^-_E) - \Pi_{h,T_{\epsilon}} \bfu^+_E }_{L^2(e_i)} & \le Ch^{-1/2}_T \vertii{  \mathcal{C}_T(\Pi_{h,T_{\epsilon}} \bfu^-_E) - \Pi_{h,T_{\epsilon}} \bfu^+_E }_{L^2(T)} \\
& \le  Ch_T^{1/2} \left( \| \bfu_E^+ \|_{\bfH^1(\text{curl};T_{\epsilon})} +  \| \bfu_E^- \|_{\bfH^1(\text{curl};T_{\epsilon})}  \right).
\end{split}
\end{equation}
The estimate for $ \vertii{  \Pi_{h,T_{\epsilon}} \bfu^+_E - \bfu^+_E }_{L^2(e_i)}$ is as the same as \eqref{thm_interp_err_3_eq4}. Putting these two estimates into \eqref{thm_interp_err_3_eq5} and combining it with \eqref{thm_interp_err_3_eq4}, we have 
\begin{equation}
\label{thm_interp_err_3_eq7}
\| \widetilde{\Pi}_{h,T}\bfw \|_{L^2(T)} \le Ch_T \left( \| \bfu_E^+ \|_{\bfH^1(\text{curl};T_{\epsilon})} +  \| \bfu_E^- \|_{\bfH^1(\text{curl};T_{\epsilon})}  \right)
\end{equation}
which has finished the proof by the assumption that $T_{\epsilon}\subseteq\omega_T$. 
\end{proof}

As for $\text{curl}~(\widetilde{\Pi}_{h,T} \bfu - \bfu)$, if we directly apply integration by parts to the curl of \eqref{thm_interp_err_3_eq2}, it may cause some troubles around the interface. So we employ a rather different argument based on the Soblev embedding theorem which is inspired by the works in \cite{2004LiLinLinRogers}. Since $\text{curl}~\bfu\in H^1(\Omega)$, the Soblev embedding theorem suggests that $\text{curl}~\bfu\in L^3(\Omega)$. Then we have the following estimate.

\begin{thm}
\label{thm_interp_err_4}
Suppose $\bfu\in \bfH^1(\emph{curl};\Omega)$, then
\begin{equation}
\label{thm_interp_err_4_eq0}
\| \emph{curl}(\widetilde{\Pi}_{h,T} \bfu - \bfu) \|_{L^2(T)} \le C h_T \left( \| \bfu_E^+ \|_{\bfH^1(\emph{curl};\omega_T)} +  \| \bfu_E^- \|_{\bfH^1(\emph{curl};\omega_T)} + \| \emph{curl}~\bfu_E^+ \|_{L^3(\omega_T)} +  \| \emph{curl}~\bfu_E^- \|_{L^3(\omega_T)}  \right).
\end{equation}
\end{thm}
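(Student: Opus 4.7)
The plan is to split
\[
\text{curl}(\widetilde{\Pi}_{h,T}\bfu - \bfu) = \text{curl}(\widetilde{\Pi}_{h,T}\bfu - J_{h,T_{\epsilon}}\bfu) + \text{curl}(J_{h,T_{\epsilon}}\bfu - \bfu),
\]
where the second summand is immediately controlled by Theorem \ref{thm_interp_err_2} and gives the desired $O(h_T)$ contribution on $T$. All the new work concerns the first summand. The key enabling observation is \eqref{thm_bound_eq02}: the constant $\tau := \mu^{-1}\text{curl}~\bfpsi_i$ is independent of $i$ on $T$ and satisfies $|\tau|\le Ch_T^{-2}$. Setting $\bfw := \bfu - J_{h,T_{\epsilon}}\bfu$ and $s_i := \int_{e_i}\bfw\cdot\bft_i\,ds$, one gets $\text{curl}(\widetilde{\Pi}_{h,T}\bfw)|_{T^{\pm}_h} = \mu^{\pm}\tau\,(s_1+s_2+s_3)$, which is piecewise constant. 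Combined with $|T|\le Ch_T^2$ this reduces everything to
\[
\|\text{curl}(\widetilde{\Pi}_{h,T}\bfw)\|_{L^2(T)}\le Ch_T^{-1}|s_1+s_2+s_3|,
\]
so it suffices to show $|s_1+s_2+s_3|\le Ch_T^2\,(\ldots)$ with the right-hand side of the theorem.

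Next I would apply Stokes' theorem to $\bfu$ and to $J_{h,T_{\epsilon}}\bfu$ separately. Both are piecewise smooth with vanishing tangential jump---$\bfu$ across $\Gamma$ by \eqref{inter_jc_1} and $J_{h,T_{\epsilon}}\bfu$ across $\Gamma^T_h$ by \eqref{weak_eqv_jc_1}---so the boundary integral $\sum_i s_i = \int_{\partial T}\bfw\cdot\bft\,ds$ collapses to volume integrals of the curl over $T^{\pm}$ and $T^{\pm}_h$ respectively. Swapping $\bfu^{\pm}$ for the extensions $\bfu^{\pm}_E$ on $T^{\pm}$ and accounting for the thin sandwich $\widetilde{T}$ on which the labellings by $\Gamma$ and by $\Gamma^T_h$ disagree, one arrives at
\[
s_1+s_2+s_3 = \sum_{\pm}\int_{T^{\pm}_h}\text{curl}(\bfu^{\pm}_E - J^{\pm}_{h,T_{\epsilon}}\bfu)\,dX \;\pm\; \int_{\widetilde{T}}\bigl(\text{curl}\,\bfu^-_E - \text{curl}\,\bfu^+_E\bigr)\,dX.
\]
The two integrals over $T^{\pm}_h$ are handled by Cauchy--Schwarz paired with Theorem \ref{thm_interp_err_2}, contributing $|T^{\pm}_h|^{1/2}\cdot Ch_T\|\bfu^{\pm}_E\|_{\bfH^1(\text{curl};\omega_T)} \le Ch_T^2\|\bfu^{\pm}_E\|_{\bfH^1(\text{curl};\omega_T)}$.

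The main obstacle is the sandwich integral. A naive $L^2$ Cauchy--Schwarz estimate would only give $|\widetilde{T}|^{1/2}\le Ch_T^{3/2}$, which after the $h_T^{-1}$ amplification above would degrade to the suboptimal $O(h_T^{1/2})$. The remedy, foreshadowed by the remark preceding the theorem and by the analogous treatment in \cite{2004LiLinLinRogers}, is to invoke the 2D Sobolev embedding $H^1\hookrightarrow L^3$ together with the sharper volume estimate $|\widetilde{T}|\le Ch_T^3$ coming from \eqref{lemma_interface_flat_eq1}, so that by H\"older
\[
\Big|\int_{\widetilde{T}}\bigl(\text{curl}\,\bfu^-_E - \text{curl}\,\bfu^+_E\bigr)\,dX\Big| \le |\widetilde{T}|^{2/3}\|\text{curl}(\bfu^-_E - \bfu^+_E)\|_{L^3(\widetilde{T})} \le Ch_T^2\bigl(\|\text{curl}\,\bfu^+_E\|_{L^3(\omega_T)} + \|\text{curl}\,\bfu^-_E\|_{L^3(\omega_T)}\bigr).
\]
Collecting all contributions yields $|s_1+s_2+s_3|\le Ch_T^2\,(\ldots)$, and multiplying by $h_T^{-1}$ produces the stated $O(h_T)$ rate. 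The emergence of the $L^3$ norms on the right-hand side of \eqref{thm_interp_err_4_eq0} is precisely dictated by this interplay between the $O(h_T^3)$ thinness of $\widetilde{T}$ and the fact that $\text{curl}\,\bfu$ has no better global regularity than $H^1$.
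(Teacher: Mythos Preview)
Your proposal is correct and follows essentially the same approach as the paper: the same splitting via $J_{h,T_{\epsilon}}\bfu$, the same use of the constant $\tau=\mu^{-1}\text{curl}\,\bfpsi_i$ from \eqref{thm_bound_eq02} to reduce to bounding $\sum_i s_i$, the same Stokes/integration-by-parts decomposition into a volume term handled by Theorem~\ref{thm_interp_err_2} and a sandwich integral over $\widetilde T$, and the same $L^3$--H\"older estimate on $\widetilde T$ exploiting $|\widetilde T|^{2/3}\le Ch_T^2$ from \eqref{lemma_interface_flat_eq1}. The only difference is cosmetic: the paper first replaces $\bfu$ by the piecewise extension $\bfu_E$ on $T^{\pm}_h$ and integrates by parts once, whereas you apply Stokes to $\bfu$ and $J_{h,T_{\epsilon}}\bfu$ separately before swapping in $\bfu^{\pm}_E$; the resulting identities are the same.
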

\begin{proof}
Similar to \eqref{thm_interp_err_3_eq1}, we have
\begin{equation}
\label{thm_interp_err_4_eq_extra1}
\| \text{curl}(\widetilde{\Pi}_{h,T} \bfu - \bfu) \|_{L^2(T)} \le \| \text{curl}( \widetilde{\Pi}_{h,T} \bfu - J_{h,T_{\epsilon}}\bfu ) \|_{L^2(T)} + \| \text{curl}(J_{h,T_{\epsilon}}\bfu - \bfu) \|_{L^2(T)}.
\end{equation}
Again the second term in \eqref{thm_interp_err_4_eq_extra1} directly follows from Theorem \ref{thm_interp_err_2}. For the first term, we also consider the piecewise-defined function $\bfw = \bfu_E - J_{h,T_{\epsilon}} \bfu$ with $\bfw^{\pm} = \bfu^{\pm}_E - J^{\pm}_{h,T_{\epsilon}} \bfu$ in $T^{\pm}_h$ partitioned by $\Gamma^T_h$. 
%We recall that $\widetilde{T}$ is the subelement sandwiched by $\Gamma^T_h$ and $\Gamma$ shown by Figure \ref{fig:sandwich}, and note that $\bfw$ differs from $\bfu - J_{h,T_{\epsilon}} \bfu$ in $\widetilde{T}$ since $\bfu$ is partitioned by the interface $\Gamma$ itself but they equal on $\partial T$. 
By the identity in \eqref{thm_bound_eq02} we let $\tau=\mu^{-1}\text{curl}~\bfpsi_i$, $i=1,2,3$. Then the similar derivation to \eqref{thm_curl0_eq1}, i.e., the integration by parts on $T^{\pm}_h$, leads to
\begin{equation}
\begin{split}
\label{thm_interp_err_4_eq1}
\frac{1}{\mu} \text{curl}~ \widetilde{\Pi}_{h,T}(\bfu - J_{h,T_{\epsilon}} \bfu)= \int_{\partial T} (\bfu - J_{h,T_{\epsilon}} \bfu)\cdot\bft_i ds ~ \tau= \int_{\partial T} \bfw\cdot\bft_i ds ~ \tau =  \int_{\Gamma^T_h} [\bfw\cdot\bar{\bft}]_{\Gamma^T_h} ds ~\tau + \int_T \text{curl}~\bfw dX ~\tau
\end{split}
\end{equation}
where $\bar{\bft}$ denotes the unit tangential vector to $\Gamma^T_h$ in the clockwise orientation of $T^-_h$. Then applying the integration by parts to the subregion $\widetilde{T}$ and using jump conditions \eqref{weak_eqv_jc_1}, \eqref{inter_jc_1}, we actually have
\begin{equation}
\begin{split}
\label{thm_interp_err_4_eq2}
\int_{\Gamma^T_h} [\bfw\cdot\bar{\bft}]_{\Gamma^T_h} ds =  \int_{\Gamma^T_h} \bfu^-_E\cdot\bar{\bft} -   \bfu^+_E\cdot\bar{\bft} ds = \int_{\widetilde{T}} \text{curl}(\bfu^-_E - \bfu^+_E ) dX.
\end{split}
\end{equation}
By H\"older's inequality and the geometric estimate \eqref{lemma_interface_flat_eq1}, we have
\begin{equation}
\label{thm_interp_err_4_eq3}
\verti{ \int_{\Gamma^T_h} [\bfw\cdot\bar{\bft}]_{\Gamma^T_h} ds }\le \|  \text{curl}(\bfu^-_E - \bfu^+_E ) \|_{L^3(\widetilde{T})} |\widetilde{T}|^{2/3} \le Ch^2_T \left( \| \text{curl}~ \bfu^-_E \|_{L^3(\omega_T)} + \| \text{curl}~ \bfu^+_E \|_{L^3(\omega_T)} \right)
\end{equation}
where we have also used $\widetilde{T}\subset T \subset \omega_T$ in the last inequality. Also, by H\"older's inequality and Theorem \ref{thm_interp_err_2}, we have
\begin{equation}
\label{thm_interp_err_4_eq4}
\verti{ \int_T \text{curl}~\bfw dX } \le \| \text{curl}~\bfw \|_{L^2(T)} |T|^{1/2} \le Ch^2_T \left( \| \bfu_E^+ \|_{\bfH^1(\text{curl};\omega_T)} +  \| \bfu_E^- \|_{\bfH^1(\text{curl};\omega_T)}  \right).
\end{equation}
Moreover, in \eqref{thm_interp_err_4_eq1} we use the inequality in \eqref{thm_bound_eq02} to obtain $\|\tau\|_{L^2(T)}\le Ch^{-2}_T|T|^{1/2}\le Ch^{-1}_T$. Now putting this estimate together with \eqref{thm_interp_err_4_eq3} and \eqref{thm_interp_err_4_eq4} into \eqref{thm_interp_err_4_eq1}, we have 
\begin{equation}
\begin{split}
\label{thm_interp_err_4_eq5}
& \| \mu^{-1} \text{curl}~\widetilde{\Pi}_{h,T} (\bfu - J_{h,T_{\epsilon}} \bfu) \|_{L^2(T)} \\
\le & C h_T \left( \| \bfu_E^+ \|_{\bfH^1(\text{curl};\omega_T)} +  \| \bfu_E^- \|_{\bfH^1(\text{curl};\omega_T)} + \| \text{curl}~\bfu_E^+ \|_{L^3(\omega_T)} +  \| \text{curl}~\bfu_E^- \|_{L^3(\omega_T)}  \right)
\end{split}
\end{equation}
which yields the desired result with \eqref{thm_interp_err_4_eq_extra1}.
\end{proof}

Finally we can provide the estimate for the global interpolation $\widetilde{\Pi}_h$ defined by \eqref{interp_4}.
\begin{thm}
\label{thm_interp_err_5}
Suppose $\bfu\in \bfH^1(\emph{curl};\Omega)$, then there exists a constant $C$ such that
\begin{equation}
\label{thm_interp_err_5_eq0}
\| \widetilde{\Pi}_{h} \bfu - \bfu \|_{\bfH^1(\emph{curl};\Omega)} \le C h \left( \| \bfu \|_{\bfH^1(\emph{curl};\Omega^-)} +  \| \bfu \|_{\bfH^1(\emph{curl};\Omega^+)} \right).
\end{equation}
\end{thm}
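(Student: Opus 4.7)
The plan is to localize, sum, and then globalize via patch overlap and the extension theorem. Decompose $\mathcal{T}_h=\mathcal{T}_h^i\cup\mathcal{T}_h^n$. On each $T\in\mathcal{T}_h^n$ we have $\widetilde{\Pi}_{h,T}=\Pi_{h,T}$, so the standard estimate \eqref{interp_1_err} yields $\|\widetilde{\Pi}_{h,T}\bfu-\bfu\|_{\bfH(\text{curl};T)}\le Ch_T\|\bfu\|_{\bfH^1(\text{curl};T)}$, with $T$ entirely contained in $\Omega^+$ or $\Omega^-$. On each $T\in\mathcal{T}_h^i$, Theorems \ref{thm_interp_err_3} and \ref{thm_interp_err_4} together give
\begin{equation*}
\|\widetilde{\Pi}_{h,T}\bfu-\bfu\|_{\bfH(\text{curl};T)}^2 \le Ch_T^2\Big(\|\bfu_E^+\|_{\bfH^1(\text{curl};\omega_T)}^2+\|\bfu_E^-\|_{\bfH^1(\text{curl};\omega_T)}^2+\|\text{curl}~\bfu_E^+\|_{L^3(\omega_T)}^2+\|\text{curl}~\bfu_E^-\|_{L^3(\omega_T)}^2\Big).
\end{equation*}

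Next I would sum squares over all $T\in\mathcal{T}_h$. The mesh regularity guarantees that the patches $\{\omega_T\}$ have uniformly bounded overlap, so for any $w\in L^p(\Omega)$, $\sum_T\|w\|_{L^p(\omega_T)}^p\le C\|w\|_{L^p(\Omega)}^p$. Applied with $p=2$ to $w=\bfu_E^\pm$ and $\text{curl}~\bfu_E^\pm$, this produces $\sum_T h_T^2\|\bfu_E^\pm\|_{\bfH^1(\text{curl};\omega_T)}^2\le Ch^2\|\bfu_E^\pm\|_{\bfH^1(\text{curl};\Omega)}^2$. Invoking the $\bfH^1(\text{curl};\Omega)$-extension theorem (Theorem \ref{thm_ext}), the right-hand side is in turn controlled by $Ch^2\|\bfu\|_{\bfH^1(\text{curl};\Omega^\pm)}^2$, which delivers the desired $\mathcal{O}(h)$ bound for the ``smooth'' part of the error.

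The delicate step, and the one I expect to be the main obstacle, is bounding $\sum_{T\in\mathcal{T}_h^i}h_T^2\|\text{curl}~\bfu_E^\pm\|_{L^3(\omega_T)}^2$ at the same rate. Here I would exploit the Sobolev embedding $H^1(\Omega)\hookrightarrow L^3(\Omega)$ in two dimensions, which combined with Theorem \ref{thm_ext} gives $\|\text{curl}~\bfu_E^\pm\|_{L^3(\Omega)}\le C\|\text{curl}~\bfu_E^\pm\|_{H^1(\Omega)}\le C\|\bfu\|_{\bfH^1(\text{curl};\Omega^\pm)}$. Then apply Hölder's inequality for sums with exponents $(3,3/2)$ to get $\sum_{T\in\mathcal{T}_h^i}\|v\|_{L^3(\omega_T)}^2\le(\#\mathcal{T}_h^i)^{1/3}\bigl(\sum_T\|v\|_{L^3(\omega_T)}^3\bigr)^{2/3}$, and use the finite-overlap bound with $p=3$ to absorb the second factor into $C\|v\|_{L^3(\Omega)}^2$. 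Since the interface is a smooth curve of bounded length and hence crosses $\mathcal{O}(h^{-1})$ elements, $\#\mathcal{T}_h^i=\mathcal{O}(h^{-1})$, and this term is handled by the same $\mathcal{O}(h)$ order after combining with the $h_T^2$ prefactor and a mesh-quasi-uniformity refinement.

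Finally, collecting the non-interface and interface contributions and combining the $L^2$-error of Theorem \ref{thm_interp_err_3} with the curl error of Theorem \ref{thm_interp_err_4} yields the bound
\begin{equation*}
\|\widetilde{\Pi}_h\bfu-\bfu\|_{\bfH(\text{curl};\Omega)}\le Ch\bigl(\|\bfu\|_{\bfH^1(\text{curl};\Omega^-)}+\|\bfu\|_{\bfH^1(\text{curl};\Omega^+)}\bigr),
\end{equation*}
where the norm on the left is understood in the broken/piecewise sense for the non-conforming IFE space. The only genuinely novel ingredients beyond summation are the careful accounting for the Sobolev-embedding term and the appeal to the $\bfH^1(\text{curl})$-extension to unify the local patch norms with the global regularity assumption on $\bfu$.
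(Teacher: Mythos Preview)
Your overall plan matches the paper's proof: localize via Theorems~\ref{thm_interp_err_3}--\ref{thm_interp_err_4} on interface elements and \eqref{interp_1_err} on non-interface elements, sum, invoke finite overlap of patches, then apply the Sobolev embedding $H^1(\Omega)\hookrightarrow L^3(\Omega)$ and the extension Theorem~\ref{thm_ext}. You are also right to flag the $L^3$ summation as the only nontrivial step.

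However, the resolution you propose for that step does not deliver $\mathcal{O}(h)$. Your H\"older-for-sums bound
\[
\sum_{T\in\mathcal{T}_h^i}\|v\|_{L^3(\omega_T)}^2 \le (\#\mathcal{T}_h^i)^{1/3}\Bigl(\sum_T\|v\|_{L^3(\omega_T)}^3\Bigr)^{2/3}\le C\,h^{-1/3}\|v\|_{L^3(\Omega)}^2
\]
combined with the prefactor $h_T^2\simeq h^2$ gives only $h^{5/3}$ for the sum of squares, i.e.\ $\mathcal{O}(h^{5/6})$ after the square root, not $\mathcal{O}(h)$. (The paper itself writes this step tersely, invoking only ``the finite overlapping property of the patches''; note that finite overlap controls $\sum_T\|v\|_{L^3(\omega_T)}^3$ by $\|v\|_{L^3(\Omega)}^3$, but not the $\ell^2$-of-$L^3$-norms directly.)

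The repair is immediate in two dimensions: go back to the proof of Theorem~\ref{thm_interp_err_4}, equation~\eqref{thm_interp_err_4_eq3}, and apply H\"older on $\widetilde{T}$ with exponent $p\ge 4$ instead of $p=3$. Since $|\widetilde{T}|\le Ch_T^3$, the local contribution to the curl error becomes $Ch_T^{\,2-3/p}\|\text{curl}\,\bfu_E^\pm\|_{L^p(\omega_T)}$. Your H\"older-for-sums argument (now with exponents $p/2$ and $p/(p-2)$) then yields, using $\#\mathcal{T}_h^i=\mathcal{O}(h^{-1})$,
\[
\sum_{T\in\mathcal{T}_h^i} h_T^{\,2(2-3/p)}\|v\|_{L^p(\omega_T)}^2 \le C\,h^{(3p-4)/p}\|v\|_{L^p(\Omega)}^2,
\]
and $(3p-4)/p\ge 2$ precisely when $p\ge 4$. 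In 2D the embedding $H^1(\Omega)\hookrightarrow L^p(\Omega)$ holds for every finite $p$, so this costs nothing and the final bound follows exactly as you outlined.
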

\begin{proof}
Combining Theorem \ref{thm_interp_err_3}, Theorem \ref{thm_interp_err_4} and using the standard estimates \eqref{interp_1_err} on non-interface elements, and then using the finite overlapping property of the patches, we have
\begin{equation}
\label{thm_interp_err_5_eq1}
\| \widetilde{\Pi}_{h} \bfu - \bfu \|_{\bfH^1(\text{curl};\Omega)} \le Ch \left( \| \bfu^+_E \|_{\bfH^1(\text{curl};\Omega)} + \| \bfu^-_E \|_{\bfH^1(\text{curl};\Omega)}  + \| \text{curl}~ \bfu^+_E \|_{L^3(\Omega)}  + \| \text{curl}~ \bfu^-_E \|_{L^3(\Omega)} \right).
\end{equation}
By the Sobolev embedding theorem, we have $\| \text{curl}~ \bfu^{\pm}_E \|_{L^3(\Omega)}\le C \| \text{curl}~ \bfu^{\pm}_E \|_{H^1(\Omega)}$ where the constant $C$ only depends $\Omega$. Furthermore, the boundedness of the $\bfH^1(\text{curl};\Omega)$ extensions in Theorem \ref{thm_ext} yields the desired result.
\end{proof}

%The following estimates under the energy norm $\|\cdot\|_a$ directly follows from Theorem \ref{thm_interp_err_5}.
%
%\begin{thm}
%\label{thm_interp_err_6}
%Suppose $\bfu\in \bfH^1(\emph{curl};\Omega)$, then there exists a constant $C$ independent of the interface location such that
%\begin{equation}
%\label{thm_interp_err_6_eq0}
%\| \widetilde{\Pi}_{h} \bfu - \bfu \|_{a} \le C h \| \bfu \|_{\bfH^1(\emph{curl};\Omega)}.
%\end{equation}
%\end{thm}

%%%%%%%%%%%%%%%%%%%%%%%%%%%%%%%%%%%%%%%%%%%%%%%%%%%%%%%%%%%%%%%%%%%%%%%%%%%%%%%%%%%%%%%%%%%%%%%%%%%%%%%%%%%%%%%%%%%%%%%%%%%%%%%%%%%%%%%%%%%%%%%%%%%%%%%%%%%%%%%%%%%%%%%%%%%%%%%%%%%%%%%%%%%%%%%%%%%%%%%%%%%%%%%%%%%%%%%%%%%%%%%%%%%%%%%%%%%%%%%%%%%%%%%%%%%%%%%%%%%%%%%%%%%%%%%%%%%%%%%%%%%%%%%%%%%%%%%%%%%%%%%%%%%%%%%%%%%%%%%%%%%%%%%%%%%%%%%%%%%%%%%%%%%%%%%%%%%%%%%%%%%%%%%%%%%%%%%%%%%%%%%%%%%%%%%%%%%%%%%%%%%%%%%%%%%%%%%%%%%%%%%%%%%%%%%%%%%%%%%%%%%%%%%%

\section{Solution Errors of The IFE Scheme}
\label{sec:solu_error}

In this section, we analyze the PG-IFE scheme \eqref{weak_form_3}. 
%The analysis of the \textit{inf-sup} condition highly relies on the isomorphism between the global IFE space $\mathcal{IND}_h(\Omega)$ and the standard N\'ed\'elec space $\mathcal{ND}_h(T)$:
%\begin{equation}
%\begin{split}
%\label{iso_map}
% \mathbb{I}_h~:~ \mathcal{IND}_h(\Omega) \longrightarrow \mathcal{ND}_h(\Omega) ~~~ 
%\text{with} ~~  \int_e \mathbb{I}_h\bfu_h\cdot\bft ds = \int_e \bfu_h\cdot\bft ds, ~~ \forall e\in\mathcal{E}_h
%\end{split}
%\end{equation}
%where we note that $\mathbb{I}_h$ is well-defined due to the definition of the global degrees of freedom of global IFE functions in \eqref{IFE_glob_spa}. We also note that the isomorphism $\mathbb{I}_h$ can be understood as the inverse of the interpolation operator in \eqref{interp_4} restricted onto $\mathcal{ND}_h(\Omega)$.

\subsection{Local Stability Resutls}
As suggested by the framework \cite{2020AdjeridBabukaGuoLin,2019GuoLin}, the stability of an IFE method generally highly relies on the stability of the linear operator $\mathcal{C}_T$ used to construct IFE functions given by the analysis below. 
Again, the image and inverse image of $\mathcal{C}_T$ are all polynomials and thus can be used on the whole element $T$ instead of only the associated subelements on which they are defined. Without loss of generality, in the following analysis we only consider the interface element configurations in Figure \ref{fig:interf_case} where $T^-_h$ is assumed to be triangular. We first recall a norm equivalence result from Lemma 3.6 in \cite{2019GuoLin}.
\begin{lemma}
\label{lem_norm_poly_equiv}
For each interface element with the configuration shown in Figure \ref{fig:interf_case}, if $|A_1D|\ge\frac{1}{2}|A_1A_2|$ and $|A_1E|\ge\frac{1}{2}|A_1A_3|$ (case 1), there holds
\begin{subequations}
\label{lem_norm_poly_equiv0}
\begin{align}
\| \cdot \|_{L^2(T^-_h)} \simeq \| \cdot \|_{L^2(T)}, ~~~~ \text{on} ~~ \mathcal{ND}_h(T), \label{lem_norm_poly_equiv01}   
\end{align}
and if $|A_1D|\le\frac{1}{2}|A_1A_2|$ or $|A_1E|\le\frac{1}{2}|A_1A_3|$ (case 2)
\begin{align}
\| \cdot \|_{L^2(T^+_h)} \simeq \| \cdot \|_{L^2(T)}, ~~~~ \text{on} ~~ \mathcal{ND}_h(T).  \label{lem_norm_poly_equiv02}   
\end{align}
\end{subequations}
\end{lemma}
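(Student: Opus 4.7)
The upper bound $\|\bfv_h\|_{L^2(T^\pm_h)}\le \|\bfv_h\|_{L^2(T)}$ is immediate from $T^\pm_h\subseteq T$, so only the reverse direction requires argument. The plan is a compactness argument, exploiting the fact that $\mathcal{ND}_h(T)$ is a fixed 3-dimensional polynomial space and the restricted parameter ranges make the admissible interface configurations either compact or compactifiable.

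First I would pull back to the reference triangle $\hat{T}$ via the Piola transformation \eqref{piola_trans}, which is a bijection between $\mathcal{ND}_h(T)$ and $\mathcal{ND}_h(\hat{T})$ and preserves the ratios $d=|A_1D|/|A_1A_2|$, $e=|A_1E|/|A_1A_3|$. The $L^2$ norms transform with Jacobian factors bounded above and below by constants depending only on shape-regularity of $\mathcal{T}_h$, so it suffices to establish both equivalences on $\hat{T}$, uniformly in the admissible $(d,e)$.

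For case 1, the parameter set $K_1=[1/2,1]^2$ is compact. The map
\begin{equation*}
\Phi_1:\ (d,e,\hat{\bfv}_h)\ \longmapsto\ \frac{\|\hat{\bfv}_h\|^2_{L^2(\hat{T}^-_h(d,e))}}{\|\hat{\bfv}_h\|^2_{L^2(\hat{T})}}
\end{equation*}
is continuous on $K_1\times\mathcal{S}$, where $\mathcal{S}$ is the $L^2(\hat{T})$-unit sphere in the 3-dimensional space $\mathcal{ND}_h(\hat{T})$. Since a nonzero polynomial in $\mathcal{ND}_h(\hat{T})$ cannot vanish on the positive-area triangle $\hat{T}^-_h$, $\Phi_1>0$ pointwise; compactness of $K_1\times\mathcal{S}$ then yields a strictly positive infimum, giving \eqref{lem_norm_poly_equiv01}.

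For case 2, the analogous parameter set $K_2=\{(d,e)\in(0,1]^2:d\le 1/2\ \text{or}\ e\le 1/2\}$ fails to be compact only as $d\to 0$ or $e\to 0$. The key observation is that in this regime $\hat{T}^+_h$ fills out $\hat{T}$, so $\|\hat{\bfv}_h\|_{L^2(\hat{T}^+_h)}\to \|\hat{\bfv}_h\|_{L^2(\hat{T})}$ uniformly on $\mathcal{S}$. This lets me extend the ratio $\Phi_2:=\|\hat{\bfv}_h\|^2_{L^2(\hat{T}^+_h)}/\|\hat{\bfv}_h\|^2_{L^2(\hat{T})}$ continuously to $\overline{K_2}$ with value $1$ on the added boundary pieces, and compactness again yields a strictly positive infimum. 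The main obstacle is justifying this uniform limit, which reduces to showing that the $L^\infty(\hat{T})$-norm on the 3-dimensional space $\mathcal{ND}_h(\hat{T})$ is equivalent to the $L^2(\hat{T})$-norm (a standard finite-dimensional norm equivalence) and then observing $|\hat{T}\setminus\hat{T}^+_h|=de|\hat{T}|\to 0$ along the degenerating boundary. This completes the outline for \eqref{lem_norm_poly_equiv02}.
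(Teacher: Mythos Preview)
Your argument is correct. The paper does not prove this lemma itself but simply recalls it from Lemma~3.6 of \cite{2019GuoLin}, so your self-contained compactness argument on the reference element supplies detail the paper omits. One small point worth sharpening: under the Piola map $\bfz=B_T^{-t}(\hat{\bfz}\circ F_T^{-1})$ one has $\|\bfz\|_{L^2(S)}^2=|\det B_T|\int_{\hat{S}}|B_T^{-t}\hat{\bfz}|^2\,d\hat{X}$, so the ratio $\|\bfz\|_{L^2(T_h^\pm)}/\|\bfz\|_{L^2(T)}$ is not literally equal to $\|\hat{\bfz}\|_{L^2(\hat{T}_h^\pm)}/\|\hat{\bfz}\|_{L^2(\hat{T})}$ but is equivalent to it up to the condition number of $B_T$, which is exactly what shape regularity bounds. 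With that understood, both cases go through as you outline; in particular the continuous extension of $\Phi_2$ by the value $1$ to $\{d=0\}\cup\{e=0\}$, justified via the $L^\infty$--$L^2$ equivalence on the three-dimensional space $\mathcal{ND}_h(\hat{T})$, is the right device to recover compactness in case~2.
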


\begin{figure}[H]
\centering
\begin{subfigure}{.3\textwidth}
     \includegraphics[width=2in]{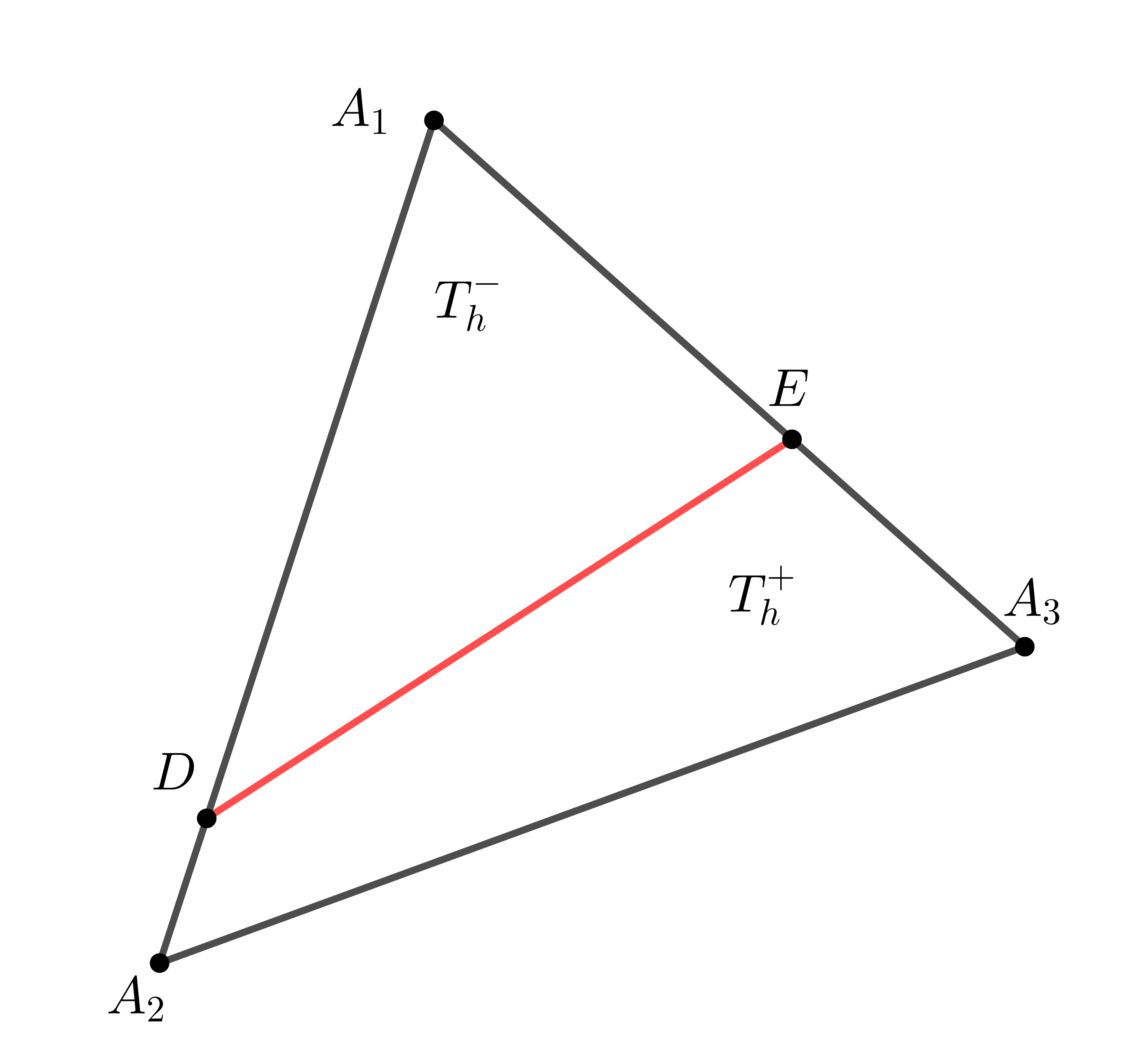}
     \caption{Case 1}
     \label{interf_case_1} %% label for first subfigure
\end{subfigure}
~~~~~~~~~~~~~~~~
\begin{subfigure}{.3\textwidth}
     \includegraphics[width=2in]{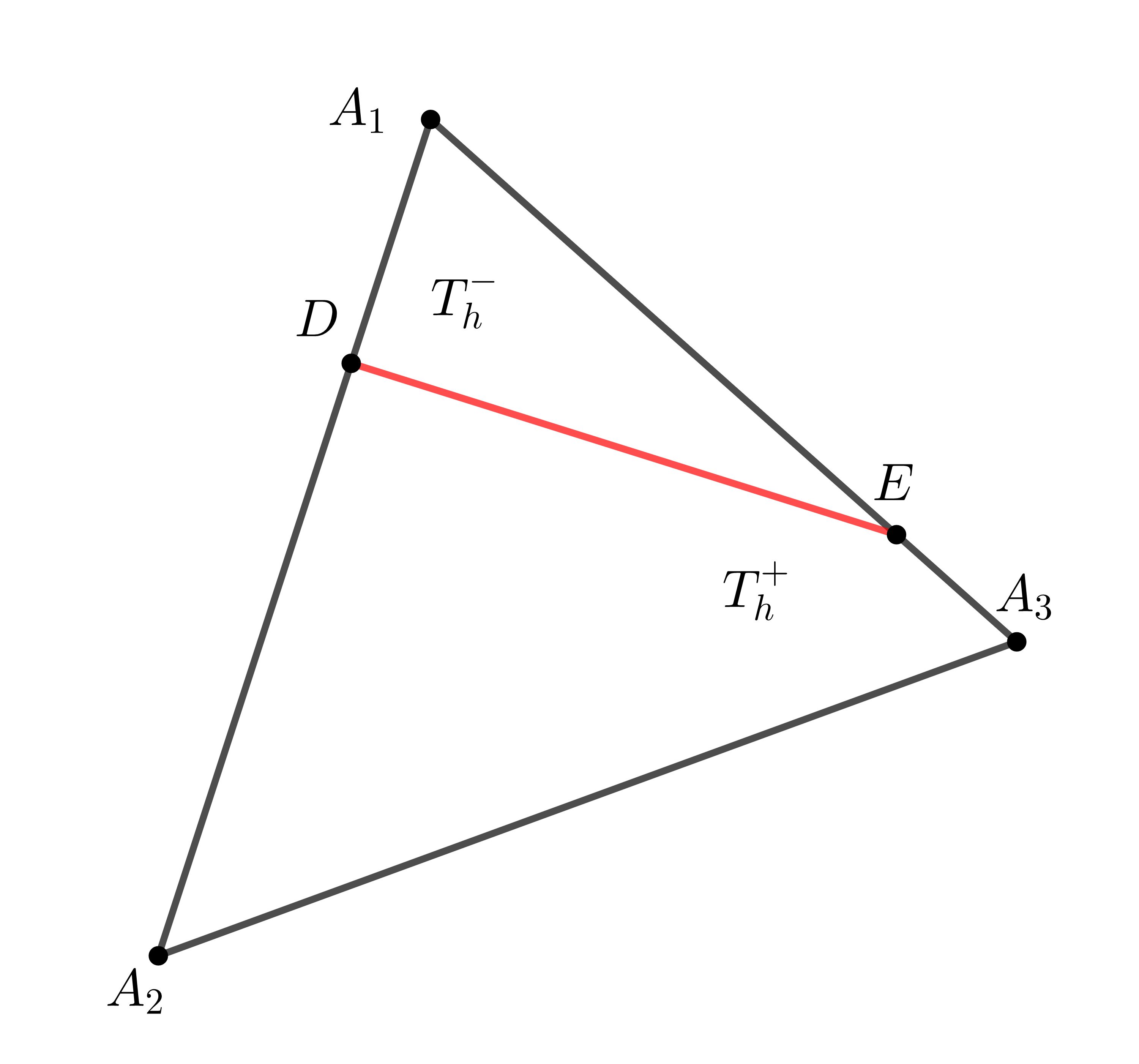}
     \caption{Case 2}
     \label{interf_case_2} %% label for first subfigure
\end{subfigure}
     \caption{Interface element configuration}
  \label{fig:interf_case} %% label for entire figure
\end{figure}

Then we can show the stability of the extension operator $\mathcal{C}_T$ in the following.

\begin{lemma}
\label{lem_C_stab}
For each interface element $T$, if $|A_1D|\ge\frac{1}{2}|A_1A_2|$ and $|A_1E|\ge\frac{1}{2}|A_1A_3|$ (Case 1 in Figure \ref{fig:interf_case}),
\begin{subequations}
\label{lem_C_stab_eq0}
\begin{align}
 \| \mathcal{C}_T(\bfv_h) \|_{L^2(T^-_h)} \le C \| \bfv_h \|_{L^2(T^-_h)}, ~~~ \text{and} ~~~  \| \mathcal{C}^{-1}_T(\bfv_h) \|_{L^2(T^-_h)} \le C \| \bfv_h \|_{L^2(T^-_h)}, ~~~ \forall \bfv_h\in\mathcal{ND}_h(T);\label{lem_C_stab_eq01}   
\end{align}
if $|A_1D|\le\frac{1}{2}|A_1A_2|$ or $|A_1E|\le\frac{1}{2}|A_1A_3|$ (Case 2 in Figure \ref{fig:interf_case}),
\begin{align}
 \| \mathcal{C}_T(\bfv_h) \|_{L^2(T^+_h)} \le C \| \bfv_h \|_{L^2(T^+_h)} ~~~ \text{and} ~~~ \| \mathcal{C}^{-1}_T(\bfv_h) \|_{L^2(T^+_h)} \le C \| \bfv_h \|_{L^2(T^+_h)}, ~~~ \forall \bfv_h\in\mathcal{ND}_h(T)  . \label{lem_C_stab_eq02}   
\end{align}
\end{subequations}
\end{lemma}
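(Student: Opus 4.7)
The plan is to leverage the explicit formula \eqref{explicit_form_CT}, which expresses $\mathcal{C}_T(\bfv_h)=\bfv_h+b_1\bfR[X-D]+b_2\bar{\bfn}$ with $b_1$ a scalar multiple of $\operatorname{curl}\bfv_h$ and $b_2$ a linear combination of $\bfv_h(X_m)\cdot\bar{\bfn}$ and $b_1|\Gamma^T_h|$. Since $\mathcal{C}_T(\bfv_h)$ and $\bfv_h$ are both \emph{polynomials} in $\mathcal{ND}_h(T)$, it suffices to show $\|\mathcal{C}_T(\bfv_h)\|_{L^2(T)}\le C\|\bfv_h\|_{L^2(T)}$ on the whole element, and then invoke Lemma \ref{lem_norm_poly_equiv}: in Case 1 the chain $\|\mathcal{C}_T(\bfv_h)\|_{L^2(T^-_h)}\simeq\|\mathcal{C}_T(\bfv_h)\|_{L^2(T)}\lesssim\|\bfv_h\|_{L^2(T)}\simeq\|\bfv_h\|_{L^2(T^-_h)}$ delivers \eqref{lem_C_stab_eq01}, and Case 2 is entirely symmetric on $T^+_h$ via \eqref{lem_norm_poly_equiv02}.

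For the perturbation estimates I would use two uniform polynomial inequalities. First, the inverse inequality for $\mathcal{ND}_h(T)$ gives $|T|^{1/2}|\operatorname{curl}\bfv_h|\le Ch_T^{-1}\|\bfv_h\|_{L^2(T)}$, so that $\|b_1\bfR[X-D]\|_{L^2(T)}\le C h_T\,|b_1|\,|T|^{1/2}\lesssim\|\bfv_h\|_{L^2(T)}$, since $|\bfR[X-D]|\le Ch_T$ on $T$. Second, the pointwise value $|\bfv_h(X_m)\cdot\bar{\bfn}|$ is routed through the weighted-$L^2$ identity of Lemma \ref{quiv_jc_3}: namely $|\bfv_h(X_m)\cdot\bar{\bfn}|^2=\|\sqrt{\rho}\,\bfv_h\cdot\bar{\bfn}\|_{L^2(\Gamma^T_h)}^2\le Ch_T^{-1}\|\bfv_h\cdot\bar{\bfn}\|_{L^2(\Gamma^T_h)}^2\le Ch_T^{-2}\|\bfv_h\|_{L^2(T)}^2$, where the polynomial trace inequality on $\Gamma^T_h$ is uniform because $|\Gamma^T_h|\ge\delta_1h_T$ by \eqref{interf_ext}. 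Combining with $|T|^{1/2}\lesssim h_T$ yields $\|b_2\bar{\bfn}\|_{L^2(T)}\lesssim|b_2||T|^{1/2}\lesssim\|\bfv_h\|_{L^2(T)}$, completing the bound on $\mathcal{C}_T$.

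The bound on $\mathcal{C}_T^{-1}$ is obtained by symmetry: Lemma \ref{lem_C_welldefine} guarantees $\mathcal{C}_T^{-1}$ exists, and one checks from \eqref{weak_jc} that it admits an identical representation with $(\mu^-,\beta^-)$ and $(\mu^+,\beta^+)$ interchanged, so the same inequalities apply (with constants depending on the ratios of the coefficients, which is permitted). The step I expect to be most delicate is the uniform control of the pointwise term $|\bfv_h(X_m)\cdot\bar{\bfn}|$: a naive inverse inequality would introduce a constant depending on the distance of $X_m$ to $\partial T$, and thus on the interface location. The resolution, as indicated above, is to avoid any direct inverse estimate on $X_m$ and instead pair $\bfv_h(X_m)\cdot\bar{\bfn}$ with itself through the $\rho$-weighted pairing on the approximate interface $\Gamma^T_h$, whose length is uniformly comparable to $h_T$ by \eqref{interf_ext}; all the other geometric quantities entering the constants are likewise controlled by the fictitious element analysis already in place.
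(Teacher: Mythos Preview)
Your approach is correct and close to the paper's: both use the explicit formula \eqref{explicit_form_CT} and bound the perturbation $b_1\bfR[X-D]+b_2\bar{\bfn}$ via inverse and trace inequalities. The organization differs slightly. The paper works directly on the ``large'' subelement ($T^-_h$ in Case~1), exploiting that $|\triangle A_1DE|/|DE|\gtrsim h_T$ there to obtain $|\bfv_h(X_m)\cdot\bar{\bfn}|\le Ch_T^{-1}\|\bfv_h\|_{L^2(T^-_h)}$ from the polynomial trace inequality on that triangle, and then invokes Lemma~\ref{lem_norm_poly_equiv} only for the curl bound; for $\mathcal{C}_T^{-1}$ it uses the alternate expressions of $b_1,b_2$ in terms of $\mathcal{C}_T(\bfv_h)$ rather than swapping the coefficients. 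You instead prove $\|\mathcal{C}_T(\bfv_h)\|_{L^2(T)}\le C\|\bfv_h\|_{L^2(T)}$ on the full element and transfer to $T^{\pm}_h$ via Lemma~\ref{lem_norm_poly_equiv} at the end, which is a clean uniform treatment of both cases.

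Two technical slips to flag. First, \eqref{interf_ext} bounds $|\Gamma^{T_\epsilon}_h|$ on the fictitious element, not $|\Gamma^T_h|$; the latter can be arbitrarily small (precisely in Case~2), so your appeal to it for a trace inequality on $\Gamma^T_h$ is incorrect. If you want the $\rho$-pairing route, it must be run on $\Gamma^{T_\epsilon}_h$ exactly as in Lemma~\ref{quiv_jc_3}, followed by the polynomial norm equivalence $\|\bfv_h\|_{L^2(T_\epsilon)}\simeq\|\bfv_h\|_{L^2(T)}$. Second, your stated worry that a ``naive inverse inequality would introduce a constant depending on the distance of $X_m$ to $\partial T$'' is unfounded: for fixed-degree polynomials on the shape-regular element $T$ one has $\|\bfv_h\|_{L^\infty(T)}\le C|T|^{-1/2}\|\bfv_h\|_{L^2(T)}$ uniformly, independent of the evaluation point, which already gives $|\bfv_h(X_m)\cdot\bar{\bfn}|\le Ch_T^{-1}\|\bfv_h\|_{L^2(T)}$ without invoking Lemma~\ref{quiv_jc_3} at all. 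With either of these fixes your argument goes through.
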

\begin{proof} 
For simplicity, we only prove \eqref{lem_C_stab_eq01} here. Let $D=[x_{D,1},x_{D,2}]^t$ and $E=[x_{E,1},x_{E,2}]^t$. By the explicit formula in \eqref{explicit_form_CT} and its derivation, we can write
\begin{equation}
\label{lem_C_stab_eq1}
\mathcal{C}_T(\bfv_h) = \bfv_h + b_1 [x_2 - x_{D,2}, -(x_1 - x_{D,1})]^t + b_2 \bar{\bfn} , ~~~~~ \forall \bfv_h\in \mathcal{ND}_h(T)
\end{equation}
where $\bar{\bfn}$ and $[x_2 - x_{D,2}, -(x_1 - x_{D,1})]^t$ with $[x_1,x_2]^t\in\Gamma^T_h$ are norm vectors to $\Gamma^T_h$ and assumed to have the same orientation, and
\begin{equation}
\label{lem_C_stab_eq3}
b_1 = \frac{1}{2}\left( 1- \frac{\mu^+}{\mu^-} \right) \text{curl}~\bfv_h ~~~ \text{or} ~~~
b_2 = \left( \frac{\beta^-}{\beta^+} - 1 \right) \bfv_h(X_m)\cdot\bar{\bfn}  - \frac{b_1|\Gamma^T_h|}{2}, 
\end{equation}
\begin{equation}
\label{lem_C_stab_eq2}
 b_1 = \frac{1}{2}\left( \frac{\mu^-}{\mu^+} - 1 \right) \text{curl}~\mathcal{C}_T(\bfv_h),~~~ \text{or} ~~~ b_2 = \left( 1 - \frac{\beta^+}{\beta^-}  \right) \mathcal{C}_T(\bfv_h)(X_m) - \frac{b_1|\Gamma^T_h|}{2}.
\end{equation}
Note that $\text{curl}~\bfv$ is a constant. Then if $|A_1D|\ge\frac{1}{2}|A_1A_2|$ and $|A_1E|\ge\frac{1}{2}|A_1A_3|$, the first identity in \eqref{lem_C_stab_eq3} leads to
\begin{equation}
\label{lem_C_stab_eq4}
|b_1|\le C | \text{curl}~\bfv_h | \le Ch^{-1}_T \| \text{curl}~\bfv_h \|_{L^2(T)} \le Ch^{-2}_T \| \bfv_h \|_{L^2(T)} \le Ch^{-2}_T \| \bfv_h \|_{L^2(T^-_h)},
\end{equation}
where in the last two inequalities above we have used the inverse inequality and norm equivalence \eqref{lem_norm_poly_equiv01}. In addition, we note that there exists a constant $C$ such that $|\triangle A_1DE|/|DE|\ge Ch$ where $C$ is independent of interface location. So by the trace inequalities for polynomials \cite{2003WarburtonHesthaven} we have $|\bfv_h(X_m)\cdot\bar{\bfn}|\le Ch^{-1}_T\|\bfv_h \|_{L^2(\triangle A_1DE)} = Ch^{-1}_T \| \bfv_h \|_{L^2(T^-_h)}$. Putting this estimate and \eqref{lem_C_stab_eq4} into the second identity in \eqref{lem_C_stab_eq3}, we obtain
\begin{equation}
\label{lem_C_stab_eq5}
|b_2| \le Ch^{-1}_T \| \bfv_h \|_{L^2(T^-_h)} + Ch_T |b_1| \le Ch^{-1}_T \| \bfv_h \|_{L^2(T^-_h)} .
\end{equation}
Now using \eqref{lem_C_stab_eq1}, we have
\begin{equation}
\label{lem_C_stab_eq6}
 \| \mathcal{C}_T(\bfv_h) \|_{L^2(T^-_h)} \le \| \bfv_h \|_{L^2(T^-_h)} + Ch^2_T |b_1| + Ch_T |b_2| \le C \| \bfv_h \|_{L^2(T^-_h)}
\end{equation}
which has finished the proof of the first one in \eqref{lem_C_stab_eq01}. The argument for the second one in \eqref{lem_C_stab_eq01} is similar in which we need to use the second identities of \eqref{lem_C_stab_eq2}.
\end{proof}

The stability enables us to prove the trace inequality of IFE functions.

\begin{thm}
\label{lem_trace_inequa}
For each interface element $T$ and its edge $e$, there holds
\begin{equation}
\label{lem_trace_inequa_eq0}
\| \bfz_h \|_{L^2(e)} \le Ch^{-1/2}_T \| \bfz_h \|_{L^2(T)}, ~~~~~ \forall \bfz_h\in \mathcal{IND}_h(T).
\end{equation}
\end{thm}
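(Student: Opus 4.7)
The plan is to reduce the trace inequality for the piecewise-polynomial IFE function $\bfz_h$ to the standard polynomial trace inequality for the two polynomial extensions that make it up, and then use Lemmas \ref{lem_norm_poly_equiv} and \ref{lem_C_stab} to control each of those polynomials by the full $L^2$-norm of $\bfz_h$ on $T$ with a constant independent of the interface location.

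First I would write $\bfz_h|_{T^-_h} = \bfz_h^-$ and $\bfz_h|_{T^+_h} = \bfz_h^+ = \mathcal{C}_T(\bfz_h^-)$ where $\bfz_h^\pm$ are both genuine polynomials in $\mathcal{ND}_h(T)$ (and so can be extended from the subelement to the whole $T$). Splitting the edge $e$ by the interface segment $\Gamma_h^T$ gives
\begin{equation*}
\|\bfz_h\|_{L^2(e)}^2 \le \|\bfz_h^-\|_{L^2(e)}^2 + \|\bfz_h^+\|_{L^2(e)}^2,
\end{equation*}
and since each $\bfz_h^\pm$ is a polynomial living on the whole element $T$, the standard trace inequality for polynomials yields $\|\bfz_h^\pm\|_{L^2(e)} \le Ch_T^{-1/2}\|\bfz_h^\pm\|_{L^2(T)}$.

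Next I would separately handle the two interface configurations of Figure \ref{fig:interf_case}. In Case 1 (so $T^-_h$ is comparable to $T$), the norm equivalence \eqref{lem_norm_poly_equiv01} gives $\|\bfz_h^-\|_{L^2(T)} \simeq \|\bfz_h^-\|_{L^2(T^-_h)} = \|\bfz_h\|_{L^2(T^-_h)} \le \|\bfz_h\|_{L^2(T)}$. For the other piece, write $\bfz_h^+ = \mathcal{C}_T(\bfz_h^-)$ and use the stability estimate \eqref{lem_C_stab_eq01} on $T^-_h$ followed by \eqref{lem_norm_poly_equiv01} applied to $\bfz_h^+ \in \mathcal{ND}_h(T)$:
\begin{equation*}
\|\bfz_h^+\|_{L^2(T)} \simeq \|\bfz_h^+\|_{L^2(T^-_h)} = \|\mathcal{C}_T(\bfz_h^-)\|_{L^2(T^-_h)} \le C\|\bfz_h^-\|_{L^2(T^-_h)} \le C\|\bfz_h\|_{L^2(T)}.
\end{equation*}
Case 2 is symmetric: apply \eqref{lem_norm_poly_equiv02} and \eqref{lem_C_stab_eq02}, working with $T^+_h$ and the inverse operator $\mathcal{C}_T^{-1}$ to recover $\bfz_h^-$ from $\bfz_h^+$.

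Combining these bounds gives $\|\bfz_h^\pm\|_{L^2(T)} \le C\|\bfz_h\|_{L^2(T)}$ with $C$ independent of the interface position, which together with the polynomial trace inequality completes the proof. The main subtlety is making sure the constant does not blow up when the interface is very close to a vertex of $T$; this is exactly what the two-case split in Lemma \ref{lem_norm_poly_equiv} is designed to prevent, since at least one of $T^-_h$ or $T^+_h$ always occupies a fixed positive fraction of $T$, and the stability of $\mathcal{C}_T$ in Lemma \ref{lem_C_stab} is established on precisely that ``large'' subelement.
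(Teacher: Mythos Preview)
Your proof is correct and follows essentially the same strategy as the paper: reduce to polynomial trace inequalities for the two pieces $\bfz_h^\pm$, then use the case split of Lemma \ref{lem_norm_poly_equiv} together with the stability of $\mathcal{C}_T$ from Lemma \ref{lem_C_stab} to bound $\|\bfz_h^\pm\|_{L^2(T)}$ uniformly in the interface location. The paper's argument differs only cosmetically, handling the interface edges by applying the trace inequality directly on the subtriangles $\triangle A_1DE$ and $\triangle DA_2E$ rather than first extending both polynomials to the whole of $T$; your uniform treatment via Lemma \ref{lem_norm_poly_equiv} is equally valid and arguably cleaner.
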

\begin{proof}
Without loss of generality, we only consider the case that $|A_1D|\ge\frac{1}{2}|A_1A_2|$ and $|A_1E|\ge\frac{1}{2}|A_1A_3|$ as shown by Case 1 in Figure \ref{fig:interf_case}. On the interface edge $A_1A_2$, the standard trace inequality \cite{2003WarburtonHesthaven} yields
\begin{equation}
\label{lem_trace_inequa_eq1}
\| \bfz_h \|_{L^2(A_1D)} \le Ch^{-1/2}_T \| \bfz_h \|_{L^2(\triangle A_1DE)} ~~~ \text{and} ~~~ \| \bfz_h \|_{L^2(DA_2)} \le Ch^{-1/2}_T \| \bfz_h \|_{L^2(\triangle DA_2E)},
\end{equation}
where the constant $C$ is independent of interface location since the distance from the point $E$ to $A_1A_2$ is lower bounded regardless of interface location. Then the result on this edge follows. The argument for the interface edge $A_1A_3$ is similar. For the non-interface edge $A_2A_3$, by the standard trace inequality and \eqref{lem_C_stab_eq01} we obtain
\begin{equation}
\begin{split}
\label{lem_trace_inequa_eq2}
\| \bfz_h^+ \|_{L^2(A_2A_3)} & \le Ch^{-1/2}_T \| \bfz^+_h \|_{L^2(T)} = Ch^{-1/2}_T ( \| \mathcal{C}^{-1}_T(\bfz^-_h) \|_{L^2(T^-_h)} + \| \bfz^+_h \|_{L^2(T^+_h)} ) \\
& \le Ch^{-1/2}_T ( \| \bfz^-_h \|_{L^2(T^-_h)} + \| \bfz^+_h \|_{L^2(T^+_h)} ) 
\end{split}
\end{equation}
which yields the desired result. The argument for the Case 2 in Figure \ref{fig:interf_case} is similar and relies on \eqref{lem_C_stab_eq02}.
\end{proof}

Now based on the trace inequality, we show the main result in this subsection which is the stability estimate for the isomorphism $\mathbb{\Pi}_h$.

\begin{thm}
\label{thm_iso_stab}
These exist constants $c$ and $C$ such that
\begin{subequations}
\label{thm_iso_stab_eq0}
\begin{align}
    & c \| \bfz_h \|_{L^2(T)} \le  \| \mathbb{\Pi}_{h,T} \bfz_h \|_{L^2(T)} \le C \| \bfz_h \|_{L^2(T)} ~~~ \forall \bfz_h\in\mathcal{IND}_h(T), \label{thm_iso_stab_eq01} \\ %~~ \text{and} ~~  \| \mathbb{\Pi}^{-1}_h \bfz_h \|_{L^2(T)} \le C \| \bfz_h \|_{L^2(T)}  ~ \forall \bfz_h\in\mathcal{ND}(T),    \\
    & c \| \emph{curl}~\bfz_h \|_{L^2(T)} \le \| \emph{curl}~\mathbb{\Pi}_{h,T} \bfz_h \|_{L^2(T)} \le C \| \emph{curl}~\bfz_h \|_{L^2(T)} ~~~ \forall \bfz_h\in\mathcal{IND}_h(T).  \label{thm_iso_stab_eq02} % ~~ \text{and} ~~  \| \emph{curl}~ \mathbb{\Pi}^{-1}_h \bfz_h \|_{L^2(\Omega)} \le C \| \emph{curl}~\bfz_h \|_{L^2(T)}  ~ \forall \bfz_h\in\mathcal{ND}(T). \label{thm_iso_stab_eq02}
\end{align}
\end{subequations}
\end{thm}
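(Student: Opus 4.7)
On a non-interface element $\mathbb{\Pi}_{h,T}$ is the identity and both bounds hold trivially with $c=C=1$. Fix an interface element $T$ and assume the Case~1 configuration of Figure~\ref{fig:interf_case}; Case~2 is symmetric via $\mathcal{C}_T^{-1}$, \eqref{lem_norm_poly_equiv02} and \eqref{lem_C_stab_eq02}. Setting $\bfv_h:=\bfz_h|_{T_h^-}\in\mathcal{ND}_h(T)$, the definition \eqref{IFE_loc_spa} gives $\bfz_h|_{T_h^+}=\mathcal{C}_T(\bfv_h)$. Let $\bfv=(v_1,v_2,v_3)$ with $v_i:=\int_{e_i}\bfz_h\cdot\bft_i\,ds$ denote the edge DOFs shared by $\bfz_h$ and $\mathbb{\Pi}_{h,T}\bfz_h$, and let $\tilde\bfv=(\tilde v_1,\tilde v_2,\tilde v_3)$, $\tilde v_i:=\int_{e_i}\bfv_h\cdot\bft_i\,ds$, be the DOFs of $\bfv_h$ viewed as a global polynomial in $\mathcal{ND}_h(T)$.

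The plan for \eqref{thm_iso_stab_eq01} is to chain three interface-uniform equivalences. First, I would establish $\|\bfz_h\|_{L^2(T)}\simeq\|\bfv_h\|_{L^2(T)}$: the lower direction uses $\|\bfv_h\|_{L^2(T_h^-)}=\|\bfz_h\|_{L^2(T_h^-)}\le\|\bfz_h\|_{L^2(T)}$ combined with \eqref{lem_norm_poly_equiv01}; the upper direction decomposes $\|\bfz_h\|_{L^2(T)}^2=\|\bfv_h\|_{L^2(T_h^-)}^2+\|\mathcal{C}_T(\bfv_h)\|_{L^2(T_h^+)}^2$ and bounds the second summand by $\|\mathcal{C}_T(\bfv_h)\|_{L^2(T)}\le C\|\mathcal{C}_T(\bfv_h)\|_{L^2(T_h^-)}\le C\|\bfv_h\|_{L^2(T_h^-)}$ via \eqref{lem_norm_poly_equiv01} and \eqref{lem_C_stab_eq01}. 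Second, the standard $h$-uniform norm equivalence on $\mathcal{ND}_h(T)$ expressed through edge DOFs gives $\|\mathbb{\Pi}_{h,T}\bfz_h\|_{L^2(T)}\simeq|\bfv|$ and $\|\bfv_h\|_{L^2(T)}\simeq|\tilde\bfv|$, with constants depending only on shape regularity. Third and most delicately, I would show $|\bfv|\simeq|\tilde\bfv|$ uniformly in the cut location. On the reference element \eqref{IFE_fun_ref_1} yields $\tilde\bfv=(v_1,c_2,c_3)$, while the linear system \eqref{IFE_fun_ref_5} expresses $(c_2,c_3)^t=(\mathbf{I}+\mathbf{R}\mathbf{A}^{-1}\mathbf{B})^{-1}\bigl((v_2,v_3)^t-\mathbf{A}^{-1}\bfgamma v_1\bigr)$. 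The eigenvalue lower bounds $\min(1,\mu^+/\mu^-,\beta^-/\beta^+)>0$ derived in the proof of Theorem~\ref{thm_unisol}, together with the $(d,e)$-uniform boundedness of the entries of $\mathbf{A},\mathbf{R},\mathbf{B},\bfgamma$ guaranteed under Case~1 geometry and \eqref{interf_ext}, render this affine map bi-Lipschitz with constants depending only on $\mu^{\pm}$ and $\beta^{\pm}$. Concatenating the three equivalences delivers \eqref{thm_iso_stab_eq01}.

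The curl estimate \eqref{thm_iso_stab_eq02} will be essentially algebraic. By \eqref{thm_bound_eq02}, all three IFE shape functions share the same value $\mu^{-1}\text{curl}~\bfpsi_i=\tau:=2\det B_T^{-1}/((1-de)\mu^-+de\mu^+)$, so $\text{curl}~\bfz_h=\mu^{\mp}\tau\sum_i v_i$ on $T_h^{\mp}$, whereas $\text{curl}~\mathbb{\Pi}_{h,T}\bfz_h=2(\det B_T^{-1})\sum_i v_i$ on all of $T$. Using $|T_h^-|/|T|=de$, a one-line computation reduces the ratio of squared norms to
\[
\frac{\|\text{curl}~\mathbb{\Pi}_{h,T}\bfz_h\|_{L^2(T)}^2}{\|\text{curl}~\bfz_h\|_{L^2(T)}^2}=\frac{\bigl((1-de)\mu^-+de\mu^+\bigr)^2}{de(\mu^-)^2+(1-de)(\mu^+)^2},
\]
a continuous function of $de\in[0,1]$ that is bounded above and below by positive constants depending only on $\mu^-/\mu^+$.

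The main obstacle will be the third equivalence of \eqref{thm_iso_stab_eq01}: securing $|\bfv|\simeq|\tilde\bfv|$ with constants independent of the cut, for which the quantitative content of Theorem~\ref{thm_unisol} is indispensable. All other bounds reduce transparently to Lemmas~\ref{lem_norm_poly_equiv}--\ref{lem_C_stab} or the explicit formula \eqref{thm_bound_eq02}.
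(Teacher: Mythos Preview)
Your route to \eqref{thm_iso_stab_eq01} is genuinely different from the paper's. The paper does not touch the linear system \eqref{IFE_fun_ref_5} at all: having just established the IFE trace inequality (Theorem~\ref{lem_trace_inequa}) from Lemma~\ref{lem_C_stab}, it simply expands $\mathbb{\Pi}_{h,T}\bfz_h$ in shape functions, bounds each coefficient $\int_{e_i}\bfz_h\cdot\bft_i\,ds$ by $h_T^{1/2}\|\bfz_h\|_{L^2(e_i)}$, and then invokes the trace inequality to finish; the left inequality follows by applying the same reasoning to $\mathbb{\Pi}_{h,T}^{-1}$. This is a two-line argument once Theorem~\ref{lem_trace_inequa} is available. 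Your chain of three equivalences can be made to work, and it is instructive in that it exposes exactly which quantitative piece of the unisolvence computation (the boundedness of $(\mathbf{I}+\mathbf{R}\mathbf{A}^{-1}\mathbf{B})^{-1}$ and of $\mathbf{R}\mathbf{A}^{-1}\bfgamma$, proved in Appendix~\ref{App_A2}) drives the stability. But it is considerably longer, and you incur some redundancy: Step~1 already uses Lemma~\ref{lem_C_stab}, which is essentially the same stability content repackaged.

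There is also a genuine slip in your Step~3. You set $\bfv_h:=\bfz_h|_{T_h^-}$ with the Section~5.1 convention that $T_h^-$ is the \emph{triangular} subelement, and then identify $\tilde\bfv=(v_1,c_2,c_3)$ via \eqref{IFE_fun_ref_1}. But in the reference-element construction of Section~3.2 the piece written as $\hat\bfz^-=v_1\hat\bfphi_1+c_2\hat\bfphi_2+c_3\hat\bfphi_3$ is the one containing the non-interface edge $e_1=A_2A_3$, i.e.\ the \emph{quadrilateral}. Under your labeling, $\bfv_h$ corresponds to $\hat\bfz^+$, whose DOFs differ from $(v_1,c_2,c_3)$ by terms involving $b_1,b_2$. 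The mismatch also propagates into your explicit ratio for \eqref{thm_iso_stab_eq02}, where the roles of $\mu^-$ and $\mu^+$ in the denominator are swapped. None of this is fatal---one can either switch to the quadrilateral piece in Step~1 (using \eqref{lem_norm_poly_equiv02}, \eqref{lem_C_stab_eq02} appropriately) or carry the $b_1,b_2$ corrections through Step~3 using the bounds of Appendix~\ref{App_A2}---but as written the identification is incorrect. For \eqref{thm_iso_stab_eq02} the paper simply cites the identity \eqref{thm_curl0_eq1}; your explicit ratio computation is equivalent in spirit once the sign convention is repaired.
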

\begin{proof}
These inequalities for non-interface elements are trivial, and we only consider the interface elements here. For \eqref{thm_iso_stab_eq01}, using the argument similar to \eqref{thm_interp_err_3_eq3} with \eqref{thm_bound_eq01} we obtain
\begin{equation}
\label{thm_iso_stab_eq1}
\vertii{\mathbb{\Pi}_{h,T}\bfz_h}_{L^2(T)} = \vertii{\sum_{i=1}^3 \int_{e_i} \bfz_h\cdot\bft_i ds \bfpsi_i}_{L^2(T)}
\le \sum_{i=1}^3 C h^{1/2}_T \| \bfz_h \|_{L^2(e_i)}.
\end{equation}
Applying the trace inequality in Theorem \ref{lem_trace_inequa} to \eqref{thm_iso_stab_eq1} yields the right inequality of \eqref{thm_iso_stab_eq01}. The left one can be obtained by applying this argument to $\mathbb{\Pi}^{-1}_{h,T}$. In addition \eqref{thm_iso_stab_eq02} is a direct consequence of the identity in \eqref{thm_curl0_eq1}.
\end{proof}

\begin{figure}[H]
\centering
\begin{subfigure}{.3\textwidth}
     \includegraphics[width=1.9in]{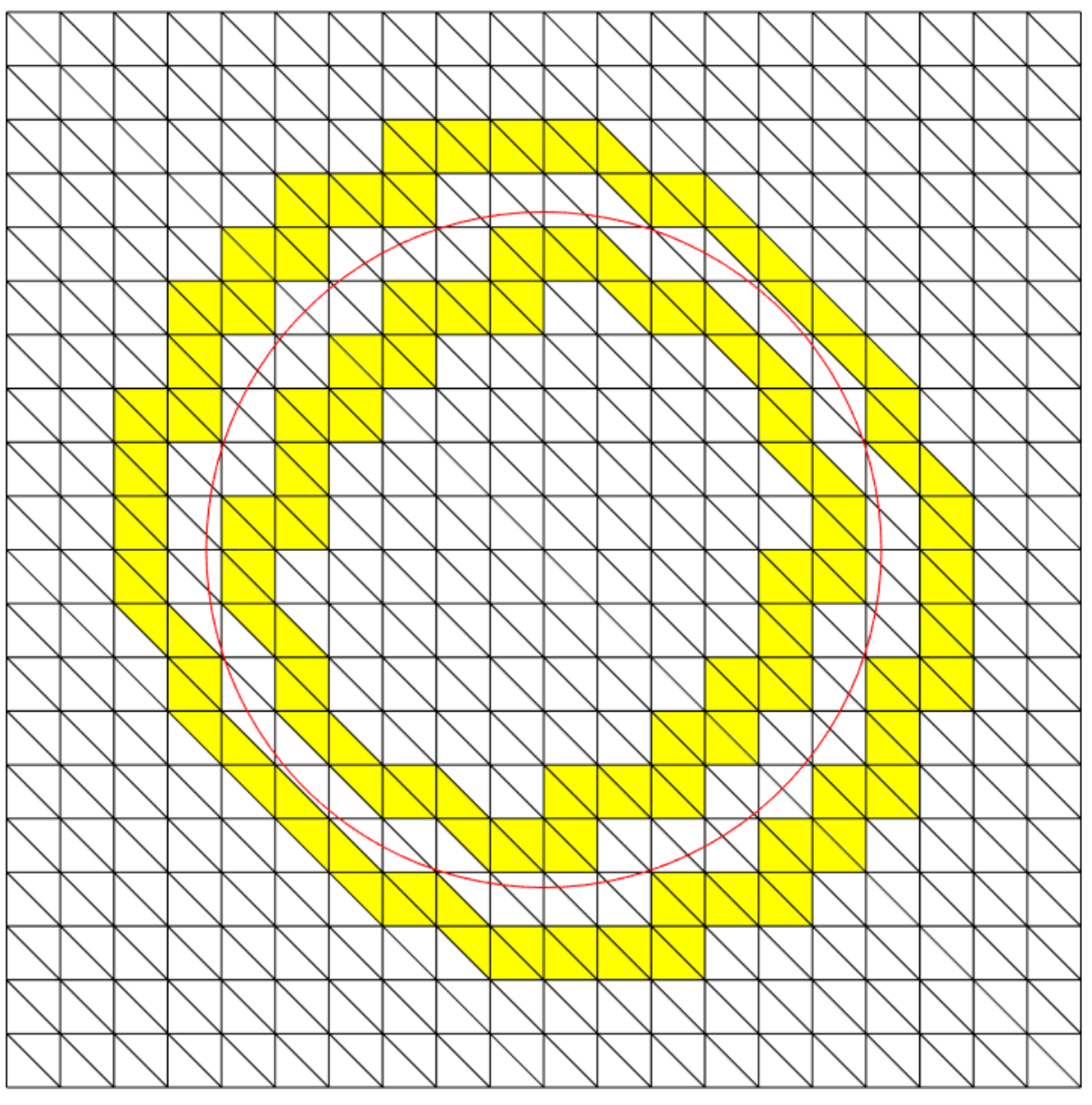}
     %\caption{Case 1}
     \label{fig:domain_out} %% label for first subfigure
\end{subfigure}
~~
\begin{subfigure}{.3\textwidth}
     \includegraphics[width=2in]{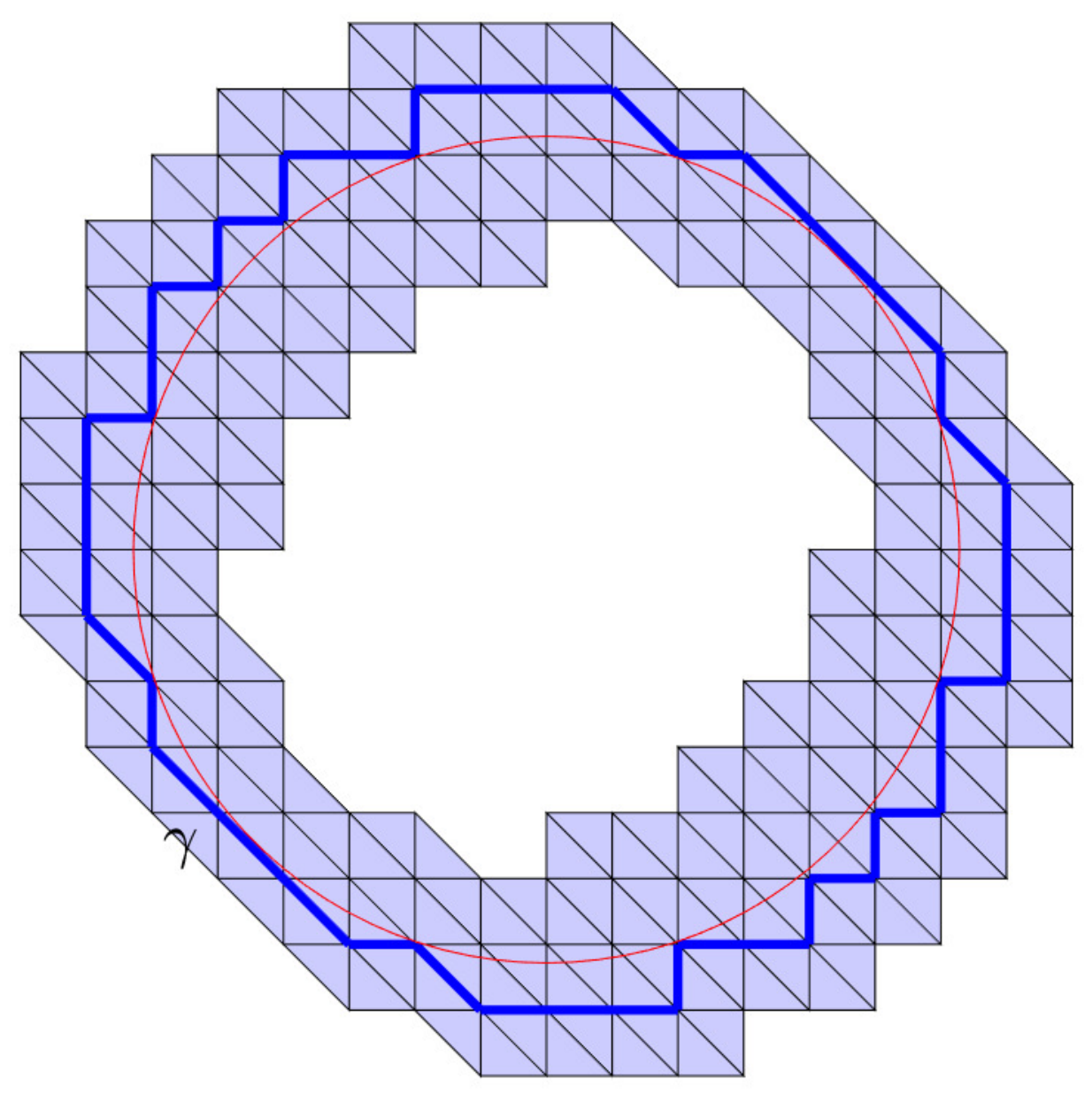}
     %\caption{Case 2}
     \label{fig:domain_in} %% label for first subfigure
\end{subfigure}
~~
\begin{subfigure}{.3\textwidth}
     \includegraphics[width=2in]{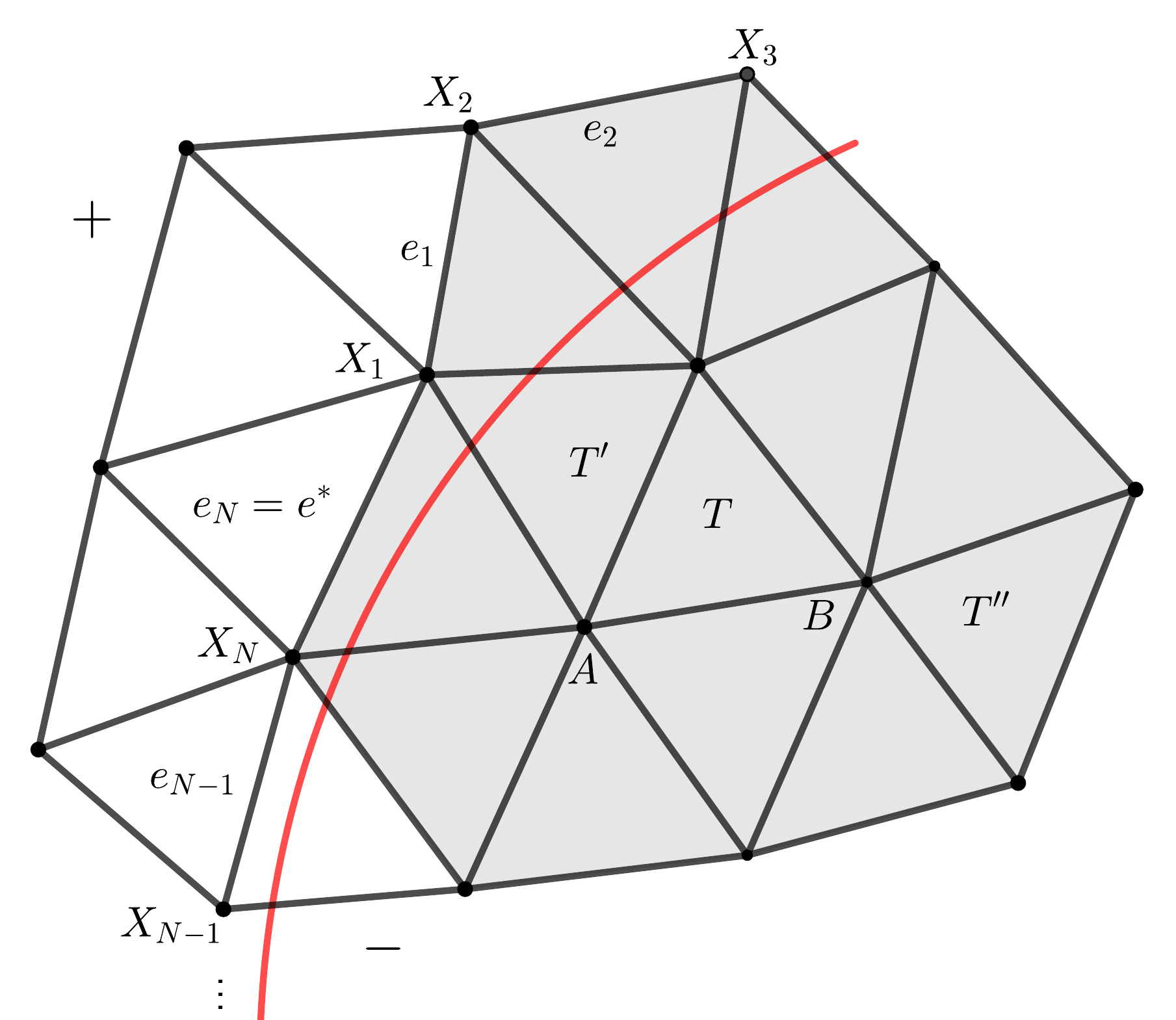}
     %\caption{Case 2}
     \label{fig:domain_star} %% label for first subfigure
\end{subfigure}
     \caption{The left plot: $\widetilde{\Omega}^{\pm}_h$, the meddle plot: $\Omega^{\Gamma}_h$ and $\gamma$, and the right plot: a patch near the interface.}
  \label{fig:submesh} %% label for entire figure;
\end{figure}

\subsection{The \textit{inf-sup} stability}
In the analysis of a general Petrov-Galerkin system, 
the most important and often also difficult step is to establish a so-called \textit{inf-sup} stability \cite{1972BabuskaAziz}.
This refers to the following inequality between two spaces $\mathcal{ND}_{h,0}(\Omega)$ and $\mathcal{IND}_{h,0}(\Omega)$
for our Petrov-Galerkin IFE system \eqref{weak_form_3}:
\begin{equation}
\label{inf_sup}
\sup_{\bfv_h\in\mathcal{ND}_{h,0}(\Omega)} \frac{a(\bfu_h,\bfv_h)}{\| \bfv_h \|_{\bfH(\text{curl};\Omega)}} \ge C_s \| \bfu_h \|_{\bfH(\text{curl};\Omega)} ~~~~ \forall \bfu_h \in \mathcal{IND}_{h,0}(\Omega)
\end{equation}
where $C_s>0$ should be uniformly bounded away from $0$ regardless of the mesh size $h$ and interface location. We mention that the rigorous analysis on the \textit{inf-sup} stability of PG-IFE methods still remains open even for $H^1$-elliptic interface problems \cite{2013HouSongWangZhao}. For the considered $\bfH(\text{curl})$-elliptic interface problems here, 
it is the first time to establish, in the rest of this subsection, the \textit{inf-sup} stability \eqref{inf_sup} 
(see Theorem\,\ref{thm_infsup})
for the general discontinuous magnetic permeability, but for the discontinuous conductivity 
whose jumps is less than a critical constant. The analysis is lengthy and we shall decouple it into several lemmas and steps. The basic idea is to apply a special discrete decomposition to functions $\bfu_h\in \mathcal{IND}_h(\Omega)$ such that their regular components are sufficiently small near the interface. 

For this purpose, we define the subdomain consisting of interface elements $\Omega^i_h=\cup\{ T\in \mathcal{T}^i_h \}$ where its boundary in $\Omega^{\pm}$ is denoted by $\partial \Omega^{i,\pm}_h$, and specifically, we let $\gamma =\partial\Omega^{i,+}_h$ as highlighted by the blue polyline in the meddle plot of Figure \ref{fig:submesh}. Furthermore we define the submesh and the corresponding subdomain near the interface elements:
\begin{equation}
\begin{split}
\label{mesh_tran}
\widetilde{\mathcal{T}}^{\pm}_h := \{ T\in\mathcal{T}_h\backslash \mathcal{T}^i_h~:~ T\subseteq \Omega^{\pm}, ~  \exists T'\in \mathcal{T}^i_h ~ \text{such that}~T~\text{and}~T' ~ \text{share at least one node} \}, ~~ \widetilde{\Omega}^{\pm}_h := \cup\{ T\in \widetilde{\mathcal{T}}^{\pm}_h \},
\end{split}
\end{equation}
which are illustrated by the yellow-shaded region in the left plot of Figure \ref{fig:submesh}. Moreover, we define the union of these elements $\mathcal{T}^+_h\cup\mathcal{T}^i_h\cup\mathcal{T}^-_h=:\mathcal{T}^{\Gamma}_h$ and the corresponding subdomain $\widetilde{\Omega}^+_h\cup\Omega^i_h\cup\widetilde{\Omega}^-_h=:\Omega^{\Gamma}_h$ which is indicated by the blue-shaded region in the meddle plot of Figure \ref{fig:submesh}. Basically, $\Omega^{\Gamma}_h$ contains the interface elements and the non-interface elements near the interface elements.

The first theorem is to handle the curl term, and it suggests that the stiffness matrix of the PG-IFE method is actually 
as the same as the standard IFE method which uses the IFE functions as the test functions.
\begin{thm}
\label{thm_stiff}
On each element $T$, there holds 
\begin{equation}
\label{thm_stiff_eq0}
\int_T \mu^{-1} \emph{curl}~\bfz_h~\emph{curl}~\mathbb{\Pi}_{h,T}\bfz_h dX = \int_T\mu^{-1}  \emph{curl}~\bfz_h~\emph{curl}~\bfz_h dX, ~~~~~~ \forall \bfz_h\in\mathcal{IND}_h(T).
\end{equation}
\end{thm}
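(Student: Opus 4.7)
The plan is to reduce the identity to a curl-mean-preservation statement between $\bfz_h$ and $\mathbb{\Pi}_{h,T}\bfz_h$, combined with the key fact that $\mu^{-1}\emph{curl}~\bfz_h$ is \emph{globally} constant on $T$ (not merely piecewise constant on $T_h^{\pm}$). On non-interface elements $\mathbb{\Pi}_{h,T}\bfz_h = \bfz_h$ by definition, so the identity is trivial and I only need to handle $T \in \mathcal{T}_h^i$.

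First, I would observe that for every $\bfz_h \in \mathcal{IND}_h(T)$, the quantity $\mu^{-1}\text{curl}~\bfz_h$ is a constant $\tau$ on all of $T$: each IFE shape function satisfies $\mu^{-1}\text{curl}~\bfpsi_i = \tau_0$ with the same constant value $\tau_0$ on both $T_h^-$ and $T_h^+$ by \eqref{thm_bound_eq02} (equivalently, this is the jump condition \eqref{weak_eqv_jc_2} of Lemma \ref{lem_equiv_jc12}), and linearity extends this to all of $\mathcal{IND}_h(T)$. Meanwhile $\text{curl}(\mathbb{\Pi}_{h,T}\bfz_h)$ is a constant on $T$ because $\mathbb{\Pi}_{h,T}\bfz_h$ is a standard Nédélec polynomial in $\mathcal{ND}_h(T)$.

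The second step is to show that these two integrals of curl over $T$ agree, i.e.\ $\int_T \text{curl}(\mathbb{\Pi}_{h,T}\bfz_h)\,dX = \int_T \text{curl}~\bfz_h\,dX$. Since $\mathcal{IND}_h(T) \subset \bfH(\text{curl};T)$ by \eqref{IFE_loc_spa_Hcurl} and $\mathcal{ND}_h(T) \subset \bfH(\text{curl};T)$, Stokes' theorem applies to both: $\int_T \text{curl}~\bfz_h\,dX = \int_{\partial T}\bfz_h\cdot\bft\,ds$ and similarly for $\mathbb{\Pi}_{h,T}\bfz_h$. The edge-wise definition \eqref{iso_map} of $\mathbb{\Pi}_{h,T}$ gives $\int_{e_i}(\mathbb{\Pi}_{h,T}\bfz_h)\cdot\bft_i\,ds = \int_{e_i}\bfz_h\cdot\bft_i\,ds$ for $i=1,2,3$, so summing over the three edges yields the claimed equality of the two integrals over $T$. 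This is essentially the same computation that was carried out in \eqref{thm_curl0_eq1}.

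Putting the two steps together, since $\mu^{-1}\text{curl}~\bfz_h \equiv \tau$ is a constant on $T$, one has
\begin{equation*}
\int_T \mu^{-1}\text{curl}~\bfz_h\ \text{curl}~\mathbb{\Pi}_{h,T}\bfz_h\,dX = \tau\int_T \text{curl}~\mathbb{\Pi}_{h,T}\bfz_h\,dX = \tau\int_T \text{curl}~\bfz_h\,dX = \int_T \mu^{-1}\text{curl}~\bfz_h\ \text{curl}~\bfz_h\,dX,
\end{equation*}
which is the desired identity. I do not anticipate a real obstacle here; the only non-routine ingredient is the constancy of $\mu^{-1}\text{curl}~\bfz_h$ on the whole element $T$, but this is already encoded in the construction of $\mathcal{C}_T$ and recorded in Lemma \ref{lem_equiv_jc12} and Theorem \ref{thm_bound}.
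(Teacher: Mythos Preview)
Your proof is correct and follows essentially the same approach as the paper: both arguments rely on the constancy of $\mu^{-1}\text{curl}~\bfz_h$ on all of $T$ (via \eqref{weak_eqv_jc_2} or \eqref{thm_bound_eq02}), reduce the identity to equality of boundary integrals $\int_{\partial T}(\mathbb{\Pi}_{h,T}\bfz_h)\cdot\bft\,ds = \int_{\partial T}\bfz_h\cdot\bft\,ds$ via Stokes' theorem, and then invoke the edge degrees of freedom in \eqref{iso_map}. The only cosmetic difference is that the paper phrases the Stokes step as integration by parts and explicitly cites the tangential continuity \eqref{weak_eqv_jc_1}, whereas you equivalently appeal to $\mathcal{IND}_h(T)\subset\bfH(\text{curl};T)$ from \eqref{IFE_loc_spa_Hcurl}.
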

\begin{proof}
It is trivial on non-interface elements. On an interface element, we note that $\mu^{-1}\text{curl}~\bfz_h$ is a constant, and then the integration by part yields
\begin{equation}
\begin{split}
\label{thm_stiff_eq1}
 \int_{T} \mu^{-1} \text{curl}~\bfz_h~\text{curl}~\mathbb{\Pi}_{h,T}\bfz_h dX = & \mu^{-1}\text{curl}~\bfz_h \int_{\partial T} \mathbb{\Pi}_{h,T}\bfz_h\cdot\bft ~  ds\\
 = &  \mu^{-1}\text{curl}~ \bfz_h \int_{\partial T}  \bfz_h\cdot\bft ~  ds  = \int_T \mu^{-1}  \text{curl}~\bfz_h~\text{curl}~\bfz_h dX
\end{split}
\end{equation}
where in the second identity we have also used the continuous tangential jump condition \eqref{weak_eqv_jc_1}.
\end{proof}

The next result is for a special discrete decomposition.
\begin{lemma}[A Special Discrete Decomposition]
\label{lemma_special_decomp}
For each
%\fn{All three functions in \eqref{lemma_special_decomp_eq0} are emphasized not to be in 
%$\mathcal{IND}_{h,0}(\Omega)$. But I could not see clearly how the removal of $\mathcal{IND}_{h,0}(\Omega)$ 
%from $\mathcal{ND}_{h,0}(\Omega)$ is used and important in the whole proof? 
%It seems $\bfu^{\Gamma}_h$ should be $\bfu^{\ast}_h$. {\color{red}I feel so sorry for the misuse of the notation. Here ``/" %actually refers to ``or", and I have corrected it to be clear. Yes, $\bfu^{\Gamma}_h$ should be $\bfu^{\ast}_h$, 
%and I have corrected it.}} 
$\bfu_h\in \mathcal{ND}_{h,0}(\Omega)$ (or $\mathcal{IND}_{h,0}(\Omega)$), there exists a $\bfu^{\ast}_h\in\mathcal{ND}_{h,0}(\Omega)$ (or $\mathcal{IND}_{h,0}(\Omega)$) and
%\fn{Is the intersection 
%applied to $\mathcal{IND}_{h,0}(\Omega)$ or to $\mathcal{ND}_{h,0}(\Omega)/\mathcal{IND}_{h,0}(\Omega)$? 
%{\color{red}There are actually two intersections, and again it is due to the misuse of notations. I have corrected it.}}
$\mathring{\bfu}_h\in\mathcal{ND}_{h,0}(\Omega)\cap\emph{Ker(curl)}$ (or $\mathcal{IND}_{h,0}(\Omega)\cap\emph{Ker(curl)}$) such that 
\begin{equation}
\label{lemma_special_decomp_eq0}
\bfu_h = \bfu^{\ast}_h + \mathring{\bfu}_h
\end{equation}
%where $\bfu^{\ast}_h\cdot\bft=0$ on each $e\neq e^{\ast}$ on $\gamma$ and 
satisfying that
\begin{equation}
\label{lemma_special_decomp_eq01}
%\| \bfu^{\ast} \|_{L^2(\Omega)} \le C \| \bfu \|_{L^2(\Omega)}, ~~~ \| \mathring{\bfu} \|_{L^2(\Omega)} \le C \| \bfu \|_{L^2(\Omega)}  ~~~ \text{and} ~~~ \| \bfu^{\ast}_h \|_{L^2(\Omega^{\Gamma}_h)} \le Ch \| \emph{curl}~ \bfu_h \|_{L^2(\Omega)}.
\| \bfu^{\ast}_h \|_{L^2(\Omega^{\Gamma}_h)} \le Ch \| \emph{curl}~ \bfu_h \|_{L^2(\Omega)}.
\end{equation}
\end{lemma}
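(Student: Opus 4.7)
The plan is to first establish the decomposition for $\mathcal{ND}_{h,0}(\Omega)$ and then transfer it to $\mathcal{IND}_{h,0}(\Omega)$ via the isomorphism $\mathbb{\Pi}_h$. By Theorem \ref{thm_curl0}, $\mathbb{\Pi}_h^{-1}$ maps $\text{Ker(curl)} \cap \mathcal{ND}_{h,0}(\Omega)$ bijectively onto $\text{Ker(curl)} \cap \mathcal{IND}_{h,0}(\Omega)$; the local $L^2$-stability of $\mathbb{\Pi}_h^{-1}$ in Theorem \ref{thm_iso_stab}, together with the curl-stability \eqref{thm_iso_stab_eq02}, then transports the $\|\cdot\|_{L^2(\Omega^\Gamma_h)}$ bound and the equivalence of the curls, so it suffices to treat the $\mathcal{ND}_{h,0}(\Omega)$ case.

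For $\bfu_h \in \mathcal{ND}_{h,0}(\Omega)$, the strategy is a regular decomposition of Hiptmair--Birman--Solomyak type that is adapted to the interface. The key step is to build a continuous vector field $\bfw \in \bfH^1_0(\Omega)$ with $\text{curl}\,\bfw = \text{curl}\,\bfu_h$ in $\Omega$, satisfying $\|\bfw\|_{\bfH^1(\Omega)} \le C\|\text{curl}\,\bfu_h\|_{L^2(\Omega)}$, and \emph{additionally} vanishing on $\Gamma$ in the trace sense. My plan is to obtain $\bfw$ through a stream-function construction on each subdomain: solve $-\Delta \psi^\pm = \text{curl}\,\bfu_h|_{\Omega^\pm}$ on $\Omega^\pm$ with appropriate boundary data on $\partial\Omega^\pm$, set $\bfw^\pm = \underline{\text{curl}}\,\psi^\pm$, and glue the two pieces along $\Gamma$ via a gradient corrector so that the resulting $\bfw$ is $\bfH^1$-regular across $\Gamma$ and is killed on $\Gamma$. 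One then defines $\bfu_h^* := \Pi_h \bfw$ and $\mathring{\bfu}_h := \bfu_h - \bfu_h^*$, both of which lie in $\mathcal{ND}_{h,0}(\Omega)$.

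That $\mathring{\bfu}_h$ is curl-free follows from the commuting diagram \eqref{commut_1}: the identity $\text{curl}\,\Pi_h\bfw = Q_h\,\text{curl}\,\bfw$, with $Q_h$ the $L^2$-projection onto piecewise constants, combined with the fact that $\text{curl}\,\bfu_h$ is already piecewise constant and equals $\text{curl}\,\bfw$, yields $\text{curl}\,\mathring{\bfu}_h = 0$ and hence $\mathring{\bfu}_h \in \text{Ker(curl)} \cap \mathcal{ND}_{h,0}(\Omega)$. For the $L^2$-estimate on $\Omega^\Gamma_h$, decompose
\begin{equation*}
\|\bfu_h^*\|_{L^2(\Omega^\Gamma_h)} \le \|\Pi_h\bfw - \bfw\|_{L^2(\Omega^\Gamma_h)} + \|\bfw\|_{L^2(\Omega^\Gamma_h)}.
\end{equation*}
The first term is at most $Ch\|\bfw\|_{\bfH^1(\Omega)} \le Ch\|\text{curl}\,\bfu_h\|_{L^2(\Omega)}$ via the edge-by-edge N\'ed\'elec interpolation estimate \eqref{interp_1_err}. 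For the second, since $\Omega^\Gamma_h$ is a tubular neighborhood of $\Gamma$ of width $O(h)$ and $\bfw|_\Gamma = 0$, a one-dimensional Poincar\'e inequality in the normal-to-$\Gamma$ direction gives $\|\bfw\|_{L^2(\Omega^\Gamma_h)} \le Ch\|\nabla\bfw\|_{L^2(\Omega^\Gamma_h)} \le Ch\|\text{curl}\,\bfu_h\|_{L^2(\Omega)}$.

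The main obstacle is the construction of $\bfw$ with $\bfw|_\Gamma = 0$ while retaining $\bfH^1$-control by $\|\text{curl}\,\bfu_h\|_{L^2(\Omega)}$: the two-sided stream-function problem with full Dirichlet data on $\partial\Omega^\pm$ is formally overdetermined for a second-order PDE, so the clamped boundary condition can only be enforced through a careful gauge correction whose $\bfH^1$-norm must be controlled. This is also where the critical upper bound on the jump of $\beta$ mentioned in the introduction is expected to enter, because when the decomposition is pulled back through $\mathbb{\Pi}_h$ to $\mathcal{IND}_{h,0}(\Omega)$, the constants from Lemma \ref{lem_C_stab} and Theorem \ref{thm_iso_stab} degrade with the contrast in $\beta$.
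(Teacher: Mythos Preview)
Your transfer from $\mathcal{ND}_{h,0}(\Omega)$ to $\mathcal{IND}_{h,0}(\Omega)$ via $\mathbb{\Pi}_h^{-1}$, using Theorem~\ref{thm_curl0} for the curl-free part and Theorem~\ref{thm_iso_stab} for the $L^2$-bound, is exactly what the paper does in its Step~4. The curl-free verification of $\mathring{\bfu}_h$ via the commuting diagram is also fine.

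The genuine gap is the construction of $\bfw\in\bfH^1_0(\Omega)$ with $\text{curl}\,\bfw=\text{curl}\,\bfu_h$ and $\bfw|_{\Gamma}=0$. This is not merely ``formally overdetermined'' in the PDE sense; it is obstructed by a topological compatibility condition. If $\bfw^-:=\bfw|_{\Omega^-}\in\bfH^1_0(\Omega^-)$, then integration by parts forces
\[
\int_{\Omega^-}\text{curl}\,\bfu_h\,dX=\int_{\Omega^-}\text{curl}\,\bfw^-\,dX=\int_{\Gamma}\bfw^-\cdot\bft\,ds=0,
\]
whereas $\int_{\Omega^-}\text{curl}\,\bfu_h\,dX=\int_{\Gamma}\bfu_h\cdot\bft\,ds$ is in general nonzero. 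So no stream-function or gauge correction can produce such a $\bfw$, and with it the tubular Poincar\'e step collapses. The paper circumvents exactly this obstruction: it works with the \emph{discrete} polyline $\gamma=\partial\Omega^{i,+}_h$ rather than $\Gamma$, first subtracts a gradient $\bfv_h=\nabla s_h\in\text{Ker(curl)}$ that matches the tangential degrees of freedom of $\bfu_h$ on all edges of $\gamma$ except one edge $e^{\ast}$ (the missing edge absorbs the nonzero circulation), and then applies the discrete regular decomposition of Hiptmair--Pechstein \cite{2019HiptmairPechstein} separately on the two sides of $\gamma$. That decomposition inherits zero traces on $\gamma\setminus e^{\ast}$, and in particular the regular component $\bfz_h\in[S_h(\Omega)]^2$ vanishes at every mesh node on $\gamma$; a discrete Poincar\'e argument then propagates this through the patches in $\Omega^{\Gamma}_h$ to yield \eqref{lemma_special_decomp_eq01}.

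One further misconception: the contrast condition on $\beta$ does not enter this lemma at all. The constants in Theorem~\ref{thm_iso_stab} are uniform in the interface location and come from Lemma~\ref{lem_C_stab}; the threshold $\max\{\beta^+/\beta^-,\beta^-/\beta^+\}<10.65$ appears only later, in Lemma~\ref{lem_loc_infsup}, when bounding $(\beta\bfu_h,\mathbb{\Pi}_h\bfu_h)_{L^2(\Omega^{\Gamma}_h)}$ on the curl-free subspace.
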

\begin{proof}
The proof is lengthy, and we decompose it into several steps.
\vspace{0.1in}

\textit{Step 1}. We first focus on $\bfu_h\in\mathcal{ND}_{h,0}(\Omega)$. We need to construct a function $\bfv_h\in\mathcal{ND}_{h,0}(\Omega)\cap\text{Ker(curl)}$ such that its trace on $\gamma$ matches $\bfu_h$ except one edge denoted by $e^{\ast}$ of $\gamma$. Let's denote the edges on $\gamma$ by $e_1$, $e_2$, ..., $e_N$ having the clockwise orientation with the nodes $X_1$, $X_2$,..., $X_N$, and, without loss of generality, we assume $e^{\ast}=e_N$, as shown in the right plot of Figure \ref{fig:submesh}. We consider a linear finite element function $s_h \in S_{h,0}(\widetilde{\Omega}^+_h\cup\Omega^i_h)$ such that $v_h(X_1)=0$, $v_h(X_{n+1})= \sum_{j=1}^{n} \int_{e_j} \bfu_h\cdot\bft ds$, $n=1,2,...,N-1$. Then, by the exact sequence, we let $\bfv_h=\nabla s_h\in \mathcal{ND}_{h,0}(\widetilde{\Omega}^+_h\cup\Omega^i_h)$, and by the definition we clearly have $\int_{e_n}\bfv_h\cdot\bft ds = \int_{e_n}\bfu_h\cdot\bft ds$, $n=1,2,...,N-1$. Note that $\int_{\gamma}\bfv_h\cdot\bft ds =0$ but $\bfu_h$ may not have this property, so $\int_{e^{\ast}} \bfv_h\cdot\bft ds$ may not equal $\int_{e^{\ast}} \bfu_h\cdot\bft ds$. 

\vspace{0.1in}
\textit{Step 2}. Let us assume the closed polyline $\gamma$ partition the whole domain $\Omega$ into $\Omega_{h,+}$ and $\Omega_{h,-}$ where, by the definition of $\gamma$, we have $\Omega_{h,+}\subseteq\Omega^+$ and $\Omega^-\subseteq\Omega_{h,-}$. Since the triangulation is regular, we note that $\gamma$ is a Lipschitz curve, and thus both $\Omega_{h,\pm}$ have Lipschitz boundary. Denote $\bfk_h = \bfu_h - \bfv_h$ and $\bfk^{\pm}_h = (\bfu_h - \bfv_h)|_{\Omega_{h,\pm}}$. By the discussion in Step 1, we note that $\bfk^{+}_h$ has the zero trace on $\partial\Omega$ and $\gamma\backslash e^{\ast}$, and $\bfk^{-}_h$ simply has the zero trace on $\gamma\backslash e^{\ast}$. Therefore we can apply the discrete regular decomposition of Theorem 11 in
%\fn{I have not checked 
%these 3 references \cite{2019HiptmairPechstein,2002Hiptmair,2017HiptmairPechstein} and we may have to really make sure if our current complicated settings are proper for the applications of those results. {\color{red}Yes, I also think more details should be added here since the present situation is very complicated. I add a much more detailed explanation to show how the results in the reference can be applied. The basic idea is to apply the decomposition in the reference to $\bfu_h - \bfv_h$ on each subdomain partitioned by $\gamma$ (this is the case discussed in the reference), but then we need to extraly verify those decomposed components on each subdomain are tangential continuous on $\gamma$.}} 
\cite{2019HiptmairPechstein} (also see the related discussion in \cite{2002Hiptmair,2017HiptmairPechstein}) to $\bfk^{\pm}_h$ on $\Omega_{h,\pm}$ which gives
$$
\bfk^{\pm}_h = \bfR^{\ast}_{\pm} \bfz^{\pm}_h  + \bfr^{\pm}_h + \bfh^{\pm}_h, ~~~~ \text{on} ~ \Omega_{h,\pm},
$$
where the regular components $\bfz^{\pm}_h\in \left[S_{h}(\Omega_{h,\pm})\right]^2$, the curl-free components $\bfh^{\pm}_h\in \mathcal{ND}_{h}(\Omega_{h,\pm})\cap\text{Ker(curl)}$, the reminders $\bfr^{\pm}_h\in\mathcal{ND}_{h}(\Omega_{h,\pm})$, and $\bfR^{\ast}_{\pm}:S_{h}(\Omega_{h,\pm})\rightarrow \mathcal{ND}_{h}(\Omega_{h,\pm})$ are special local projection operators preserving zero boundary conditions. Moreover these components inherit the zero traces of $\bfk^{\pm}_h$, namely, $\bfz^{+}_h$, $\bfr^{+}_h$ and $\bfh^{+}_h$ also have the corresponding zero traces on $\partial\Omega\cup(\gamma\backslash e^{\ast})$ and the $-$ components have the zero traces on $\gamma\backslash e^{\ast}$. Furthermore, since $e^{\ast}$ is a single edge, the continuity suggests that $\bfz^{\pm}_h$ and $\bfh^{\pm}_h$ must have the zero traces on the whole $\gamma$. Then we have $\bfr^+_h\cdot \bft=\bfk_h\cdot\bft = \bfr^-_h\cdot \bft$ on every edge of $\gamma$. That is, all these three decomposed components must be tangentially continuous on $\gamma$. Therefore, we put these components together to define piecewise-defined function $\bfz_h = \bfz^{\pm}_h$ in $\Omega_{h,\pm}$ belonging to $\left[S_{h,0}(\Omega)\right]^2$, $\bfh_h = \bfh^{\pm}_h$ in $\Omega_{h,\pm}$ belonging to $\mathcal{ND}_{h,0}(\Omega)\cap\text{Ker(curl)}$ and $\bfr_h = \bfr^{\pm}_h$ in $\Omega_{h,\pm}$ belonging to $\mathcal{ND}_{h,0}(\Omega)$ which satisfy
\begin{equation}
\label{lemma_special_decomp_eq1}
\bfu_h - \bfv_h = \bfk_h =  \bfR^{\ast}\bfz_h  + \bfr_h + \bfh_h
\end{equation}
with $\bfR^{\ast} = \bfR^{\ast}_{\pm}$ in $\Omega_{h,\pm}$. By this definition, using the estimates of the components $\bfz^{\pm}_h$, $\bfh^{\pm}_h$ and $\bfr^{\pm}_h$ in Theorem 11 in \cite{2019HiptmairPechstein}, and noting that $\bfv_h$ is curl-free, we have
\begin{subequations}
\label{lemma_special_decomp_eq2}
\begin{align}
 \| \nabla \bfz_h \|_{L^2(\Omega)} \le C \| \text{curl} ~ \bfu_h \|_{L^2(\Omega)}, ~~\| \bfr_h \|_{L^2(\Omega)} \le Ch \| \text{curl}~ \bfu_h \|_{L^2(\Omega)} ~~~ \text{and} ~~~  \| \bfh_h \|_{L^2(\Omega)} \le C \| \text{curl}~ \bfu_h \|_{L^2(\Omega)}, \label{lemma_special_decomp_eq23} 
\end{align}
and for each patch $\omega_T$ of an element $T$, there holds
\begin{align}
\label{lemma_special_decomp_eq21} 
&\| \bfR^{\ast} \bfz_h \|_{L^2(T)} \le C\| \bfz_h \|_{L^2(\omega_T)} + Ch_T \| \text{curl}~ \bfz_h \|_{L^2(\omega_T)}.
\end{align}
\end{subequations}

\textit{Step 3}. We estimate $\mathbf{ R}^{\ast}\bfz_h$ on $\Omega^{\Gamma}_h$. 
Applying the estimate in \eqref{lemma_special_decomp_eq21}, we obtain
\begin{equation}
\begin{split}
\label{lemma_special_decomp_eq3}
\| \bfR^{\ast} \bfz_h \|_{L^2(\Omega^{\Gamma}_h)} &\le C \sum_{T\in\mathcal{T}^{\Gamma}_h} \| \bfR^{\ast} \bfz_h \|_{L^2(T)}
\le C \sum_{T\in\mathcal{T}^{\Gamma}_h} \left( \| \bfz_h \|_{L^2(\omega_T)} + h_T \| \text{curl}~\bfz_h \|_{L^2(\omega_T)} \right).
%& \le C \| \bfz_h \|_{L^2(U_{\Gamma})} + Ch \| \text{curl} ~ \bfz_h \|_{L^2(U_{\Gamma})}  \le Ch \| \nabla \bfz_h \|_{L^2(U_{\Gamma})}  \le Ch \| \text{curl} ~ \bfu_h \|_{L^2(\Omega)}.
\end{split}
\end{equation} 
For each patch $\omega_T$, $T\in\mathcal{T}^{\Gamma}_h$, without loss of generality we assume $T\in\widetilde{\mathcal{T}}^-_h$, i.e., it is not an interface element, and then there is at least one interface element denoted by $T'$ such that $T'$ and $T$ share at least one node denoted by $A$ as shown in the right plot of Figure \ref{fig:submesh}. Since $T'$ has one node on $\gamma$, $\bfz_h$ must vanish at this node, and thus the estimate of $\bfz_h$ on $T'$ is straightforward through the Poincar\'e-type inequality:
\begin{equation}
\label{lemma_special_decomp_eq4}
\| \bfz_h \|_{L^2(T')} \le C h_{T'} \| \nabla \bfz_h \|_{L^2(T')}.
\end{equation}
Then we also have the estimate for $|\bfz_h(A)|$ through the trace inequality and \eqref{lemma_special_decomp_eq4}
\begin{equation}
\label{lemma_special_decomp_eq5}
| \bfz_h(A) | \le C h^{-1}_{T'}\| \bfz_h \|_{L^2(T')} \le C  \| \nabla \bfz_h \|_{L^2(T')}.
\end{equation}
In addition, on $T$ we can write $\bfz_h$ as $\bfz_h(X)=\bfz_h(A) + \nabla\bfz_h(X-A)$ where $\nabla\bfz_h$ is a $2$-by-$2$ constant matrix. Thus \eqref{lemma_special_decomp_eq5} together with the continuity at $A$ leads to
\begin{equation}
\label{lemma_special_decomp_eq6}
\| \bfz_h \|_{L^2(T)} \le Ch_T (|\bfz_h(A)| + \| \nabla \bfz_h \|_{L^2(T)} ) \le Ch_T  \| \nabla \bfz_h \|_{L^2(T'\cup T)} \le C h \| \nabla\bfz_h \|_{L^2(\omega_T)} .
\end{equation}
Furthermore we note that any element $T''$ in $\omega_T$ must share at least one node with $T$ denoted by $B$ as shown in the right plot of Figure \ref{fig:submesh} for an example. Then the similar argument to \eqref{lemma_special_decomp_eq5} and \eqref{lemma_special_decomp_eq6} shows
\begin{equation}
\label{lemma_special_decomp_eq7}
\| \bfz_h \|_{L^2(T'')} \le C h_{T''}( |\bfz_h(B)| + \| \nabla\bfz_h \|_{L^2(T'')}) \le  C h_{T''} \| \nabla\bfz_h \|_{L^2(T\cup T' \cup T'')} \le C h \| \nabla\bfz_h \|_{L^2(\omega_T)}. 
\end{equation}
The results above give the estimate of $\bfz_h$ on $\omega_T$. Applying it to \eqref{lemma_special_decomp_eq7} and using the finite overlapping property together with the first estimate in \eqref{lemma_special_decomp_eq23}, we obtain
\begin{equation}
\begin{split}
\label{lemma_special_decomp_eq8}
\| \bfR^{\ast} \bfz_h \|_{L^2(\Omega^{\Gamma}_h)} \le C \sum_{T\in\mathcal{T}^{\Gamma}_h} h\left( \| \nabla \bfz_h \|_{L^2(\omega_T)} + \| \text{curl}~\bfz_h \|_{L^2(\omega_T)} \right) \le C h \| \text{curl}~\bfz_h \|_{L^2(\Omega)}.
\end{split}
\end{equation} 
Therefore, by \eqref{lemma_special_decomp_eq8} and the second estimate in \eqref{lemma_special_decomp_eq23}, setting $\bfu^{\ast}_h = \mathbf{ R}^{\ast}_h\bfz_h + \bfr_h$ and $\mathring{\bfu}_h=\bfv_h + \bfh_h$ fullfils the decomposition \eqref{lemma_special_decomp_eq0} and \eqref{lemma_special_decomp_eq01} for the case $\bfu_h\in\mathcal{ND}_{h,0}(\Omega)$.

\vspace{0.1in}

\textit{Step 4.} Finally if $\bfu_h\in\mathcal{IND}_{h,0}(\Omega)$, we have $\mathbb{\Pi}_h\bfu_h\in\mathcal{ND}_{h,0}(\Omega)$, and then the previous analysis shows the existence of functions $\bfw^{\ast}_h \in\mathcal{ND}_{h,0}(\Omega)$ and $\mathring{\bfw}_h\in\mathcal{ND}_{h,0}(\Omega)\cap\text{Ker(curl)}$ such that $\mathbb{\Pi}_h\bfu_h= \bfw^{\ast}_h + \mathring{\bfw}_h$. So we obtain $\bfu_h = \bfu^{\ast}_h + \mathring{\bfu}_h$ with $\bfu^{\ast}_h = \mathbb{\Pi}^{-1}_h\bfw^{\ast}_h$ and $\mathring{\bfu}_h = \mathbb{\Pi}^{-1}_h\mathring{\bfw}_h$. Here $\bfu^{\ast}_h$ satisfies \eqref{lemma_special_decomp_eq01} due to Theorem \ref{thm_iso_stab} and $\mathring{\bfu}_h\in\mathcal{IND}_{h,0}(\Omega)$ due to the curl-free-preserving property of the isomorphism in Theorem \ref{thm_curl0}.
%In addition we also denote the interior edges of $\Omega^i_h$ by $\tilde{e}_1$, $\tilde{e}_2$, ..., $\tilde{e}_M$ with the orientation from $+$ to $-$, as shown in Figure \ref{}. We pick up an element $T$ such that its non-interface edge locates on $\Omega^-$, and for simplicity, we can assume $T$ is the element between $e_1$ and $e_N$, see Figure \ref{} for an illustration. Not that $\bfv_h$ is locally curl-free on an element if and only if the sum of $\int_e\bfv_h\cdot\bft ds$ on each edge $e$ with the local orientation is $0$. Thus, setting $\bfv_h\cdot\bft=0$ on $\partial T$, we can then proceed by induction to assign values to $\bfv_h\cdot\bft$ on other edges of elements in $\Omega^i_h$, for example $\int_{e_1}\bfv_h\cdot\bft ds=\int_{e_1} \bfu_h\cdot\bft ds$ on $e_1$, $\int_{\tilde{e}_2} \bfv_h\cdot\bft ds = -\int_{e_1}\bfu_h\cdot\bft ds$ on $\tilde{e}_2$ and then $\int_{e_2}\bfv_h\cdot\bft ds=\int_{e_2} \bfu_h\cdot\bft ds$ on $e_2$,  $\int_{\tilde{e}_3} \bfv_h\cdot\bft ds = \int_{\tilde{e}_2}\bfu_h\cdot\bft ds -\int_{e_2}\bfu_h\cdot\bft ds$ on $\tilde{e}_3$, ..., .
\end{proof}
%Now for each $\bfu_h\in \mathcal{IND}_h(\Omega)/\mathcal{ND}_h(\Omega)$, we define
%\begin{equation}
%\label{u_decompose}
%\bfu_h = \bfu^{\ast}_h + \bfu^{\Gamma}_h,~~~ \text{with} ~~\bfu^{\ast}_h\in \mathcal{IND}_h(\Omega^{\ast}_h)/\mathcal{ND}_h(\Omega^{\ast}_h)~~ \text{and} ~~ \bfu^{\Gamma}_h\in \mathcal{IND}_h(\Omega^{\Gamma}_h)/\mathcal{ND}_h(\Omega^{\Gamma}_h)
%\end{equation}
%where $\mathcal{IND}_h(\Omega^{\ast}_h)/\mathcal{ND}_h(\Omega^{\ast}_h)$ and $\mathcal{IND}_h(\Omega^{\Gamma}_h)/\mathcal{ND}_h(\Omega^{\Gamma}_h)$ are (immersed) N\'ed\'elec spaces on $\Omega^{\ast}_h$ and $\Omega^{\Gamma}_h$ with zeros trace on $\partial \Omega^{\ast}$ and $\partial \Omega^{\Gamma}$, respectively.  

%Moreover based on the assumption (\textbf{A2}) and Lemma \ref{r_tub}, we can consider the $r$-tabular neighborhood of $\Gamma$ with $r=2h$ denoted by $U_{\Gamma}=U_{\Gamma}(2h)$. see the right plot in Figure \ref{fig:submesh} where the dashed curves are the boundary of $U_{\Gamma}$. Then it clearly holds that
%\begin{equation}
%\label{mesh_near_interf_tabu}
%\mathcal{T}^{\Gamma}_h \subseteq U_{\Gamma}.
%\end{equation}
%\begin{figure}[h]
%\centering
%\begin{minipage}{.5\textwidth}
%  \centering
%   \includegraphics[width=3.5in]{tabular}
%  \caption{The illustration of Lemma \ref{lem_l2_to_h1}}
%  \label{fig:neighbor}
%\end{minipage}
%~~~~~~~~~~~
%\begin{minipage}{.4\textwidth}
%  \centering
%   \includegraphics[width=2in]{homogen}
%  \caption{Illustration of the Step 2 in the proof of Theorem \ref{thm_infsup}}
%  \label{fig:homogen}
%\end{minipage}
%\end{figure}

The next step is to establish an \textit{inf-sup} stability specially for curl-free subspaces near the interface elements. 
\begin{lemma}
\label{lem_loc_infsup}
For the Cartesian mesh, assume the contrast of the conductivity satisfies $\max\{\beta^+/\beta^-,\beta^-/\beta^+\}< 10.65$, 
then there holds
\begin{equation}
\label{lem_loc_infsup_eq0}
(\beta \bfu_h,\mathbb{\Pi}_h\bfu_h)_{L^2(\Omega^{\Gamma}_h)} \ge C\| \bfu_h \|^2_{L^2(\Omega^{\Gamma}_h)}, ~~~~ \forall \bfu_h \in \mathcal{IND}_h(\Omega)\cap\emph{Ker(curl)}.
\end{equation}
\end{lemma}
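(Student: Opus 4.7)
The plan is to reduce the estimate to a local, elementwise quadratic-form inequality by exploiting the characterization of curl-free IFE functions in Theorem \ref{thm_DR_3} together with the commuting property of Remark \ref{rem_DR}. First I would write
\[
(\beta \bfu_h, \mathbb{\Pi}_h \bfu_h)_{L^2(\Omega^{\Gamma}_h)} = \sum_{T\in\mathcal{T}^{\Gamma}_h} (\beta\bfu_h, \mathbb{\Pi}_{h,T}\bfu_h)_{L^2(T)},
\]
and handle the two element types separately. On a non-interface element $T\in\mathcal{T}^{\Gamma}_h\setminus\mathcal{T}^i_h$, we have $\mathcal{IND}_h(T)=\mathcal{ND}_h(T)$, so $\mathbb{\Pi}_{h,T}$ is the identity, and since $\bfu_h|_T$ is curl-free it is a constant vector. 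The local contribution equals $\beta|_T\,|T|\,|\bfu_h|_T|^2$, which already provides the desired lower bound.

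The essential case is an interface element $T$. By Theorem \ref{thm_DR_3} there exists $s_h\in\widetilde{S}_{h,0}(\Omega)$ with $\bfu_h=\nabla s_h$, and by Remark \ref{rem_DR}, $\mathbb{\Pi}_{h,T}\bfu_h = \nabla\tilde{s}_h|_T$, where $\tilde{s}_h=\mathbb{I}_h s_h$ is the standard piecewise linear interpolant at the mesh vertices; in particular $\bfc:=\mathbb{\Pi}_{h,T}\bfu_h$ is a single constant vector on $T$. The weak jump conditions \eqref{weak_jc} force $\bfu_h|_{T^-_h}= t\bar{\bft}+n\bar{\bfn}$ and $\bfu_h|_{T^+_h}= t\bar{\bft}+(\beta^-/\beta^+)n\bar{\bfn}$; matching the three edge integrals of $\bfu_h$ and $\bfc$ (equivalently, computing $\nabla\tilde s_h$ via the nodal values of $s_h$) expresses $\bfc = c_t\bar{\bft}+c_n\bar{\bfn}$ as an explicit linear function of $(t,n)$ whose coefficients depend only on the cut positions and on $r:=\beta^-/\beta^+$.

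With this parametrization, both $(\beta\bfu_h,\mathbb{\Pi}_{h,T}\bfu_h)_T$ and $\|\bfu_h\|^2_{L^2(T)}$ reduce to quadratic forms in $(t,n)$. Letting $\sigma=|T^-_h|/|T|$, the former takes the shape $|T|\bigl[(\sigma\beta^-+(1-\sigma)\beta^+)(t^2+\mu_1 tn) + \mu_2\beta^- n^2\bigr]$ and the latter $|T|\bigl[t^2+(\sigma+(1-\sigma)r^2)n^2\bigr]$, with $\mu_1,\mu_2$ determined explicitly by the cut geometry and $r$. The desired local bound $(\beta\bfu_h,\mathbb{\Pi}_{h,T}\bfu_h)_T \ge C\|\bfu_h\|^2_{L^2(T)}$ is then equivalent to the positive definiteness of a $2\times 2$ symmetric matrix, i.e., to a concrete generalized eigenvalue problem parametrized by the cut and by $r$.

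The main obstacle is uniformity: the off-diagonal term $\mu_1 tn$ has no \emph{a priori} sign and its magnitude grows with $|1-r|$, so the smallest generalized eigenvalue must be shown to stay bounded below uniformly in the interface position. On a Cartesian mesh the cut parameters range over a compact two-dimensional domain, and a direct computation shows that the infimum of this eigenvalue over that domain remains strictly positive precisely when $\max\{\beta^+/\beta^-,\beta^-/\beta^+\}$ is less than the stated numerical threshold $10.65$, which is exactly where the hypothesis enters. Summing the resulting uniform local bound over all $T\in\mathcal{T}^{\Gamma}_h$ then yields \eqref{lem_loc_infsup_eq0}.
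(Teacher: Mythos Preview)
Your overall strategy---parametrize a curl-free IFE function on an interface element as $\bfu_h|_{T^-_h}=t\bar{\bft}+n\bar{\bfn}$, $\bfu_h|_{T^+_h}=t\bar{\bft}+(\beta^-/\beta^+)n\bar{\bfn}$, compute $\mathbb{\Pi}_{h,T}\bfu_h$ from the edge integrals, and reduce to a $2\times2$ quadratic form---matches the paper's approach closely. The gap is in the final step: you assert that the local form $(\beta\bfu_h,\mathbb{\Pi}_{h,T}\bfu_h)_T \ge C\|\bfu_h\|^2_{L^2(T)}$ holds on each interface element \emph{individually}, uniformly over all cuts, precisely when the contrast is below $10.65$. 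That claim is not correct.

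On a Cartesian mesh there are two inequivalent interface-element configurations, depending on whether the cut separates the right-angle vertex or a $45^\circ$ vertex. In the first configuration the purely local eigenvalue argument does give a comfortable threshold (about $41.8$). In the second configuration, however, the same local computation only yields positivity for $\max\{\beta^+/\beta^-,\beta^-/\beta^+\}<9$; beyond that the $2\times2$ form can become indefinite for certain cut positions. The paper reaches $10.65$ not by an elementwise bound but by \emph{pairing} such an interface element $T$ with its non-interface neighbor $T'$ across the uncut edge $e_1$: tangential continuity forces $\bfu_h|_{T'}\cdot\bft_1 = \bfu_h|_{T}\cdot\bft_1$, so the strictly positive contribution $(\bfu_h\cdot\bft_1)^2|T'|$ from $T'$ can be borrowed to compensate the possibly negative cross term on $T$. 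This coupling, together with the fact that $\Omega^\Gamma_h$ by construction contains those neighbors, is what pushes the admissible contrast from $9$ up to roughly $10.65$. Your decomposition treats the non-interface elements as already ``done'' and never feeds their surplus back into the interface elements, so as written it cannot deliver the stated threshold.
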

\begin{proof}
The argument is tedious based on direct calculation, so we put it in the Appendix \ref{App_A3}.
\end{proof}

%Next we present the following three inequalities which can be considered as some special cases of the \textit{inf-sup} condition in \eqref{inf_sup}.
%\begin{lemma}
%\label{lem_infsup_special}
%There exists constants $C$ such that
%\begin{subequations}
%\label{lem_infsup_special_eq0}
%\begin{align}
%    & a_h(\bfu_h, \mathbb{\Pi}_h \bfu_h) \ge C \| \bfu_h \|_{\bfH(\emph{curl},\Omega^{\ast}_h)} \| \mathbb{\Pi}_h \bfu_h \|_{\bfH(\emph{curl},\Omega^{\ast}_h)}, ~~~ \forall \bfu_h \in \mathcal{IND}_h(\Omega^{\ast}_h), \label{lem_infsup_special_eq01}  \\
%    &  a_h(\bfu_h, \mathbb{\Pi}_h \bfu_h) \ge C \| \bfu_h \|_{\bfH(\emph{curl},\Omega^{\Gamma}_h)} \| \mathbb{\Pi}_h \bfu_h \|_{\bfH(\emph{curl},\Omega^{\Gamma}_h)}, ~~~ \forall \bfu_h \in \mathcal{IND}_h(\Omega^{\Gamma}_h)\cap\emph{Ker}(\emph{curl}), \label{lem_infsup_special_eq02} \\
%    &  a_h(\bfu_h, \mathbb{\Pi}_h \bfu_h) \ge C \| \bfu_h \|_{\bfH(\emph{curl},\Omega^{\Gamma}_h)} \| \mathbb{\Pi}_h \bfu_h \|_{\bfH(\emph{curl},\Omega^{\Gamma}_h)}, ~~~ \forall \bfu_h \in \mathcal{IND}_h(\Omega^{\Gamma}_h). \label{lem_infsup_special_eq03}
%\end{align}
%\end{subequations}
%\end{lemma}
Now we are ready to show the \textit{inf-sup} condition in \eqref{inf_sup}.
\begin{thm}
\label{thm_infsup}
Under the conditions of Lemma \ref{lem_loc_infsup} and for $h$ sufficiently small, the \textit{inf-sup} condition \eqref{inf_sup} holds regardless of interface location relative to the mesh.
\end{thm}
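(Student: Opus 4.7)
The plan is to take $\bfv_h := \mathbb{\Pi}_h \bfu_h \in \mathcal{ND}_{h,0}(\Omega)$ as the test function and establish $a(\bfu_h, \mathbb{\Pi}_h\bfu_h) \ge C\|\bfu_h\|^2_{\bfH(\text{curl};\Omega)}$; together with the $L^2$- and curl-stability of $\mathbb{\Pi}_h$ in Theorem \ref{thm_iso_stab}, this directly yields \eqref{inf_sup}. The curl part of $a(\cdot,\cdot)$ collapses to $\|\mu^{-1/2}\text{curl}~\bfu_h\|^2_{L^2(\Omega)}$ exactly by Theorem \ref{thm_stiff}, so all the difficulty lies in the mass term $(\beta\bfu_h,\mathbb{\Pi}_h\bfu_h)_{L^2(\Omega)}$.

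I would split the mass integral into a contribution from $\Omega \setminus \Omega^\Gamma_h$ and one from $\Omega^\Gamma_h$. On $\Omega\setminus\Omega^\Gamma_h$ the IFE space coincides with the standard N\'ed\'elec space, so $\mathbb{\Pi}_h \bfu_h = \bfu_h$ there and this contribution equals $\|\sqrt{\beta}\bfu_h\|^2_{L^2(\Omega\setminus\Omega^\Gamma_h)}$ outright. On $\Omega^\Gamma_h$ I apply the discrete decomposition of Lemma \ref{lemma_special_decomp} to write $\bfu_h = \mathring{\bfu}_h + \bfu^*_h$, with $\mathring{\bfu}_h$ curl-free and $\|\bfu^*_h\|_{L^2(\Omega^\Gamma_h)} \le Ch \|\text{curl}~\bfu_h\|_{L^2(\Omega)}$, and expand the inner product into four pieces. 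The dominant piece $(\beta\mathring{\bfu}_h,\mathbb{\Pi}_h\mathring{\bfu}_h)_{L^2(\Omega^\Gamma_h)}$ is bounded below by $C\|\mathring{\bfu}_h\|^2_{L^2(\Omega^\Gamma_h)}$ via Lemma \ref{lem_loc_infsup} (this is where the contrast restriction $\max\{\beta^+/\beta^-,\beta^-/\beta^+\} < 10.65$ enters). The three remaining pieces each contain a factor of $\bfu^*_h$ and are controlled by Cauchy--Schwarz, the $L^2$-stability of $\mathbb{\Pi}_h$, and the $Ch$-smallness of $\bfu^*_h$; a Young's inequality with small parameter $\epsilon$ absorbs a fraction of $\|\mathring{\bfu}_h\|^2_{L^2(\Omega^\Gamma_h)}$ out of the cross terms at the price of a term $C_\epsilon h^2 \|\text{curl}~\bfu_h\|^2_{L^2(\Omega)}$.

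Assembling both regions should yield a bound of the form
\begin{equation*}
a(\bfu_h,\mathbb{\Pi}_h\bfu_h) \ge c_1\|\text{curl}~\bfu_h\|^2_{L^2(\Omega)} + c_2\|\bfu_h\|^2_{L^2(\Omega\setminus\Omega^\Gamma_h)} + c_3\|\mathring{\bfu}_h\|^2_{L^2(\Omega^\Gamma_h)} - C_\epsilon h^2\|\text{curl}~\bfu_h\|^2_{L^2(\Omega)}.
\end{equation*}
The triangle inequality together with the Lemma \ref{lemma_special_decomp} bound gives $\|\bfu_h\|^2_{L^2(\Omega^\Gamma_h)} \le 2\|\mathring{\bfu}_h\|^2_{L^2(\Omega^\Gamma_h)} + Ch^2\|\text{curl}~\bfu_h\|^2_{L^2(\Omega)}$, which lets me reconstruct $\|\bfu_h\|^2_{L^2(\Omega)}$ on the right-hand side modulo another $h^2\|\text{curl}~\bfu_h\|^2$ term. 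For $h$ sufficiently small, all the residual $O(h^2)\|\text{curl}~\bfu_h\|^2$ contributions are absorbed by the coercive $c_1\|\text{curl}~\bfu_h\|^2$ part, delivering $a(\bfu_h,\mathbb{\Pi}_h\bfu_h) \ge C\|\bfu_h\|^2_{\bfH(\text{curl};\Omega)}$ and closing the argument.

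The main obstacle I expect is the constant bookkeeping in the Young's inequality step: the positive lower constant coming from Lemma \ref{lem_loc_infsup} is exactly what the threshold $10.65$ guarantees, $\epsilon$ must be strictly smaller than this constant, and the reciprocal factor $C_\epsilon$ multiplying $h^2\|\text{curl}~\bfu_h\|^2$ is correspondingly large, so one must carefully verify that the smallness requirement on $h$ can still be made independently of the interface location. Since all of the ingredients (Theorems \ref{thm_iso_stab}, \ref{thm_stiff} and Lemmas \ref{lemma_special_decomp}, \ref{lem_loc_infsup}) are stated with constants uniform in mesh size and interface position, this uniformity should propagate through the above argument to give a final inf-sup constant $C_s>0$ independent of $h$ and of how the interface cuts the mesh.
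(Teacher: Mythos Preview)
Your proposal is correct and follows essentially the same route as the paper: test with $\mathbb{\Pi}_h\bfu_h$, use Theorem~\ref{thm_stiff} for the curl term, split the mass term into $\Omega\setminus\Omega^\Gamma_h$ and $\Omega^\Gamma_h$, apply the decomposition of Lemma~\ref{lemma_special_decomp} on $\Omega^\Gamma_h$, control the curl-free piece by Lemma~\ref{lem_loc_infsup}, and absorb the $\bfu^*_h$-cross terms via Young's inequality and the stability of $\mathbb{\Pi}_h$. The only cosmetic difference is that the paper parametrizes Young's inequality so that the negative remainder scales like $Ch$ rather than $C_\epsilon h^2$, but either version is absorbed for $h$ small and yields the same uniform inf-sup constant.
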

\begin{proof}
First all, Theorem \ref{thm_stiff} directly yields
\begin{equation}
\label{thm_infsup_eq1-1}
(\mu^{-1} ~\text{curl}~\bfu_h, \text{curl}~\mathbb{\Pi}_h\bfu_h)_{L^2(\Omega)} = (\mu^{-1} ~\text{curl}~\bfu_h, \text{curl}~\bfu_h)_{L^2(\Omega)} \ge C \| \text{curl}~\bfu_h \|^2_{L^2(\Omega)}.
\end{equation}
Since $\mathbb{\Pi}_h\bfu_h=\bfu_h$ on $\Omega\backslash\Omega^{\Gamma}_h$, we certainly have
\begin{equation}
\label{thm_infsup_eq1}
(\beta~ \bfu_h, \mathbb{\Pi}_h\bfu_h)_{L^2(\Omega\backslash\Omega^{\Gamma}_h)} \ge C \| \bfu_h \|^2_{L^2(\Omega\backslash\Omega^{\Gamma}_h)}.
\end{equation}
As for $\Omega^{\Gamma}_h$, applying the decomposition in Lemma \ref{lemma_special_decomp}, the estimate in Lemma \ref{lem_loc_infsup} and the stability in Theorem \ref{thm_iso_stab} together with the arithmetic inequality, for $h$ sufficiently small we have
\begin{equation}
\begin{split}
\label{thm_infsup_eq2}
 & (\beta~(\bfu^{\ast}_h+\mathring{\bfu}_h), \mathbb{\Pi}_h(\bfu^{\ast}_h+\mathring{\bfu}_h))_{L^2(\Omega^{\Gamma}_h)} \\
= & (\beta \bfu^{\ast}_h, \bfu^{\ast}_h)_{L^2(\Omega^{\Gamma}_h)} + (\beta~\mathring{\bfu}_h, \mathbb{\Pi}_h\mathring{\bfu}_h)_{L^2(\Omega^{\Gamma}_h)} \\
 + & (\beta \bfu^{\ast}_h, \mathbb{\Pi}_h\bfu^{\ast}_h - \bfu^{\ast}_h)_{L^2(\Omega^{\Gamma}_h)}  + (\beta~\bfu^{\ast}_h, \mathbb{\Pi}_h\mathring{\bfu}_h)_{L^2(\Omega^{\Gamma}_h)} + (\beta~\mathring{\bfu}_h, \mathbb{\Pi}_h\bfu^{\ast}_h)_{L^2(\Omega^{\Gamma}_h)} \\
 \ge & C( \| \bfu^{\ast}_h \|^2_{L^2(\Omega^{\Gamma}_h)} + \| \mathring{\bfu}_h \|^2_{L^2(\Omega^{\Gamma}_h)} ) - C(h^2  \| \text{curl}~ \bfu_h \|^2_{L^2(\Omega)} + h \| \text{curl}~ \bfu_h \|^2_{L^2(\Omega)} + h\| \mathring{\bfu}_h \|^2_{L^2(\Omega^{\Gamma}_h)}) \\
 \ge & C\| \bfu^{\ast}_h \|^2_{L^2(\Omega^{\Gamma}_h)} + (C-Ch) \| \mathring{\bfu}_h \|^2_{L^2(\Omega^{\Gamma}_h)} - Ch \| \text{curl}~ \bfu_h \|^2_{L^2(\Omega)} \\
\ge & C\| \bfu_h \|^2_{L^2(\Omega^{\Gamma}_h)}  - Ch \| \text{curl}~ \bfu_h \|^2_{L^2(\Omega)} .
\end{split}
\end{equation}
Noting that $\mathring{\bfu}_h$ is curl-free, we finally obtain from Theorem \ref{thm_stiff} and \eqref{thm_infsup_eq1-1}-\eqref{thm_infsup_eq2} that
\begin{equation}
\begin{split}
\label{thm_infsup_eq3}
a_h(\bfu_h,\mathbb{\Pi}_h\bfu_h) = & (\mu~\text{curl}~\bfu_h, \text{curl}~\mathbb{\Pi}_h\bfu_h)_{L^2(\Omega)} + (\beta~\bfu_h, \mathbb{\Pi}_h \bfu_h)_{L^2(\Omega\backslash\Omega^{\Gamma}_h)} +  (\beta~\bfu_h, \mathbb{\Pi}_h \bfu_h)_{L^2(\Omega^{\Gamma}_h)} \\ 
\ge & (C-Ch) \| \text{curl}~ \bfu_h \|^2_{L^2(\Omega)} + C\| \bfu_h \|^2_{L^2(\Omega^{\Gamma}_h)} + C \| \bfu_h \|^2_{L^2(\Omega\backslash\Omega^{\Gamma}_h)} \ge C \| \bfu_h \|^2_{\bfH(\text{curl};\Omega)}
\end{split}
\end{equation}
which yields the desired result for $h$ sufficiently small.
\end{proof}
\begin{rem}[Comments on the analysis \textit{inf-sup} stability.]
\label{rem_infsup_1}
When the discontinuity is only on permeability $\mu$, i.e., the conductivity is continuous ($\beta^-=\beta^+$), Remark \ref{rem_identity} indicates their curl-free subspaces are identical as constant vectors on each interface element $T$. So by the Poincar\'e-type inequality, we have the local estimates for every $\bfu_h\in\mathcal{IND}_h(T)$:
\begin{equation}
\label{rem_infsup_1_eq1}
\| \bfu_h - \mathbb{\Pi}_h\bfu_h \|_{L^2(T)} \le C h_T \| \emph{curl}~ \bfu_h \|_{L^2(T)}.
\end{equation}
Then the analysis of the \textit{inf-sup} stability is straightforward by using \eqref{rem_infsup_1_eq1} to estimate the lower-order term $(\beta\bfu_h,\mathbb{\Pi}_h\bfu_h)_{L^2(\Omega)}$. However the estimate \eqref{rem_infsup_1_eq1} is only true for continuous conductivity. Actually since $\mathcal{IND}_h(T)$ can not recover the local spaces of constant vectors when the conductivity is discontinuous. That is, essentially, $\mathcal{IND}_h(T)$ and $\mathcal{ND}_h(T)$ have different curl-free subspaces, and thus there is no approximation results between them. 

According to the explanation above, by inspecting the proof of Theorem \ref{thm_infsup}, we expect that the most difficult part in the analysis for discontinuous conductivity is the \textit{inf-sup} stability on their curl-free subspaces. By Lemma \ref{lemma_special_decomp}, the non-curl-free component near the interface can be bounded by its curl value with a scaling factor related to the mesh size which is then used in \eqref{thm_infsup_eq3} to obtain a positive lower bound. A similar argument can be found in \cite{2006HiptmairWidmerZou} (Lemma 5.3) on a strip around the domain boundary. As for the curl-free component, in this article, our approach is based on the direct calculation locally on each interface element of $(\beta\bfu_h,\mathbb{\Pi}_h\bfu_h)_{L^2(T)}$ to estimate the largest contrast of $\beta$ such that it is lower bounded by $\|\bfu_h\|^2_{L^2(T)}$, that is Lemma \ref{lem_loc_infsup}. The Cartesian assumption for the mesh is mainly for relatively easy calculation, and we expect other bounds may be also obtained for other meshes through a similar derivation. We also note that the Cartesian mesh is easy to construct and thus widely used for interface-unfitted methods.

Indeed, numerical results suggest that $(\beta\bfu_h,\mathbb{\Pi}_h\bfu_h)_{L^2(\Omega)}$ fails to satisfy the \textit{inf-sup} condition when $\mathbb{\Pi}_h\bfu_h$ is chosen as the test function and the contrast of $\beta$ is too large. However in our numerical experiments we have not observed any instability issue for large contrast of conductivity. So from the perspective of analysis, $\mathbb{\Pi}_h$ may not be a suitable operator to generate the test function, but we have not known yet which operator can fulfill the requirements especially the \text{inf-sup} stability on the curl-free subspaces. We think the challenge mainly arises from the insufficient approximation between $\mathcal{IND}_h(T)$ and  $\mathcal{ND}_h(T)$. Due to exact-sequence in Theorem \ref{thm_DR_3}, this is equivalent to show the inf-sup stability for the PG-IFE method for the $H^1$-elliptic interface problems \cite{2013HouSongWangZhao} which remains open for years. 

\end{rem}

Furthermore we have the continuity of the bilinear form $a(\cdot,\cdot)$.
\begin{thm}
\label{thm_contin}
There holds
\begin{equation}
\label{thm_contin_eq0}
a(\bfu_h,\bfv_h) \le C \| \bfu_h \|_{\bfH(\emph{curl};\Omega)} \| \bfv_h \|_{\bfH(\emph{curl};\Omega)}.
\end{equation}
\end{thm}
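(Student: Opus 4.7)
The claim is that the bilinear form $a(\cdot,\cdot)$ is continuous on the product space that hosts the trial and test functions. Even though $\mathcal{IND}_{h,0}(\Omega)$ fails to be globally $\bfH(\text{curl};\Omega)$-conforming, the local inclusion $\mathcal{IND}_h(T)\subset\bfH(\text{curl};T)$ established in \eqref{IFE_loc_spa_Hcurl} guarantees that $\text{curl}~\bfu_h$ is well-defined on each element, so the bilinear form makes sense provided we interpret $\|\cdot\|_{\bfH(\text{curl};\Omega)}$ as the broken norm $\bigl(\sum_{T\in\mathcal{T}_h}\|\cdot\|^2_{\bfH(\text{curl};T)}\bigr)^{1/2}$ on the trial space. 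On the test side, $\mathcal{ND}_{h,0}(\Omega)\subset\bfH(\text{curl};\Omega)$ and no such interpretation is needed.

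The plan is essentially the routine Cauchy--Schwarz argument, performed element-wise to accommodate the non-conformity. First I would split
\begin{equation*}
a(\bfu_h,\bfv_h) = \sum_{T\in\mathcal{T}_h}\int_T \mu^{-1}\,\text{curl}~\bfu_h\,\text{curl}~\bfv_h\,dX + \sum_{T\in\mathcal{T}_h}\int_T \beta\,\bfu_h\cdot\bfv_h\,dX.
\end{equation*}
Since $\mu=\mu^{\pm}$ and $\beta=\beta^{\pm}$ are positive piecewise constants on $\Omega^{\pm}$, there is a constant $M$ depending only on $\mu^{\pm}$ and $\beta^{\pm}$ such that $\mu^{-1}\le M$ and $\beta\le M$ pointwise in $\Omega$. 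Applying the Cauchy--Schwarz inequality on each $T$ for both integrals then yields, for each element, the bound
\begin{equation*}
\int_T \mu^{-1}\,\text{curl}~\bfu_h\,\text{curl}~\bfv_h\,dX + \int_T \beta\,\bfu_h\cdot\bfv_h\,dX \le M\,\|\bfu_h\|_{\bfH(\text{curl};T)}\|\bfv_h\|_{\bfH(\text{curl};T)}.
\end{equation*}

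Finally I would sum this estimate over $T\in\mathcal{T}_h$ and apply the discrete Cauchy--Schwarz inequality in $\ell^2$ over the set of elements, producing
\begin{equation*}
a(\bfu_h,\bfv_h) \le M\Bigl(\sum_{T}\|\bfu_h\|^2_{\bfH(\text{curl};T)}\Bigr)^{1/2}\Bigl(\sum_{T}\|\bfv_h\|^2_{\bfH(\text{curl};T)}\Bigr)^{1/2} \le C\,\|\bfu_h\|_{\bfH(\text{curl};\Omega)}\|\bfv_h\|_{\bfH(\text{curl};\Omega)},
\end{equation*}
which is the desired inequality. There is no real obstacle here; the only mild conceptual point is keeping track of the broken versus conforming interpretation of the $\bfH(\text{curl};\Omega)$-norm on the trial space, and absorbing the dependence on the parameters $\mu$ and $\beta$ into the generic constant $C$, which is permitted by the convention stated in Section~2.
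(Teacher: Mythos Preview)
Your proposal is correct and follows essentially the same approach as the paper: the paper's proof is the single line ``It directly follows from the H\"older's inequality,'' and your argument is simply a spelled-out version of that, with the added (and appropriate) care about interpreting the $\bfH(\text{curl};\Omega)$-norm elementwise on the non-conforming trial space.
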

\begin{proof}
It directly follows from the H\"older's inequality.
\end{proof}

Finally the standard argument yields the following error estimates.
\begin{thm}
\label{thm_solu_err}
Let $\bfu_h$ be the solution to the scheme \eqref{weak_form_3}. Under the conditions of Theorem \ref{thm_infsup}, there holds
\begin{equation}
\label{thm_solu_err_eq0}
\| \bfu -\bfu_h \|_{\bfH(\emph{curl};\Omega)} \le C h \left( \| \bfu \|_{\bfH^1(\emph{curl};\Omega^-)} + \| \bfu \|_{\bfH^1(\emph{curl};\Omega^+)}  \right).
\end{equation}
\end{thm}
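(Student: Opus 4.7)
The plan is to apply a standard Babu\v{s}ka--Aziz/Strang-type argument built directly on the three ingredients already assembled in the preceding sections: the \textit{inf-sup} stability of Theorem \ref{thm_infsup}, the continuity of Theorem \ref{thm_contin}, and the interpolation estimate of Theorem \ref{thm_interp_err_5}. The crucial structural observation is that, although the trial space $\mathcal{IND}_{h,0}(\Omega)$ is not $\bfH(\text{curl};\Omega)$-conforming, the test space $\mathcal{ND}_{h,0}(\Omega)$ is conforming. Hence, for every $\bfv_h \in \mathcal{ND}_{h,0}(\Omega)\subset \bfH_0(\text{curl};\Omega)$, inserting $\bfv=\bfv_h$ into the continuous weak formulation \eqref{weak_form_1} and subtracting the discrete equation \eqref{weak_form_3} yields the Galerkin orthogonality
\begin{equation*}
a(\bfu-\bfu_h,\bfv_h)=0 \qquad \forall \bfv_h\in\mathcal{ND}_{h,0}(\Omega),
\end{equation*}
where $a(\bfu_h,\cdot)$ is understood element-wise on the non-conforming trial function $\bfu_h$.

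Next I would introduce the IFE interpolant $\widetilde{\Pi}_h\bfu\in\mathcal{IND}_{h,0}(\Omega)$ and split the error by the triangle inequality
\begin{equation*}
\|\bfu-\bfu_h\|_{\bfH(\text{curl};\Omega)}\le \|\bfu-\widetilde{\Pi}_h\bfu\|_{\bfH(\text{curl};\Omega)} + \|\widetilde{\Pi}_h\bfu-\bfu_h\|_{\bfH(\text{curl};\Omega)}.
\end{equation*}
The first summand is controlled directly by Theorem \ref{thm_interp_err_5} and produces the desired $\mathcal{O}(h)$ bound. For the second summand, set $\bfe_h:=\widetilde{\Pi}_h\bfu-\bfu_h\in\mathcal{IND}_{h,0}(\Omega)$ and apply the \textit{inf-sup} condition of Theorem \ref{thm_infsup} to obtain
\begin{equation*}
C_s\|\bfe_h\|_{\bfH(\text{curl};\Omega)}\le \sup_{\bfv_h\in\mathcal{ND}_{h,0}(\Omega)}\frac{a(\bfe_h,\bfv_h)}{\|\bfv_h\|_{\bfH(\text{curl};\Omega)}}.
\end{equation*}
Using Galerkin orthogonality to replace $a(\bfe_h,\bfv_h)=a(\widetilde{\Pi}_h\bfu-\bfu,\bfv_h)$ and invoking the continuity estimate from Theorem \ref{thm_contin} yields $a(\bfe_h,\bfv_h)\le C\|\widetilde{\Pi}_h\bfu-\bfu\|_{\bfH(\text{curl};\Omega)}\|\bfv_h\|_{\bfH(\text{curl};\Omega)}$, so that $\|\bfe_h\|_{\bfH(\text{curl};\Omega)}$ is bounded by the interpolation error as well. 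A final application of Theorem \ref{thm_interp_err_5} to both summands completes the proof.

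Because the difficult pieces (IFE interpolation estimate and the \textit{inf-sup} stability obtained via the regular discrete decomposition of Lemma \ref{lemma_special_decomp} combined with Lemma \ref{lem_loc_infsup}) are already in place, the remaining argument is essentially routine. The one point worth verifying carefully is the consistency step used to obtain Galerkin orthogonality: one must check that $a(\bfu,\bfv_h)=\int_\Omega \bff\cdot\bfv_h\,dX$ holds when $\bfv_h$ is merely a test function in $\mathcal{ND}_{h,0}(\Omega)$. This is immediate since $\bfu\in\widetilde{\bfH}^1_0(\text{curl};\Omega)\subset\bfH_0(\text{curl};\Omega)$ and no interface jump or inter-element penalty terms ever appear in the bilinear form, which is precisely the payoff of the Petrov--Galerkin construction advocated in the paper.
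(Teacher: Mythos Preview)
Your proposal is correct and follows essentially the same approach as the paper's own proof: apply the \textit{inf-sup} stability of Theorem~\ref{thm_infsup} to $\widetilde{\Pi}_h\bfu-\bfu_h$, use Galerkin orthogonality (valid because the test space $\mathcal{ND}_{h,0}(\Omega)\subset\bfH_0(\text{curl};\Omega)$ is conforming) to replace this by $a(\widetilde{\Pi}_h\bfu-\bfu,\bfv_h)$, bound via the continuity of Theorem~\ref{thm_contin} and the interpolation estimate of Theorem~\ref{thm_interp_err_5}, and finish with the triangle inequality. Your explicit verification of the consistency step is a welcome addition that the paper leaves implicit.
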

\begin{proof}
By the \textit{inf-sup} stability, Theorem \ref{thm_contin} and the interpolation error estimate in Theorem \ref{thm_interp_err_5}, we have
\begin{equation}
\begin{split}
\label{thm_solu_err_eq1}
\| \bfu_h -  \widetilde{\Pi}_h\bfu \|_{\bfH(\text{curl};\Omega)} &\le C \sup_{\bfv_h\in\mathcal{ND}_h(\Omega)} \frac{a(\bfu_h - \widetilde{\Pi}_h\bfu, \bfv_h ) }{ \| \bfv_h \|_{\bfH(\text{curl};\Omega)} } = C \sup_{\bfv_h\in\mathcal{ND}_h(\Omega)} \frac{a(\bfu - \widetilde{\Pi}_h\bfu, \bfv_h ) }{ \| \bfv_h \|_{\bfH(\text{curl};\Omega)} }\\
&\le C \| \bfu -  \widetilde{\Pi}_h\bfu_h \|_{\bfH(\text{curl};\Omega)} \le C h \left( \| \bfu \|_{\bfH^1(\text{curl};\Omega^-)} + \| \bfu \|_{\bfH^1(\text{curl};\Omega^+)}  \right).
\end{split}
\end{equation}
Then the triangular inequality yields the desired result.
\end{proof}

%%%%%%%%%%%%%%%%%%%%%%%%%%%%%%%%%%%%%%%%%%%%%%%%%%%%%%%%%%%%%%%%%%%%%%%%%%%%%%%%%%%%%%%%%%%%%%%%%%%%%%%%%%%%%%%%%%%%%%%%%%%%%%%%%%%%%%%%%%%%%%%%%%%%%%%%%%%%%%%%%%%%%%%%%%%%%%%%%%%%%%%%%%%%%%%%%%%%%%%%%%%%%%%%%%%%%%%%%%%%%%%%%%%%%%%%%%%%%%%%%%%%%%%%%%%%%%%%%%%%%%%%%%%%%%%%%%%%%%%%%%%%%%%%%%%%%%%%%%%%%%%%%%%%%%%%%%%%%%%%

\section{Numerical Examples}
\label{sec:num_examp}
In this section, we present a group of numerical experiments to validate the previous analysis. We also compare the numerical performance of the proposed PG-IFE method with the classic IFE method and penalty-type IFE method. The latter two use the Galerkin formulation, namely the IFE functions are used as both the trial functions and test functions. More specifically, they are to find $\bfu_h\in \mathcal{IND}_h(\Omega)$ such that
\begin{equation}
\label{otherIFE_12}
a^{(i)}_h(\bfu_h,\bfv_h) = \int_{\Omega} \bff\cdot\bfv_h dX ~~~~  \forall \bfv_h \in  \mathcal{IND}_h(\Omega)
\end{equation}
where $i=1,2$, and the bilinear form for the penalty-type IFE method is given by
\begin{equation}
\begin{split}
\label{otherIFE_1}
a^{(1)}_h(\bfu_h,\bfv_h) &= \int_{\Omega}\mu^{-1}\text{curl}~\bfu_h\cdot\text{curl}~\bfv_h dX + \int_{\Omega} \beta \bfu_h\cdot\bfv_h dX  - \int_{\mathcal{E}^i_h} \{\mu^{-1}\text{curl}~\bfu_h\}_e[\bfv_h\cdot\bft]_e ds \\
&- \int_{\mathcal{E}^i_h} \{\mu^{-1}\text{curl}~\bfv_h\}_e[\bfu_h\cdot\bft]_e ds + \frac{c_0\max\{\beta^-,\beta^+\}}{h^r} \int_{\mathcal{E}^i_h} [\bfu_h\cdot\bft]_e \cdot [\bfv_h\cdot\bft]_e ds
\end{split}
\end{equation}
in which $\mathcal{E}^i_h$ denotes the collection of interface edges, $c_0$ is a positive constant parameter independent of the mesh size, $r$ is real number parameter, $[\bfw_h\cdot\bft]_e=\bfw_h|_{T_1}\cdot\bft-\bfw_h|_{T_2}\cdot\bft$, and $\{\mu^{-1}\text{curl}~\bfw_h\}_e= \frac{1}{2}\left(\mu^{-1}\text{curl}~\bfw_h|_{T_1} + \mu^{-1}\text{curl}~\bfw_h|_{T_2} \right)$, and the bilinear form for the classic IFE method is
\begin{equation}
\label{otherIFE_2}
a^{(2)}_h(\bfu_h,\bfv_h) = \int_{\Omega}\mu^{-1}\text{curl}~\bfu_h\cdot\text{curl}~\bfv_h dX + \int_{\Omega} \beta \bfu_h\cdot\bfv_h dX.
\end{equation}
Many IFE functions are in general discontinuous across interface edges, and thus the standard Galerkin scheme, referred as the classic IFE (C-IFE) method, may produce suboptimally convergent solutions due to the non-conformity errors on interface edges. This is indeed numerically observed in \cite{2015LinLinZhang} for $H^1$-elliptic interface problems. To fix this issue, the authors in \cite{2015LinLinZhang} proposed the penalty-type method, referred as the partially penalized IFE method (PP-IFE) method since the penalties are only added on interface edges to handle the discontinuities, and the PP-IFE method is then shown to work for various different interface problems. Here we follow the literature to call \eqref{otherIFE_1} and \eqref{otherIFE_2} the PP-IFE and C-IFE methods respectively. Unfortunately, even the PP-IFE method can not produce optimally convergent solutions for the considered $\bfH(\text{curl})$-elliptic interface problems, and actually it fails to converge near the interface, as shown by the numerical results below. As discussed in \cite{2016CasagrandeHiptmairOstrowski,2016CasagrandeWinkelmannHiptmairOstrowski} the similar issue also occurs in Nitsche's penalty methods, and the problem is on the stability term which yields the loss of convergence due to the $\mathcal{O}(h)$ approximation capability for $\bfH^1(\text{curl};\Omega)$ functions. Similarly, as for the IFE method, it is the last term in \eqref{otherIFE_1} that is responsible for the loss of accuracy, and one can quickly verify that the interpolation error gauged by the corresponding energy norm involving the stability term does not have optimal convergence rate.

We consider a domain $\Omega=(-1,1)\times(-1,1)$ which is partitioned into $N\times N$ squares and each square is then cut into two triangles along the diagonal, i.e., the semi Cartesian mesh shown in the right plot of Figure \ref{fig:submesh}. Let a circular interface $\Gamma:x^2+y^2=r^2_1$ cut $\Omega$ into the inside subdomain $\Omega^-$ and the outside subdomain $\Omega^+$. On $\Omega$ the exact solution is given by
\begin{equation}
\label{exact_solu}
\bfu = 
\begin{cases}
      & \left(\begin{array}{c} \mu^-\left(  - k_1(r_1^2 - x^2 -y^2)y \right) \\ \mu^-\left(  -k_1(r_1^2 - x^2 -y^2)x  \right) \end{array}\right) ~~~~ \text{in}~ \Omega^-  \\
      & \left(\begin{array}{c} \mu^+\left(  - k_2(r_2^2-x^2-y^2)(r_1^2-x^2-y^2)y  \right) \\  \mu^+\left( - k_2(r_2^2-x^2-y^2)(r_1^2-x^2-y^2)x \right) \end{array}\right) ~~~~ \text{in}~ \Omega^+
\end{cases}
\end{equation}
for which the boundary conditions and the right hand side $\bff$ are calculated accordingly, and $k_2=20$, $k_1=k_2(r_2^2-r_1^2)$ with $r_1=\pi/5$ and $r_2=1$. We focus the numerical experiments on four groups of parameters: fixing $\mu^-=\beta^-=1$ and varying $\mu^+=1/10$ or $1/100$ and $\beta^+=10$ or $100$. Here we emphasize that, although the analysis is only based on the small contrast of conductivities (less than 10.56), there is no issue in computation for larger contrast. Moreover we choose the stability parameters in \eqref{otherIFE_1} to be $c_0=10$ and $r=1$, and other choices such as $c_0=-10,0,100$ and $r=-1,-1/2,0,1/2,1$ can give the similar results. Furthermore, we let $e_0$ be the error $\| \bfu - \bfu_h \|_{\bfH(\text{curl};\Omega)}$, and in order to study the convergence behavior around the interface we also define the error
\begin{equation}
\label{err_interf}
e_1 = |\Omega^i_h|^{-1/2}\| \bfu - \bfu_h \|_{\bfH(\text{curl};\Omega^i_h)}.
\end{equation}
A similar indicator was also used in \cite{2010LiMelenkWohlmuthZou} to study the error near the interface for $H^1$-elliptic interface problems. 

The results for the errors $e_0$ are presented in Figure \ref{fig:err0} where the convergence behavior of PG-IFE, PP-IFE and C-IFE methods are indicated by black, red and blue curves, respectively. In addition there are three dashed lines with the corresponding color indicating the expected convergence rate $\mathcal{O}(h)$ for the PG-IFE method and the approximate rates $\mathcal{O}(h^{1/2})$ for the PP-IFE and C-IFE methods. As we can see from the plots, the black error curve almost overlaps with the related dashed line for the PG-IFE method, namely its convergence rate is certainly optimal. However, as for the PP-IFE and C-IFE methods, we note that the errors asymptotically have the suboptimal $\mathcal{O}(h^{1/2})$ convergence rates. Moreover, as the contrast of $\beta$ becomes larger, the advantages of the PG-IFE method over the other two are more evident. 

It is interesting to point out that straightforwardly applying the argument of Theorem 2 in \cite{2016CasagrandeHiptmairOstrowski} actually suggests that the PP-IFE method should not converge at all near the interface. So we expect the loss of $\mathcal{O}(h^{1/2})$ may be due to the pollution of the error near the interface over the whole domain. To further study this issue, we compute and plot the error $e_1$ defined in \eqref{err_interf} for PG-IFE, PP-IFE and C-IFE methods in Figure \ref{fig:err1}. Still the black dashed line indicates the expected optimal $\mathcal{O}(h)$ convergence rate for the PG-IFE method which matches the true error curve quite well. So the method has the optimal accuracy even near the interface. But the numerical results clearly suggest that both the PP-IFE and the C-IFE methods completely fail to converge near the interface. Combining the numerical experiments here and in \cite{2016CasagrandeHiptmairOstrowski}, we actually think this issue seems to be very difficult to overcome for penalty-type methods if the solutions' regularity are described by $\bfH^k(\text{curl};\Omega)$ spaces. We believe this clearly shows advantages for the proposed PG formulation without penalties.

\begin{figure}
        \centering
        \begin{subfigure}[b]{0.45\textwidth}
            \centering
            \includegraphics[width=2.3in]{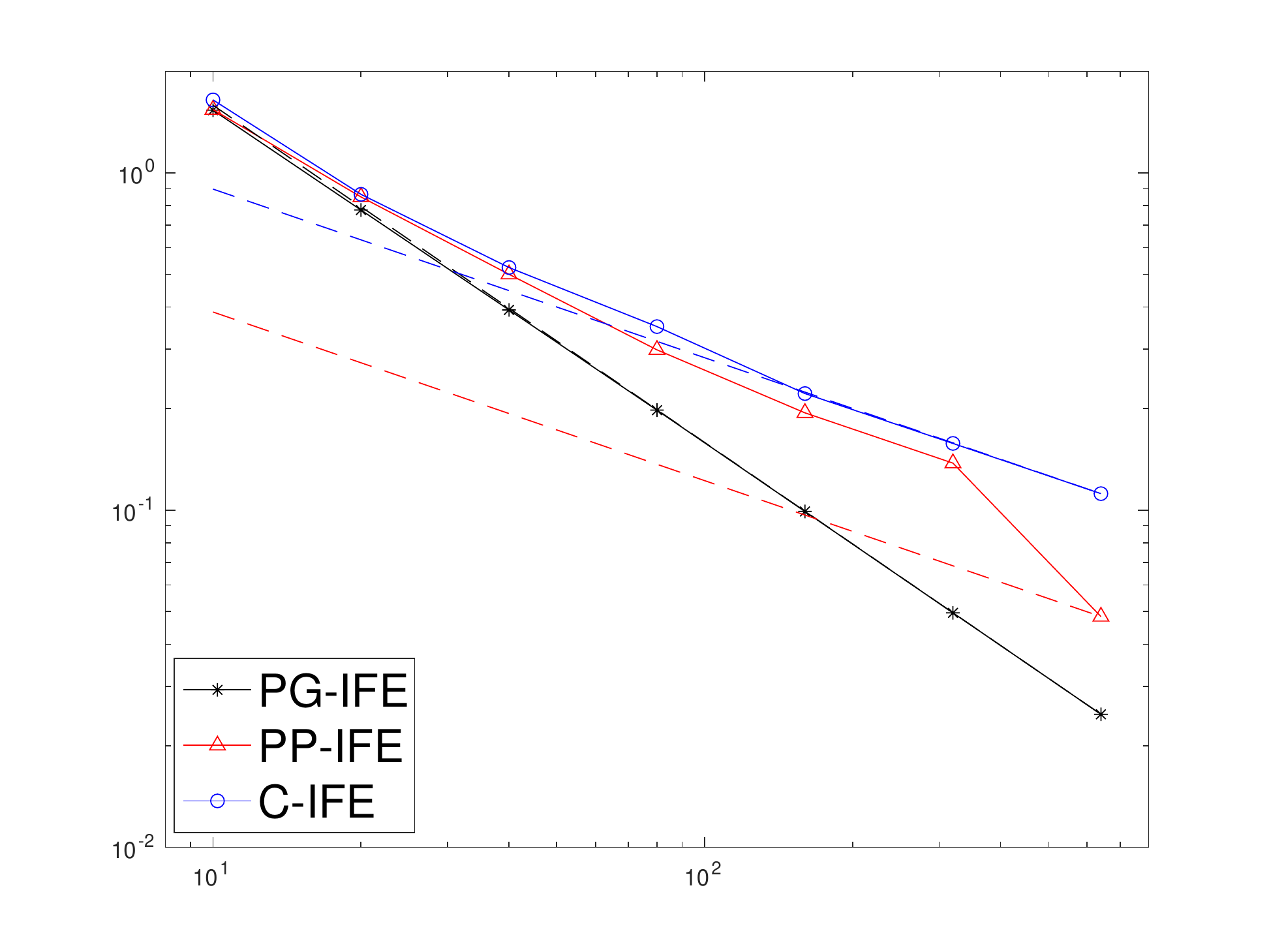}
            \caption{$\mu^+=1/10$ and $\beta^+=10$}%
            \label{fig:err01}
        \end{subfigure}
        \hfill
        \begin{subfigure}[b]{0.45\textwidth}  
            \centering 
            \includegraphics[width=2.3in]{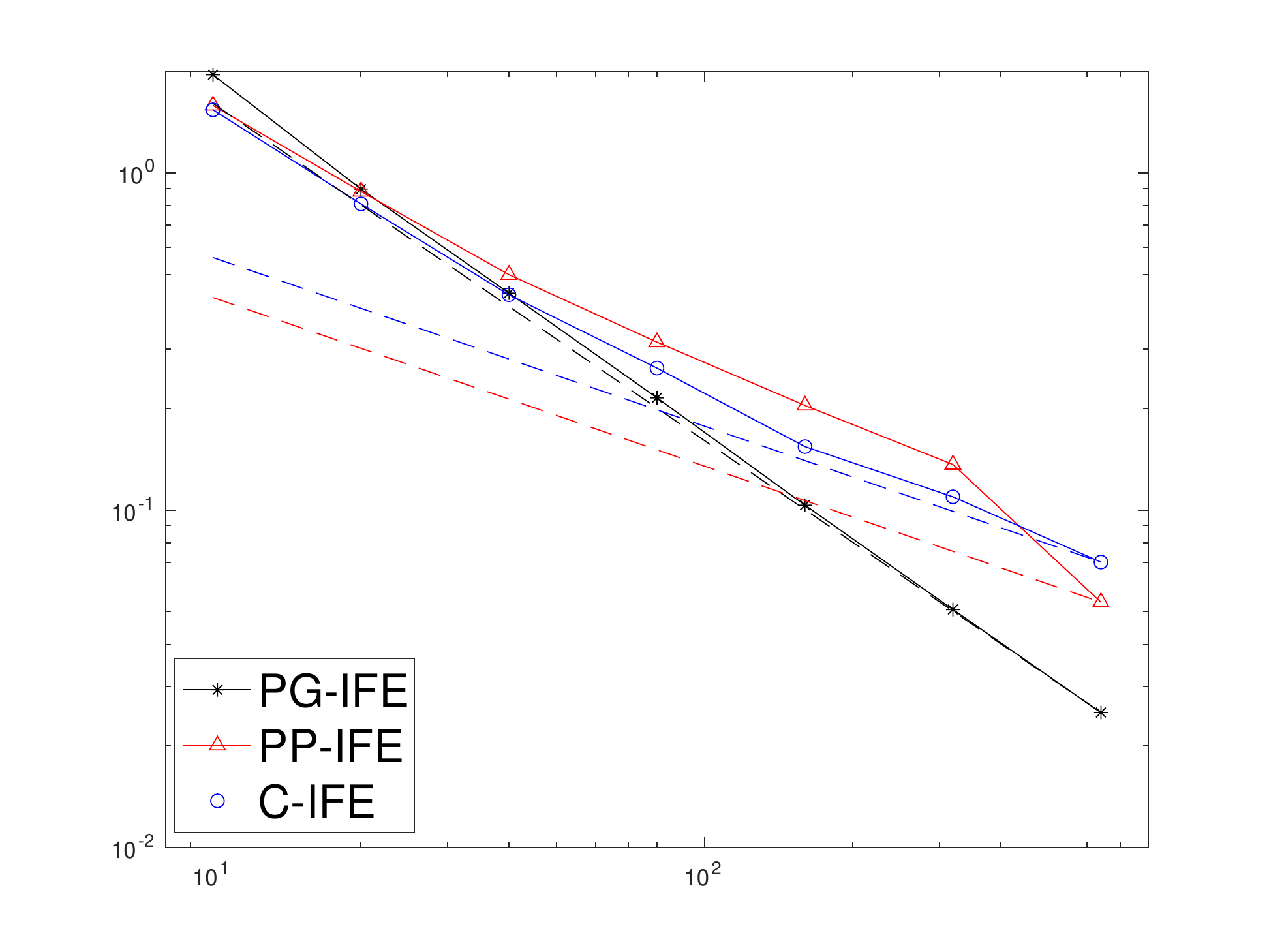}
             \caption{$\mu^+=1/10$ and $\beta^+=100$}%
            \label{fig:err02}
        \end{subfigure}
       % \vskip\baselineskip
        \begin{subfigure}[b]{0.45\textwidth}   
            \centering 
            \includegraphics[width=2.3in]{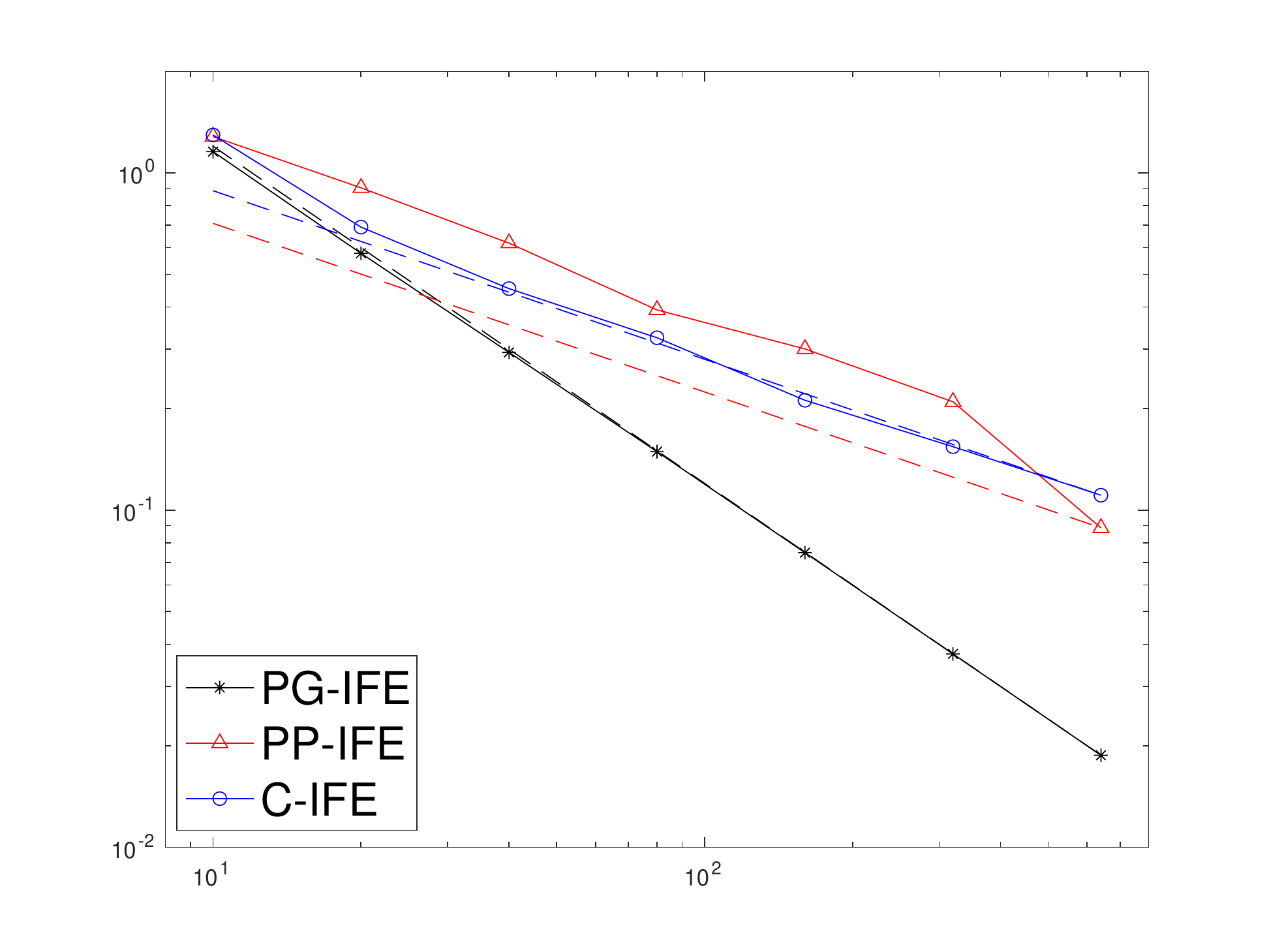}
             \caption{$\mu^+=1/100$ and $\beta^+=10$}%   
            \label{fig:err03}
        \end{subfigure}
        \hfill
        \begin{subfigure}[b]{0.45\textwidth}   
            \centering 
            \includegraphics[width=2.3in]{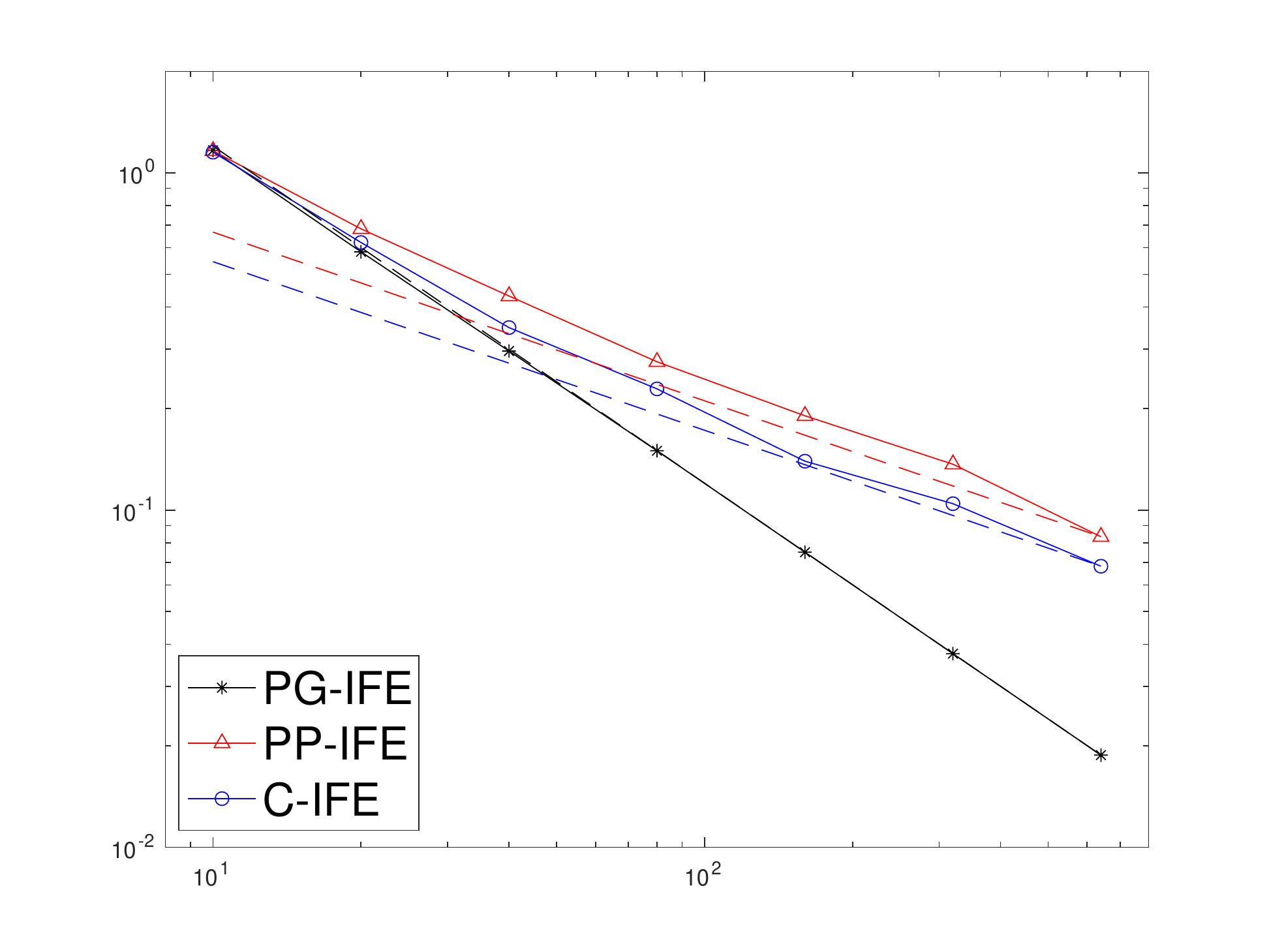}
             \caption{$\mu^+=1/100$ and $\beta^+=100$}% 
            \label{fig:err04}
        \end{subfigure}
        \caption{The convergence rates for the errors $e_0$ of PG-IFE, PPIFE, and C-IFE methods.}
        %$\mu^+=1/10$ and $\beta^+=10$ (upper left), $\mu^+=1/10$ and $\beta^+=100$ (upper right), $\mu^+=1/100$ and $\beta^+=10$ (lower left), and $\mu^+=1/100$ and $\beta^+=100$ (lower left).}
        \label{fig:err0}
    \end{figure}

%\vspace{-0.1in}

\begin{figure}
        \centering
        \begin{subfigure}[b]{0.45\textwidth}
            \centering
            \includegraphics[width=2.3in]{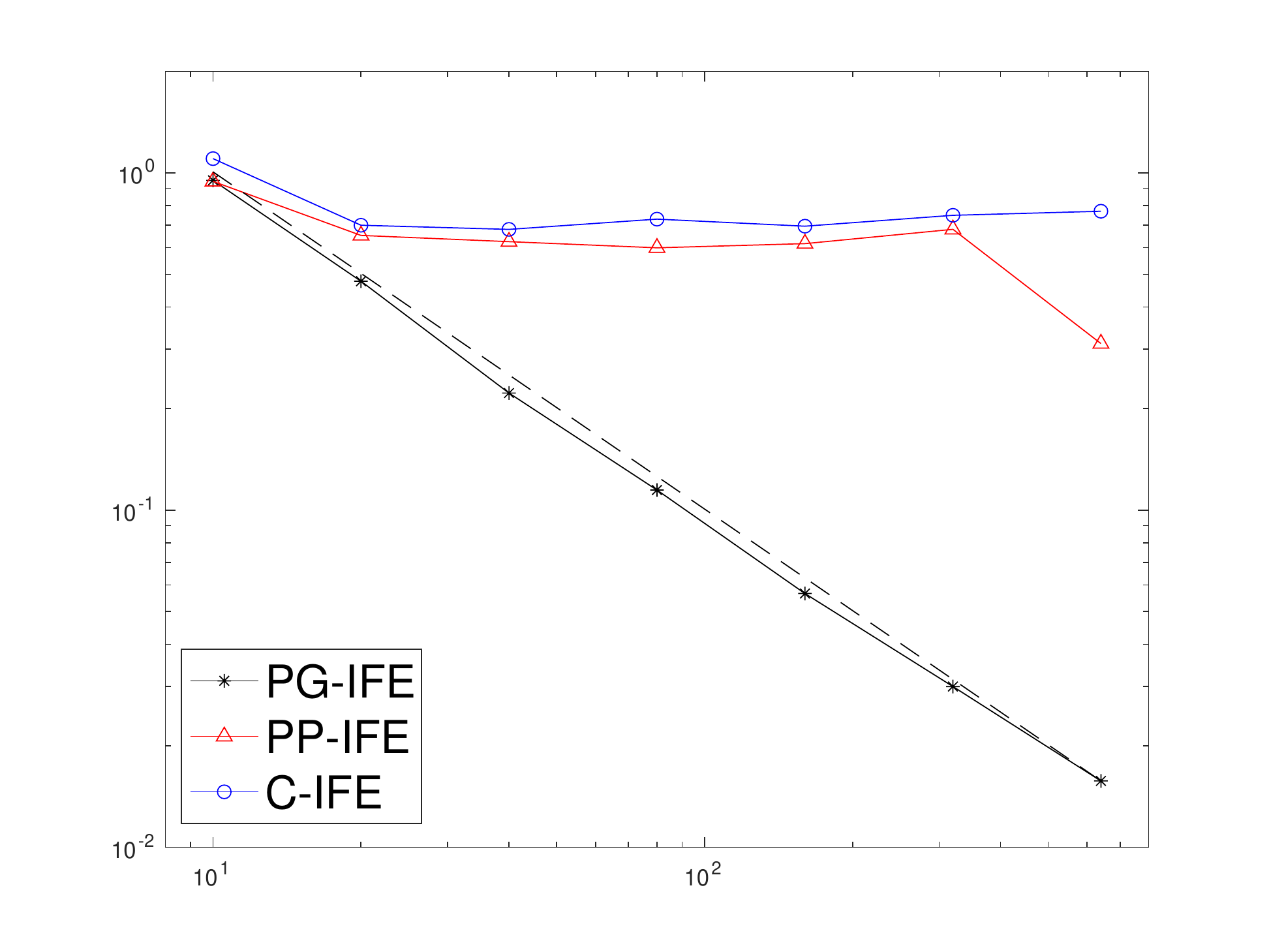}
            \caption{$\mu^+=1/10$ and $\beta^+=10$}%
            \label{fig:err11}
        \end{subfigure}
        \hfill
        \begin{subfigure}[b]{0.45\textwidth}  
            \centering 
            \includegraphics[width=2.3in]{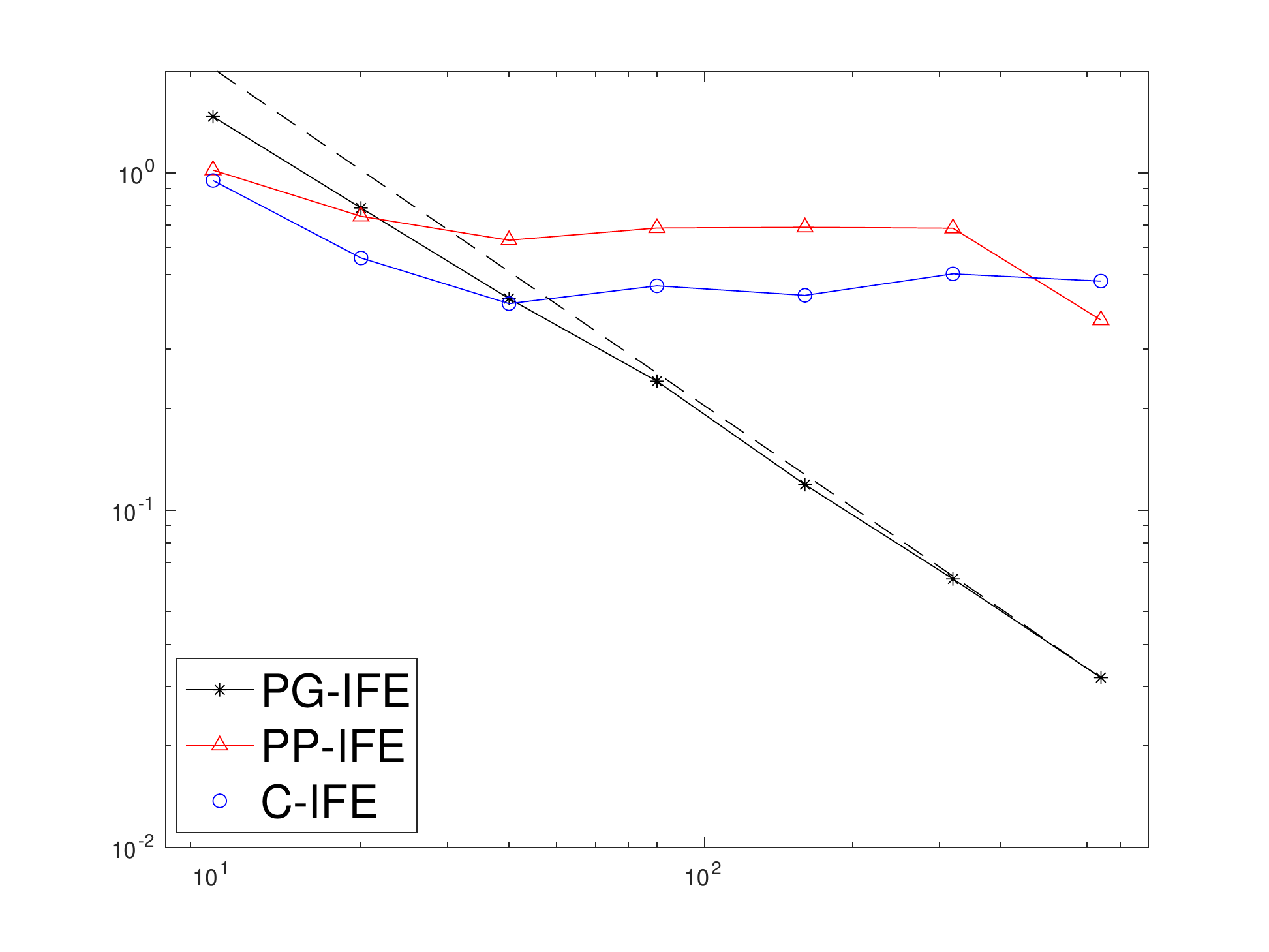}
             \caption{$\mu^+=1/10$ and $\beta^+=100$}%
            \label{fig:err12}
        \end{subfigure}
       % \vskip\baselineskip
        \begin{subfigure}[b]{0.45\textwidth}   
            \centering 
            \includegraphics[width=2.3in]{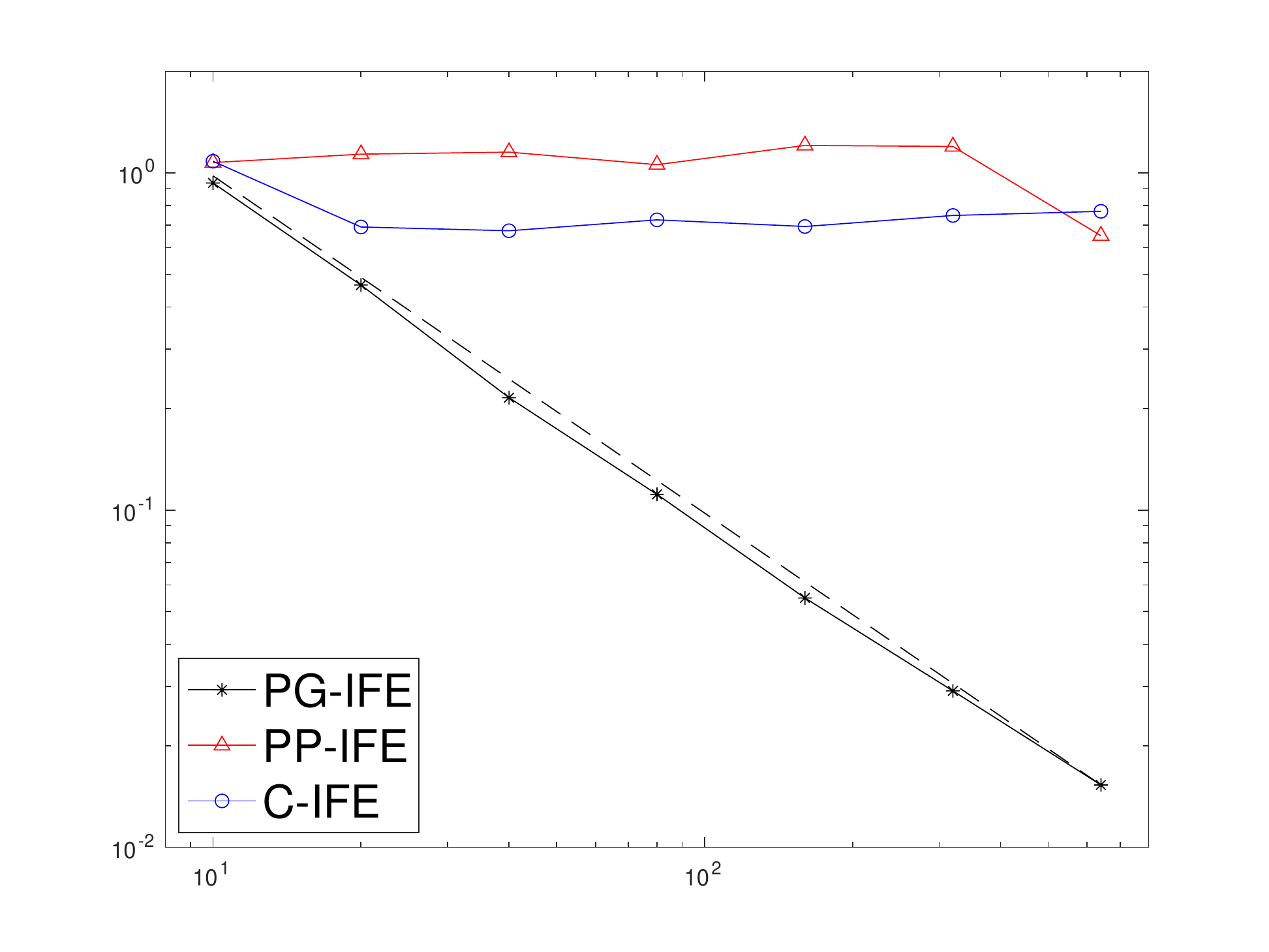}
             \caption{$\mu^+=1/100$ and $\beta^+=10$}%   
            \label{fig:err13}
        \end{subfigure}
        \hfill
        \begin{subfigure}[b]{0.45\textwidth}   
            \centering 
            \includegraphics[width=2.3in]{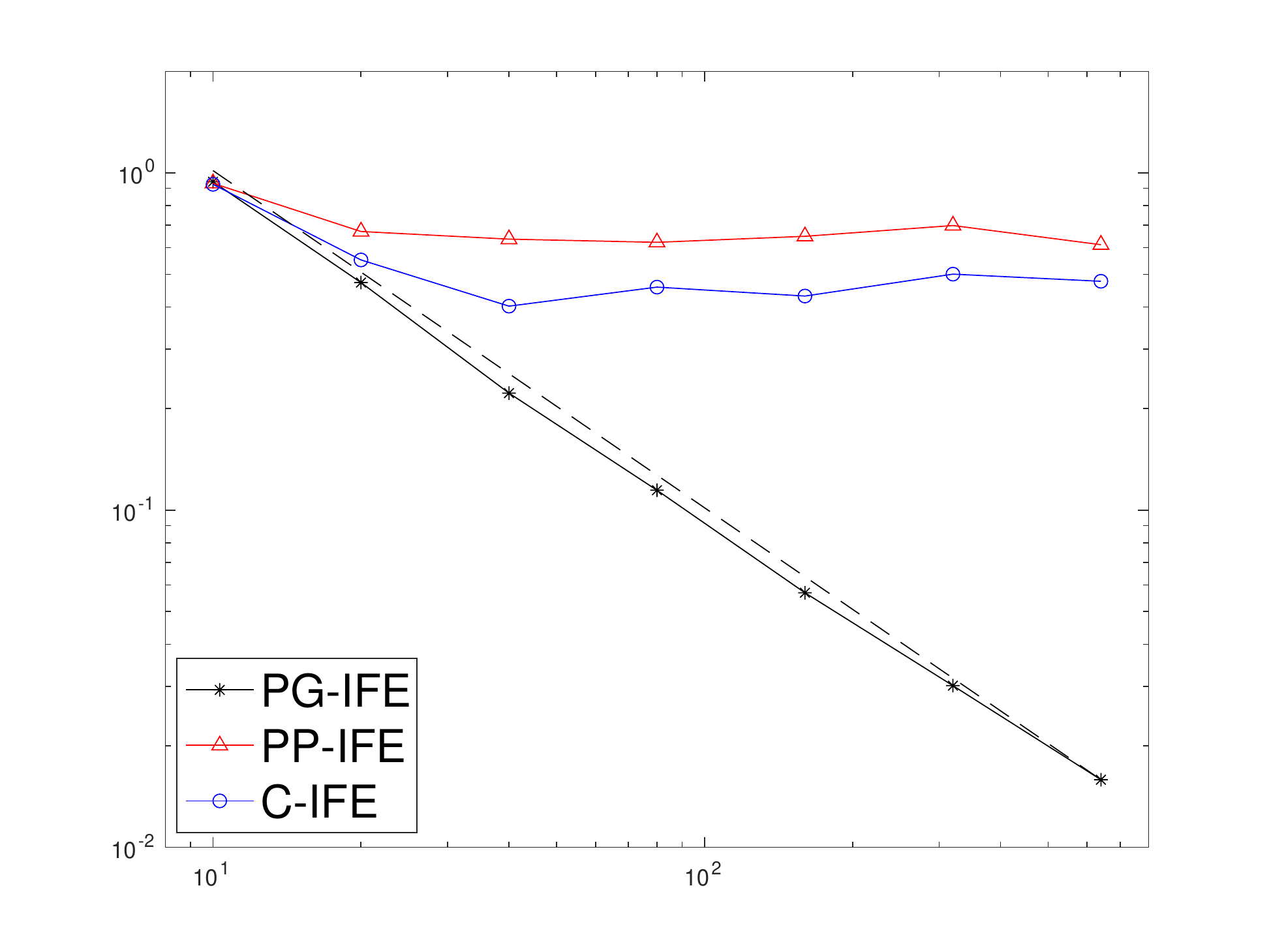}
             \caption{$\mu^+=1/100$ and $\beta^+=100$}% 
            \label{fig:err14}
        \end{subfigure}
        \caption{The convergence rates for the errors $e_1$ of PG-IFE, PPIFE, and C-IFE methods}
        \label{fig:err1}
    \end{figure}

%%%%%%%%%%%%%%%%%%%%%%%%%%%%%%%%

\begin{appendices}
  \section{Technical Results}
  In this section, we present the proof of some technical results.
  %%%%
  \subsection{Technical Results for \eqref{thm_unisol_eq1}}
  \label{App_A1}
  First of all, we let $l_2=|e_2|$, $l_3=|e_3|$ and $\theta=\angle A_3A_1A_2$ as shown by the left plot in Figure \ref{fig:ref_elem_1}. Then we can express $B_T$ as $B_T = \left[\begin{array}{cc} l_3 & l_2\cos(\theta) \\ 0 & l_2\sin(\theta) \end{array}\right]$.
%  \begin{equation}
%\label{App_A1_eq1}
%B_T = \left[\begin{array}{cc} l_3 & l_2\cos(\theta) \\ 0 & l_2\sin(\theta) \end{array}\right].
%\end{equation}
Let $\delta$ be the angle between the normal vector $\bar{\bfn}$ and the $x_1$ axis, and it is easy to see $\delta\in[\theta-\pi/2,\pi/2]$. Then we can write
%$\hat{\bar{\bfn}} = \frac{1}{l_2l_2 \sin(\theta)} \left[\begin{array}{c} l_2 \sin(\theta-\delta) \\ l_3\sin(\delta) \end{array}\right]$.
\begin{equation}
\label{App_A1_eq1_extra1}
\hat{\bar{\bfn}} = \frac{1}{l_2l_2 \sin(\theta)} \left[\begin{array}{c} l_2 \sin(\theta-\delta) \\ l_3\sin(\delta) \end{array}\right].
\end{equation}
  For the first part of this lemma, by direct calculation we have $\hat{\bar{\bfn}}'\cdot\hat{\bar{\bfn}} = \frac{|\Gamma^T_h|}{l_2l_3\sin(\theta)}>0$.
  \begin{equation}
\label{App_A1_eq2}
2\alpha = \hat{\bar{\bfn}}'\cdot\hat{\bar{\bfn}} = \frac{|\Gamma^T_h|}{l_2l_3\sin(\theta)}>0.
\end{equation}
For the second part of this lemma, we first show that $\hat{n}_1+\hat{n}_2\ge 0$. Also by direct calculation, we obtain
\begin{equation}
\label{App_A1_eq3}
\hat{n}_1+\hat{n}_2 = \frac{ l_2 \sin(\theta-\delta) + l_3\sin(\delta) }{l_2l_3 \sin(\theta)}
= \frac{\cos(\delta)}{\sin(\theta)} \left( \frac{ \sin(\theta) - \cos(\theta)\tan(\delta) }{l_3} + \frac{\tan(\delta)}{l_2} \right).
\end{equation}
Since $\cos(\delta),\sin(\theta)>0$ and the function in the parentheses is linear with respect to $\tan(\delta)$. So we only need to check its values at the two boundary $\delta=\theta-\pi/2$ and $\delta=\pi/2$. Thus we denote the function in \eqref{App_A1_eq3} by $f(\delta)$, and note that $f(\theta - \pi/2) = \frac{l_2 - l_3 \cos(\theta)}{l_2l_3 \sin(\theta)} \ge 0$
%\begin{equation}
%\label{App_A1_eq4}
%f(\theta - \pi/2) = \frac{l_2 - l_3 \cos(\theta)}{l_2l_3 \sin(\theta)} \ge 0
%\end{equation}
since $\angle A_2A_3A_1 \le \pi/2$ and similarly have $f(\theta)\ge 0$ since $\angle A_1A_2A_3 \le \pi/2$. Therefore the quantity $\frac{de(\hat{n}_1+\hat{n}_2)}{\hat{\bar{\bfn}}'\cdot\hat{\bar{\bfn}}}$ is non-negative. In addition, we can derive
\begin{equation}
\label{App_A1_eq5}
e\hat{n}_1 + d\hat{n}_2 - de( \hat{n}_1 + \hat{n}_2 ) = l_2l_3 \cos(\theta) \left( e^2(1-d) \frac{l_2}{l_3\cos(\theta)} + d^2(1-e)\frac{l_3}{l_2\cos(\theta)} - (2de-d^2e-e^2d) \right).
\end{equation}
Again, since $\angle A_1A_2A_3\le \pi/2$ and $\angle A_2A_3A_1\le \pi/2$, we have $l_3\ge l_2 \cos(\theta)$ and $l_2\ge l_3\cos(\theta)$. Therefore, by $\theta\le \pi/2$ and $d,e\in[0,1]$, we obtain $e\hat{n}_1 + d\hat{n}_2 - de( \hat{n}_1 + \hat{n}_2 ) \ge l_2l_3 \cos(\theta) ( e^2+d^2-2de ) \ge 0$
%\begin{equation}
%\label{App_A1_eq6}
%e\hat{n}_1 + d\hat{n}_2 - de( \hat{n}_1 + \hat{n}_2 ) \ge l_2l_3 \cos(\theta) ( e^2+d^2-2de ) \ge 0
%\end{equation}
which yields the desired result. 

%\begin{figure}[h]
%\centering
%  \includegraphics[width=2.2in]{physical_element.png}
%  \caption{The physical element}
%  \label{fig:phy_elem_1}
%\end{figure}

  \subsection{Proof of Theorem \ref{thm_bound}}
  \label{App_A2}
We first give the explicit expression of the matrix in \eqref{IFE_fun_ref_5}:
\begin{equation}
\label{App_A2_eq1}
\mathbf{ I}  +  \mathbf{ R} \mathbf{ A}^{-1} \mathbf{ B} =
\left[\begin{array}{cc}
1 + \frac{de}{2}\left( -\kappa + \frac{\lambda s_2}{\alpha} \right) & \frac{de}{2}\left( -\kappa + \frac{\lambda s_3}{\alpha} \right) \\
\frac{de}{2}\left( -\kappa - \frac{\lambda s_2}{\alpha} \right) & 1 + \frac{de}{2}\left( -\kappa - \frac{\lambda s_3}{\alpha} \right)
\end{array}\right]
\end{equation}
where $s_2=\hat{\bfphi}_2(X_m)\cdot\hat{\bar{\bfn}}=-\frac{e}{2}\hat{n}_1 + \left( \frac{d}{2}-1 \right)\hat{n}_2$ and $s_3=\hat{\bfphi}_3(X_m)\cdot\hat{\bar{\bfn}}= \left( 1-\frac{e}{2} \right)\hat{n}_1 + \frac{d}{2} \hat{n}_2$. \eqref{App_A1_eq1_extra1} and \eqref{App_A1_eq2} yield $\frac{ s_2}{\alpha} = \frac{( -e l_2 \sin(\theta - \delta) +(d-2) l_3 \sin(\delta) )}{2|\Gamma^T_h|}$.
%\begin{equation}
%\label{App_A2_eq2}
%\frac{ s_2}{\alpha} = \frac{( -e l_2 \sin(\theta - \delta) +(d-2) l_3 \sin(\delta) )}{2|\Gamma^T_h|}.
%\end{equation}
We note that $|\Gamma^T_h| \ge  \max\{dl_3,el_2\}\sin(\theta)$, so we have
\begin{equation}
\label{App_A2_eq3}
\verti{ \frac{de s_2}{\alpha} } \le \frac{ de^2l_2  +d^2el_3 +2del_3 }{2 \max\{dl_3,el_2\}\sin(\theta)} \le C
\end{equation}
due to the shape regularity of $T$ and $d,e\le1$. A similar bound applies to $\verti{ \frac{de s_3}{\alpha} }$. Therefore by \eqref{thm_unisol_eq2} we have
\begin{equation}
\label{App_A2_eq4}
\| (\mathbf{ I}  +  \mathbf{ R} \mathbf{ A}^{-1} \mathbf{ B})^{-1} \|_{\infty}\le C \frac{  \max\Big\{ 1, \frac{\mu^+}{\mu^-} \Big\} + \max\Big\{ 1, \frac{\beta^-}{\beta^+} \Big\} }{ \min\Big\{ 1, \frac{\mu^+}{\mu^-} \Big\} \min\Big\{ 1, \frac{\beta^-}{\beta^+} \Big\} } \le C. %\max\Big\{  \frac{\mu^-}{\mu^+}, \frac{\mu^+}{\mu^-} \Big\} \max\Big\{ \frac{\beta^+}{\beta^-}, \frac{\beta^-}{\beta^+} \Big\} .
\end{equation}
Furthermore, we note that
\begin{equation}
\label{App_A2_eq5}
\mathbf{ A}^{-1}\bfgamma = \left[\begin{array}{c} \kappa \\ -\frac{1}{2}(\kappa+ \frac{\lambda}{\alpha}s_1) \end{array}\right],
\end{equation}
where $s_1=\hat{\bfphi}_1(X_m)\cdot\hat{\bfn}=(e\hat{n}_1 - d\hat{n}_2)/2$. Using \eqref{App_A1_eq1_extra1} again and similar derivation above, we have
\begin{equation}
\label{App_A2_eq6}
\verti{\frac{s_1}{\alpha}}=\verti{ \frac{ e\hat{n}_1 - d\hat{n}_2 }{2\alpha} } = \frac{ | el_2 \sin(\theta-\delta) + dl_3 \sin(\delta) | }{2|\Gamma^T_h|} 
\le \frac{ | el_2 \sin(\theta-\delta) + dl_3 \sin(\delta) | }{2 \max\{dl_3,el_2\}\sin(\theta)} \le C
\end{equation}
due to the shape regularity and $d,e\le 1$. Hence, similar to \eqref{App_A2_eq4} we obtain
\begin{equation}
\label{App_A2_eq7}
\| \mathbf{ A}^{-1} \bfgamma \|_{\infty} \le C \max\Big\{ 1, \frac{\mu^+}{\mu^-} \Big\} \max\Big\{ 1, \frac{\beta^-}{\beta^+} \Big\}  \le C .
\end{equation}
Putting \eqref{App_A2_eq4} and \eqref{App_A2_eq7} into the formula \eqref{IFE_fun_ref_5}, we have
\begin{equation}
\begin{split}
\label{App_A2_eq8}
\| \bfc \|_{\infty}   \le \| (\mathbf{ I}  +  \mathbf{ R} \mathbf{ A}^{-1} \mathbf{ B})^{-1} \|_{\infty}\| \bfv \|_{\infty} + 
\| (\mathbf{ I}  +  \mathbf{ R} \mathbf{ A}^{-1} \mathbf{ B})^{-1} \|_{\infty} \| \mathbf{ A}^{-1} \bfgamma \|_{\infty} |v_1|  \le  C .
\end{split}
\end{equation}
%\begin{equation}
%\begin{split}
%\label{App_A2_eq8}
%\| \bfc \|_{\infty}  & \le \| (\mathbf{ I}  +  \mathbf{ R} \mathbf{ A}^{-1} \mathbf{ B})^{-1} \|_{\infty}\| \bfv \|_{\infty} + 
%\| (\mathbf{ I}  +  \mathbf{ R} \mathbf{ A}^{-1} \mathbf{ B})^{-1} \|_{\infty} \| \mathbf{ A}^{-1} \bfgamma \|_{\infty} |v_1| \\
%& \le  C \left( \max\Big\{  \frac{\mu^-}{\mu^+}, \frac{\mu^+}{\mu^-} \Big\} \max\Big\{ \frac{\beta^+}{\beta^-}, \frac{\beta^-}{\beta^+} \Big\} \right)^2.
%\end{split}
%\end{equation}
Moreover we note that
\begin{equation}
\label{App_A2_eq9}
\mathbf{ A}^{-1}\mathbf{ B} = 
\left[\begin{array}{cc} \kappa & \kappa \\  \left(-\kappa - \lambda\frac{s_2}{\alpha} \right)   & \left(-\kappa - \lambda\frac{s_2}{\alpha} \right) \end{array}\right].
\end{equation}
Therefore, putting \eqref{App_A2_eq5} and \eqref{App_A2_eq9} into the formula for $\bfb$ in \eqref{IFE_fun_ref_2}, we have $|b_1| = |\kappa| ~|v_1+c_2+c_3|  \le C$, and
%\begin{equation}
%\label{App_A2_eq10}
%|b_1| = |\kappa| ~|v_1+c_2+c_3|  \le C \left( \max\Big\{  \frac{\mu^-}{\mu^+}, \frac{\mu^+}{\mu^-} \Big\} \right)^3 \left( \max\Big\{ \frac{\beta^+}{\beta^-}, \frac{\beta^-}{\beta^+} \Big\} \right)^2.
%\end{equation}
%\begin{equation}
%\begin{split}
%\label{App_A2_eq11}
%|b_2|  & =  \verti {-\kappa\left( \frac{v_1}{2} + c_2 +c_3 \right) - \frac{\lambda}{\alpha} \left( s_1 + s_2 + s_3  \right) } \\
%& \le C \left( \max\Big\{  \frac{\mu^-}{\mu^+}, \frac{\mu^+}{\mu^-} \Big\} \right)^3 \left( \max\Big\{ \frac{\beta^+}{\beta^-}, \frac{\beta^-}{\beta^+} \Big\} \right)^2 + \frac{Ch}{|\Gamma^T_h|} \left( \max\Big\{  \frac{\mu^-}{\mu^+}, \frac{\mu^+}{\mu^-} \Big\} \right)^2 \left( \max\Big\{ \frac{\beta^+}{\beta^-}, \frac{\beta^-}{\beta^+} \Big\} \right)^3 
%\end{split}
%\end{equation}
%\begin{equation}
%\begin{split}
%\label{App_A2_eq11}
%|b_2|  =  \verti {-\kappa\left( \frac{v_1}{2} + c_2 +c_3 \right) - \frac{\lambda}{\alpha} \left( s_1 + s_2 + s_3  \right) }  \le C + \frac{Ch}{|\Gamma^T_h|}
%\end{split}
%\end{equation}
$|b_2|  =  \verti {-\kappa\left( \frac{v_1}{2} + c_2 +c_3 \right) - \frac{\lambda}{\alpha} \left( s_1 + s_2 + s_3  \right) }  \le C + \frac{Ch}{|\Gamma^T_h|}$
where we have used the estimates for $s_i/\alpha$, $i=1,2,3$ from \eqref{App_A2_eq3} and \eqref{App_A2_eq6} together with the shape regularity. Then using the estimate $|\Gamma^T_h|\ge  \max\{dl_3,el_2\}\sin(\theta)$ we clearly have $|b_2|e, |b_2|d \le C$.
%\begin{equation}
%\label{App_A2_eq12}
%|b_2|e, |b_2|d \le C\left( \max\Big\{  \frac{\mu^-}{\mu^+}, \frac{\mu^+}{\mu^-} \Big\} \right)^3 \left( \max\Big\{ \frac{\beta^+}{\beta^-}, \frac{\beta^-}{\beta^+} \Big\} \right)^3. 
%\end{equation}
Finally the estimates above together with \eqref{ref_jump_cond} yield 
$\| \hat{\bfz} \|_{L^{\infty}(T)} \le C$.
%\begin{equation}
%\label{App_A2_eq13}
%\| \hat{\bfz} \|_{L^{\infty}(T)} \le C\left( \max\Big\{  \frac{\mu^-}{\mu^+}, \frac{\mu^+}{\mu^-} \Big\} \right)^3 \left( \max\Big\{ \frac{\beta^+}{\beta^-}, \frac{\beta^-}{\beta^+} \Big\} \right)^3.
%\end{equation}
Hence the desired result \eqref{thm_bound_eq01} follows from the Piola transformation in \eqref{piola_trans}. Besides, \eqref{thm_bound_eq02} can be derived by direct calculation.
%\begin{equation}
%\label{App_A2_eq10}
%b_1 = \kappa(v_1+c_2+c_3) ~~ \text{and} ~~ b_2 = -\kappa\left( \frac{v_1}{2} + c_2 +c_3 \right) - \frac{\lambda}{\alpha} \left( s_1 + s_2 + s_3  \right).
%\end{equation}

\subsection{Proof of Lemma \ref{lem_loc_infsup}}
  \label{App_A3}
We let $\bar{\bfn}=[n_1,n_2]$ and $\bar{\bft}=[-n_2,n_1]$ be the normal and tangential vectors to linearly-approximate interface $\Gamma^T_h$. Define a transmission orthogonal matrix $\bfQ=[\bar{\bft},\bar{\bfn}]^t$ and a diagonal matrix $\bf\Lambda=\left[\begin{array}{cc} 1 & 0 \\ 0 & \rho \end{array}\right]$ with $\rho=\beta^-/\beta^+$. By the exact sequence for IFE functions, we know $\mathcal{IND}_h(T)\cap\text{Ker(curl)}$ consists of piecewise constant vectors on $T$, and thus, without loss of generality, we assume $\bfu_h^-=\bfu_h|_{T^-_h}$ is a constant unit vector. Then we write $\bfu^+_h = \bfQ^t\bfLambda\bfQ\bfu^-_h$. So the object is to show that the inner product of $\bfu_h$ and $\mathbb{\Pi}_h\bfu_h\ge Ch^2$ over each interface element (or plus its neighborhood elements) is lower bounded by $Ch^2$ with some constant $C$ independent of interface location. We organize our arguments for two possible cases: the interface cuts the two adjacent edges of the right angle or non-right angle as shown in Figure \ref{fig:infsup_case}.
\vspace{0.1in}

\textbf{Case 1}. In this case, by the geometry assumption, we assume the element has the configuration, shown by the left plot in Figure \ref{fig:infsup_case}, with the vertices $A_1=(0,0)$, $A_2=(h,0)$ and $A_3=(0,h)$, and we let the interface-intersection points be $D=(dh,0)$ and $E=(0,eh)$, $d,e\in[0,1]$. Then we can express $n_1=e/\sqrt{d^2+e^2}$ and $n_2=d/\sqrt{d^2+e^2}$. The direct calculation yields
\begin{equation}
\begin{split}
\label{App_A3_eq1}
\mathbb{\Pi}_{h,T}\bfu_h &= \left[\begin{array}{cc} 1-d  & 0  \\ 0 & 1-e \end{array}\right]\bfu^-_h + \left[\begin{array}{cc} d  & 0  \\ 0 & e \end{array}\right] \bfQ^t \bfLambda \bfQ \bfu^-_h\\
&= \bfu^-_h + \frac{(\rho-1)de}{d^2+e^2} \left[\begin{array}{cc} e & d  \\ e  & d \end{array}\right] \bfu^-_h = (\bfI_2 + (\rho-1)\bfA_1)\bfu^-_h.
\end{split}
\end{equation}
%First of all, \eqref{App_A3_eq1} can be equivalently written as
%\begin{equation}
%\begin{split}
%\label{App_A3_eq2}
%\mathbb{\Pi}_h\bfu_h = h^2 \bfu^-_h + \frac{h^2(\rho-1)de}{d^2+e^2} \left[\begin{array}{cc} e & d  \\ e  & d \end{array}\right] \bfu^-_h = h^2(\bfI_2 + (\rho-1)\bfA_1)\bfu^-_h.
%\end{split}
%\end{equation}
The eigenvalues of $(\bfA^t_1+ \bfA_1)/2$ are 
\begin{equation}
\label{App_A3_eq3}
\lambda_{1} = \frac{de(d+e-\sqrt{2(d^2+e^2)})}{2(d^2+e^2)} \in [(5-3\sqrt{3})/8,0] ~~~ \text{and} ~~~ \lambda_{2} = \frac{de(d+e+\sqrt{2(d^2+e^2)})}{2(d^2+e^2)} \in [0,1]
\end{equation}
Then if $\rho=\beta^-/\beta^+<8/(3\sqrt{3}-5)+1\approx 41.816$, on $T^-_h$ we have
\begin{equation}
\label{App_A3_eq4}
\bfu^-_h \cdot \mathbb{\Pi}_{h,T}\bfu_h =  \bfu^-_h\cdot\bfu^-_h + \frac{(\rho-1)}{2}(\bfu^-_h)^t(\bfA^t_1+\bfA_1)\bfu^-_h > \min\{1+(\rho-1)\lambda_1,1+(\rho-1)\lambda_2\} \ge C.
\end{equation}
In addition, we note another equivalent expression of \eqref{App_A3_eq1}:
\begin{equation}
\begin{split}
\label{App_A3_eq5}
\mathbb{\Pi}_{h,T}\bfu_h &= \left[\begin{array}{cc} 1-d  & 0  \\ 0 & 1-e \end{array}\right]\bfQ^t\bfLambda^{-1}\bfQ \bfu^+_h + \left[\begin{array}{cc} d  & 0  \\ 0 & e \end{array}\right] \bfu^+_h\\
&=  \bfu^+_h + \frac{(\rho^{-1}-1)}{d^2+e^2} \left[\begin{array}{cc} (1-d)e^2 & (1-d)de  \\ (1-e)de  & (1-e)d^2 \end{array}\right] \bfu^-_h = (\bfI_2 + (\rho^{-1}-1)\bfA_2)\bfu^+_h.
\end{split}
\end{equation}
Similarly, by estimating the eigenvalues of $(\bfA^t_2+ \bfA_2)/2$,
%\begin{equation}
%\begin{split}
%\label{App_A3_eq6}
%& \lambda_{1} = \frac{(1-d)(1-e)(2-d-e-\sqrt{2((1-d)^2+(1-e)^2)})}{2((1-d)^2+(1-e)^2)} \in [(5-3\sqrt{3})/8,0] \\
%& \lambda_{2} = \frac{(1-d)(1-e)(2-d-e+\sqrt{2((1-d)^2+(1-e)^2)})}{2((1-d)^2+(1-e)^2)} \in [0,1]
%\end{split}
%\end{equation}
if $\rho^{-1}=\beta^+/\beta^-<8(3\sqrt{3}-5)+1\approx 41.816$, on $T^-_h$ we have
\begin{equation}
\label{App_A3_eq7}
\bfu^+_h \cdot \mathbb{\Pi}_{h,T}\bfu_h =  \bfu^+_h\cdot\bfu^+_h + \frac{(\rho-1)}{2}(\bfu^+_h)^t(\bfA^t_2+\bfA_2)\bfu^+_h > \min\{1+(\rho-1)\lambda_{1},1+(\rho-1)\lambda_{2}\} \ge Ch^2.
\end{equation}
Note that $\bfu^+_h\cdot\bfu^+_h=(\bfu^-_h)^t\bfQ^t\bfLambda^2\bfQ\bfu^-_h\approx C$ since $\|\bfu^-_h\|_2=1$. Combining \eqref{App_A3_eq4} and \eqref{App_A3_eq7}, we obtain
\begin{equation}
\label{App_A3_eq8}
(\beta\bfu_h, \mathbb{\Pi}_{h,T} \bfu_h )_{L^2(T)} \ge Ch^2 \approx C \| \bfu_h \|^2_{L^2(T)}.
\end{equation}

\vspace{0.1in}

\textbf{Case 2}. In this case, by the geometry assumption, we assume the element has the configuration shown by the right plot in Figure \ref{fig:infsup_case}, with the vertices $A_1=(0,0)$, $A_2=(h,0)$ and $A_3=(h,h)$, and we let the interface-intersection points be $D=(dh,0)$ and $E=(eh,eh)$, $d,e\in[0,1]$. Then we can express $n_1=e/\sqrt{(d-e)^2+e^2}$ and $n_2=(d-e)/\sqrt{(d-e)^2+e^2}$. Using the argument above, we can show the positive lower bound for $\rho\in[1/9,9]$. But we can actually get a slightly better bound for $\rho$ if the neighborhood non-interface element denoted by $T'$ is included. To see this, we use the direct calculation to obtain
\begin{equation}
\begin{split}
\label{App_A3_eq9}
\mathbb{\Pi}_{h,T}\bfu_h &= \left[\begin{array}{cc} 1-d & 0  \\  d-e & 1-e \end{array}\right]  \bfu^-_h +  \left[\begin{array}{cc} d & 0  \\  e-d & e \end{array}\right]\bfQ^t\bfLambda\bfQ \bfu^-_h \\
& = \bfu^-_h + \frac{(\rho-1)}{(d-e)^2 + e^2} \left[\begin{array}{cc} de^2 & de(d-e)  \\  0 & 0 \end{array}\right] \bfu^-_h = (\bfI_2 + (\rho-1) \bfB_1)\bfu^-_h.
\end{split}
\end{equation}
We denote the unit normal and tangential vectors to the non-interface edge $e_1$ connecting $(h,0)$ and $(h,h)$ by $\bfn_1$ and $\bft_1$, and let $\bfu^-_h=a\bfn_1 + b\bft_1$ with $a^2+b^2=1$. Without loss of generality, we assume $a \ge 0$. We further let $\alpha_1=\bfn_1^t\bfB_1\bfn_1$ and $\alpha_2=\bfn_1^t\bfB_1\bft_1$. It is easy to see that $\bft_1^t\bfB_1=0$, and we then observe
\begin{equation}
\label{App_A3_eq10}
\bfu^-_h\cdot\mathbb{\Pi}_h\bfu_h =  1 + (\rho-1)(a^2\alpha_1 + ab\alpha_2).
\end{equation}
The direct calculation yields the following estimates
\begin{equation}
\begin{split}
\label{App_A3_eq11}
& \alpha_1 = \frac{de^2}{(d-e)^2 + e^2} \in [0,1], ~~ \alpha_2 = \frac{de(d-e)}{(d-e)^2 + e^2} \in [(1-\sqrt{2})/2,0.5]~~ \text{and} ~~ \alpha_1^2 + \alpha_2^2 \le 1.
\end{split}
\end{equation}
Then if $\rho \le 1$, using the estimate $a^2\alpha_1+ab\alpha_2 \le a\sqrt{a^2+b^2}\sqrt{\alpha_1^2+\alpha_2^2} \le a$, we have
\begin{equation}
\label{App_A3_eq12}
\bfu^-_h\cdot\mathbb{\Pi}_{h,T}\bfu_{h} \ge 1 - a + \rho a  \ge C.
\end{equation}
It remains to show the estimate for $\rho > 1$. If $a^2\alpha_1 + ab\alpha_2\ge 0$, then $\bfu^-_h\cdot\mathbb{\Pi}_h\bfu_h \ge C$. In addition, if $a^2\alpha_1 + ab\alpha_2 < 0$, the direct calculation yields
\begin{equation}
\label{App_A3_eq14}
\frac{-(1+b^2)}{a^2\alpha_1 + ab\alpha_2} \ge 2 \sqrt{2} \sqrt{ \frac{((d^2 - 2 d e + 2 e^2)^2 (d^2 - 2 d e + 3 e^2))}{(
  d^2 (d - e)^4 e^2)} } + \frac{4((d - e)^2 + e^2)}{d(d - e)^2} \ge 2 (4 + 2 \sqrt{2}).
\end{equation}
Hence, the following estimate is true for $\rho=\beta^-/\beta^+\in (1,9+4\sqrt{2})$ where $9+4\sqrt{2}\approx 14.65$
\begin{equation}
\label{App_A3_eq13}
\bfu^-_h\cdot\mathbb{\Pi}_{h,T}\bfu_h + (\bfu^-_h\cdot\bft_1)^2 = \bfu^-_h\cdot\mathbb{\Pi}_{h,T}\bfu_h + b^2 \ge  1+ b^2 + (\rho-1)(a^2\alpha_1 + ab\alpha_2) \ge C.
\end{equation}
As for $\bfu^+_h$, the similar derivation yields
\begin{equation}
\label{App_A3_eq13_1}
\bfu^-_h\cdot\mathbb{\Pi}_{h,T}\bfu_h \ge C ~~~ \text{if} ~ \rho^{-1} \le 1 ~~~ \text{and} ~~~ \bfu^+_h\cdot\mathbb{\Pi}_{h,T}\bfu_h + \rho(\bfu^-_h\cdot\bft_1)^2 \ge C ~~~ \text{if} ~ \rho^{-1} \le 10.655.
\end{equation}
Due to the continuity of $\bfu_h$ along the tangential direction of the non-interface edge, we have $\bfu_h=a'\bfn_1 + b\bft_1$ on $T'$ where we recall $\bfu^-_h|_T\cdot\bft_1=b$, and then obtain $\bfu_h\cdot\mathbb{\Pi}_{h,T'}\bfu_h = a'^2+(\bfu^-_h\cdot\bft_1)^2$ on $T'$. Therefore, assuming $\rho\in[1/10.655,14.65]$, the estimates above lead to
\begin{equation}
\begin{split}
\label{App_A3_eq15}
&(\beta\bfu_h, \mathbb{\Pi}_{h,T}\bfu_h)_{L^2(T)} + (\beta^-\bfu_h,\mathbb{\Pi}_{h,T'}\bfu_h )_{L^2(T')} \\
 = & \beta^-(\bfu^-_h\cdot\mathbb{\Pi}_{h,T}\bfu_h + (\bfu^-_h\cdot\bft_1)^2 )|T^-_h| + \beta^+( \bfu^+_h\cdot\mathbb{\Pi}_{h,T}\bfu_h +  \rho(\bfu^-_h\cdot\bft_1)^2 ) |T^+_h| + a'^2|T'|\\
 \ge & C h^2 + a'^2|T'| \ge C(1+b^2+a'^2)h^2 \approx C \| \bfu_h \|^2_{L^2(T\cup T')}.
\end{split}
\end{equation}
%The eigenvalues of $(\bfB_1^t+\bfB)/2$ are
%\begin{equation}
%\label{App_A3_eq10}
%\lambda_1 = \frac{d e (e - \sqrt{(d-e)^2 + e^2})}{2((d-e)^2+e^2)} \in [-1/8,0]  ~~~ \text{and} ~~~ 
%\lambda_2 = \frac{d e (e + \sqrt{(d-e)^2+ e^2})}{2((d-e)^2+e^2)} \in [0,1]
%\end{equation}
\begin{figure}[H]
\centering
\begin{subfigure}{.32\textwidth}
     \includegraphics[width=2in]{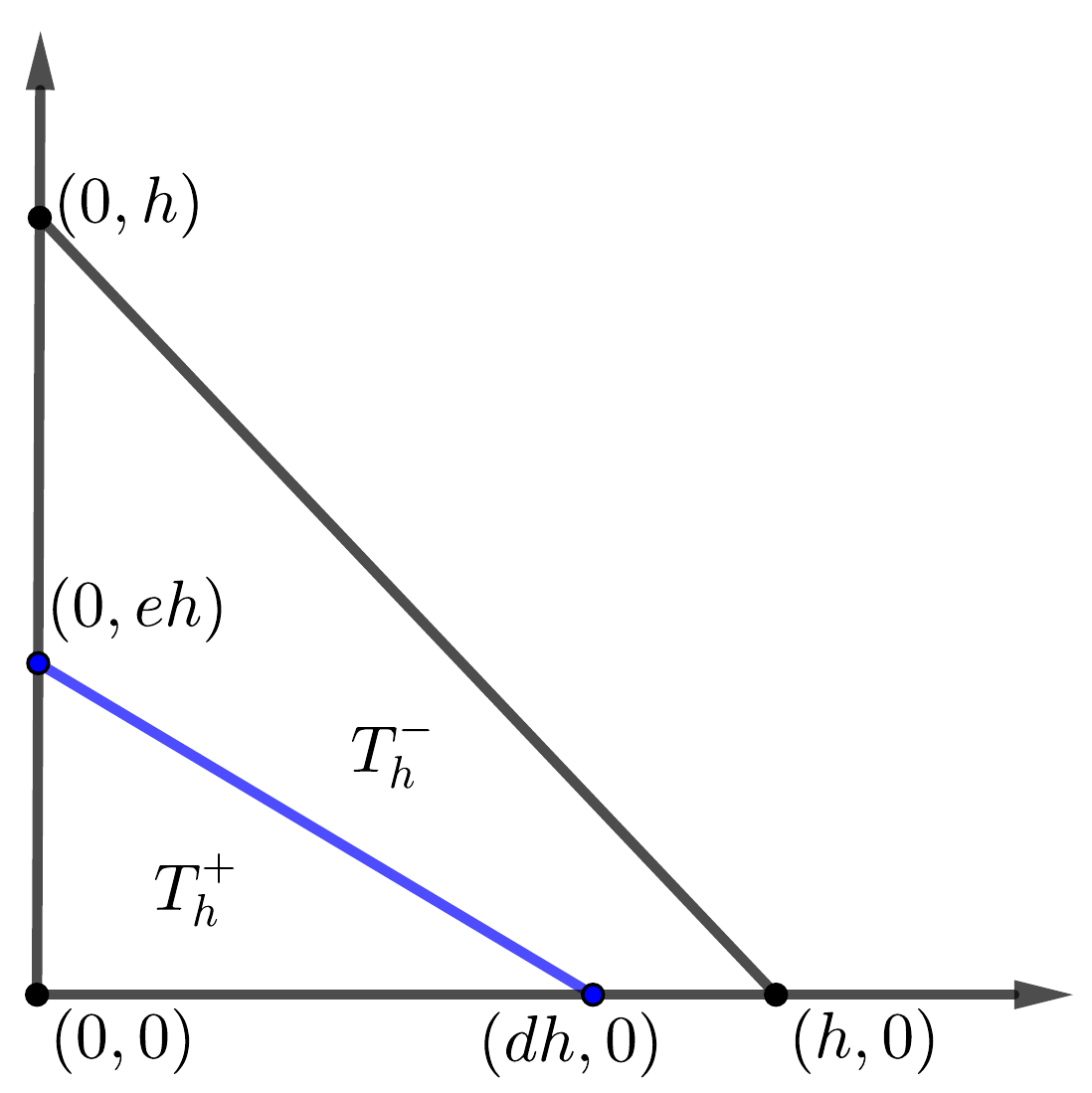}
     %\caption{Case 1}
     \label{infsup_case1} %% label for first subfigure
\end{subfigure}
~~~~
\begin{subfigure}{.37\textwidth}
     \includegraphics[width=2.5in]{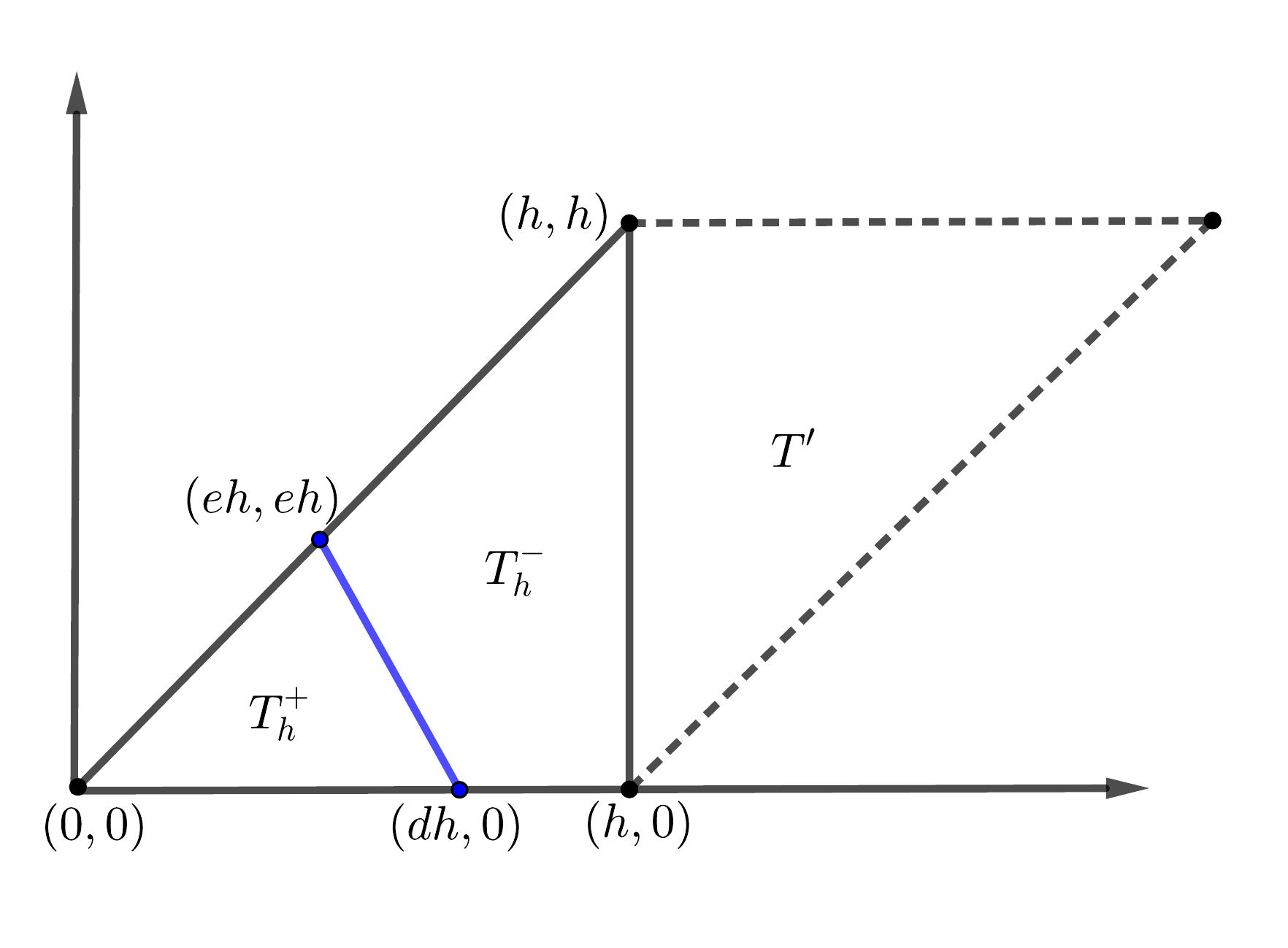}
     %\caption{Case 2}
     \label{infsup_case2} %% label for first subfigure
\end{subfigure}
     \caption{Interface element configuration: Case 1(left) and Case 2(right).}
  \label{fig:infsup_case} %% label for entire figure
\end{figure}

\subsection{Proof of Lemma \ref{lem_u_gam_h}}
\label{app_4}
Let $\bfw = \bfu^+_E\cdot\bar{\bft} - \bfu^-_E$ and then $\bfw\cdot\bar{\bft}\in \bfH^1(\text{curl};T_{\epsilon})$ because of Theorem \ref{thm_ext}. By the density argument, we only need to prove the result for sufficient smooth $\bfw$. Let $T_{\epsilon}$ be $A^{\epsilon}_1A^{\epsilon}_2A^{\epsilon}_3$ and let $\Gamma$ intersect $\partial T_{\epsilon}$ with the points $D^{\epsilon}$ and $E^{\epsilon}$. We first consider the curved-edge quadrilateral $T^+_{\epsilon}=A^{\epsilon}_1A^{\epsilon}_2E^{\epsilon}D^{\epsilon}$. The basic idea is to construct a finite number of strips with bounded width to cover the whole curved-edge quadrilateral and the number is bounded independent of interface location. For this purpose, we let $P_0$ be $A^{\epsilon}_1$ and let $P_1$ be the point on $A^{\epsilon}_1A^{\epsilon}_2$ such that $P_1E^{\epsilon}$ is parallel to $D^{\epsilon}A^{\epsilon}_1$, then the first strip is the curved-edge quadrilateral $A^{\epsilon}_1P_1E^{\epsilon}D^{\epsilon}$ denoted as $s_1$. Then we proceed by induction to construct $P_n$ on the edge $A^{\epsilon}_1A^{\epsilon}_2$ such that $P_nE^{\epsilon}$ is parallel to $P_{n-1}D^{\epsilon}$, $n\ge2$, and the $n$-th strip denoted by $s_n$ is the curved-edge quadrilateral $P_{n-1}P_nE^{\epsilon}D^{\epsilon}$. The last point $P_N$ may locate outside the edge $A^{\epsilon}_1A^{\epsilon}_2$ and if it happens we then simply let $P_N=A^{\epsilon}_2$. Thus we have totally $N$ strips $s_1$, $s_2$,...,$s_N$ and $T_{\epsilon}^+=\cup_{i=1}^N s_i$ as shown in Figure \ref{fig:strip_1}. Without loss of generality, we assume $|A^{\epsilon}_3D^{\epsilon}|/|A^{\epsilon}_3A^{\epsilon}_1|\ge |A^{\epsilon}_3E^{\epsilon}|/|A^{\epsilon}_3A^{\epsilon}_2|$, and then \eqref{interf_ext} yields
\begin{equation}
\label{lem_u_gam_eq1}
|P_{N-1}P_{N-2}| \ge |P_{N-2}P_{N-3}| \ge \cdots \ge |P_1P_0| \ge \delta_1\sin(\delta_2) h_T.
\end{equation}
Using \eqref{interf_ext} and \eqref{lem_u_gam_eq1}, we have $(N-1)\delta_1\sin(\delta_2) h_T\le  h_T$, and thus $N\le \frac{1}{\delta_1\sin(\delta_2)} + 1$ which is independent of the interface location since $\delta_1,\delta_2$ are lower bounded from $0$ regard less of interface location. Now on each strip $s_n=P_{n-1}P_nE^{\epsilon}D^{\epsilon}$, $n=1,2,\cdots,N$, we consider a local Cartesian system with the $\xi$-axis perpendicular to $P_{n-1}D^{\epsilon}$ and the $\eta$-axis long the side $P_{n-1}D^{\epsilon}$ as shown by Figure \ref{fig:strip_2}. In this local system, let $f_1(\xi)$ and $f_2(\xi)$, $\xi\in[0,\xi_n]$, be the functions corresponding to the side $P_{n-1}P_n$ and the curved-side $D^{\epsilon}E^{\epsilon}$. Then the 1D Friedrichs inequality for functions vanishing on one boundary \cite{2003London} yields
\begin{equation}
\begin{split}
\label{lem_u_gam_eq2}
\| \bfw \|^2_{L^2(s_n)} & = \int^{\xi_n}_0 \int_{f_1(\xi)}^{f_2(\xi)} \| \bfw \|^2 d\eta d\xi \\
& \le \int_0^{\xi_n} |f_2(\xi) - f_1(\xi)|^2 \int_{f_1(\xi)}^{f_2(\xi)} \| \partial_{\eta}\bfw \|^2  d\eta d\xi + \int^{\xi_n}_0 |f_2(\xi) - f_1(\xi)| \| \bfw(\xi,f_2(\xi)) \|^2 d\eta d\xi \\
& \le Ch^2_T \| \bfw \|^2_{H^1(s_n)} + Ch_T \| \bfw \|^2_{L^2(\Gamma\cap T_{\epsilon})}.
\end{split}
\end{equation}
Furthermore, the jump condition \eqref{inter_jc_1}, the geometric estimate \eqref{lemma_interface_flat_eq2} and the trace inequality yield 
\begin{equation}
\begin{split}
\label{lem_u_gam_eq3}
 \| \bfw \|^2_{L^2(\Gamma\cap T_{\epsilon})} & = \| \bfu^+_E\cdot\bar{\bft} - \bfu^-_E\cdot\bar{\bft} \|^2_{L^2(\Gamma\cap T_{\epsilon})} = \| (\bfu^+_E - \bfu^-_E)\cdot(\bar{\bft}-\bft) \|^2_{L^2(\Gamma\cap T_{\epsilon})} \le Ch_T^2 \| \bfu^+_E - \bfu^-_E \|^2_{L^2(\Gamma\cap T_{\epsilon})} \\
 & \le \left( Ch_T \| \bfu^+_E - \bfu^-_E \|^2_{L^2( T_{\epsilon})} + Ch^3_T | \bfu^+_E - \bfu^-_E |^2_{H^1(T_{\epsilon})} \right).
 \end{split}
\end{equation}
Putting \eqref{lem_u_gam_eq3} into \eqref{lem_u_gam_eq2} yields $\| \bfw \|_{L^2(s_n)} \le Ch_T \left( \| \bfu_E^+ \|_{H^1(T_{\epsilon})} +  \| \bfu_E^- \|_{H^1(T_{\epsilon})}  \right)$.
%\begin{equation}
%\label{lem_u_gam_eq4}
%\| \bfw \|_{L^2(s_n)} \le Ch_T \left( \| \bfu_E^+ \|_{\bfH^1(T_{\epsilon})} +  \| \bfu_E^- \|_{\bfH^1(T_{\epsilon})}  \right).
%\end{equation}
Summing this estimate over all the strips, we obtain
\begin{equation}
\label{lem_u_gam_eq5}
\| \bfw \|_{L^2(T^+_{\epsilon})} \le \sum_{n=1}^N \| \bfw \|_{L^2(s_n)} \le CNh_T \left( \| \bfu_E^+ \|_{H^1(T_{\epsilon})} +  \| \bfu_E^- \|_{H^1(T_{\epsilon})}  \right)
\end{equation}
which yields the desired result on $T^+_{\epsilon}$ since $N$ is uniformly bounded. Similar argument can be also applied to $T^-_{\epsilon}$.
\begin{figure}[h]
\centering
\begin{minipage}{.42\textwidth}
  \centering
  \includegraphics[width=2.5in]{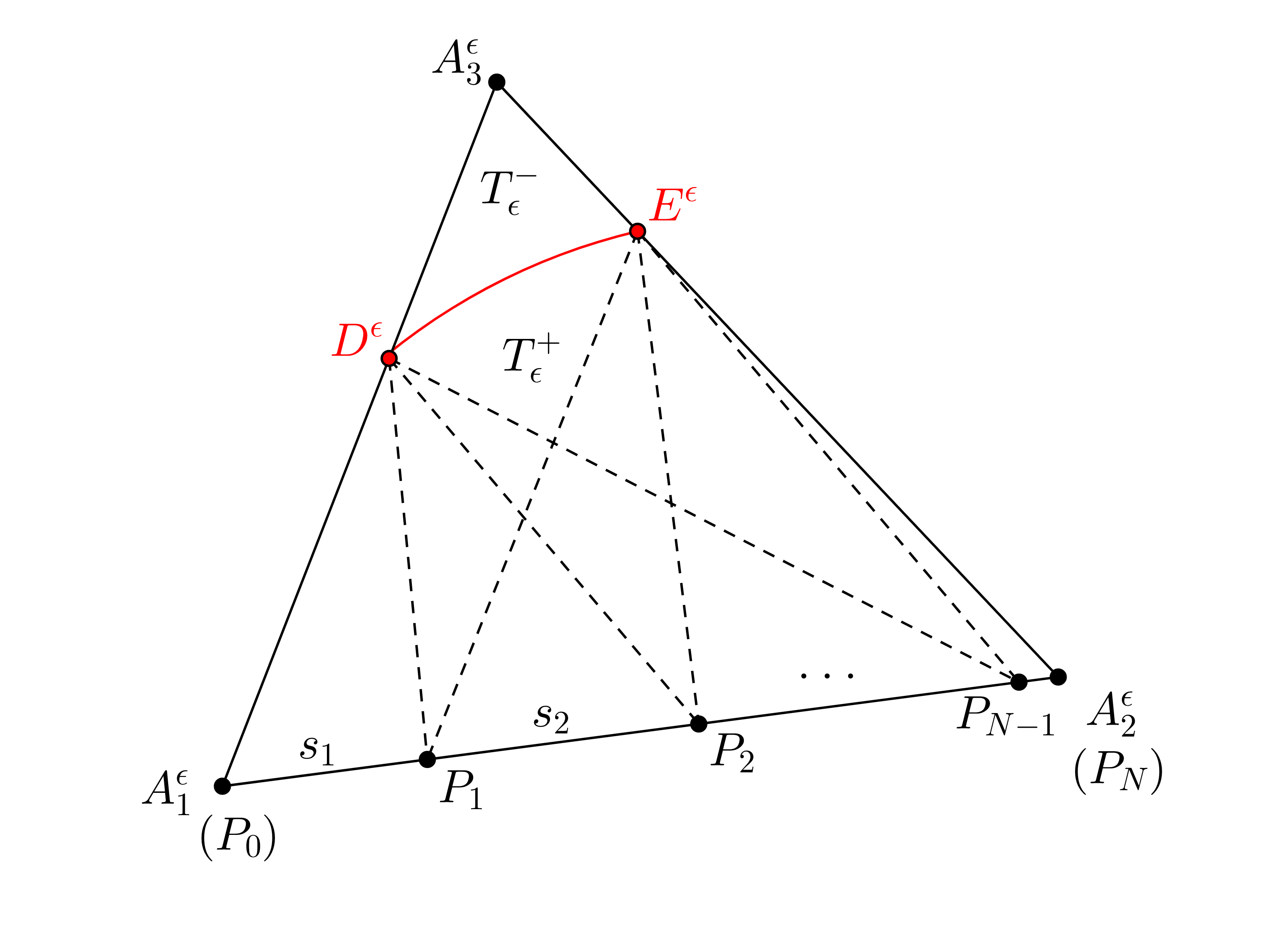}
  \caption{The partition of subelements into strips}
  \label{fig:strip_1}
\end{minipage}
\hspace{2cm}
\begin{minipage}{.4\textwidth}
  \centering
  \ \includegraphics[width=2.1in]{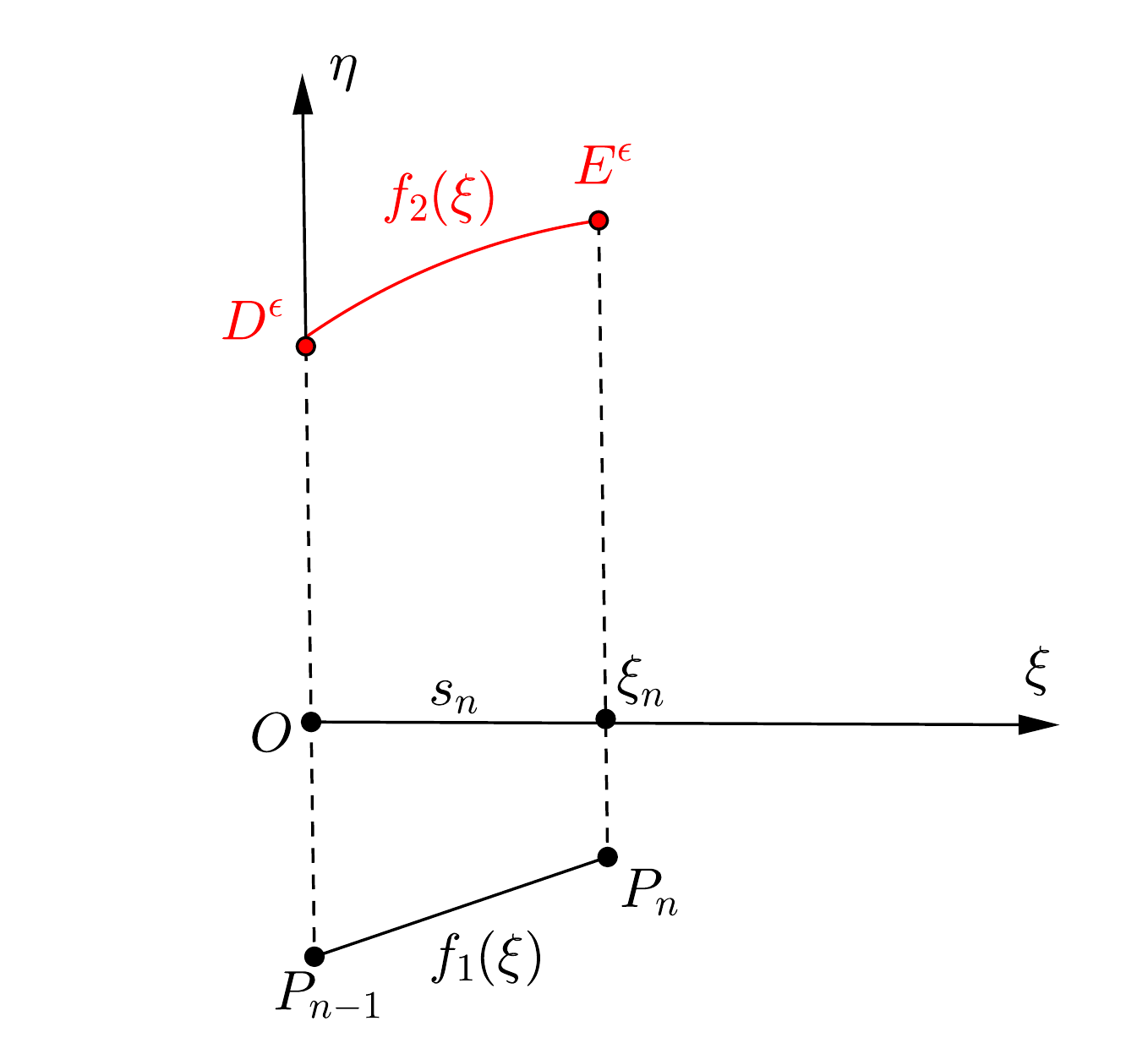}
  \caption{The local system for each strip}
  \label{fig:strip_2}
  \end{minipage}
\end{figure}

\end{appendices}

\bibliographystyle{plain}
\clearpage
%\bibliography{/Users/guoruchi/Documents/math/Mathematics/research/reference/RuchiBib.bib}
%\bibliography{RuchiBib.bib}

\end{document}